\documentclass[12pt]{amsart}
\usepackage{fullpage}
\usepackage{amsmath,amsthm,amssymb,amscd,float}
\usepackage{subcaption}
\usepackage[shortlabels]{enumitem}
\usepackage{youngtab}
\usepackage{tikz}
\usepackage{xcolor}
\usepackage{amsfonts,hyperref}
\usepackage{tabularx}
\usepackage[normalem]{ulem}
\usetikzlibrary{arrows,matrix}

\newtheorem{theorem}{Theorem}[section]

\newtheorem{corollary}[theorem]{Corollary}
\newtheorem{proposition}[theorem]{Proposition}

\newtheorem{lemma}[theorem]{Lemma}
\newtheorem{remark}{Remark}[section]
\usepackage[english]{babel}
\usepackage{dutchcal}
\allowdisplaybreaks

\usepackage{ulem}

\title[Inequalities and asymptotics for hook numbers]{ Inequalities and asymptotics for hook numbers in two partition functions of MacMahon and Andrews}

\author[R. Barman]{Rupam Barman}
\address{Department of Mathematics, Indian Institute of Technology Guwahati, Assam, India, PIN- 781039}
\email{rupam@iitg.ac.in}

\author[A. Deb]{Archit Deb}
\address{Department of Mathematics, Indian Institute of Technology Guwahati, Assam, India, PIN- 781039}
\email{archit.deb@iitg.ac.in}

\author[S. Ghorai]{Sayantan Ghorai}
\address{Department of Mathematics, Indian Institute of Technology Guwahati, Assam, India, PIN- 781039}
\email{g.sayantan@iitg.ac.in}

\subjclass[2020]{11P82; 05A17; 05A15; 05A19; 11P83}
\keywords{Hook length, Partition asymptotics, Partition inequalities, Circle method.}

\begin{document}

\begin{abstract}
Motivated by recent studies of hook length biases in restricted partitions, we investigate the total number of hooks of fixed length in two partition functions introduced by MacMahon and Andrews. In $1916$, MacMahon proved that the number of partitions of $n$ in which no part appears with multiplicity one equals the number of partitions of $n$ into parts that are even or congruent to $3 \pmod{6}$. For a positive integer $r$, let $A_r(n)$ count the partitions of $n$ in which every part occurring an odd number of times occurs at least $2r+1$ times, and let $B_r(n)$ count the partitions into parts that are even or congruent to $2r+1 \pmod{4r+2}$. Andrews proved that $A_r(n)=B_r(n)$ for all positive integers $r$ and nonnegative integers $n$, thereby recovering MacMahon's identity as the special case $r=1$. We derive generating functions and asymptotic formulas for the total numbers of hooks of lengths $1$, $2$, and $3$ in both partition classes, employing Wright's circle method. These asymptotic formulas yield several hook length bias results. In particular, for sufficiently large $n$, the $B_r$-type partitions exhibit more $1$-hooks than the $A_r$-type partitions, whereas for hook lengths $2$ and $3$, the bias is reversed when $r=1$ and favors the $B_r$-type partitions when $r\geq 2$. We also establish asymptotic inequalities among hook numbers of different lengths within each partition class. Finally, for fixed hook lengths $\ell\in\{1,2,3\}$, we prove that the $B_r$-type partitions exhibit more $\ell$-hooks than the $B_{r+1}$-type partitions for sufficiently large $n$. These results reveal systematic asymptotic patterns in the distribution of small hook lengths across the two partition families.
\end{abstract}

\maketitle

\section{Introduction}\label{p1_sec1}
A partition $\lambda$ of a positive integer $n$ is a finite sequence of non-increasing positive integers $\lambda=(\lambda_1, \lambda_2, \ldots, \lambda_s)$ such that $\lambda_1+\lambda_2+\cdots +\lambda_s=n$. The numbers $\lambda_1, \lambda_2, \ldots, \lambda_s$ are called parts of the partition $\lambda$.
\par 
A \textit{Young diagram} of a partition $(\lambda_1, \lambda_2, \ldots, \lambda_s)$ is a left-justified array of boxes, where the $i$-th row from the top contains $\lambda_i$ boxes. For example, the Young diagram of the partition $(6,3,2,2)$ is shown in Figure \ref{fig-hook-length-1} (left). For each box
$v$ in the Young diagram of a partition $\lambda$, the
\textit{arm length} (resp. the \textit{co-arm length}, the
\textit{leg length}) of $v$, denoted by
$a_\lambda(v)$ (resp. $ca_\lambda(v)$, $l_\lambda(v)$), is
defined as the number of boxes to the right of (resp. to the
left of, below) $v$ in the Young diagram of $\lambda$. For example, see Figure \ref{fig-hook-length-2}. The \textit{hook length}, also known as the \emph{hook number}, of $v$ is defined as
\[
h_\lambda(v)=a_\lambda(v)+l_\lambda(v)+1
.\]
For example, see Figure \ref{fig-hook-length-1} (right) for the hook lengths of each box in the Young diagram of the partition $(6,3,2,2)$. A hook of length $k$ is also called a $k$-hook.

\begin{figure}[h]
\centering
\begin{minipage}[b]{0.4\textwidth}
\[\young(~~~~~~,~~~,~~,~~)\]
\end{minipage}
\begin{minipage}[b]{0.4\textwidth}
\[\young(985321,541,32,21)\]
\end{minipage}
\caption{The Young diagram of the partition $(6,3,2,2)$ and its hook lengths}\label{fig-hook-length-1}
\end{figure}

\begin{figure}[htbp]
\centering

\begin{tikzpicture}[x=1cm,y=1cm]

    % Shaded regions
    \fill[yellow!45] (0,2.0) rectangle (2.5,2.5);
    \fill[yellow!45] (3.3,2.0) rectangle (6.0,2.5);
    \fill[yellow!45] (2.5,0.8) rectangle (3.3,2.0);

    % Outer boundary
    \draw
        (0,0) -- (0,3.2)
        -- (8,3.2)
        -- (8,2.8)
        -- (7.2,2.8)
        -- (7.2,2.5)
        -- (6.0,2.5)
        -- (6.0,2.0)
        -- (6.0,1.5)
        -- (4.5,1.5)
        -- (4.5,1.1)
        -- (3.3,1.1)
        -- (3.3,0.8)
        -- (1.6,0.8)
        -- (1.6,0.4)
        -- (0.8,0.4)
        -- (0.8,0)
        -- cycle;

    % Horizontal dividing lines
    \draw (0,2.0) -- (6.0,2.0);
    \draw (0,2.5) -- (7.2,2.5);

    % Vertical boundaries
    \draw (2.5,2.0) -- (2.5,2.5);
    \draw (3.3,2.0) -- (3.3,2.5);
    \draw (3.3,0.8) -- (3.3,2.0);
    \draw (2.5,0.8) -- (2.5,2.0);
    \draw (3.3,0.8) -- (3.3,1.1);

    % Labels
    \node at (1.25,2.25) {$m$};
    \node at (2.9,2.25) {$v$};
    \node at (4.65,2.25) {$j$};
    \node at (2.9,1.4) {$i$};

\end{tikzpicture}
\caption{Arm $a_\lambda(v) = j$, coarm $ca_\lambda(v) = m$ and leg $l_\lambda(v) = i$.}
\label{fig-hook-length-2}
\end{figure}
\par 
The hook lengths of partitions have an important connection with the representation theory of symmetric groups $S_n$ and $GL_n(\mathbb{C})$. The irreducible representations of $S_n$ are in bijection with the partitions of $n$, and if $\rho_{\lambda}$ is the irreducible representation corresponding to a partition $\lambda$, then the dimension of $\rho_{\lambda}$ is a function of the hook numbers of $\lambda$.
Hook lengths also appear in the Seiberg-Witten theory of random partitions, which gives the Nekrasov-Okounkov formula for arbitrary powers of Euler's infinite product in terms of hook numbers, see \cite{Han_2010, Nekrasov_Okounkov} for more details. These connections have led to many interesting studies regarding the asymptotic, combinatorial, and arithmetic properties of hook numbers, particularly those concerning $t$-core partitions and $t$-hooks.
\par 
Recently, several authors have investigated the hook lengths associated with various partition functions. Beyond the classical partition function, significant attention has been given to the study of hook lengths within several restricted partition functions, such as partitions into odd parts, partitions into distinct parts, partitions simultaneously into odd and distinct parts, self-conjugate partitions, and doubled distinct partitions (see, for instance, \cite{Ono_Singh, Ballantine_2023, Yee, Craig3, Craig, Craig2, Griffin, Han_2016, Han_2017, Petreolle}). In a significant contribution, Ballantine et al. \cite{Ballantine_2023} established the existence of specific hook length biases between partitions into odd parts and those into distinct parts. Further studies have explored such hook length biases; see, for example, \cite{Cossaboom, Craig, Kim1}. Recently, the first author and G. Singh studied hook length biases in ordinary and $t$-regular partitions and proposed several open problems; see, for example,  \cite{Singh1Barman, Singh2Barman}. Subsequently, the first author, P. Mahanta and G. Singh \cite{BMS}, as well as He et al. \cite{Bing-He} and Qu et al. \cite{Zang} investigated these open problems. Motivated by the hook length biases for ordinary partitions established in \cite{Singh1Barman}, Cho et al. \cite{Yee} investigated analogous biases for self-conjugate and doubled distinct partitions.

\par
In this article, we consider two restricted partition functions introduced by MacMahon \cite{MacMahon} and their natural generalization due to Andrews \cite{MacMahon_Generalization_by_Andrews}. In \cite[p. 54]{MacMahon}, MacMahon proved the following theorem: 
\begin{theorem}[\cite{MacMahon}]\label{p1_thm1.1}
The number of partitions of $n$ wherein no part appears with
multiplicity one equals the number of partitions of $n$ where all
parts must be even or congruent to $3 \pmod{6}$.
\end{theorem}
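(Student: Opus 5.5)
We prove the identity with generating functions, writing each side as an infinite product and matching the factors.

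For partitions in which no part has multiplicity one, each part $k$ appears with multiplicity $0$ or $m\ge 2$. The factor contributed by $k$ is therefore
\[
1+\frac{q^{2k}}{1-q^k}=\frac{1-q^k+q^{2k}}{1-q^k}=\frac{1+q^{3k}}{(1-q^k)(1+q^k)}=\frac{1+q^{3k}}{1-q^{2k}},
\]
where we used $(1+x)(1-x+x^2)=1+x^3$ with $x=q^k$. Hence
\[
\sum_{n\ge0}(\text{LHS count})\,q^n=\prod_{k\ge1}\frac{1+q^{3k}}{1-q^{2k}}.
\]

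Next we rewrite $\prod_k(1+q^{3k})$ using $1+x=(1-x^2)/(1-x)$:
\[
\prod_{k\ge1}(1+q^{3k})=\prod_{k\ge1}\frac{1-q^{6k}}{1-q^{3k}}=\prod_{k\ \mathrm{odd}}\frac{1}{1-q^{3k}}.
\]
The last equality holds because the factors of $\prod_k(1-q^{3k})$ with $k$ even are exactly $\prod_k(1-q^{6k})$ and cancel. The exponents $3k$ with $k$ odd run over precisely the integers congruent to $3\pmod 6$. So the generating function becomes
\[
\prod_{k\ge1}\frac{1}{1-q^{2k}}\prod_{j\equiv 3\ (\mathrm{mod}\ 6)}\frac{1}{1-q^{j}}.
\]
This is the generating function for partitions into parts that are even or congruent to $3\pmod 6$. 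The two residue classes are disjoint, since even numbers are never $\equiv 3\pmod 6$, so no part is counted twice.

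Comparing coefficients of $q^n$ gives the theorem. All manipulations are valid as identities of formal power series, since every product converges $q$-adically. The only nonroutine step is the factorization $1-x+x^2=(1+x^3)/(1+x)$, which turns the multiplicity condition into a clean product; the remaining steps are mechanical cancellation.
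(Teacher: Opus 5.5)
Your proof is correct and is essentially the generating-function argument the paper relies on. The paper does not reprove MacMahon's theorem; it cites his generating-function proof and, at the start of Section~\ref{p1_sec2}, records the $r=1$ case of your identity, $\frac{(-q^{3};q^{3})_\infty}{(q^2;q^2)_\infty}=\frac{1}{(q^2;q^2)_\infty(q^{3};q^{6})_\infty}$, whose left side comes from the same per-part factor $\frac{1+q^{3k}}{1-q^{2k}}$ that you derived.
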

MacMahon used a generating function argument to prove Theorem~\ref{p1_thm1.1}. In $1967$, Andrews \cite{MacMahon_Generalization_by_Andrews} stated and proved the following natural generalization of Theorem~\ref{p1_thm1.1}:
\begin{theorem}[\cite{MacMahon_Generalization_by_Andrews}]\label{p1_thm1.2}
Let $A_r(n)$ denote the number of partitions of $n$ of the form $n=\lambda_1+\cdots +\lambda_s$, where $\lambda_i\ge \lambda_{i+1}$, all odd parts are $\ge 2r+1$, and if $\lambda_i-\lambda_{i+1}$ is odd then $\lambda_i - \lambda_{i+1}\ge 2r+1$. Let $B_r(n)$ be the number of partitions of $n$ into parts which are either even or else congruent to $2r+1$ modulo $4r+2$. Then $A_r(n)=B_r(n)$.
\end{theorem}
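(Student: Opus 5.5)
The plan is to reduce both sides to infinite products and compare them. The key observation is that the conditions defining $A_r(n)$ are conditions on the differences $\lambda_i-\lambda_{i+1}$, and these differences become multiplicities after conjugation.

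\emph{Step 1 (conjugation).} Set $\lambda_{s+1}=0$. For the conjugate partition $\lambda'$, the part $j$ occurs exactly $\lambda_j-\lambda_{j+1}$ times, for $1\le j\le s$.

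First I check that the condition on odd parts is redundant. Suppose $\lambda_i$ is odd. Then $\lambda_i=\sum_{k\ge i}(\lambda_k-\lambda_{k+1})$ is a sum of differences, so at least one of these differences is odd. By hypothesis that difference is $\ge 2r+1$, and hence $\lambda_i\ge 2r+1$. The case $k=s$ also shows that the difference condition is meant to include $\lambda_s-\lambda_{s+1}=\lambda_s$.

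Consequently, $A_r(n)$ counts exactly the partitions $\lambda$ of $n$ in which every difference $\lambda_j-\lambda_{j+1}$ ($1\le j\le s$) is either even or odd and $\ge 2r+1$. Under conjugation, these correspond bijectively to partitions of $n$ in which each part size $j$ occurs with multiplicity in
\[
M=\{0,2,4,\dots\}\cup\{2r+1,2r+3,\dots\}.
\]
This is the description given in the abstract.

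\emph{Step 2 (generating functions).} For a fixed part size $j$, the multiplicities in $M$ contribute the factor
\[
\sum_{m\in M}q^{jm}=\frac{1}{1-q^{2j}}+\frac{q^{(2r+1)j}}{1-q^{2j}}=\frac{1+q^{(2r+1)j}}{1-q^{2j}}=\frac{1-q^{(4r+2)j}}{(1-q^{2j})(1-q^{(2r+1)j})}.
\]
Taking the product over $j\ge1$ gives
\[
\sum_{n\ge0}A_r(n)q^n=\prod_{j\ge1}\frac{1-q^{(4r+2)j}}{(1-q^{2j})(1-q^{(2r+1)j})}.
\]

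\emph{Step 3 (cancellation).} Split the factor $\prod_{j}(1-q^{(2r+1)j})^{-1}$ according to the parity of $j$.
\begin{itemize}
\item For even $j=2k$, the factor is $(1-q^{(4r+2)k})^{-1}$. The product of these over $k\ge1$ cancels the numerator $\prod_{j}(1-q^{(4r+2)j})$ exactly.
\item For odd $j=2k+1$, the factor is $(1-q^{(2r+1)(2k+1)})^{-1}$. These survive.
\end{itemize}
After the cancellation we obtain
\[
\sum_{n\ge0}A_r(n)q^n=\prod_{j\ge1}\frac{1}{1-q^{2j}}\prod_{k\ge0}\frac{1}{1-q^{(2r+1)(2k+1)}}.
\]
The exponents $(2r+1)(2k+1)$, for $k\ge0$, run exactly over the positive integers congruent to $2r+1 \pmod{4r+2}$. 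Hence this product is the generating function for $B_r(n)$, and comparing coefficients gives $A_r(n)=B_r(n)$.

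The algebra in Steps 2 and 3 is routine. The only delicate point is Step 1: one must correctly translate the difference conditions, including the boundary difference $\lambda_s-0$, into multiplicity conditions on $\lambda'$. One must also confirm that the stated constraint on odd parts adds nothing beyond the difference conditions, which is the redundancy argument given in Step 1.
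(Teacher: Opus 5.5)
Your proof is correct and is essentially the generating-function argument the paper attributes to Andrews. The paper cites the theorem rather than proving it, but it records exactly your two products in Section 2 and the identity $(-q^{2r+1};q^{2r+1})_\infty=(q^{4r+2};q^{4r+2})_\infty/(q^{2r+1};q^{2r+1})_\infty$ in Section 3.2; it runs the logic backwards, deducing this product identity from the theorem, whereas you prove it directly.

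One small clarification on Step 1. The odd-parts hypothesis is redundant only if the difference condition is read to include $i=s$. If it is read for $1\le i\le s-1$ only, that hypothesis is exactly what supplies the boundary case ($\lambda_s$ odd $\Rightarrow \lambda_s\ge 2r+1$). Either way, your multiplicity characterization holds.
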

The case $r=0$ is trivial. The case $r=1$ of Theorem~\ref{p1_thm1.2} reduces to Theorem~\ref{p1_thm1.1} of MacMahon. As an example, when $r=2$ and $n=15$, the nine partitions enumerated by $A_2(15)$ are
$
15, 13+2, 11+4, 11+2+2, 10+5,
9+4+2, 9+2+2+2, 7+2+2+2+2, 5+5+5,$
while the nine partitions enumerated by $B_2(15)$ are
$
15, 10+5, 8+5+2, 6+5+4, 6+5+2+2,
5+5+5, 5+4+4+2, 5+4+2+2+2, 5+2+2+2+2+2.$ As in MacMahon's proof, Andrews' argument is a straightforward application of generating functions. In \cite{Sellers}, Fu and Sellers gave bijective proofs of Theorem~\ref{p1_thm1.1} and Theorem~\ref{p1_thm1.2}.
\par 
In this article, we investigate hook length biases between the two partition families associated with the MacMahon-Andrews identity. Recall that the conjugate of a partition counted by $A_r(n)$ is a partition in which every part occurring an odd number of times occurs at least $2r+1$ times. \emph{Since conjugation preserves hook lengths, we adopt this equivalent characterization of $A_r(n)$ throughout our study of hook numbers.} 
\par Let $a_{\ell}^{(r)}(n)$ and $b_{\ell}^{(r)}(n)$ denote the total numbers of hooks of length $\ell$ in all partitions of $n$ counted by $A_r(n)$ and $B_r(n)$, respectively. Since $A_r(n)=B_r(n)$ for all $n,r\geq 1$, we have, for every $n, r\geq 1$,
\begin{align}\label{eqn-new-13}
nA_r(n)=\sum_{\ell\geq 1}a_{\ell}^{(r)}(n)=\sum_{\ell\geq 1}b_{\ell}^{(r)}(n)=nB_r(n).
\end{align}
In light of \eqref{eqn-new-13}, any inequality between $a_{\ell_0}^{(r)}(n)$ and $b_{\ell_0}^{(r)}(n)$, for a fixed $\ell_0$, naturally leads us to investigate inequalities between $a_{\ell}^{(r)}(n)$ and $b_{\ell}^{(r)}(n)$ for other values of $\ell$. Such inequalities must collectively compensate for the difference at $\ell=\ell_0$, ensuring that the sums over all $\ell$ remain equal. In search of such inequalities, we focus on hook lengths $\ell\in\{1,2,3\}$ and establish asymptotic inequalities both between the two partition families and within each family. We also investigate how the hook numbers associated with $B_r(n)$ vary with $r$.
\par Our approach is based on deriving generating functions for $a_{\ell}^{(r)}(n)$ and $b_{\ell}^{(r)}(n)$, followed by obtaining their asymptotic formulas using Wright's circle method. These formulas yield the hook length bias results stated below.
%%%%%%%%%%%%%%%%%%%%%%%%%%%%%%%%%%%%%%%%%%%%%%%%%%%%%%%%%
\subsection{Main results}
We now present our main results on hook length biases for the partition functions $A_r(n)$ and $B_r(n)$. We first compare the total numbers of hooks of lengths 1, 2, and 3 between the two partition families. We then establish inequalities among hook numbers of different lengths within each family. Finally, we investigate the dependence of the hook numbers for $B_r(n)$ on the parameter $r$. In the following theorem, we show that, for all $r\geq 1$, there are more hooks of length $1$ in all partitions of $n$ counted by $B_r(n)$ than in all partitions of $n$ counted by $A_r(n)$.
\begin{theorem} \label{p1_thm1.3}
For every integer $r\geq 1$, there exists $N_1^{(r)}\in\mathbb{N}$ such that
$b_1^{(r)}(n)>a_1^{(r)}(n)$ for all $n\geq N_1^{(r)}$. Moreover,
$b_1^{(r)}(n)-a_1^{(r)}(n) \rightarrow\infty$ as $n\to\infty$,
and
$$
\lim_{r\to\infty}
\left(\lim_{n\to\infty}
\frac{b_1^{(r)}(n)}{a_1^{(r)}(n)}\right)=1.
$$ 
\end{theorem}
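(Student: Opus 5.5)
The plan is to obtain exact generating functions for $a_1^{(r)}(n)$ and $b_1^{(r)}(n)$, identify their behaviour near $q=1$, and transfer this to the coefficients with Wright's circle method; the bias and the limit in $r$ then come down to comparing two explicit constants. The combinatorial input is that the $1$-hooks of a partition are exactly its corner boxes. Hence the number of $1$-hooks equals the number of distinct part sizes, and this count is invariant under conjugation.

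Write $c=2r+1$ and let $P(q)$ be the common generating function of $A_r(n)=B_r(n)$. For the $B_r$-type partitions, a part $k$ occurs in a partition counted by $q^kP(q)$, so summing over the admissible sizes gives
\[
\sum_{n\ge0} b_1^{(r)}(n)q^n = P(q)\left(\frac{q^2}{1-q^2}+\frac{q^{c}}{1-q^{2c}}\right).
\]
For the $A_r$-type partitions, in the multiplicity description each part size $k$ contributes the factor $F(q^k)$, where
\[
F(x)=\sum_{m\text{ allowed}}x^m=\frac{1+x^{c}}{1-x^2}.
\]
The product of these factors is $P(q)$. Marking that $k$ actually occurs replaces $F(q^k)$ by $F(q^k)-1$, so
\[
\sum_{n\ge0} a_1^{(r)}(n)q^n = P(q)\sum_{k\ge1}\left(1-\frac{1-q^{2k}}{1+q^{ck}}\right)=P(q)\sum_{k\ge1}\frac{q^{2k}+q^{ck}}{1+q^{ck}}.
\]

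Next I would put $q=e^{-z}$ and study the two sums as $z\to0$ in a cone $|\arg z|<\pi/2-\delta$. For $B_r$ the sum is explicitly $\tfrac{1}{z}\bigl(\tfrac12+\tfrac1{2c}\bigr)+O(1)$. For $A_r$, I would apply Euler--Maclaurin to $f(t)=\dfrac{e^{-2t}+e^{-ct}}{1+e^{-ct}}$, which gives $\sum_k f(kz)=\tfrac1z\int_0^\infty f(t)\,dt+O(1)$. Computing the integral gives
\[
\int_0^\infty f(t)\,dt=\frac12-I+\frac{\log 2}{c},\qquad I=\int_0^\infty\frac{e^{-(2+c)t}}{1+e^{-ct}}\,dt .
\]
The difference of the two leading constants is therefore $\tfrac1{2c}-\tfrac{\log2}{c}+I$. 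Using $I\ge\tfrac{1}{2(2+c)}$, this is positive as soon as $c(1-(2\log2-1))>2(2\log 2-1)$, which holds for every $c\ge3$. Moreover both constants tend to $\tfrac12$ as $r\to\infty$, because $I\to0$ and $\log2/c\to0$.

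For the transfer to coefficients, $P(q)$ is a modular-type eta quotient, $\prod(1-q^{2k})^{-1}\prod(1-q^{c(2k-1)})^{-1}$, with known modular behaviour. Writing $\lambda=\pi^2(\tfrac16+\tfrac1{6c})$, we have $P(e^{-z})\sim C z^{\alpha}e^{\lambda/z}$ on the major arc. On the minor arcs, $|P(q)|$ is exponentially smaller, and the multiplying sums are only polynomially large, bounded by $O(1/|z|)$ through crude estimates. Wright's circle method (in the form of Ngo--Rhoades or Bringmann et al.) then gives
\[
a_1^{(r)}(n)\sim \kappa_A\,\Phi(n),\qquad b_1^{(r)}(n)\sim\kappa_B\,\Phi(n),
\]
with the same function $\Phi(n)\to\infty$ and with $\kappa_A,\kappa_B$ the two constants above. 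Since $\kappa_B>\kappa_A$, this yields $b_1^{(r)}(n)>a_1^{(r)}(n)$ for large $n$ and $b_1^{(r)}(n)-a_1^{(r)}(n)\sim(\kappa_B-\kappa_A)\Phi(n)\to\infty$. It also gives $\lim_n b_1^{(r)}/a_1^{(r)}=\kappa_B/\kappa_A\to1$ as $r\to\infty$.

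The main obstacle is justifying the circle-method hypotheses for the $A_r$ sum. This needs the Euler--Maclaurin expansion uniformly in the cone, and a minor-arc bound for $\sum_k (q^{2k}+q^{ck})/(1+q^{ck})$. That bound is delicate because $1+q^{ck}$ can vanish or become small near roots of unity. I would handle it by expanding $\dfrac{1}{1+q^{ck}}$ as a geometric series for $|q|<1$, which bounds the sum by a constant times $\sum_k |q|^{2k}\bigl(1-|q|^{ck}\bigr)^{-1}$, i.e. by $O(|z|^{-2})$. This polynomial loss is harmless against the exponential gap of $P$.
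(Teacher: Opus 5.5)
Your proof is correct. Its analytic part follows the paper:
\begin{itemize}
\item the same factorization through $\xi^{(r)}(q)$ (your $\sum_k (q^{2k}+q^{ck})/(1+q^{ck})$ is $L_{a,1}^{(r)}(q)$, and your $B_r$ sum is $L_{b,1}^{(r)}(q)$);
\item Euler--Maclaurin on the major arc;
\item the bound $|1+q^{ck}|\ge 1-|q|^{ck}$ on the minor arcs, as in Lemma~\ref{p1_lem28};
\item Proposition~\ref{p1_prop1} to pass to coefficients.
\end{itemize}
Your constant also agrees with theirs. Indeed $\tfrac12-I=\int_0^\infty e^{-2t}(1+e^{-ct})^{-1}\,dt=D_1^{(r)}/(2c)$, and the duplication formula gives $D_1^{(r)}=2D_0^{(r)}-2\log 2$, so $\tfrac12-I+\tfrac{\log 2}{c}=D_0^{(r)}/(2r+1)$.

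Where you genuinely differ is the comparison of the two constants.
\begin{itemize}
\item \textbf{The paper's route.} It writes $M_1^*(r)=D_0^{(r)}/(r+1)$ as the series $\frac{2r+1}{r+1}\sum_{n\ge0}\frac{1}{(n(2r+1)+1)(n(2r+1)+2)}$ and asserts, without giving the verification, that this is strictly increasing in $r$. It combines this with $M_1^*(1)=\pi/(2\sqrt3)$ and $M_1^*(r)\to1$. The $r$-limit comes from $\psi(x)=-\gamma-1/x+O(x)$.
\item \textbf{Your route.} The single bound $I\ge 1/(2(c+2))$ gives $\kappa_B>\kappa_A$ for every $c>(2\log 2-1)/(1-\log 2)\approx1.26$, uniformly in $r$, with no monotonicity claim to check. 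The bound $0<I\le 1/(c+2)$ gives the $r$-limit just as directly.
\end{itemize}
The digamma form buys the paper exact closed-form constants and the functions $D_k^{(r)}$ it reuses for $\ell=2,3$. Your integral form buys a shorter, fully self-contained positivity argument for $\ell=1$.

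There are three slips. None affects the conclusion, because the common factor cancels in the comparison.
\begin{itemize}
\item The exponential constant is $\lambda=\pi^2\bigl(\tfrac1{12}+\tfrac1{12c}\bigr)=\frac{(r+1)\pi^2}{6(2r+1)}$, not twice that.
\item Your geometric-series argument produces minor-arc bounds in terms of $x=\operatorname{Re}(z)$, e.g.\ $O(x^{-2})$, not $O(|z|^{-1})$ or $O(|z|^{-2})$. Near $y=\pm\pi/c$ or $y=\pm\pi$ both sums are at least of order $1/x$ while $|z|$ stays bounded below. A bound in $x$ is what the paper proves in Corollary~\ref{p1_cor3}, and it suffices because Lemma~\ref{p1_lem4} supplies the factor $e^{-\delta''/x}$.
\item $1+q^{ck}$ never vanishes for $|q|<1$; it only becomes small.
\end{itemize}
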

For hook lengths $2$ and $3$, the direction of the bias depends on $r$: when $r=1$, the $A_1$-type partitions have more hooks of these lengths, whereas for $r\geq 2$, the bias favors the $B_r$-type partitions.
\begin{theorem}\label{p1_thm1.4}
\begin{enumerate}[label=(\roman*)]
    \item There exists a positive integer $N_2^{(1)}$ such that for all $n\geq N_2^{(1)}$, we have $b_2^{(1)}(n)<a_2^{(1)}(n)$.
     \item For every integer $r\geq 2$, there exists a positive integer $N_2^{(r)}$ such that for all $n\geq N_2^{(r)}$, we have $b_2^{(r)}(n)>a_2^{(r)}(n)$.
    \item For every integer $r\ge2$, we have $b_2^{(r)}(n)-a_2^{(r)}(n)\to \infty$ as $n \to \infty$. Also, we have $a_2^{(1)}(n)-b_2^{(1)}(n)\to \infty$ as $n\to \infty$.
    \item Furthermore, we have $$\displaystyle \lim_{r\to \infty}\left(\lim_{n\to \infty}\frac{b_2^{(r)}(n)}{a_2^{(r)}(n)}\right)=1.$$
\end{enumerate}
\end{theorem}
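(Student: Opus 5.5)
We will follow the same route as for Theorem~\ref{p1_thm1.3}. First we derive generating functions for $a_2^{(r)}(n)$ and $b_2^{(r)}(n)$. Then we extract leading asymptotics with Wright's circle method. Finally we compare the leading constants.

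\textbf{Generating functions.} A $2$-hook arises in exactly two ways. Either a box is the second-to-last box of a row whose part size $k$ occurs with multiplicity $m$, and the next smaller part is at most $k-2$. Or a box is the last box of the lowest row of a block of equal parts, and the block just below has part size exactly $k-1$. Equivalently, in terms of distinct part sizes, a $2$-hook corresponds either to a part size $k$ with $k-1$ absent and $k\ge 2$ (one horizontal domino), or to a part size $k$ with multiplicity $\ge 2$ (one vertical domino).

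For $B_r$ the parts lie in $S_r=2\mathbb{N}\cup\{2r+1 \bmod 4r+2\}$, so consecutive integers are rarely both allowed. We write
\[
\sum_n b_2^{(r)}(n)q^n=\prod_{s\in S_r}\frac{1}{1-q^s}\cdot\Bigl(\sum_{s\in S_r}\frac{q^{2s}}{1}+\sum_{s\in S_r,\, s\ge2}q^{s}(1-\chi_{S_r}(s-1)q^{s-1}\cdots)\Bigr),
\]
with the correction only for the pairs $(s-1,s)$ with $s\equiv 2r+2 \pmod{4r+2}$ or $s\equiv 2r+1$. This uses the usual marking: the indicator ``$k$ occurs'' contributes $q^k$ times the full product, and ``$k$ occurs and $k-1$ does not'' contributes $q^k(1-q^{k-1})$ times the product.

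For $A_r$ (conjugate form: odd multiplicities are $\ge 2r+1$) the factor for a part $k$ is
\[
F_k(q)=\frac{1+q^{(2r+1)k}-q^{2k}\,(\text{adjust})}{1-q^{2k}},
\]
namely $\sum_{m \text{ even}}q^{mk}+\sum_{m\text{ odd}\ge 2r+1}q^{mk}$. The weights are $\Pr(k \text{ present})$, $\Pr(\text{mult}\ge2)$ and $\Pr(k-1\text{ absent})$. The absence factor is $1/F_{k-1}$, the presence factor is $(F_k-1)/F_k$, and the factor for multiplicity $\ge2$ is $(F_k-1)/F_k$ as well, since multiplicity $1$ is impossible. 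This gives the generating function as $\prod_kF_k$ times the sum over $k$ of these explicit rational corrections.

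\textbf{Asymptotics.} Both products equal $\prod_{s\in S_r}(1-q^s)^{-1}$, whose modular behaviour as $q=e^{-z}$, $z\to0$ is known from the analysis behind Theorem~\ref{p1_thm1.3}. The correction sums behave like $\frac{c}{z}+O(\log z)$ or similar, via Euler--Maclaurin applied to sums of the form $\sum_k f(kz)$. For instance $\sum_{s\in S_r}q^{2s}\sim \frac{1}{z}\bigl(\frac14+\frac{1}{2(4r+2)}\bigr)$. The $A_r$ sum becomes $\frac1z\int_0^\infty g_r(t)\,dt$, where $g_r$ is built from $F(t)=\frac{1+e^{-(2r+1)t}-e^{-2t}}{1-e^{-2t}}$-type expressions. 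Wright's circle method, with the minor-arc bounds already established for these products, then gives
\[
a_2^{(r)}(n)\sim \alpha_r\sqrt n\,B_r(n)\quad\text{and}\quad b_2^{(r)}(n)\sim\beta_r\sqrt n\,B_r(n),
\]
with explicit $\alpha_r,\beta_r$ given as integrals. Since $A_r(n)=B_r(n)$ grows like $\exp(C\sqrt n)$, the difference tends to $\pm\infty$ whenever $\alpha_r\neq\beta_r$. This proves (iii) given (i) and (ii). Part (iv) follows from $\alpha_r,\beta_r\to$ the same limit as $r\to\infty$, because $e^{-(2r+1)t}$ terms and the density of the odd class both vanish (dominated convergence on the integral).

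\textbf{Main obstacle.} The hard step is the sign of $\beta_r-\alpha_r$. These integrals involve $\log$- and dilogarithm-type constants, and the obvious difficulty is to show $\beta_1<\alpha_1$ while $\beta_r>\alpha_r$ for all $r\ge2$. We would first compute $\alpha_r,\beta_r$ in closed form in terms of $\pi^2$, $\log 2$ and $\sum 1/(2r+1)$-like quantities. We would then verify $r=1,2$ numerically with rigorous bounds, and prove monotonicity in $r$ of $\beta_r-\alpha_r$ (or a crude bound valid for $r\ge3$, using $e^{-(2r+1)t}\le e^{-7t}$) to cover the remaining $r$.
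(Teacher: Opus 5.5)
The gap is the one you flag yourself: you never establish the sign of $\beta_r-\alpha_r$, and that sign is the whole content of (i) and (ii).

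Your route matches the paper's: generating functions, Wright's circle method with $\xi^{(r)}$ carrying the exponential singularity, then comparing the $1/z$-coefficients of the two Lambert-type factors. Your deduction of (iii) from the limiting ratio is fine. So is (iv) by dominated convergence, which is a tidy alternative to the paper's use of $\psi(x)=-\gamma-\frac1x+O(x)$.

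Parts (i) and (ii) \emph{are} the claim that the $A_r$ coefficient exceeds $\frac{3r+2}{2(2r+1)}$ when $r=1$ and lies below it for every $r\ge2$. You leave exactly this as the ``main obstacle'' with a plan, so (i), (ii) and the directions in (iii) are unproved. Also, the constants turn out to be digamma values at rational points ($\pi/\sqrt3$ appears at $r=1$), not dilogarithms.

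The paper handles this step as follows. It computes the coefficient as $\frac{2(2r+1)(D_0^{(r)}+1)+(2r-3)(D_1^{(r)}-D_2^{(r)})}{2(2r+1)^2}$. It writes the limiting ratio as $M_2(r)=\frac{3+t}{2t((1-4t)G_1(t)+2H_1(t))}$ with $t=\frac1{2r+1}$. It then proves $G_1(t)<\frac1{2t}$, by telescoping $g_1(n)-g_1(n+\frac12)$ against $\int_0^\infty -g_1'$, and $H_1(t)<\frac1{2t}+\frac54$ for $t\le\frac15$. The reversal is structural: $1-4t>0$ exactly when $r\ge2$. At $r=1$ that coefficient is $-\frac13$, and the exact value $M_2(1)=15/(12+2\pi/\sqrt3)<1$ is used.

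Your plan can be finished in your own framework, but it has to be set up correctly. The factor attached to a part size $k$ in the $A_r$ product is exactly $\frac{1+q^{(2r+1)k}}{1-q^{2k}}$; your $F$ with the extra $-e^{-2t}$ would give a wrong constant. The presence weight is therefore $p_r(t)=\frac{e^{-2t}+e^{-(2r+1)t}}{1+e^{-(2r+1)t}}$. The $A_r$ coefficient is $\int_0^\infty(2p_r-p_r^2)\,dt$, which one can check agrees with the paper's digamma expression.

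Put $N=2r+1$, $p_r=e^{-2t}+\delta_r$ and $\delta_r=\frac{e^{-Nt}(1-e^{-2t})}{1+e^{-Nt}}$. You must then compare $\int_0^\infty\bigl(2\delta_r(1-e^{-2t})-\delta_r^2\bigr)dt$ with $\frac1{4N}$:
\begin{itemize}
\item The crude bound $\int_0^\infty 2e^{-Nt}(1-e^{-2t})^2\,dt=\frac{16}{N(N+2)(N+4)}$ settles $r\ge3$.
\item $r=2$ fails this bound narrowly and needs the denominator as well, e.g.\ $(1+e^{-Nt})^{-1}\le 1-\frac12 e^{-Nt}$.
\item $r=1$ needs a lower bound; the true value of the integral is about $0.118>\frac1{12}$.
\end{itemize}
Until such estimates are carried out, the proof is incomplete.

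Two smaller points. Your first verbal description of the vertical $2$-hook (last box of the lowest row of a block) actually describes a $1$-hook, though your ``equivalently'' reformulation is correct. And (H3) has to be checked for the new Lambert-type factor itself, not only for the product; the paper does this with $|L_{a,2}^{(r)}(q)|<6/x^4$.
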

\begin{theorem}\label{p1_thm1.5}
\begin{enumerate}[label=(\roman*)]
\item There exists a positive integer $N_3^{(1)}$ such that for all $n\geq N_3^{(1)}$, we have $b_3^{(1)}(n)<a_3^{(1)}(n)$.
    \item For every integer $r\geq 2$, there exists a positive integer $N_3^{(r)}$ such that for all $n\geq N_3^{(r)}$, we have $b_3^{(r)}(n)>a_3^{(r)}(n)$.
    \item For every integer $r\ge2$, we have $b_3^{(r)}(n)-a_3^{(r)}(n)\to \infty$ as $n \to \infty$. Also, we have $a_3^{(1)}(n)-b_3^{(1)}(n)\to \infty$ as $n\to \infty$.
    \item Furthermore, we have $$\displaystyle \lim_{r\to \infty}\left(\lim_{n\to \infty}\frac{b_3^{(r)}(n)}{a_3^{(r)}(n)}\right)=1.$$
\end{enumerate}
\end{theorem}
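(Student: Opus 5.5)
The plan is the same as for Theorems~\ref{p1_thm1.3} and \ref{p1_thm1.4}: first find generating functions for $a_3^{(r)}(n)$ and $b_3^{(r)}(n)$, then get the leading asymptotics by Wright's circle method, and finally compare the leading constants as functions of $r$.

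\textbf{Generating functions.} A $3$-hook in the Young diagram arises in one of three ways. It can be a cell at the end of a row whose part exceeds the next distinct part by at least $3$, when that part has multiplicity $1$. It can be the second-to-last cell of such a row, when the part exceeds the next one by at least $2$ and the multiplicity is $2$ or more. Finally, it can be a configuration involving multiplicity $\ge 3$ or a gap pattern in consecutive parts. So I will classify $3$-hooks by the local shape near the corner, using the triples (part size, multiplicity, gap to the next part size), and write each case as a sum over parts $k$ of marked multiplicities. For $B_r$ the multiplicities are free, and the allowed parts are the even numbers and those $\equiv 2r+1 \pmod{4r+2}$. The generating function is then $B_r(q)\cdot H_B(q)$, where $B_r(q)=\prod (1-q^{2k})^{-1}(1-q^{(4r+2)k-2r-1})^{-1}$ and $H_B$ is a finite combination of Lambert-type series, each multiplied by rational factors that record the absence of nearby allowed parts. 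For $A_r$ the multiplicity condition (odd multiplicity forces multiplicity $\ge 2r+1$) enters through the per-part factor $\frac{1+q^{(2r+1)k}-q^{2k}\cdot(\ldots)}{1-q^{2k}}$, and marking the multiplicity contributions changes only the local factor.

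\textbf{Asymptotics.} Near $q=1$ with $q=e^{-z}$, both products behave like $\exp\!\big(\pi^2(1/6+\epsilon_r)/z\big)$ times explicit algebraic factors. The main contribution of $H$ at $z\to0$ is $c_3^{X}(r)/z+O(\log(1/z))$. Here the Lambert sums $\sum_k q^{mk}/(1-q^{k})$ behave like $\log(1/z)/z$, and the higher-order constraints give $\sum q^{ck}\approx 1/(cz)$. I expect the $1/z$ coefficients to be rational combinations of digamma values, or simply explicit rationals times $1/z$. Applying Wright's circle method, in the form already used to prove Theorem~\ref{p1_thm1.3}, gives
\[
a_3^{(r)}(n)\sim \alpha_r\, n^{1/2}A_r(n),\qquad b_3^{(r)}(n)\sim \beta_r\, n^{1/2}A_r(n),
\]
with explicit $\alpha_r,\beta_r>0$. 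Since $A_r(n)=B_r(n)$, the products match exactly. This is the key simplification: only $H_A$ and $H_B$ need to be compared, including any $\log$ terms.

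\textbf{Comparison.} For $r=1$ we compute $\alpha_1>\beta_1$ numerically or in closed form; this gives (i) and the divergence claim in (iii). For $r\ge2$ we must show $\beta_r>\alpha_r$. The difference will be an expression such as $\beta_r-\alpha_r=\frac{c_1}{\sqrt{\cdot}}-\frac{c_2}{2r+1}+\ldots$. I expect the main obstacle to be proving the sign uniformly for all $r\ge2$. I would check $r=2,3$ directly and then bound the tail terms (those of order $1/r$) monotonically for $r\ge4$. Once $\beta_r\neq\alpha_r$, the difference is of order $n^{1/2}A_r(n)\to\infty$, which gives (iii). For (iv), as $r\to\infty$ the contributions from parts $\equiv 2r+1 \pmod{4r+2}$ and the $A_r$ multiplicity constraints beyond $2r$ vanish, because they involve $q^{(2r+1)k}$ and contribute $O(1/r)$ to the coefficients. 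Both $\alpha_r$ and $\beta_r$ therefore tend to the same limit, namely the $3$-hook constant for partitions into even parts with arbitrary multiplicities, and so $\beta_r/\alpha_r\to1$.
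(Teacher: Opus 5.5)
\textbf{The gap: the two leading constants are never found, and the sign argument depends entirely on them.} Your architecture matches the paper's. Both generating functions are $\xi^{(r)}(q)=\frac{(-q^{2r+1};q^{2r+1})_\infty}{(q^2;q^2)_\infty}$ times a function with a simple pole in $z$. Proposition~\ref{p1_prop1} (with $B=1$, $\beta=\tfrac12$) then gives $a_3^{(r)}(n),b_3^{(r)}(n)\asymp n^{1/2}A_r(n)$, and everything reduces to comparing the two $1/z$ coefficients. But the proposal stops where (i) and (ii) actually begin, and the route you sketch for finding the coefficients is faulty in two places.

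\emph{The classification of $3$-hooks is wrong.} A box with $a_\lambda(v)=2$, $l_\lambda(v)=0$ lies in the last copy of a part $k$ whose next smaller part is $\le k-3$, whatever the multiplicity of $k$. Requiring multiplicity $1$ deletes this whole class for $A_r$, where multiplicity $1$ never occurs. The correct split is by $(a_\lambda(v),l_\lambda(v))\in\{(2,0),(1,1),(0,2)\}$. The $(1,1)$ case itself splits into ``penultimate copy of $k$, no part $k-1$'' and ``last copy of $k$ followed by exactly one $k-1$''; the second is impossible for $A_r$.

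\emph{No Lambert series $\sum q^{mk}/(1-q^k)$ occur.} Dividing each marked local factor by the unmarked one cancels the $1/(1-q^{k})$. What remains is $\sum_{k\in S}q^{jk}$ times factors $1-q^i$ for $B_r$, and sums with denominators $1+q^{(2r+1)k}$ for $A_r$. If a $\log(1/z)/z$ term really appeared, both your claimed $c/z$ main term and hypothesis (H1) would fail.

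\textbf{The sign argument misjudges the scale.} By Theorem~\ref{p1_thm10}, the constants are $\frac{16r+13}{12(2r+1)}=\frac23+\frac{5}{12(2r+1)}$ for $b_3^{(r)}$ and $\frac{\mathcal{D}(r)}{2(2r+1)^3}=\frac23+\frac{H_2(r)}{2(2r+1)^3}$ for $a_3^{(r)}$. Here $H_2(r)=\sum_{k=1}^3C_k(r)G_2\bigl(\frac{k}{2r+1}\bigr)$, $G_2(u)=\psi(\tfrac12+u)-\psi(1+u)+2\ln 2$, and $H_2(r)\approx\frac{\pi^2}{3}(2r+1)$. The difference is therefore $\frac{5(2r+1)^2-6H_2(r)}{12(2r+1)^3}$. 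It is a competition between a term $\frac{5}{12(2r+1)}$ and a term of size about $\frac{\pi^2}{6(2r+1)^2}$; the second wins at $r=1$ and loses for $r\ge 2$.

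So the ``order $1/r$'' terms you plan to bound as a tail are the main terms of the difference. What you need is the exact digamma expression plus explicit two-sided control of $G_2$. The paper uses $G_2(0)=0$, $G_2'(0)=\pi^2/3$, $G_2''(0)=-12\zeta(3)$ and the monotonicity of $G_2'$ and $G_2''$, giving $\frac{\pi^2}{3}u-6\zeta(3)u^2\le G_2(u)\le\frac{\pi^2}{3}u$. It combines these with identities for $\sum C_k$, $\sum C_k/k$ and $\sum kC_k$. In applying such bounds you must match the upper or lower bound to the sign of each $C_k(r)$; note $C_2(r)<0$ for $r\ge4$. The paper finishes with the closed form $M_3(1)=\frac{87}{2(99-10\pi\sqrt3)}<1$ and a numerical check at $r=2$.

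\textbf{Part (iv) is sound in spirit.} Each $1/z$ coefficient is $\int_0^\infty f_r$ with $f_r\to f_\infty$ under domination, and both limits equal $\frac23$. On the $A_r$ side the limiting family is ``all multiplicities even''. To identify it with your ``even parts'' limit you need conjugation, or you can compute the integral directly. Alternatively, use $\psi(x)=-\gamma-\frac1x+O(x)$ as the paper does.
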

We next compare the total numbers of hooks of lengths $1$, $2$, and $3$ within the $A_r$-type partition family.
\begin{theorem}\label{p1_thm1.6} 
Let $r\geq 1$ be an integer.
\begin{enumerate}[label=(\roman*)]
    \item For sufficiently large $n$, we have $a_2^{(r)}(n)>a_1^{(r)}(n)$. Furthermore, we have $a_2^{(r)}(n)-a_1^{(r)}(n)\to\infty$ as $n\to\infty$.
    \item For sufficiently large $n$, we have $a_2^{(r)}(n)>a_3^{(r)}(n)$. Furthermore, we have $a_2^{(r)}(n)-a_3^{(r)}(n)\to\infty$ as $n\to\infty$.
    \item For sufficiently large $n$, we have $a_3^{(r)}(n)>a_1^{(r)}(n)$. Furthermore, we have $a_3^{(r)}(n)-a_1^{(r)}(n)\to\infty$ as $n\to\infty$.
\end{enumerate}
\end{theorem}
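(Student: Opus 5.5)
The plan is to obtain asymptotic formulas for $a_1^{(r)}(n)$, $a_2^{(r)}(n)$ and $a_3^{(r)}(n)$ of the common shape
\[
a_\ell^{(r)}(n)\sim c_\ell^{(r)}\, n^{\kappa_\ell}\, e^{\pi\sqrt{2n/3}\,\cdot\,\theta_r},
\]
with the same exponential growth for every $\ell$ (it is dictated by the generating function of $A_r(n)$, which equals the generating function of $B_r(n)$), and then compare first the powers of $n$ and, when those agree, the leading constants. Since every inequality in the statement follows from a strict difference in the leading term, each of (i)--(iii) and the divergence of the differences will follow at once: if $a_\ell^{(r)}(n)/a_{\ell'}^{(r)}(n)\to L>1$ (or $\to\infty$), then $a_\ell^{(r)}(n)-a_{\ell'}^{(r)}(n)\ge \tfrac{L-1}{2}\,a_{\ell'}^{(r)}(n)\to\infty$.

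The first step is to write down the generating functions. In the conjugate description, a partition counted by $A_r$ is a product over part sizes $k$ of a multiplicity factor $\sum_{m\in S_r} q^{km}$, where $S_r=\{\text{even } m\ge 0\}\cup\{\text{odd } m\ge 2r+1\}$. The $1$-, $2$- and $3$-hooks are determined by local data of the profile, namely the corners and the shape near consecutive distinct part sizes. For instance, the $1$-hooks are exactly the distinct part sizes, so $\sum_n a_1^{(r)}(n)q^n$ is $\prod_k F_k(q)$ times $\sum_k (1-q^{k})$-type corrections marking a nonzero multiplicity. For $\ell=2,3$ one adds the contributions of gaps of size $1$ or $2$ between consecutive part sizes and of multiplicities at least $2$ or $3$. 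Each generating function then has the form $G_r(q)\cdot H_\ell^{(r)}(q)$, where $G_r(q)=\frac{(-q^{2r+1};q^{4r+2})_\infty}{(q^2;q^2)_\infty}$ and $H_\ell^{(r)}(q)$ is a sum over $k$ of rational functions of $q^k$ and $q^{k+1}$.

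The second step is to apply Wright's circle method. Near $q=1$ the factor $G_r$ has the standard modular behaviour. Each $H_\ell^{(r)}(e^{-z})$ has an expansion $\alpha_\ell^{(r)}/z+\beta_\ell^{(r)}\log(1/z)/z+\dots$, which one obtains by Euler--Maclaurin applied to $\sum_k f(kz)$, with $f$ depending on $\ell$. The key quantity is the leading coefficient, $\int_0^\infty f_\ell(x)\,dx$, in the spirit of earlier hook-bias papers. The expected picture is that $a_1^{(r)}$ gets a factor that is only as large as the number of distinct parts, whereas $a_2^{(r)}$ also receives contributions from repeated parts, which are abundant because multiplicities are forced to be even or large. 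Consequently $a_2^{(r)}$ and $a_3^{(r)}$ should share the main order, with constants satisfying $\int f_2>\int f_3>\int f_1$, while the minor arcs are controlled exactly as in the analysis of $A_r(n)$.

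The main obstacle is the exact evaluation and comparison of the integrals $\int_0^\infty f_\ell$ for general $r$. These involve sums over the multiplicity set $S_r$, such as $\sum_{m\in S_r} m\,e^{-mx}$ weighted against the generating factor, so their difference is a function of $r$ that must be shown positive for \emph{every} $r\ge1$, not merely asymptotically in $r$. I would first compute closed forms in terms of $\log$ and elementary functions of $x$ and integrate them. If the resulting comparison involves expressions like $\log 2$ against rational functions of $r$, I would prove the needed inequality by monotonicity in $r$, checking $r=1,2$ numerically and bounding the tail for larger $r$. If instead two of the $a_\ell$ have the same leading constant, I would carry the Euler--Maclaurin expansion one further term and compare the secondary constants.
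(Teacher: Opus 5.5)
There is a genuine gap. Your reduction matches the paper's: each generating function is $\xi^{(r)}(q)L_{a,\ell}^{(r)}(q)$ with one common modular factor. So only the leading coefficients $\alpha_\ell$ in $L_{a,\ell}^{(r)}(q)=\alpha_\ell/z+O(1)$ matter, and your $\frac{L-1}{2}$ argument for the differences is exactly how the paper concludes.

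But the proposal stops where the proof's content begins. You never derive the generating functions for $\ell=2,3$, including the $A_r$-specific point that for arm $1$, leg $1$ the cell below the arm cell must be filled, since no part occurs exactly once. You never compute any $\alpha_\ell$, and you never prove $\alpha_2>\alpha_3>\alpha_1$; you only call it the \emph{expected picture}. For a statement about every $r\ge1$, that comparison is the whole theorem.

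Your plan for that step is also misdirected. There is no $\log(1/z)/z$ term: marking a part $k$ contributes $\frac{q^{2k}+q^{(2r+1)k}}{1+q^{(2r+1)k}}$, which is bounded as $q\to1$. Hence all three $a_\ell^{(r)}(n)$ carry the same power $n^{-1/2}$, and no powers of $n$ need comparing. The constants are also not elementary in $r$: with $u=e^{-(2r+1)x}$ they become integrals $\int_0^1u^{s-1}(1+u)^{-j}\,du$ with $s\equiv\frac{2k}{2r+1}\pmod 1$, i.e.\ combinations of the digamma differences $D_k^{(r)}$. Checking $r=1,2$ numerically plus an unspecified tail bound is not an argument for such quantities. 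The paper proves $M_4(r),M_5(r),M_6(r)>1$ for every $r$ using three tools: the series for $\psi(y)-\psi(x)$, the mean value theorem with $\psi'$ decreasing, and alternating-series bounds from $D_k^{(r)}=2\sum_{m\ge0}(-1)^m/(\frac{2k}{2r+1}+m)$. Separately, your $G_r$ is misprinted; the factor is $(-q^{2r+1};q^{2r+1})_\infty/(q^2;q^2)_\infty$.

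Your integral idea can in fact be finished, and partly more cheaply than in the paper. By Euler--Maclaurin, the shifts $q^{m+2},q^{m+3}$ in the summands of Theorems~\ref{p1_thm11}--\ref{p1_thm13} only affect the $O(1)$ term. So each $\alpha_\ell$ is the integral of the unshifted profile. Writing $a=e^{-2x}$, $b=e^{-(2r-1)x}$ and $c=ab=e^{-(2r+1)x}$, one gets
\[
\alpha_2-\alpha_1=\int_0^\infty\frac{a(1-a)(1+b)}{(1+c)^2}\,dx,\qquad
\alpha_3-\alpha_1=\int_0^\infty\frac{(1-a)^2(a+2c+c^2)}{(1+c)^3}\,dx,
\]
\[
\alpha_2-\alpha_3=\int_0^\infty\frac{a(1-a)(a-b+3c+c^2)}{(1+c)^3}\,dx.
\]
The first two integrands are positive, which gives (i) and (iii) immediately. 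The third is positive for $r\ge2$, since then $b<a$. For $r=1$, however, $b=e^{-x}>a$ and the third integrand is negative for large $x$. So (ii) at $r=1$ needs an actual evaluation of the constants ($\alpha_2\approx0.868$, $\alpha_3\approx0.826$), and that is precisely the kind of step your proposal leaves open.
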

We obtain analogous comparisons within the $B_r$-type partition family. In particular, hooks of length $2$ eventually outnumber both hooks of lengths $1$ and $3$.
\begin{theorem}\label{p1_thm1.7}
Let $r\geq 1$ be an integer.
\begin{enumerate}[label=(\roman*)]
    \item For sufficiently large $n$, we have $b_2^{(r)}(n)>b_{\ell}^{(r)}(n)$, where $\ell=1,3$. Furthermore, for $\ell=1,3$, we have $b_2^{(r)}(n)-b_{\ell}^{(r)}(n) \to\infty$ as $n\to\infty$.
    \item For sufficiently large $n$, we have $b_3^{(r)}(n)>b_1^{(r)}(n)$. Furthermore, $b_3^{(r)}(n)-b_1^{(r)}(n)\to\infty$ as $n\to\infty$.
\end{enumerate}
\end{theorem}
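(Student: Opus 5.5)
The plan is to prove Theorem~\ref{p1_thm1.7} by turning each hook count into a statement about part multiplicities and gaps, writing generating functions $\sum_{n}b_\ell^{(r)}(n)q^n=B_r(q)H_\ell^{(r)}(q)$ with $B_r(q)=\prod_{k\in S_r}(1-q^k)^{-1}$, where $S_r=\{k: k \text{ even or } k\equiv 2r+1 \pmod{4r+2}\}$, and then running Wright's circle method. That method should give $b_\ell^{(r)}(n)\sim \kappa_\ell^{(r)}\,\lambda_r\sqrt{n}\,B_r(n)$ for an explicit $\lambda_r>0$ depending only on $B_r(q)$, with $\kappa_\ell^{(r)}$ the coefficient in $H_\ell^{(r)}(e^{-z})\sim \kappa_\ell^{(r)}/z$ as $z\to0$ in a cone. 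Both parts of the theorem then reduce to the strict inequalities $\kappa_2^{(r)}>\kappa_3^{(r)}>\kappa_1^{(r)}$. Since $\sqrt{n}B_r(n)\to\infty$, the leading terms then also force each difference to tend to $\infty$.

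\textbf{Step 1 (combinatorics).} For a partition with distinct part sizes $k$ of multiplicity $m_k$, a box has hook length $\ell$ exactly when arm $+$ leg $=\ell-1$. All boxes of that kind sit in the last few rows of a block of equal parts, near column $k$.
\begin{itemize}
\item One-hooks are the corners, so $b_1$ counts distinct parts. This gives $H_1=\sum_{k\in S_r}q^k$, since the marginal generating factor for ``$m_k\ge1$'' is $q^k$ times the full product.
\item Two-hooks at the block of size $k$ are of two kinds. Arm $0$, leg $1$ needs $m_k\ge 2$. Arm $1$, leg $0$ needs $k\ge2$ and the next smaller part to be at most $k-2$. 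Hence $H_2=\sum_{k\in S_r}q^{2k}+\sum_{k\in S_r}q^k\bigl(1-[\text{correction when } k-1\in S_r]\bigr)$.
\item In $S_r$ the only gaps of size $1$ occur at odd $k\equiv 2r+1 \pmod{4r+2}$ with $k-1$ even. So the correction is $q^k\cdot q^{k-1}$, summed over that residue class.
\item Three-hooks are handled the same way, via (arm, leg) $\in\{(2,0),(1,1),(0,2)\}$. These require, respectively: next part $\le k-3$; $m_k\ge2$ with next part $\le k-2$, or $m_k=1$ with $k-1$ present with multiplicity $1$ (only possible at the odd residue class); and $m_k\ge3$.
\item Each condition factors as a product of terms $q^{jk}$ or $(1-q^{k'})$ over finitely many nearby $k'\in S_r$. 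This yields a finite sum of series of the shape $\sum_{k\equiv a \ (\mathrm{mod}\ M)}q^{ck}$.
\end{itemize}

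\textbf{Step 2 (constants).} Each series $\sum_{k\equiv a\ (M)}q^{ck}$ behaves like $1/(cMz)$. The density of $S_r$ is $\delta=\tfrac12+\tfrac1{4r+2}=\tfrac{r+1}{2r+1}$, and the odd class has density $\epsilon=\tfrac1{4r+2}$. This gives
\[
\kappa_1=\delta,\qquad \kappa_2=\tfrac{3}{2}\delta-\tfrac{\epsilon}{2}.
\]
For $\kappa_3$ the gap-$\ge3$ condition fails at even $k$ whose neighbour $k-2$ is present. This subtracts roughly $\tfrac14$ of the even density from the $(2,0)$ term, and the $(1,1)$ and $(0,2)$ terms add about $\tfrac{\delta}{2}$ and $\tfrac{\delta}{3}$ with lower corrections. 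I expect $\kappa_1<\kappa_3<\kappa_2$, with strict gaps uniform in $r\ge1$; to be safe I would check $r=1$ separately.

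\textbf{Step 3 (circle method).} Let $q=e^{-z}$. On the major arc, $\log B_r(e^{-z})=\frac{\pi^2\delta}{6z}+O(\log(1/z))$ by Mellin transform or Euler--Maclaurin. On the minor arc, the standard bound that $|B_r(q)|$ is exponentially smaller than $B_r(|q|)$ applies, and $H_\ell$ is of polynomial size there. Wright's theorem then gives the asymptotic above, with $B_r$ and $B_rH_\ell$ sharing the same saddle point $z\asymp n^{-1/2}$.

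\textbf{Main obstacle.} The hardest step is the exact computation of $\kappa_3^{(r)}$, in particular the $(1,1)$ case and the odd-class corrections. I would organise this by classifying each block by the residue of $k$ modulo $4r+2$ and tabulating which of $k-1,k-2,k-3$ lie in $S_r$, before summing the corresponding $q$-series.
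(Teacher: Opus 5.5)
Your strategy is the same as the paper's. You factor out the $B_r$-product, reduce to an explicit rational function $L_{b,\ell}^{(r)}(q)$ (as in Theorems~\ref{p1_thm14}--\ref{p1_thm16}), apply Proposition~\ref{p1_prop1}, and compare the coefficients of $1/z$. Step~3 is fine.

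\textbf{The gap.} The decisive inequality $\kappa_1^{(r)}<\kappa_3^{(r)}<\kappa_2^{(r)}$ is never established; you only ``expect'' it. Moreover, the bookkeeping you would use to compute it is wrong. $S_r$ contains consecutive integers $k-1,k$ not only for $k\equiv 2r+1$ but also for $k\equiv 2r+2 \pmod{4r+2}$. For example $2r+1,2r+2\in S_r$; for $r=1$ this gives the pairs $(3,4),(9,10),\dots$.
\begin{enumerate}[label=(\alph*)]
\item The correction in $H_2$ is $\sum q^{2k-1}$ over two residue classes. Hence $\kappa_2^{(r)}=\tfrac32\delta-\epsilon=\frac{3r+2}{2(2r+1)}$, not $\tfrac32\delta-\tfrac{\epsilon}{2}$.
\item In the three-hook count you miss the triples $k-2,k-1,k\in S_r$ at $k\equiv 2r+2$. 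These contribute both $-q^{2k-1}$ and $+q^{3k-3}$ to the $(2,0)$ term.
\item Your remark ``only possible at the odd residue class'' in the $(1,1)$ case fails for the same reason.
\item In the second $(1,1)$ configuration the correct condition is $m_k\ge 1$, $m_{k-1}=1$, not $m_k=1$, since rows above the last $k$-row do not affect arm or leg.
\end{enumerate}

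These errors are not cosmetic, because the margin is small. The correct values come from the paper's $L_{b,\ell}^{(r)}$ by evaluating numerators at $q=1$ (Lemmas~\ref{p1_lem25}--\ref{p1_lem27}):
\begin{itemize}
\item $\kappa_1=\frac{12r+12}{12(2r+1)}$,
\item $\kappa_3=\frac{16r+13}{12(2r+1)}$,
\item $\kappa_2=\frac{18r+12}{12(2r+1)}$.
\end{itemize}
So $\kappa_2-\kappa_3=\frac{2r-1}{12(2r+1)}$, which is only $\frac1{36}$ at $r=1$.

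Now consider the main-term estimate you sketch for $\kappa_3$. It combines $\delta-\tfrac14$ from $(2,0)$ with $\tfrac{\delta}{2}+\tfrac{\delta}{3}$ from $(1,1)$ and $(0,2)$, giving $\tfrac{11}{6}\delta-\tfrac14=\frac{16r+19}{12(2r+1)}$. This exceeds $\kappa_2$ for every $r\le 3$. So for $r\le 3$ the sign of $\kappa_2-\kappa_3$ is decided entirely by your ``lower corrections'', which total exactly $-\epsilon$ and come precisely from the residue-class analysis above.

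To close the gap, carry out the case analysis exactly with both adjacent classes. That yields $L_{b,3}^{(r)}$ as in Theorem~\ref{p1_thm16}, and then $\frac{\kappa_2}{\kappa_1}=\frac{3r+2}{2(r+1)}$, $\frac{\kappa_2}{\kappa_3}=\frac{18r+12}{16r+13}$ and $\frac{\kappa_3}{\kappa_1}=\frac{16r+13}{12(r+1)}$ are all $>1$. The divergence of the differences then follows as you describe.
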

Thus far, we have investigated hook length biases for fixed $r$ as $\ell$ varies. It is also natural to study these biases for fixed $\ell$ as $r$ varies. Let $\mathcal{A}_r(n)$ denote the set of partitions enumerated by $A_r(n)$. Since $\mathcal{A}_{r+1}(n) \subseteq \mathcal{A}_r(n)$ for all $n \geq 1$, we have $a_{\ell}^{(r)}(n) \ge a_{\ell}^{(r+1)}(n)$ for all $n,r\geq 1$. 
No analogous containment holds for the $B_r$-type partition families. This motivates us to investigate how their hook numbers vary with $r$. The following theorem establishes a strict asymptotic bias for each $\ell\in\{1,2,3\}$.

\begin{theorem}\label{p1_newthm} 
Let $r \geq 1$ be an integer. For each $\ell\in\{1,2,3\}$, there exists $N_\ell^{(r)}\in\mathbb{N}$ such that $b_\ell^{(r)}(n) > b_\ell^{(r+1)}(n)$
for all $n\geq N_\ell^{(r)}$. In particular, for each $\ell\in\{1,2,3\}$, as $n\to \infty$,
$$\frac{b_\ell^{(r)}(n)}{b_\ell^{(r+1)}(n)}\to \infty.$$
 Furthermore, $b_\ell^{(r)}(n) - b_\ell^{(r+1)}(n) \to \infty$ as $n \to \infty$.  
\end{theorem}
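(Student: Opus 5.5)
The plan is to derive generating functions for $b_\ell^{(r)}(n)$, $\ell\in\{1,2,3\}$, and compare their leading exponential growth as $r$ varies. The ratio statement is the strongest claim: it forces $b_\ell^{(r)}(n)>b_\ell^{(r+1)}(n)$ for large $n$ and, since both quantities grow, also $b_\ell^{(r)}(n)-b_\ell^{(r+1)}(n)\to\infty$. So everything reduces to showing that $b_\ell^{(r)}(n)$ grows exponentially faster than $b_\ell^{(r+1)}(n)$.

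\textbf{Step 1: the product factor.} The partitions counted by $B_r(n)$ have generating function
\[
F_r(q)=\frac{1}{(q^2;q^2)_\infty\,(q^{2r+1};q^{4r+2})_\infty}.
\]
As $q=e^{-z}\to 1$ we have $\log\frac{1}{(q^2;q^2)_\infty}\sim \frac{\pi^2}{12z}$. The second factor contributes $\log\frac{1}{(q^{2r+1};q^{4r+2})_\infty}\sim\frac{\pi^2}{12(4r+2)z}\cdot\frac{1}{1}\cdot\ldots$; more precisely, the parts $\equiv a \pmod m$ contribute $\frac{\pi^2}{6mz}$, giving $\frac{\pi^2}{6(4r+2)z}$. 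The total exponent is therefore $\frac{\pi^2}{6z}\big(\frac12+\frac{1}{4r+2}\big)=\frac{\pi^2}{6z}\cdot\frac{r+1}{2r+1}$. Hence $B_r(n)\approx \exp\!\big(\pi\sqrt{\tfrac{2(r+1)n}{3(2r+1)}}\big)$. The constant $\frac{r+1}{2r+1}$ is strictly decreasing in $r$.

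\textbf{Step 2: the hook factors.} Next I would derive the hook generating functions. The number of $\ell$-hooks in a partition is determined by its multiplicity data. For $\ell=1$ it is the number of distinct part sizes. For $\ell=2,3$ it is expressed through part sizes with multiplicity $\ge2$ (resp.\ $\ge3$) and through adjacent distinct part sizes differing by $1$ (resp.\ by $1,2$). Writing $\sum_n b_\ell^{(r)}(n)q^n=F_r(q)H_{\ell,r}(q)$, the factor $H_{\ell,r}$ is a finite combination of sums such as $\sum_{k\in S}\frac{q^{k}}{1}$, $\sum q^{2k}$, and $\sum q^{k+(k+1)}(\ldots)$ over the allowed part set $S$. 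Near $q=1$ each of these behaves like $c/z$ plus lower-order terms. So $H_{\ell,r}(e^{-z})\sim C_{\ell,r} z^{-1}$ with $C_{\ell,r}>0$, and away from $q=1$ it is bounded by a power of $|z|^{-1}$.

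\textbf{Step 3: circle method and conclusion.} Applying Wright's circle method as for the earlier theorems gives
\[
b_\ell^{(r)}(n)\sim K_{\ell,r}\,n^{\alpha}\exp\!\Big(\pi\sqrt{\tfrac{2(r+1)n}{3(2r+1)}}\Big)
\]
for some power $\alpha$ and constant $K_{\ell,r}>0$. The minor arcs are controlled by the standard bound on $|F_r(q)|$ off the dominant cusp; the factor $H_{\ell,r}$ is only polynomial there. Because $\frac{r+1}{2r+1}>\frac{r+2}{2r+3}$, the ratio $b_\ell^{(r)}(n)/b_\ell^{(r+1)}(n)$ grows like $\exp(c\sqrt n)$ with $c>0$, up to polynomial factors. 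This gives all three claims.

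The main obstacle is Step 2. One must verify that the hook generating function introduces no singularity stronger than polynomial in $1/z$ at $q=1$ and has nonnegative leading coefficient. The $\ell=2,3$ cases are where this needs care, since they involve consecutive-part conditions intersected with the congruence classes $2\mathbb{Z}$ and $2r+1+(4r+2)\mathbb{Z}$. I would handle these first by writing the hook count as an explicit sum over local configurations, and only then bound each resulting series. If the earlier theorems already produced these asymptotics, I would simply invoke them and compare exponents.
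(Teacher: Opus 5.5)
Your proposal is correct and follows the paper's route. The paper invokes the asymptotics of Theorems~\ref{p1_thm8}--\ref{p1_thm10}, in which every $b_\ell^{(r)}(n)$ has the same factor $\frac{1}{\sqrt{2n\pi}}$ and exponent $2\pi\sqrt{\frac{(r+1)n}{6(2r+1)}}$, uses $\frac{r+1}{2r+1}>\frac{r+2}{2r+3}$ to get the ratio tending to $\infty$, and deduces the eventual inequality and the divergence of the difference from this. The positivity of the leading constants, which you flag as the main obstacle, is already explicit in Lemmas~\ref{p1_lem25}--\ref{p1_lem27} (the constants are $\frac{r+1}{2r+1}$, $\frac{3r+2}{2(2r+1)}$ and $\frac{16r+13}{12(2r+1)}$), so Step 2 need not be redone.
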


%%%%%%%%%%%%%%%%%%%%%%%%%%%%
\subsection{Asymptotics for $a_{\ell}^{(r)}(n)$ and $b_{\ell}^{(r)}(n)$}\label{Subsection-Asymp}

We prove Theorems~\ref{p1_thm1.3}--\ref{p1_newthm} by deriving asymptotic formulas for $a_{\ell}^{(r)}(n)$ and $b_{\ell}^{(r)}(n)$ for $\ell\in\{1,2,3\}$. Our approach relies on Wright's circle method. Throughout, we write $f(n)\sim g(n)$ as $n\to\infty$ if
$$\lim_{n\to\infty}\frac{f(n)}{g(n)}=1.$$
We also recall the \emph{digamma} function $\psi$, defined by
$$\psi(z) := \frac{\Gamma'(z)}{\Gamma(z)},$$ where $\Gamma$ is defined for $\text{Re}(z)>0$ by
$$\Gamma(z):=\int_{0}^{\infty}e^{-u}u^{z-1}du.$$ 
To express our asymptotic formulas, we introduce the following quantities involving the digamma function. For each positive integer $r$, let 
\begin{align*}&D_0^{(r)} := \psi\left(\frac{2}{2r+1}\right) - \psi\left(\frac{1}{2r+1}\right), \quad D_1^{(r)} := \psi\left(\frac{1}{2}+ \frac{1}{2r+1}\right) - \psi\left(\frac{1}{2r+1}\right), \\
&D_2^{(r)} := \psi\left(\frac{1}{2}+ \frac{2}{2r+1}\right) - \psi\left(\frac{2}{2r+1}\right), \quad \text{and} \quad D_3^{(r)} := \psi\left(\frac{1}{2}+ \frac{3}{2r+1}\right) - \psi\left(\frac{3}{2r+1}\right). 
\end{align*}
\par We begin by deriving asymptotic formulas for the total numbers of hooks of length $1$.
\begin{theorem}\label{p1_thm8}
    Let $r\geq 1$ be an integer. As $n \to \infty$, we have
    \begin{align*}
    a_{1}^{(r)}(n) \sim \frac{1}{\sqrt{2n\pi}}\ e^{\sqrt{\frac{4(r+1)\pi^2n}{6(2r+1)}}} \left( \frac{D_0^{(r)}}{2(2r+1)\sqrt{\pi}}   \right)
\end{align*}
and
\begin{align*}
b_{1}^{(r)}(n) \sim  \frac{1}{\sqrt{2n\pi}}\ e^{\sqrt{\frac{4(r+1)\pi^2n}{6(2r+1)}}} \left( \frac{r+1}{2\sqrt{\pi}(2r+1)}    \right).
\end{align*}
\end{theorem}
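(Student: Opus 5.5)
The plan is to find explicit generating functions for $a_1^{(r)}(n)$ and $b_1^{(r)}(n)$, each a product (the generating function of $A_r(n)=B_r(n)$) times a Lambert-type series. Their behaviour as $q\to1$ then follows from modular transformation of the product and Euler--Maclaurin summation of the series. Wright's circle method (in the form of Ngo--Rhoades / Bringmann et al.) converts this into the stated asymptotics.

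\emph{Generating functions.} A box has hook length $1$ exactly when it is a removable corner. So the number of $1$-hooks of a partition equals the number of distinct part sizes. In the conjugate description of $\mathcal A_r(n)$, a part $k$ may appear with any even multiplicity, or with an odd multiplicity $\ge 2r+1$. Its local factor is therefore
\[
F_k(q)=\frac{1+q^{(2r+1)k}}{1-q^{2k}} .
\]
Marking each occurring part size by $z$ replaces $F_k$ by $1+z(F_k-1)$. Differentiating at $z=1$ gives
\[
\sum_{n\ge0}a_1^{(r)}(n)q^n=G_r(q)\,S_A(q),\qquad S_A(q)=\sum_{k\ge1}\Bigl(1-\frac1{F_k}\Bigr)=\sum_{k\ge1}\frac{q^{2k}+q^{(2r+1)k}}{1+q^{(2r+1)k}},
\]
where $G_r(q)=\prod_k F_k(q)=\sum A_r(n)q^n$. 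In the same way, with $G_r(q)=\prod_{k}(1-q^{2k})^{-1}\prod_{k\equiv 2r+1\ (4r+2)}(1-q^k)^{-1}$, each allowed part $k$ contributes $q^k$ to the marked sum. This gives
\[
\sum_{n\ge0}b_1^{(r)}(n)q^n=G_r(q)\,S_B(q),\qquad S_B(q)=\frac{q^2}{1-q^2}+\frac{q^{2r+1}}{1-q^{4r+2}}.
\]

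\emph{Behaviour near $q=1$.} Put $q=e^{-z}$ with $\mathrm{Re}(z)>0$ and $|\mathrm{Im}(z)|\le M\,\mathrm{Re}(z)$. Writing $G_r$ as an eta-quotient and using the modular transformation of $\eta$ gives
\[
G_r(e^{-z})\sim C_r\, z^{\beta_r} e^{\lambda_r/z},\qquad \lambda_r=\frac{\pi^2(r+1)}{6(2r+1)},
\]
with explicit constants $C_r,\beta_r$. This matches the exponent $2\sqrt{\lambda_r n}$ in the statement. Directly, $S_B(e^{-z})=\frac{r+1}{(2r+1)z}+O(1)$. For $S_A$, apply Euler--Maclaurin to $f(x)=\frac{e^{-2x}+e^{-(2r+1)x}}{1+e^{-(2r+1)x}}$, which is smooth at $0$ and decays exponentially. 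This gives $S_A(e^{-z})=\frac1z\int_0^\infty f(x)\,dx+O(1)$. Using $\int_0^\infty \frac{e^{-ax}}{1+e^{-bx}}dx=\frac1{2b}\bigl(\psi(\tfrac{a+b}{2b})-\psi(\tfrac a{2b})\bigr)$, the integral equals
\[
\frac{D_1^{(r)}}{2(2r+1)}+\frac{\log 2}{2r+1}.
\]
By the duplication formula $\psi(2x)=\tfrac12\psi(x)+\tfrac12\psi(x+\tfrac12)+\log2$ at $x=\frac1{2r+1}$, this equals $D_0^{(r)}/(2r+1)$. So both series behave like $\kappa/z$, with $\kappa_A=D_0^{(r)}/(2r+1)$ and $\kappa_B=(r+1)/(2r+1)$. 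The ratio $\kappa_A/\kappa_B=D_0^{(r)}/(r+1)$ is exactly the ratio of the two constants in the theorem, which is a useful consistency check.

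\emph{Circle method and main obstacle.} On the major arc we then have $G_rS\sim C_r\kappa\, z^{\beta_r-1}e^{\lambda_r/z}$. Wright's lemma gives
\[
[q^n]\sim \frac{C_r\kappa\,\lambda_r^{(\beta_r-1)/2+1/4}}{2\sqrt\pi\, n^{(\beta_r-1)/2+3/4}}e^{2\sqrt{\lambda_r n}},
\]
and substituting the constants should reproduce the stated prefactor. The main obstacle is the minor arc, $|\mathrm{Im} z|>M\,\mathrm{Re} z$. There one needs $|G_r(q)S(q)|\ll e^{(\lambda_r-\varepsilon)/\mathrm{Re}(z)}$. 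For $G_r$, the plan is to bound $\log|G_r(q)|-\log G_r(|q|)$ by a negative multiple of $1/\mathrm{Re}(z)$, using the standard estimate for $\sum_k \mathrm{Re}(q^{mk})/k$ applied to each eta-type factor. The series $S$ contributes only polynomial growth $O(1/\mathrm{Re}(z))$, since $|1+q^{(2r+1)k}|$ stays bounded away from $0$ on the relevant range (or after bounding the few small $k$ separately). This is harmless against the exponential saving.
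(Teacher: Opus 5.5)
Your proposal is correct and is essentially the paper's proof. Your $S_A$ and $S_B$ are exactly $L_{a,1}^{(r)}$ and $L_{b,1}^{(r)}$, since $q^{2k}(1+q^{(2r-1)k})=q^{2k}+q^{(2r+1)k}$ and $q^{2}\sum_{k=0}^{2r}q^{2k}/(1-q^{4r+2})=q^{2}/(1-q^{2})$; the product gets the same eta-transformation with $(\beta,K,A)=\bigl(\tfrac12,\tfrac{1}{\sqrt{2\pi}},\tfrac{(r+1)\pi^2}{6(2r+1)}\bigr)$; and Euler--Maclaurin plus the Ngo--Rhoades form of Wright's method finishes, with your digamma-integral and duplication computation supplying the constant $D_0^{(r)}/(2r+1)$ that Lemma~\ref{p1_lem5} states without derivation.

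Two minor-arc remarks, neither fatal. First, $|1+q^{(2r+1)k}|$ is not bounded away from $0$: for odd $k$ and $(2r+1)y=\pi$ it equals $1-|q|^{(2r+1)k}$. So your route only bounds $S_A$ by $O(x^{-1}\log(1/x))$ (the paper uses the cruder $2/x^{2}$ of Corollary~\ref{p1_cor3}), which is still harmless. Second, $1/(q^2;q^2)_\infty$ gives no saving near $q=-1$. The saving for $G_r$ should therefore come from the positive-coefficient product $\prod(1-q^m)^{-1}$ over $m$ even or $m\equiv 2r+1\pmod{4r+2}$. Keeping only the $k=1$ terms of $\sum_k\mathrm{Re}(q^{mk})/k$ there gives $\log|G_r(q)|-\log G_r(|q|)\le\mathrm{Re}\,S_B(q)-S_B(|q|)$, and near $q=-1$ it is the term $q^{2r+1}/(1-q^{4r+2})$ that makes this $\le -c/x$ for some $c>0$.
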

We next obtain asymptotic formulas for $a_{2}^{(r)}(n)$ and $b_{2}^{(r)}(n)$.
\begin{theorem}\label{p1_thm9}
Let $r\geq 1$ be an integer. As $n \to \infty$, we have
\begin{align*}
a_{2}^{(r)}(n) \sim  \frac{1}{\sqrt{2n\pi}}\ e^{\sqrt{\frac{4(r+1)\pi^2n}{6(2r+1)}}} \left( \frac{ 2(2r+1)(D_0^{(r)}+1) + (2r-3)(D_1^{(r)}-D_2^{(r)})}{4(2r+1)^2\sqrt{\pi}}  \right)
\end{align*}
and
\begin{align*}
b_{2}^{(r)}(n) \sim  \frac{1}{\sqrt{2n\pi}}\ e^{\sqrt{\frac{4(r+1)\pi^2n}{6(2r+1)}}} \left( \frac{3r+2}{4\sqrt{\pi}(2r+1)} \right). 
\end{align*}
\end{theorem}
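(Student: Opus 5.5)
We prove Theorem~\ref{p1_thm9} in two stages: first derive generating functions for $a_2^{(r)}(n)$ and $b_2^{(r)}(n)$, then extract asymptotics with Wright's circle method.

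\emph{Counting $2$-hooks.} A $2$-hook in a box of row $i$ arises in exactly one of two ways. Either the box has arm $1$ and leg $0$: it is the second-to-last box of a row of length $\lambda_i\ge 2$, and the next row has length $\le\lambda_i-2$. Or the box has arm $0$ and leg $1$: it is the last box of row $i$, row $i+1$ has the same length, and row $i+2$ is strictly shorter.

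In multiplicity language, for each distinct part $k$ with multiplicity $m_k$ this gives two contributions. The first is $1$ if $k\ge 2$ and $k-1$ is not a part. The second is $1$ if $m_k\ge 2$, since the last two copies of $k$ contribute one box each in the right position.

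So the total number of $2$-hooks is
\[
\sum_k [m_k\ge 2]+\sum_{k\ge2}[m_k\ge1][m_{k-1}=0].
\]

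We insert a marking variable $z$ for each event, differentiate at $z=1$, and sum over $k$.

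\emph{Generating function for $B_r$.} Parts are even or $\equiv 2r+1 \pmod{4r+2}$. Here $k-1$ is automatically not an allowed part unless $k$ and $k-1$ are both allowed; this happens only for the pairs $(k-1,k)=(j(4r+2),\,j(4r+2)+1)$ and $(j(4r+2)+2r,\,j(4r+2)+2r+1)$, i.e.\ one even and one odd part. The generating function becomes
\[
\sum_n b_2^{(r)}(n)q^n = B(q)\left(\sum_{k\in S}\frac{q^{2k}}{1} + \sum_{k\in S,\,k\ge2} q^k - \text{correction for adjacent allowed pairs}\right),
\]
where $B(q)=\prod_{k\in S}(1-q^k)^{-1}$.

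\emph{Generating function for $A_r$.} We use the conjugate description. For each $k$ the multiplicity generating function is
\[
F_k(q)=\frac{1+q^{(2r+1)k}-q^{2k}\cdot(\cdots)}{1-q^{2k}}
\]
(even multiplicities, or odd multiplicities $\ge 2r+1$), which simplifies to $(1-q^{2k}+q^{(2r+1)k})/(1-q^{2k})$. The events $[m_k\ge2]$ and $[m_k\ge1,\,m_{k-1}=0]$ are then expressed by ratios such as $q^{2k}/(1-q^{2k}+q^{(2r+1)k})$ and $(F_k-1)/F_k\cdot 1/F_{k-1}$.

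\emph{Asymptotics.} Set $q=e^{-z}$ and write each generating function as the product $A_r(q)=B_r(q)$ times a Lambert-type sum $L(q)$. The main term of $\log B_r(e^{-z})$ is $\frac{\pi^2(r+1)}{6(2r+1)z}$, together with a constant and the modular behaviour near other roots of unity. The multiplier $L(e^{-z})$ behaves like $c/z+O(\log$-free$)$ near $z=0$ and is polynomially bounded away from $q=1$. We compute $c$ by converting each sum $\sum_k g(kz)$ into $\frac1z\int_0^\infty g(t)\,dt$ (Euler--Maclaurin). Integrals of the form
\[
\int_0^\infty \frac{e^{-at}-e^{-bt}}{1-e^{-ct}}\,dt
\]
yield the digamma differences $D_0^{(r)},D_1^{(r)},D_2^{(r)}$, while the $B_r$ side gives rational constants. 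Wright's circle method (in the standard form used for these hook papers: if $F(e^{-z})\sim \lambda z^{-\beta}e^{\gamma/z}$ on a major arc and is smaller on minor arcs, then the coefficients are asymptotic to $\lambda\gamma^{\beta/2-1/4}n^{-\beta/2-3/4}e^{2\sqrt{\gamma n}}/(2\sqrt\pi)$) then gives the stated formulas with $\gamma=\pi^2(r+1)/(6(2r+1))$.

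\emph{Main obstacle.} The delicate step is the $A_r$ side: the factor $1/(1-q^{2k}+q^{(2r+1)k})$ is not a geometric series, so the integral $\int_0^\infty$ of the corresponding function must be evaluated exactly. I would first try partial-fraction/series expansion in $e^{-t}$ and identify it via $\psi(x)-\psi(y)=\int_0^\infty (e^{-yt}-e^{-xt})/(1-e^{-t})\,dt$; this is what produces the combination $2(2r+1)(D_0^{(r)}+1)+(2r-3)(D_1^{(r)}-D_2^{(r)})$. I also need uniform minor-arc bounds for these non-product multipliers.
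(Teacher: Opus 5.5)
There is a genuine error on the $A_r$ side, and that is where all the content of the first formula lies. Your $2$-hook count $\sum_k[m_k\ge2]+\sum_{k\ge2}[m_k\ge1][m_{k-1}=0]$ is correct and matches the paper's two cases. The architecture is also the paper's: $\xi^{(r)}(q)$ times a Lambert-type multiplier, Euler--Maclaurin for the $z^{-1}$ coefficient, then Wright's method.

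\textbf{The multiplicity factor.} In the conjugate description the admissible multiplicities are $0,2,4,\dots$ and $2r+1,2r+3,\dots$, so $F_k=\frac{1+q^{(2r+1)k}}{1-q^{2k}}$. Your $\frac{1-q^{2k}+q^{(2r+1)k}}{1-q^{2k}}=1+\frac{q^{(2r+1)k}}{1-q^{2k}}$ drops every even multiplicity, and its product is not $\xi^{(r)}(q)$. With the correct factor, $[m_k\ge2]=[m_k\ge1]$ has weight $\frac{F_k-1}{F_k}=\frac{q^{2k}(1+q^{(2r-1)k})}{1+q^{(2r+1)k}}$, and $\frac{1}{F_{k-1}}=\frac{1-q^{2(k-1)}}{1+q^{(2r+1)(k-1)}}$. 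All denominators are then $1+q^{(2r+1)j}$, so your ``main obstacle'' disappears. The digamma values come from $\int_0^\infty\frac{e^{-au}}{1+e^{-u}}\,du=\frac12\bigl(\psi(\frac{a+1}{2})-\psi(\frac a2)\bigr)$.

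\textbf{The constant is asserted, not derived.} Two facts must be shown.
\begin{enumerate}
\item The first sum has $z^{-1}$-coefficient $D_0^{(r)}/(2r+1)$. This uses $\psi(2x)=\frac12\psi(x)+\frac12\psi(x+\frac12)+\ln2$.
\item The second sum couples $k$ and $k-1$, so it is not of the form $\sum_k g(kz)$. Write the summand as $F(z,kz)$ with $F$ analytic in $z$, and check that the $z$-dependence contributes only $O(1)$. Its $z^{-1}$-coefficient is then
\[
\int_0^\infty\frac{e^{-2t}(1-e^{-2t})(1+e^{-(2r-1)t})}{(1+e^{-(2r+1)t})^2}\,dt=\frac{(2r-3)(D_1^{(r)}-D_2^{(r)})+2(2r+1)}{2(2r+1)^2}.
\]
The squared denominator is handled by integrating $e^{-at}\frac{d}{dt}(1+e^{-(2r+1)t})^{-1}$ by parts.
\end{enumerate}
The paper reaches this constant differently. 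It partial-fractions the sum into products $R_{a,i}(q)\mathcal{L}^{(j)}_{a,i}(q)$, each of size $z^{-2}$, which needs three-term expansions and a cancellation at order $z^{-2}$. Once repaired, your direct route is shorter.

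\textbf{Smaller slips.}
\begin{itemize}
\item On the $B_r$ side, the odd admissible parts are $\equiv2r+1\pmod{4r+2}$. So the adjacent admissible pairs are $(j(4r+2)+2r,\,j(4r+2)+2r+1)$ and $(j(4r+2)+2r+1,\,j(4r+2)+2r+2)$, not $(j(4r+2),\,j(4r+2)+1)$. The density is unchanged, so $\frac{3r+2}{2(2r+1)}$ survives. The exact correction $-\frac{q^{4r+1}+q^{4r+3}}{1-q^{8r+4}}$ would, however, come out wrong.
\item Your transfer formula has the sign of $\beta$ reversed. For $F(e^{-z})\sim\lambda z^{-\beta}e^{\gamma/z}$, the saddle point gives $\frac{\lambda}{2\sqrt\pi}\gamma^{1/4-\beta/2}n^{\beta/2-3/4}e^{2\sqrt{\gamma n}}$. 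Here $\beta=\frac12$, and your version yields $n^{-1}$ instead of the correct $n^{-1/2}$.
\item You still owe the minor-arc inputs. One is a polynomial bound for the multiplier, which is easy from $|1+q^{(2r+1)k}|\ge1-|q|^{(2r+1)k}$. The other is $|\xi^{(r)}(q)|\ll\xi^{(r)}(|q|)e^{-\delta/x}$, which the paper takes from the Jackson--Otgonbayar estimate.
\end{itemize}
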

Finally, we derive the corresponding asymptotic formulas for $a_{3}^{(r)}(n)$ and $b_{3}^{(r)}(n)$.
\begin{theorem}\label{p1_thm10}
Let $r\geq 1$ be an integer. As $n \to \infty$, we have
\begin{align*}
 a_{3}^{(r)}(n) &\sim \frac{1}{\sqrt{2n\pi}}\ e^{\sqrt{\frac{4(r+1)\pi^2n}{6(2r+1)}}}\left( \frac{(3+2\ln 2)(2r+1)^2+(8r^2-10r-1)D_1^{(r)}}{4(2r+1)^3\sqrt{\pi}} \right.\\
&\hspace{1 cm}\left.+ \frac{(-8r^2+28r-8)D_2^{(r)}+(4r^2-14r+10)D_3^{(r)}}{4(2r+1)^3\sqrt{\pi}}   \right)
\end{align*}
and
\begin{align*}
b_3^{(r)}(n) \sim \frac{1}{\sqrt{2n\pi}}\ e^{\sqrt{\frac{4(r+1)\pi^2n}{6(2r+1)}}}\left( \frac{16r+13}{24\sqrt{\pi}(2r+1)}   \right). 
\end{align*}
\end{theorem}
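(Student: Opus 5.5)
The plan is to follow the same two-stage route we use for Theorems~\ref{p1_thm8} and~\ref{p1_thm9}: first derive a generating function for $a_3^{(r)}(n)$ and $b_3^{(r)}(n)$, then extract the leading asymptotic with Wright's circle method.

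\textbf{Generating functions.} For $B_r$-type partitions we count $3$-hooks by their local configuration. A box has hook length $3$ exactly in one of three situations.
\begin{enumerate}[label=(\alph*)]
\item It has arm $2$ and leg $0$: it lies in the last row of a block of equal parts $m\ge 3$, the one at the bottom of that block.
\item It has arm $0$ and leg $2$: it sits at the end of a row with part $m$, where exactly two rows below have length $\ge m$ in the relevant column.
\item It has arm $1$ and leg $1$.
\end{enumerate}
Each case is determined by the multiplicities of a few consecutive part sizes. For each case we write the count as a sum over part sizes $m$ in the allowed set $S_r=\{\text{even}\}\cup\{m\equiv 2r+1 \pmod{4r+2}\}$. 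Each term is the full product $\prod_{k\in S_r}(1-q^k)^{-1}$ times a rational correction in $q^m$, $q^{m+1}$, $q^{m+2}$, and it accounts for whether $m+1,m+2\in S_r$. Differentiating in an auxiliary variable marking the hooks gives
\[
\sum_{n} b_3^{(r)}(n)q^n = F_r(q)\,H_r(q),
\]
where $F_r(q)$ is the generating function of $B_r(n)$. The factor $H_r(q)$ is a finite combination of sums of the form $\sum_{m\in S_r} q^{m}/(1-q^{m})$ and $\sum q^{am}/(1-q^{m})(1-q^{m+j})$, restricted to residue classes modulo $4r+2$.

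For the $A_r$ family we use the conjugate description: every part of odd multiplicity has multiplicity $\ge 2r+1$. Each part size then contributes the factor
\[
\frac{1+q^{(2r+1)m}-q^{(2r+1)m}\cdots}{1-q^{2m}},
\]
that is, $\bigl(1+q^{(2r+1)m}\bigr)/(1-q^{2m})$ up to the truncation. We mark the $3$-hooks in the same way; the cases depend on whether the multiplicity of $m$ is $1$, $2$, or $\ge 3$, combined with whether $m+1$ and $m+2$ occur.

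\textbf{Asymptotics.} Put $q=e^{-z}$ with $z\to 0$ in a cone. Then $F_r(e^{-z})\sim C\,z^{1/2}\exp\bigl(\pi^2(r+1)/(6(2r+1)z)\bigr)$, and this gives the common exponential $e^{\sqrt{4(r+1)\pi^2n/(6(2r+1))}}$. We then need the singular expansion of $H_r(e^{-z})$ near $z=0$. A single Lambert-type sum over a residue class behaves like $z^{-1}$ times a constant plus digamma terms: $\sum_{m\equiv a \,(N)} e^{-mz}/(1-e^{-mz})$ contributes a $\log(1/z)$-free leading term through the Euler–Maclaurin formula with shifted arguments. This is exactly where $\psi(1/2+j/(2r+1))-\psi(j/(2r+1))$, i.e.\ $D_1^{(r)},D_2^{(r)},D_3^{(r)}$, appear. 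The $\ln 2$ comes from sums over the even classes of the form $\sum (-1)^m/m$ type corrections. For $B_r$ everything reduces to rational constants, giving $(16r+13)/(24(2r+1))$ times $z^{-1}$. We then apply Wright's circle-method proposition as used for Theorems~\ref{p1_thm8}--\ref{p1_thm9}. On the major arc the product $F_rH_r$ has the form $\lambda z^{-1}\cdot z^{1/2}e^{A/z}$, and on the minor arc we bound $|F_r(q)|$ by $F_r(|q|)$ times an exponentially smaller factor. The claimed formulas follow.

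\textbf{Main obstacle.} The hardest step will be the $A_r$ case. We need the precise coefficients $(8r^2-10r-1)$, $(-8r^2+28r-8)$ and $(4r^2-14r+10)$. These arise from carefully expanding the rational functions in $q^{(2r+1)m}$ and $q^{2m}$ and applying Euler–Maclaurin to functions like $f(x)=e^{-(2r+1)x}/(1-e^{-2x})$ minus their poles. We also have to control the error terms uniformly on the cone so that Wright's method applies.
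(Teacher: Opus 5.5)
\textbf{Verdict: your outline follows the paper's route, but the steps that produce the constants are either wrong or missing.} The route is the same: pull out $\xi^{(r)}(q)=(-q^{2r+1};q^{2r+1})_\infty/(q^2;q^2)_\infty$, expand the cofactor at $z=0$, and apply Proposition~\ref{p1_prop1} with $B=1$, $\beta=\tfrac12$, $K=1/\sqrt{2\pi}$.

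\textbf{The $B_r$ cofactor.} It is not built from Lambert series. Marking a part $m\in S$ of multiplicity at least $s$ gives $q^{sm}/(1-q^m)$, and this denominator cancels the $m$-th factor of $\prod_{k\in S}(1-q^k)^{-1}$. What remains are polynomial corrections such as $q^{3m}$ or $q^{m}\prod_{j\in\{m-1,m-2\}\cap S}(1-q^{j})$, summed over residue classes. The result is the rational function $L_{b,3}^{(r)}(q)$. This matters: $\sum_{m\equiv a\ (N)}q^m/(1-q^m)$ behaves like $\frac{1}{Nz}\log\frac1z$, so your claim that its leading term is $\log$-free is false. If the cofactor really contained such sums, it would violate (H1) and add a $\log n$ factor. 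With the correct rational cofactor, the $z^{-1}$ coefficient is $\frac{1}{2r+1}\bigl((r+1)+\frac14+\frac{2r-1}{6}\bigr)=\frac{16r+13}{12(2r+1)}$, not $\frac{16r+13}{24(2r+1)}$. The remaining factor $\frac{1}{2\sqrt\pi}$ in the theorem is $c_{0,0}$ of Proposition~\ref{p1_prop1}. Because you leave $C$ unspecified, this normalization is never checked.

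\textbf{The $A_r$ coefficient.} You rightly flag the coefficient $\mathcal{D}(r)/(2(2r+1)^3)$ of $z^{-1}$ in $L_{a,3}^{(r)}$ as the main obstacle, but you never compute it, and that computation is the content of the theorem. The functions you suggest (like $e^{-(2r+1)x}/(1-e^{-2x})$ ``minus their poles'') are not the relevant ones. After $(1-q^{2m})$ cancels against $\xi^{(r)}$, the summands have denominators $1+q^{(2r+1)(m+i)}$, $i=1,2,3$, and couple three consecutive indices, so Theorem~\ref{p1_thm17} does not apply directly.

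The paper proceeds as follows:
\begin{itemize}
\item It uses partial fractions to rewrite everything through $\mathcal{L}_{a,j}^{(i)}(q)=\sum_{m\ge0}q^{2j(m+i)}/(1+q^{(2r+1)(m+i)})$, with coefficients $R_{a,k}(q)$, some of which have double poles at $z=0$.
\item It applies Euler--Maclaurin to $f_j(x)=e^{-2jx}/(1+e^{-(2r+1)x})$, which is regular at $0$, with $\int_0^\infty f_j=D_j^{(r)}/(2(2r+1))$.
\item The $\ln 2$ comes from $\int_0^\infty e^{-(2r+1)x}/(1+e^{-(2r+1)x})\,dx=\ln 2/(2r+1)$ in the arm-$0$/leg-$2$ piece.
\item Because of the double poles, each $\mathcal{L}_{a,j}^{(i)}$ must be expanded through order $z$, and the $z^{-3}$ and $z^{-2}$ terms must cancel (Corollary~\ref{p1_cor2}).
\end{itemize}

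A shorter correct route is also available. Write each summand as $g(z(m+1),z)$ with $g$ holomorphic and exponentially decaying in its first variable. Taylor-expanding in the second variable and applying Theorem~\ref{p1_thm17} termwise shows that the $z^{-1}$ coefficient is $\int_0^\infty g(w,0)\,dw$. For $r=1$, the substitution $t=e^{-w}$ gives $\frac16+\bigl(\frac23-\frac{2\pi}{9\sqrt3}\bigr)+\bigl(1-\frac{\pi}{3\sqrt3}\bigr)=\frac{99-10\sqrt3\,\pi}{54}=\frac{\mathcal{D}(1)}{54}$, as required.

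\textbf{Minor arc.} Your minor-arc remark covers only (H4) for $\xi^{(r)}$. You also need (H3): a polynomial bound on $|L_{a,3}^{(r)}(q)|$ for $\Delta x\le|y|<\pi$. The paper obtains $O(x^{-5})$ in Corollary~\ref{p1_cor5}, using $|1+q^k|\ge 1-e^{-kx}$ and divisor-sum estimates.
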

%%%%%%%%%%%%%%%%%%%%%%%
 \subsection{Generating functions for $a_{\ell}^{(r)}(n)$ and $b_{\ell}^{(r)}(n)$}
 We first recall the \emph{$q$-Pochhammer symbol} $(a;q)_n$ which is defined as follows:
 \[
(a;q)_n :=
\begin{cases}
1, & \text{if } n=0,\\[4pt]
(1-a)(1-aq)\cdots(1-aq^{n-1}), 
& \text{if } n\ge 1,\\[4pt]
\displaystyle\prod_{j=0}^{\infty}(1-aq^j), 
& \text{if } n=+\infty.
\end{cases}
\]
In this section, we find the generating functions of $a_{\ell}^{(r)}(n)$ and $b_{\ell}^{(r)}(n)$ for all $r\ge 1$ and $\ell=1, 2, 3$. These generating functions will be used to find biases and asymptotics for $a_{\ell}^{(r)}(n)$ and $b_{\ell}^{(r)}(n)$. In the following theorem, we find the generating function of $a_{\ell}^{(r)}(n)$ for $\ell=1$.
\begin{theorem}\label{p1_thm11}
 Let $r\geq 1$ be an integer. The generating function of $a_1^{(r)}(n)$ is given by 
\begin{align*}
\sum_{n\ge 0} a_1^{(r)}(n)q^n
&=
\frac{(-q^{2r+1};q^{2r+1})_\infty}{(q^2;q^2)_\infty} \sum_{m\ge0}   \left(\dfrac{q^{2(m+1)}(1+q^{(2r-1)(m+1)})}{1+q^{(2r+1)(m+1)}} \right).
\end{align*}
\end{theorem}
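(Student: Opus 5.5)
We count $1$-hooks by distinct part sizes and mark each distinct part size with a variable $z$. The key combinatorial observation is that in any partition $\lambda$, a box has hook length $1$ exactly when it has neither a box to its right nor a box below it, i.e.\ it is a removable corner of the Young diagram. There is exactly one such corner for each distinct part size of $\lambda$: it sits at the end of the last row of that size. Hence $a_1^{(r)}(n)=\sum_{\lambda}d(\lambda)$, where the sum runs over partitions of $n$ in the conjugate description of $\mathcal{A}_r(n)$ adopted above (every part occurring an odd number of times occurs at least $2r+1$ times), and $d(\lambda)$ denotes the number of distinct parts of $\lambda$. Conjugation preserves hook lengths, so this is legitimate.

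First I will write down the ordinary generating function for this class. For a fixed part size $k$, the allowed multiplicities are the even integers $m\ge 0$ together with the odd integers $m\ge 2r+1$. With $x=q^k$, the contribution of part $k$ is therefore
\begin{align*}
F(x)=\sum_{j\ge0}x^{2j}+\sum_{j\ge0}x^{2r+1+2j}=\frac{1+x^{2r+1}}{1-x^2}.
\end{align*}
Taking the product over $k\ge1$ gives $\prod_{k\ge1}F(q^k)=\dfrac{(-q^{2r+1};q^{2r+1})_\infty}{(q^2;q^2)_\infty}$, which recovers the generating function of $A_r(n)$.

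Next I introduce a variable $z$ that records each part size actually used: the factor for part $k$ becomes $1+z\bigl(F(q^k)-1\bigr)$. Differentiating $\prod_{k\ge1}\bigl(1+z(F(q^k)-1)\bigr)$ with respect to $z$ and setting $z=1$ (logarithmic differentiation) gives
\begin{align*}
\sum_{n\ge0}a_1^{(r)}(n)q^n=\prod_{k\ge1}F(q^k)\cdot\sum_{k\ge1}\frac{F(q^k)-1}{F(q^k)}.
\end{align*}
It remains to simplify the summand:
\begin{align*}
\frac{F(x)-1}{F(x)}=1-\frac{1-x^2}{1+x^{2r+1}}=\frac{x^2+x^{2r+1}}{1+x^{2r+1}}=\frac{x^2(1+x^{2r-1})}{1+x^{2r+1}}.
\end{align*}
Substituting $x=q^{m+1}$ with $m=k-1\ge0$ yields exactly the stated series.

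There is no real obstacle here. The only points requiring care are the identification of $1$-hooks with distinct part sizes, and the observation that the marking/differentiation step is valid as an identity of formal power series. The latter holds because $F(q^k)-1=O(q^{2k})$, so all products and sums converge $q$-adically.
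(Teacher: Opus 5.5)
Your proof is correct and is essentially the paper's argument. Differentiating the $z$-marked product at $z=1$ produces exactly the sum $\sum_{k\ge1}\bigl(F(q^k)-1\bigr)\prod_{j\ne k}F(q^j)$, which the paper writes down directly by summing over the part size $m+1$ that carries the $1$-hook; both arguments then factor out $\prod_{k\ge1}F(q^k)$ and simplify $(F-1)/F$ in the same way.
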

In the following theorem, we find the generating function of $a_{\ell}^{(r)}(n)$ for $\ell=2$.
\begin{theorem} \label{p1_thm12}
Let $r\geq 1$ be an integer. The generating function of $a_2^{(r)}(n)$ is given by
\begin{align*}
    \sum_{n\ge 0} a_2^{(r)}(n)q^n
&=
\frac{(-q^{2r+1};q^{2r+1})_\infty}{(q^2;q^2)_\infty} 
\sum_{m\ge0} 
\left(
\dfrac{q^{2(m+2)}(1-q^{2(m+1)})(1+q^{(2r-1)(m+2)})}{(1+q^{(2r+1)(m+1)})(1+q^{(2r+1)(m+2)})} \right.\\
&\hspace{2.7 cm } \left.+ \dfrac{q^{2(m+1)}(1+q^{(2r-1)(m+1)})}{1+q^{(2r+1)(m+1)}} 
\right).
\end{align*}
\end{theorem}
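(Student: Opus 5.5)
The plan is to read off the number of $2$-hooks of a partition from its multiplicities, and then sum over partitions using the product form of the generating function. Throughout we use the conjugate description of $\mathcal{A}_r(n)$: for each $k$, the multiplicity $m_k$ of the part $k$ is either even, or odd and at least $2r+1$.

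\textbf{Step 1 (combinatorial count of $2$-hooks).} Fix a partition $\lambda$ and a part size $k$ with $m_k\ge 1$, and look at the rectangular block of the $m_k$ rows equal to $k$. A cell has hook length $2$ exactly when $a_\lambda(v)+l_\lambda(v)=1$. Only two cells in the block can satisfy this.
\begin{itemize}
\item[(a)] The cell in column $k$ of the second-to-last row of the block has arm $0$. Every row below the block is shorter than $k$, so its leg is $1$ exactly when $m_k\ge 2$.
\item[(b)] The cell in column $k-1$ of the last row of the block (so $k\ge 2$) has arm $1$. Its leg equals the number of rows below of length $\ge k-1$, which is $m_{k-1}$, so this cell is a $2$-hook exactly when $m_{k-1}=0$.
\end{itemize}
Every other cell has arm $\ge 2$, or leg $\ge 2$, or arm $\ge 1$ and leg $\ge 1$. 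Hence
\[
\#\{2\text{-hooks of }\lambda\}=\#\{k: m_k\ge 2\}+\#\{k\ge 2: m_k\ge1,\ m_{k-1}=0\}.
\]
In $\mathcal{A}_r$ a nonzero multiplicity is at least $2$, because an odd multiplicity is at least $2r+1\ge 3$. So the first set is simply $\{k: m_k\ge 1\}$.

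\textbf{Step 2 (weights).} The part size $k$ contributes the factor
\[
P_k:=\sum_{\text{allowed } m} q^{km}=\frac{1+q^{(2r+1)k}}{1-q^{2k}}.
\]
The product of these factors is $\prod_k P_k=(-q^{2r+1};q^{2r+1})_\infty/(q^2;q^2)_\infty$. We then write $\sum_n a_2^{(r)}(n)q^n$ as a sum over $k$ of this full product, with the factors concerning $k$ (and $k-1$) replaced by their restricted versions. Concretely, each indicator condition multiplies the full product by a ratio, as follows.
\begin{itemize}
\item[(i)] For the condition $m_k\ge1$ the ratio is $(P_k-1)/P_k$. A short computation gives
\[
\frac{P_k-1}{P_k}=\frac{q^{2k}\bigl(1+q^{(2r-1)k}\bigr)}{1+q^{(2r+1)k}},
\]
and with $k=m+1$ this is exactly the second summand in Theorem~\ref{p1_thm12}. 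This is the same computation as in Theorem~\ref{p1_thm11}.
\item[(ii)] For the joint condition $m_k\ge1$, $m_{k-1}=0$ the ratio is $\dfrac{1}{P_{k-1}}\cdot\dfrac{P_k-1}{P_k}$. This equals
\[
\frac{1-q^{2(k-1)}}{1+q^{(2r+1)(k-1)}}\cdot\frac{q^{2k}\bigl(1+q^{(2r-1)k}\bigr)}{1+q^{(2r+1)k}}.
\]
With $k=m+2$ this is the first summand in Theorem~\ref{p1_thm12}.
\end{itemize}

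\textbf{Step 3 (assembly).} Summing the contributions from (i) over $k\ge1$ and from (ii) over $k\ge2$, and factoring out the common infinite product, gives the stated formula. The sums converge as formal power series, since the $k$-th term is $O(q^{2k})$.

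The only real obstacle is Step 1: making sure every $2$-hook is counted exactly once, including at the boundaries of blocks and when $k=1$. Here the fact that multiplicities in $\mathcal{A}_r$ are never $1$ simplifies the count. The algebra in Step 2 is routine.
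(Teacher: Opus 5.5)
Your proposal is correct and follows essentially the same route as the paper. The paper also splits the $2$-hooks into the arm-$1$/leg-$0$ case (part $m+2$ present, part $m+1$ absent) and the arm-$0$/leg-$1$ case (part $m+1$ with multiplicity at least $2$, which in $\mathcal{A}_r$ just means present and reproduces the $a_1^{(r)}$ series). It then divides the corresponding factors out of $(-q^{2r+1};q^{2r+1})_\infty/(q^2;q^2)_\infty$ exactly as you do, and your block-by-block analysis simply makes the exactly-once counting more explicit. One harmless slip: the corner cell of each block has arm and leg both $0$, so your list of ``other cells'' misses it, but it has hook length $1$ and the count is unaffected.
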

In the following theorem, we find the generating function of $a_{\ell}^{(r)}(n)$ for $\ell=3$.
\begin{theorem}\label{p1_thm13}
Let $r\geq 1$ be an integer. The generating function of $a_3^{(r)}(n)$ is given by
\begin{align*}
  \sum_{n\ge 0} a_3^{(r)}(n)q^n
&= \frac{(-q^{2r+1};q^{2r+1})_\infty}{(q^2;q^2)_\infty} \sum_{m\ge0} \left(
 \dfrac{q^{2(m+3)}(1-q^{2(m+1)})(1-q^{2(m+2)})(1+q^{(2r-1)(m+3)})}{(1+q^{(2r+1)(m+1)})(1+q^{(2r+1)(m+2)})(1+q^{(2r+1)(m+3)})}  \right.\\
&\hspace{1 cm}\left.+
\dfrac{q^{2(m+2)}(1-q^{2(m+1)})(1+q^{(2r-1)(m+2)})}{(1+q^{(2r+1)(m+1)})(1+q^{(2r+1)(m+2)})}
+ \dfrac{(q^{4(m+1)} +q^{(2r+1)(m+1)})}{(1+q^{(2r+1)(m+1)}) } \right).  
\end{align*} 
\end{theorem}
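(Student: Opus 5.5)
We prove the formula by counting $3$-hooks one at a time, using the multiplicity description of $\mathcal{A}_r(n)$ adopted above: a partition belongs to $\mathcal{A}_r(n)$ exactly when each part size $j$ has multiplicity $f_j$ that is either even or odd and $\ge 2r+1$. Multiplicities can be chosen independently for different $j$. The generating function for a single part size $j$ is therefore
\[
\sum_{f \text{ admissible}} q^{jf}=\frac{1}{1-q^{2j}}+\frac{q^{(2r+1)j}}{1-q^{2j}}=\frac{1+q^{(2r+1)j}}{1-q^{2j}},
\]
and the product over $j$ is $(-q^{2r+1};q^{2r+1})_\infty/(q^2;q^2)_\infty$. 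Any condition imposed only on a few multiplicities $f_j$ multiplies this product by the ratio of the restricted single-part series to the full one. For example, ``part $j$ absent'' contributes the factor $(1-q^{2j})/(1+q^{(2r+1)j})$. Since $f_j=1$ is never admissible for $r\ge1$, ``part $j$ present'' and ``$f_j\ge2$'' both give
\[
1-\frac{1-q^{2j}}{1+q^{(2r+1)j}}=\frac{q^{2j}\bigl(1+q^{(2r-1)j}\bigr)}{1+q^{(2r+1)j}}.
\]

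Next we classify a box of hook length $3$ by its pair (arm, leg), which must be $(2,0)$, $(1,1)$ or $(0,2)$. In each case we locate the box in a row of length $j$ and read off the condition on the multiplicities.

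In case $(2,0)$, the box sits in column $j-2$ of the last row of length $j$, with $j\ge3$. Leg $0$ forces parts $j-1$ and $j-2$ to be absent. Each partition with $f_j\ge1$, $f_{j-1}=f_{j-2}=0$ contributes exactly one such box.

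In case $(1,1)$, the box sits in column $j-1$, and exactly one row below it has length $\ge j-1$. There are two possibilities. Either the box is in the second-to-last row of length $j$ (so $f_j\ge2$) and part $j-1$ is absent. Or the box is in the last row of length $j$ and $f_{j-1}=1$; this is impossible because multiplicity $1$ is not admissible. So each partition with $f_j\ge2$, $f_{j-1}=0$ contributes exactly one such box, for $j\ge2$.

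In case $(0,2)$, the box ends a row of length $j$ that has exactly two rows of length $j$ below it and no longer rows below. This happens exactly once when $f_j\ge3$. The factor for $f_j\notin\{0,2\}$ is
\[
1-\frac{(1-q^{2j})(1+q^{2j})}{1+q^{(2r+1)j}}=\frac{q^{4j}+q^{(2r+1)j}}{1+q^{(2r+1)j}}.
\]

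Finally we sum over $j$ with the index shifts $j=m+3$, $j=m+2$ and $j=m+1$ respectively, with $m\ge0$. Multiplying the appropriate presence and absence factors reproduces the three summands in the statement. The factored-out product is $(-q^{2r+1};q^{2r+1})_\infty/(q^2;q^2)_\infty$.

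The main obstacle is making the case analysis of the $(1,1)$ and $(0,2)$ configurations airtight. We must check that each $3$-hook is counted exactly once across the three cases. We must also verify that the excluded multiplicity $1$ is what kills the second subcase of $(1,1)$. In the $(0,2)$ case, we must justify that $f_j=2$ is subtracted separately, since it is admissible. All of this will be checked directly against the rows of the Young diagram, exactly as for Theorems~\ref{p1_thm11} and~\ref{p1_thm12}.
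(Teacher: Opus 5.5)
Your proposal is correct and is essentially the paper's own proof. It uses the same split of $3$-hooks into the (arm, leg) cases $(2,0)$, $(1,1)$, $(0,2)$, the same multiplicity conditions (ruling out the $f_{j-1}=1$ subcase of $(1,1)$ by admissibility is the paper's remark that the $*$ cell must be filled), and the same device of multiplying $(-q^{2r+1};q^{2r+1})_\infty/(q^2;q^2)_\infty$ by the ratio of each restricted single-part series to the full one, with your Case $(0,2)$ factor via $f_j\notin\{0,2\}$ being just a repackaging of the paper's series $\bigl(q^{4j}+q^{(2r+1)j}\bigr)/(1-q^{2j})$.
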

Next, we find the generating functions of $b_{\ell}^{(r)}(n)$ for all $r\ge 1$ and $\ell=1, 2, 3$. In the following theorem, we find the generating function of $b_{\ell}^{(r)}(n)$ for $\ell=1$.
\begin{theorem} \label{p1_thm14}
Let $r\geq 1$ be an integer. The generating function of $b_1^{(r)}(n)$ is given by
$$\sum_{n\ge 0} b_1^{(r)}(n)q^n
=  \frac{(-q^{2r+1};q^{2r+1})_\infty}{(q^2;q^2)_\infty}\left(
\frac{q^2 \sum_{k=0}^{2r}q^{2k} +q^{2r+1}}{1-q^{4r+2}} \right).$$
\end{theorem}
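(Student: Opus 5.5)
We count $1$-hooks in partitions whose parts are even or congruent to $2r+1 \pmod{4r+2}$. In any partition, the number of $1$-hooks equals the number of distinct part sizes. Indeed, a box has hook length $1$ exactly when it is the last box of a row and no box lies below it. This happens precisely for the last row of each block of equal parts.

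Hence
$$\sum_{n\ge0} b_1^{(r)}(n)q^n=\sum_{\lambda\in \mathcal{B}_r} \#\{\text{distinct parts of }\lambda\}\, q^{|\lambda|},$$
where $\mathcal{B}_r$ is the set of $B_r$-type partitions. The generating function for $B_r(n)$ is
$$F(q)=\prod_{k\ge1}\frac1{1-q^{2k}}\prod_{j\ge0}\frac{1}{1-q^{(2r+1)(2j+1)}}=\frac{(-q^{2r+1};q^{2r+1})_\infty}{(q^2;q^2)_\infty}.$$
The second equality is the standard identity $1/(q^{2r+1};q^{4r+2})_\infty=(-q^{2r+1};q^{2r+1})_\infty$.

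We then use the usual marking trick. Replace each allowed factor $1/(1-q^{s})$ by $1-q^s+zq^s/(1-q^s)$, so that $z$ counts whether part $s$ appears. Differentiating at $z=1$ gives
$$\sum_n b_1^{(r)}(n)q^n = F(q)\sum_{s \text{ allowed}} q^s,$$
since $\frac{q^s/(1-q^s)}{1/(1-q^s)}=q^s$.

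It remains to sum $q^s$ over the allowed parts. The even parts contribute $\sum_{k\ge1}q^{2k}=q^2/(1-q^2)$. The parts $s\equiv 2r+1 \pmod{4r+2}$ contribute $q^{2r+1}/(1-q^{4r+2})$. Writing $1/(1-q^2)=\sum_{k=0}^{2r}q^{2k}/(1-q^{4r+2})$ puts both terms over the common denominator $1-q^{4r+2}$, which gives exactly the stated formula.

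The only step needing care is the combinatorial identification of $1$-hooks with distinct part sizes. The rest is routine generating-function manipulation, so I do not expect a genuine obstacle.
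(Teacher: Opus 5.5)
Your proof is correct and follows the paper's argument. You identify $1$-hooks with distinct part sizes, factor out $\frac{(-q^{2r+1};q^{2r+1})_\infty}{(q^2;q^2)_\infty}$ so that each allowed part $s$ contributes $q^s$, and sum over $s$ even or $s\equiv 2r+1 \pmod{4r+2}$. One small slip: the marked factor should be $1+zq^s/(1-q^s)$, not $1-q^s+zq^s/(1-q^s)$, since the latter does not equal $1/(1-q^s)$ at $z=1$; your subsequent computation already uses the correct value.
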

In the following theorem, we find the generating function of $b_{\ell}^{(r)}(n)$ for $\ell=2$.
\begin{theorem}\label{p1_thm15}
Let $r\geq 1$ be an integer. The generating function of $b_2^{(r)}(n)$ is given by
\begin{align*}
    \sum_{n\ge 0} b_2^{(r)}(n)q^n
&=\dfrac{(-q^{2r+1};q^{2r+1})_\infty}{(q^2;q^2)_\infty}
\left(
\dfrac{\sum_{k=1}^{2r+1}q^{2k} + q^{2r+1}}{1-q^{4r+2}}+
 \dfrac{q^{4r+2} + \sum_{k=1}^{2r+1}q^{4k}- q^{4r+1}-q^{4r+3}}{1-q^{8r+4}} 
\right).
\end{align*}
\end{theorem}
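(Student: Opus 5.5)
The plan is to first reduce the number of $2$-hooks in a single partition to a statement about its distinct part sizes, and then sum over all $B_r$-type partitions using the product structure of their generating function
\[
P_r(q):=\sum_{n\ge0}B_r(n)q^n=\frac{(-q^{2r+1};q^{2r+1})_\infty}{(q^2;q^2)_\infty}=\prod_{s\in S_r}\frac{1}{1-q^s},
\]
where $S_r$ is the set of allowed parts (even, or $\equiv 2r+1 \pmod{4r+2}$).

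\emph{Combinatorial step.} A box $v$ has hook length $2$ exactly when either $a_\lambda(v)=0$ and $l_\lambda(v)=1$, or $a_\lambda(v)=1$ and $l_\lambda(v)=0$.
\begin{itemize}
\item In the first case $v$ is the last box of a row, the next row has the same length, and the row after that is shorter. So there is exactly one such box for each part size $s$ of multiplicity at least $2$; it sits in the second-to-last row of the block of parts equal to $s$.
\item In the second case $v$ is the second-to-last box of a row of length $s\ge2$, and the row below it does not reach column $s-1$. This forces the row to be the last one of its block, and $s-1$ must not be a part.
\end{itemize}
Hence the number of $2$-hooks of $\lambda$ equals
\[
\#\{s:\ m_s(\lambda)\ge 2\}\;+\;\#\{s\ge2:\ m_s(\lambda)\ge1,\ m_{s-1}(\lambda)=0\},
\]
where $m_s(\lambda)$ denotes the multiplicity of $s$ in $\lambda$. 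This step is the only place where care is needed: I must check that the block structure gives exactly one box of each kind per qualifying part size, and nothing else.

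\emph{Summation step.} Summing over all partitions with parts in $S_r$, I count pairs $(\lambda,s)$ by modifying the factor attached to $s$ in $P_r(q)$.
\begin{itemize}
\item The condition $m_s\ge2$ replaces $1/(1-q^s)$ by $q^{2s}/(1-q^s)$, so the first count contributes $P_r(q)\sum_{s\in S_r}q^{2s}$.
\item The condition $m_s\ge1$ contributes a factor $q^s$. When $s-1\in S_r$, the extra condition $m_{s-1}=0$ contributes a further factor $1-q^{s-1}$. So the second count contributes
\[
P_r(q)\Bigl(\sum_{s\in S_r}q^s-\sum_{\substack{s\in S_r\\ s-1\in S_r}}q^{2s-1}\Bigr).
\]
\end{itemize}
The restriction $s\ge2$ is automatic, because $1\notin S_r$ when $r\ge1$.

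\emph{Evaluation of the sums.}
\begin{itemize}
\item Split $\sum_{s\in S_r}q^s$ as $\frac{q^2}{1-q^2}+\frac{q^{2r+1}}{1-q^{4r+2}}$. Writing $\frac{q^2}{1-q^2}=\frac{\sum_{k=1}^{2r+1}q^{2k}}{1-q^{4r+2}}$ gives the first fraction in Theorem~\ref{p1_thm15}.
\item Similarly, $\sum_{s\in S_r}q^{2s}=\frac{\sum_{k=1}^{2r+1}q^{4k}}{1-q^{8r+4}}+\frac{q^{4r+2}}{1-q^{8r+4}}$.
\item Consecutive allowed pairs $(s-1,s)$ arise in two ways. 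Either $s-1=(2r+1)(2j+1)$ with $s$ even, giving $\sum_j q^{2(2r+1)(2j+1)+1}=\frac{q^{4r+3}}{1-q^{8r+4}}$. Or $s=(2r+1)(2j+1)$ with $s-1$ even and positive, giving $\frac{q^{4r+1}}{1-q^{8r+4}}$.
\end{itemize}
Collecting these terms reproduces exactly the second fraction in Theorem~\ref{p1_thm15}, and the result follows. Once the hook characterization is established, the remaining work is routine bookkeeping of geometric series.
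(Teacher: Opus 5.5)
Your proposal is correct and follows essentially the same route as the paper. You use the same two cases, $a_\lambda(v)=0,\ l_\lambda(v)=1$ (part sizes of multiplicity at least $2$) and $a_\lambda(v)=1,\ l_\lambda(v)=0$ (a part $s$ present with $s-1$ absent); the same factor-replacement argument in $\prod_{s\in S_r}(1-q^s)^{-1}$, with the correction $-q^{2s-1}$ for consecutive allowed pairs; and the same geometric-series evaluation that produces the $-q^{4r+1}-q^{4r+3}$ terms.
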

In the following theorem, we find the generating function of $b_{\ell}^{(r)}(n)$ for $\ell=3$.
\begin{theorem}\label{p1_thm16}
Let $r\geq 1$ be an integer. The generating function of $b_3^{(r)}(n)$ is given by
\begin{align*}
  \sum_{n\ge 0} b_3^{(r)}(n)q^n = &\frac{(-q^{2r+1};q^{2r+1})_\infty}{(q^2;q^2)_\infty}
\left(
\frac{q^{2r+1} + \sum_{k=0}^{2r}q^{2k+4}}{1 - q^{4r+2}} +  \frac{q^{4r+2} + \sum_{j=1}^{2r+1}q^{4j} - \sum_{k=0}^{2r}q^{4k+6}}{1 - q^{8r+4}} \right.\\
&\hspace{1.2cm}\left.+  \frac{2q^{6r+3} - (q^{6r+1} + q^{6r+2} + q^{6r+4} + q^{6r+5}) + \sum_{j=1}^{2r+1}q^{6j}}{1 - q^{12r+6}}
\right).
\end{align*}
\end{theorem}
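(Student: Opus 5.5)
The plan is to reduce the count of $3$-hooks to finitely many local patterns in the multiplicity data of a partition, and then sum the generating function of each pattern over the allowed part sizes. Throughout, write
\[
P_r(q):=\frac{(-q^{2r+1};q^{2r+1})_\infty}{(q^2;q^2)_\infty}=\frac{1}{(q^2;q^2)_\infty\,(q^{2r+1};q^{4r+2})_\infty}
\]
for the generating function of $B_r(n)$. The set of allowed parts is $S_r=\{2k\}\cup\{(2r+1)(2k+1)\}$, and $P_r$ factors as $\prod_{s\in S_r}(1-q^s)^{-1}$.

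\textbf{Step 1: local description of $3$-hooks.} Write a partition as distinct part sizes $s_1>s_2>\cdots>s_p>s_{p+1}=0$ with multiplicities $m_1,\dots,m_p$. A cell in a row of block $i$ and in the column range $(s_{j+1},s_j]$, with $j\ge i$, has
\[
h=(s_i-s_j)+(m_{i+1}+\cdots+m_j)+a+k+1,\qquad 0\le a<s_j-s_{j+1},\ 0\le k<m_i .
\]
Setting $h=3$ forces $j\le i+1$, and leaves a finite list of cases:
\begin{itemize}
\item[(a)] $j=i$ with $(a,k)\in\{(2,0),(1,1),(0,2)\}$. These require, respectively, gap $s_i-s_{i+1}\ge3$; gap $\ge2$ and $m_i\ge2$; and $m_i\ge3$.
\item[(b)] $j=i+1$ with $s_i-s_{i+1}=1$, $m_{i+1}=1$, $a=k=0$.
\end{itemize}
The list is finite because $s_i-s_j+m_{i+1}+\cdots+m_j\ge 2(j-i)$. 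Hence $b_3^{(r)}(n)$ is the sum, over these patterns and over the admissible values of $s$ (and of $s-1$ in (b)), of the number of $B_r$-partitions of $n$ realizing the pattern at $s$.

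\textbf{Step 2: generating function of each pattern.} Each pattern has the form ``$s$ occurs with multiplicity $\ge1,\ge2$ or $\ge3$, certain smaller integers $s-1,s-2$ do not occur, and possibly $s-1$ occurs exactly once.'' Since the parts are independent in the product, its generating function is $P_r(q)$ times a monomial correction. For example, $\{s$ occurs, $s-1,s-2$ absent$\}$ contributes
\[
P_r(q)\,q^s\prod_{t\in\{s-1,s-2\}\cap S_r}(1-q^t).
\]
The factor $(1-q^t)$ appears only when $t$ is an allowed part. This is the step where the structure of $S_r$ enters: for even $s$, the integer $s-1$ is allowed only when $s\equiv 2r+2\pmod{4r+2}$; for $s\equiv 2r+1$, both $s-1$ and $s-3$ are even and allowed. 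Multiplicity $\ge m$ gives $q^{ms}$ in place of $q^s$.

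\textbf{Step 3: sum over residue classes.} For each pattern, split $s$ into residue classes modulo $4r+2$ (respectively $8r+4$, $12r+6$ for the $q^{2s}$ and $q^{3s}$ terms) and sum the resulting geometric series. This produces rational functions with denominators $1-q^{4r+2}$, $1-q^{8r+4}$ and $1-q^{12r+6}$. The terms then need to be combined:
\begin{itemize}
\item the $m_i\ge1$ patterns (gap conditions) should telescope into $\bigl(q^{2r+1}+\sum_{k=0}^{2r}q^{2k+4}\bigr)/(1-q^{4r+2})$;
\item the $m_i\ge2$ pattern should give the $1-q^{8r+4}$ term, including the correction $-\sum q^{4k+6}$ coming from the absence of $s-1$;
\item the $m_i\ge3$ pattern, together with the small correction terms $q^{6r+1},\dots,q^{6r+5}$ from neighbours of the odd parts $(2r+1)(2k+1)$, should give the $1-q^{12r+6}$ term.
\end{itemize}
As a sanity check I will compare with the analogous (already stated) derivations of Theorems~\ref{p1_thm14} and \ref{p1_thm15}, which are the $h=1,2$ analogues of the same scheme. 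I will also verify the first several coefficients numerically against direct enumeration, e.g. $r=1$, $n\le 12$.

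The main obstacle I expect is Step 3: the bookkeeping of which neighbours $s-1,s-2$ are allowed parts near the odd residues $\equiv 2r+1\pmod{4r+2}$, together with case (b). Case (b) only occurs when $s$ and $s-1$ are consecutive allowed parts, i.e.\ $s=(2r+1)(2k+1)$ with $s-1$ even, or $s$ even with $s-1=(2r+1)(2k+1)$. Getting these boundary terms to collapse exactly to $2q^{6r+3}-(q^{6r+1}+q^{6r+2}+q^{6r+4}+q^{6r+5})$ is where errors are likely. I would first handle the even-$s$ contributions uniformly and then add the odd-class corrections one by one.
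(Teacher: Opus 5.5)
Your proposal is correct and matches the paper's proof. Your patterns (a) with $(a,k)=(2,0)$, $(1,1)$, $(0,2)$ and pattern (b) are exactly the paper's Case 1, Case 2 with the $*$ cell occupied, Case 3, and Case 2 with $*$ empty. The paper then multiplies $P_r(q)$ by the same correction factors $\prod(1-q^t)$ and sums the resulting geometric series over residue classes modulo $4r+2$, $8r+4$ and $12r+6$, as you propose.

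Some of your Step 3 predictions are misattributed:
\begin{itemize}
\item The term $-\sum_{k=0}^{2r}q^{4k+6}/(1-q^{8r+4})$ comes from the factor $1-q^{s-2}$ (for even $s\ge 4$) in the arm-$2$ pattern, not from the multiplicity-$\ge 2$ pattern.
\item The contribution $-(q^{4r+1}+q^{4r+3})/(1-q^{8r+4})$ from the arm-$2$ pattern cancels the $+(q^{4r+1}+q^{4r+3})/(1-q^{8r+4})$ from case (b), so these two patterns do not reduce to the $1-q^{4r+2}$ term alone.
\item $s-3$ plays no role, since only $s-1$ and $s-2$ enter.
\end{itemize}
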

The remainder of the paper is organized as follows. In Section~\ref{p1_sec2}, we derive the generating functions for $a_{\ell}^{(r)}(n)$ and $b_{\ell}^{(r)}(n)$, which denote the total number of hooks of length $\ell$ in all partitions of $n$ of $A_r$-type and $B_r$-type, respectively, for $\ell=1, 2, 3$. In Section~\ref{p1_sec3}, we establish asymptotic formulas for these functions for $\ell=1, 2, 3$. Finally, we prove the hook length bias results stated in Theorems~\ref{p1_thm1.3}--\ref{p1_newthm} in Section~\ref{p1_sec4}.

%%%%%%%%%%%%%%%%%%%%%%%%%%%%%%%%%%%%%%%%%%%%%%%%%%%%%%%%%%%%%%%%
\section{Proof of generating functions for $a_{\ell}^{(r)}(n)$ and $b_{\ell}^{(r)}(n)$}\label{p1_sec2}
In this section, we derive the generating functions for $a_{\ell}^{(r)}(n)$ and $b_{\ell}^{(r)}(n)$ for all integers $r\geq 1$ and $\ell=1,2,3$. Recall that, for a box $v$ in the Young diagram of a partition $\lambda$, $a_\lambda(v)$ denotes the \emph{arm length} of $v$ and $l_\lambda(v)$ denotes the \emph{leg length} of $v$.
\par
The generating function for $A_r(n)$ is $\displaystyle\frac{(-q^{2r+1};q^{2r+1})_\infty}{(q^2;q^2)_\infty}$, while for $B_r(n)$ it is equal to $\displaystyle\frac{1}{(q^2;q^2)_\infty\,(q^{2r+1};q^{4r+2})_\infty}$. Since $A_r(n)=B_r(n)$ for all $n\ge0$, these generating functions are equal and hence
$$\frac{(-q^{2r+1};q^{2r+1})_\infty}{(q^2;q^2)_\infty} = \frac{1}{(q^2;q^2)_\infty\,(q^{2r+1};q^{4r+2})_\infty}.$$
This $q$-series identity will be used in finding the generating functions for hook numbers below. 

\begin{figure}[htbp]
\centering

%========================================================
% CASE 1
%========================================================

\begin{tikzpicture}[
    x=0.48cm,
    y=0.48cm,
    line width=0.5pt
]

%--------------------------------------------------------
% Shaded hook -- fill first
%--------------------------------------------------------
\fill[yellow!45] (0,2) rectangle (4,3);

%--------------------------------------------------------
% Young diagram boundary
%--------------------------------------------------------
\draw
(0,0) -- (2,0)
-- (2,1)
-- (3,1)
-- (4,1)
-- (4,2)
-- (5,2)
-- (5,3)
-- (6,3)
-- (6,4)
-- (7,4)
-- (7,5)
-- (0,5)
-- cycle;

%--------------------------------------------------------
% Horizontal row boundaries
%--------------------------------------------------------
\draw (0,2) -- (4,2);
\draw (0,3) -- (6,3);

%--------------------------------------------------------
% Boundaries of shaded hook
%--------------------------------------------------------
\draw (0,2) -- (5,2);
\draw (0,3) -- (5,3);

% Vertical boundaries

\draw (4,2) -- (4,3);

%--------------------------------------------------------
% Labels
%--------------------------------------------------------
\node at (1.8,2.5) {$m$};
\node at (4.5,2.5) {$v$};

\end{tikzpicture}
\hspace{1.0cm}

%========================================================
% CASE LABELS
%========================================================

\caption{Case for a hook of length $1$, $a_\lambda(v) = 0$ and $l_\lambda(v) = 0$.}
\label{fig:hook-length-1}

\end{figure}

\begin{proof}[Proof of Theorem \ref{p1_thm11}]
     To count the hooks of length $1$ in a partition $\lambda$, it suffices to count the number of distinct part sizes in $\lambda$. Consequently, if the hook length of $v$ is 1, then $a_\lambda(v) = l_\lambda(v) = 0$ (see Fig.~\ref{fig:hook-length-1}). Moreover, for each $m\ge 0$, the part $m+1$ must appear at least twice in $\lambda$; if it appears an odd number of times, then it appears at least $2r+1$ times. Therefore, the generating function takes the form
\begin{align*}
&\sum_{n\ge 0} a_1^{(r)}(n)q^n \\
&=\sum_{m\ge0} \left( q^{2(m+1)}+q^{4(m+1)}+\cdots + q^{2r(m+1)} + q^{(2r+1)(m+1)}+  q^{(2r+2)(m+1)} + \cdots \right)\\
&\hspace{.5cm}\times \prod_{\substack{i\ge1\\ i\ne m+1}}
\left( 1 + q^{2i}+q^{4i}+\cdots +q^{2ri} + q^{(2r+1)i}+  q^{(2r+2)i} + \cdots \right)\\
&=\sum_{m\ge0} \left(\dfrac{q^{2(m+1)}(1+ q^{(2r-1)(m+1)})}{1-q^{2(m+1)}}\right)
\prod_{\substack{i\ge1\\ i\ne m+1}}
\left(  \dfrac{1 + q^{(2r+1)i}}{1-q^{2i}}\right)\\
&=\sum_{m\ge0} \left(\dfrac{q^{2(m+1)}(1+ q^{(2r-1)(m+1)})}{1-q^{2(m+1)}}\right)
\left( \frac{(-q^{2r+1};q^{2r+1})_\infty}{(q^2;q^2)_\infty}\left(\dfrac{1-q^{2(m+1)}}{1+q^{(2r+1)(m+1)}} \right)\right)\\
&=\frac{(-q^{2r+1};q^{2r+1})_\infty}{(q^2;q^2)_\infty} \sum_{m\ge0}   \dfrac{q^{2(m+1)}(1+q^{(2r-1)(m+1)})}{1+q^{(2r+1)(m+1)}}
\end{align*}
which is the required generating function for $a_1^{(r)}(n)$.
\end{proof}  

\begin{figure}[htbp]
\centering

%=========================================================
% CASE (1)
%=========================================================
\begin{tikzpicture}[
    x=0.55cm,
    y=0.55cm,
    line width=0.5pt
]

%---------------------------------------------------------
% Gray shaded cells -- DRAW FIRST
%---------------------------------------------------------
\fill[yellow!45] (0,2) rectangle (3,3);
\fill[yellow!45] (4,2) rectangle (5,3);

%---------------------------------------------------------
% Complete Young diagram boundary -- DRAW ON TOP
%---------------------------------------------------------
\draw
(0,0) -- (1,0)
-- (2,0)
-- (2,1)
-- (3,1)
-- (3,2)
-- (4,2)
-- (5,2)
-- (5,3)
-- (6,3)
-- (6,4)
-- (7,4)
-- (7,5)
-- (0,5)
-- cycle;

%---------------------------------------------------------
% Horizontal boundaries of the Young diagram
%---------------------------------------------------------
\draw (0,2) -- (5,2);
\draw (0,3) -- (5,3);

%---------------------------------------------------------
% Boundaries of the shaded cells
%---------------------------------------------------------
\draw (3,2) -- (3,3);
\draw (4,2) -- (4,3);

%---------------------------------------------------------
% Labels
%---------------------------------------------------------
\node at (1.5,2.5) {$m$};
\node at (3.5,2.5) {$v$};
\node at (4.5,2.5) {$1$};

\end{tikzpicture}
\hspace{1.7cm}
%
%=========================================================
% CASE (2)
%=========================================================
\begin{tikzpicture}[
    x=0.55cm,
    y=0.55cm,
    line width=0.5pt
]

%---------------------------------------------------------
% Gray shaded cells -- DRAW FIRST
%---------------------------------------------------------
\fill[yellow!45] (0,2) rectangle (3,3);
\fill[yellow!45] (3,1) rectangle (4,2);

%---------------------------------------------------------
% Complete Young diagram boundary -- DRAW ON TOP
%---------------------------------------------------------
\draw
(0,0) -- (2,0)
-- (2,1)
-- (3,1)
-- (4,1)
-- (4,2)
-- (4,3)
-- (6,3)
-- (6,4)
-- (7,4)
-- (7,5)
-- (0,5)
-- cycle;

%---------------------------------------------------------
% Horizontal boundaries
%---------------------------------------------------------
\draw (0,2) -- (4,2);
\draw (0,3) -- (6,3);

%---------------------------------------------------------
% Boundaries of shaded cells
%---------------------------------------------------------
\draw (3,2) -- (3,3);
\draw (3,1) -- (3,2);

%---------------------------------------------------------
% Labels
%---------------------------------------------------------
\node at (1.5,2.5) {$m$};
\node at (3.5,2.5) {$v$};
\node at (3.5,1.5) {$1$};

\end{tikzpicture}

\vspace{0.25cm}

%=========================================================
% LABELS BELOW THE FIGURES
%=========================================================

\begin{minipage}{0.42\textwidth}
\centering
\[
(1)\quad
a_{\lambda}(v)=1,\quad
l_{\lambda}(v)=0
\]
\end{minipage}
\hspace{0.05cm}
\begin{minipage}{0.42\textwidth}
\centering
\[
(2)\quad
a_{\lambda}(v)=0,\quad
l_{\lambda}(v)=1
\]
\end{minipage}

\caption{Cases for a hook of length $2$.}
\label{fig:hook-length-2}
\end{figure}
    \begin{proof}[Proof of Theorem \ref{p1_thm12}]
    To count the hooks of length 2, we have two cases (see Fig.~\ref{fig:hook-length-2}):
    \begin{itemize}
        \item Case 1: partitions include a box $v$ such that $a_\lambda(v) = 1$ and $l_\lambda(v) = 0$,
        \item Case 2: partitions include a box $v$ such that $a_\lambda(v) = 0$ and $l_\lambda(v) = 1$.
    \end{itemize}

    \textbf{Case 1:} For each $m\geq 0$, a partition in this case has a part of length $m+2$ that is repeated at least twice but, when it appears odd many times then the number of appearances is at least $2r+1$, and contains no part of length $m+1$.
 Thus, the  generating function is 
\begin{align*}
&\sum_{m\ge0} \left( q^{2(m+2)}+q^{4(m+2)}+\cdots +q^{2r(m+2)} + q^{(2r+1)(m+2)}+  q^{(2r+2)(m+2)} + \cdots \right) \\
&\times \prod_{\substack{i\ge1\\ i\ne m+1,m+2}}
\left( 1 + q^{2i}+q^{4i}+\cdots +q^{2ri} + q^{(2r+1)i}+  q^{(2r+2)i} + \cdots \right)\\
&=\sum_{m\ge0} \left(\dfrac{q^{2(m+2)}(1+ q^{(2r-1)(m+2)})}{1-q^{2(m+2)}}\right)
\prod_{\substack{i\ge1\\ i\ne m+1,m+2}}
\left(  \dfrac{1 + q^{(2r+1)i}}{1-q^{2i}}\right)\\
&=\sum_{m\ge0} \left(\dfrac{q^{2(m+2)}(1+ q^{(2r-1)(m+2)})}{1-q^{2(m+2)}}\right)
\left( \frac{(-q^{2r+1};q^{2r+1})_\infty}{(q^2;q^2)_\infty} \dfrac{(1-q^{2(m+1)})(1-q^{2(m+2)})}{(1+q^{(2r+1)(m+1)})(1+q^{(2r+1)(m+2)})}  \right)\\
&=\frac{(-q^{2r+1};q^{2r+1})_\infty}{(q^2;q^2)_\infty} \sum_{m\ge0}   \dfrac{q^{2(m+2)}(1-q^{2(m+1)})(1+q^{(2r-1)(m+2)})}{(1+q^{(2r+1)(m+1)})(1+q^{(2r+1)(m+2)})}.
\end{align*}
 \textbf{Case 2:} For each $m\geq 0$, a partition in this case contains a part of size $m+1$ that appears at least twice but, when it appears odd many times then the number of appearance is at least $2r+1$. Accordingly, the generating function for this case is identical to the generating function for $a_1^{(r)}(n)$, which is given by
\begin{align*}
    \frac{(-q^{2r+1};q^{2r+1})_\infty}{(q^2;q^2)_\infty} \sum_{m\ge0}   \dfrac{q^{2(m+1)}(1+q^{(2r-1)(m+1)})}{1+q^{(2r+1)(m+1)}}. 
\end{align*}
Adding these two generating functions yields the required generating function for $a_2^{(r)}(n)$.
\end{proof}

\begin{proof}[Proof of Theorem \ref{p1_thm13}]
    To count hooks of length 3, we consider three cases as in Fig.~\ref{fig:hook-length-3}:
\begin{itemize}
\item Case 1: partitions include a box with $a_\lambda(v) = 2$ and $l_\lambda(v) = 0$,
\item Case 2: partitions include a box with $a_\lambda(v) = 1$ and $l_\lambda(v) = 1$,
\item Case 3: partitions include a box with $a_\lambda(v) = 0$ and $l_\lambda(v) = 2$.
\end{itemize}

\begin{figure}[htbp]
\centering

%========================================================
% CASE 1
%========================================================
\begin{tikzpicture}[
    x=0.48cm,
    y=0.48cm,
    line width=0.5pt
]

%--------------------------------------------------------
% Shaded hook -- fill first
%--------------------------------------------------------
\fill[yellow!45] (0,2) rectangle (3,3);
\fill[yellow!45] (4,2) rectangle (6,3);
%--------------------------------------------------------
% Young diagram boundary
% Shape: (7,6,5,3,2,1)
%--------------------------------------------------------
\draw
(0,0) -- (1,0)
-- (2,0)
-- (2,1)
-- (3,1)
-- (3,2)
-- (6,2)
-- (6,3)
-- (6,4)
-- (7,4)
-- (7,5)
-- (8,5)
-- (8,6)
-- (0,6)
-- cycle;

%--------------------------------------------------------
% Horizontal row boundaries
%--------------------------------------------------------
\draw (0,3) -- (6,3);

%--------------------------------------------------------
% Boundaries of shaded hook
%--------------------------------------------------------
\draw (0,2) -- (6,2);
\draw (0,3) -- (6,3);

% Cell boundaries inside shaded hook
\draw (3,2) -- (3,3);
\draw (4,2) -- (4,3);

%--------------------------------------------------------
% Labels
%--------------------------------------------------------
\node at (1.5,2.5) {$m$};
\node at (3.5,2.5) {$v$};
\node at (5,2.5) {$2$};

\end{tikzpicture}
\hspace{1.0cm}
%
%========================================================
% CASE 2
%========================================================
\begin{tikzpicture}[
    x=0.48cm,
    y=0.48cm,
    line width=0.5pt
]

%--------------------------------------------------------
% Shaded hook -- fill first
%--------------------------------------------------------
\fill[yellow!45] (0,2) rectangle (3,3);
\fill[yellow!45] (4,2) rectangle (5,3);
\fill[yellow!45] (3,1) rectangle (4,2);

%--------------------------------------------------------
% Young diagram boundary
%--------------------------------------------------------
\draw
(0,0) -- (2,0)
-- (2,1)
-- (3,1)
-- (4,1)
-- (4,2)
-- (5,2)
-- (5,3)
-- (6,3)
-- (6,4)
-- (7,4)
-- (7,5)
-- (0,5)
-- cycle;

%--------------------------------------------------------
% Horizontal row boundaries
%--------------------------------------------------------
\draw (0,2) -- (4,2);
\draw (0,3) -- (6,3);

%--------------------------------------------------------
% Boundaries of shaded hook
%--------------------------------------------------------
\draw (0,2) -- (5,2);
\draw (0,3) -- (5,3);

% Vertical boundaries
\draw (3,2) -- (3,3);
\draw (4,2) -- (4,3);

% Boundary of lower shaded cell
\draw (3,1) -- (4,1);
\draw (3,1) -- (3,2);
\draw (4,1) -- (4,2);

%--------------------------------------------------------
% Labels
%--------------------------------------------------------
\node at (1.5,2.5) {$m$};
\node at (3.5,2.5) {$v$};
\node at (4.5,2.5) {$1$};

\node at (3.5,1.5) {$1$};
\node at (4.5,1.5) {$*$};

\end{tikzpicture}
\hspace{1.0cm}
%
%========================================================
% CASE 3
%========================================================
\begin{tikzpicture}[
    x=0.48cm,
    y=0.48cm,
    line width=0.5pt
]

%--------------------------------------------------------
% Shaded vertical hook -- fill first
%--------------------------------------------------------
\fill[yellow!45] (0,3) rectangle (3,4);
\fill[yellow!45] (3,1) rectangle (4,3);

%--------------------------------------------------------
% Young diagram boundary
%--------------------------------------------------------
\draw
(0,0) -- (2,0)
-- (2,1)
-- (3,1)
-- (4,1)
-- (4,2)
-- (4,3)
-- (4,4)
-- (5,4)
-- (5,5)
-- (6,5)
-- (6,6)
-- (0,6)
-- cycle;

%--------------------------------------------------------
% Horizontal row boundaries
%--------------------------------------------------------
\draw (0,3) -- (4,3);
\draw (0,4) -- (5,4);

%--------------------------------------------------------
% Boundaries of shaded hook
%--------------------------------------------------------
\draw (0,3) -- (3,3);
\draw (0,4) -- (3,4);

% Vertical boundaries of hook
\draw (3,1) -- (3,4);
\draw (4,1) -- (4,4);

% Internal horizontal boundaries
\draw (3,3) -- (4,3);

%--------------------------------------------------------
% Labels
%--------------------------------------------------------
\node at (1.5,3.5) {$m$};
\node at (3.5,3.5) {$v$};
\node at (3.5,2) {$2$};

\end{tikzpicture}

\vspace{0.25cm}

%========================================================
% CASE LABELS
%========================================================

\begin{minipage}{0.30\textwidth}
\centering
\[
(1)\quad
a_{\lambda}(v)=2,\
l_{\lambda}(v)=0
\]
\end{minipage}
\hfill
\begin{minipage}{0.30\textwidth}
\centering
\[
(2)\quad
a_{\lambda}(v)=1, \
l_{\lambda}(v)=1
\]
\end{minipage}
\hfill
\begin{minipage}{0.30\textwidth}
\centering
\[
(3)\quad
a_{\lambda}(v)=0,\
l_{\lambda}(v)=2
\]
\end{minipage}

\caption{Cases for a hook of length $3$.}
\label{fig:hook-length-3}
\end{figure}

\textbf{Case 1:} Each partition within this case contains a part of length $m+3$ that is repeated at least twice, but, when it appears odd many times, the number of appearances is at least $2r+1$ and has no parts of length $m+1$ or $m+2$, for each $m \ge 0$. Thus, the generating function corresponding to this case is given by
\begin{align*}
&\sum_{m\ge0} \left( q^{2(m+3)}+q^{4(m+3)}+\cdots +q^{2r(m+3)} + q^{(2r+1)(m+3)}+  q^{(2r+2)(m+3)} + \cdots \right) \\
&\times \prod_{\substack{i\ge1\\ i\ne m+1,m+2,m+3}}
\left( 1 + q^{2i}+q^{4i}+\cdots +q^{2ri} + q^{(2r+1)i}+  q^{(2r+2)i} + \cdots \right)\\
&=\sum_{m\ge0} \left(\dfrac{q^{2(m+3)}(1+ q^{(2r-1)(m+3)})}{1-q^{2(m+3)}}\right)
\prod_{\substack{i\ge1\\ i\ne m+1,m+2,m+3}}
\left(  \dfrac{1 + q^{(2r+1)i}}{1-q^{2i}}\right)\\
&=\sum_{m\ge0} \left(\dfrac{q^{2(m+3)}(1+ q^{(2r-1)(m+3)})}{1-q^{2(m+3)}}\right)
\left( \frac{(-q^{2r+1};q^{2r+1})_\infty}{(q^2;q^2)_\infty}\left(\dfrac{1-q^{2(m+1)}}{1+q^{(2r+1)(m+1)}} \right) \right.\\
&\hspace{2 cm}\left. \left(\dfrac{1-q^{2(m+2)}}{1+q^{(2r+1)(m+2)}} \right)  \left(\dfrac{1-q^{2(m+3)}}{1+q^{(2r+1)(m+3)}} \right) \right)\\
&=\frac{(-q^{2r+1};q^{2r+1})_\infty}{(q^2;q^2)_\infty} \sum_{m\ge0}\dfrac{q^{2(m+3)}(1-q^{2(m+1)})(1-q^{2(m+2)})(1+q^{(2r-1)(m+3)})}{(1+q^{(2r+1)(m+1)})(1+q^{(2r+1)(m+2)})(1+q^{(2r+1)(m+3)})}.
\end{align*}

 \textbf{Case 2:} In this case, the partitions contain a part of length $m+2$. Importantly, the $*$ position must always be filled; leaving it empty would indicate that the part $m+1$ appears only once, which is not allowed for any partition of type $A_r$. Consequently, the generating function for this case is identical to that of Case 1 for $a_2^{(r)}(n)$ and is given by

 \begin{align*}
  \frac{(-q^{2r+1};q^{2r+1})_\infty}{(q^2;q^2)_\infty} \sum_{m\ge0}   \dfrac{q^{2(m+2)}(1-q^{2(m+1)})(1+q^{(2r-1)(m+2)})}{(1+q^{(2r+1)(m+1)})(1+q^{(2r+1)(m+2)})}. 
 \end{align*}

\textbf{Case 3:} Each  partition in this case contains a part of length $m+1$ that is repeated at least three times, but, when it appears odd many times, the number of appearances is at least $2r+1$ for each $m \ge 0$. Thus, the generating function corresponding to this case is given by
\begin{align*}
&\sum_{m\ge0} \left( \left( q^{4(m+1)}+q^{6(m+1)}+ q^{8(m+1)} +\cdots \right) + \left( q^{(2r+1)(m+1)}+  q^{(2r+3)(m+1)} + \cdots \right) \right)\\
&\times \prod_{\substack{i\ge1\\ i\ne m+1}}
\left( 1 + q^{2i}+q^{4i}+\cdots + q^{2ri} + q^{(2r+1)i}+  q^{(2r+2)i} + \cdots \right)\\
&=\sum_{m\ge0} \left(\dfrac{q^{4(m+1)} + q^{(2r+1)(m+1)} }{1-q^{2(m+1)}}\right)
\prod_{\substack{i\ge1\\ i\ne m+1}}
\left(  \dfrac{1 + q^{(2r+1)i}}{1-q^{2i}}\right)\\
&=\sum_{m\ge0}\left(\dfrac{q^{4(m+1)} + q^{(2r+1)(m+1)} }{1-q^{2(m+1)}}\right)
\left( \frac{(-q^{2r+1};q^{2r+1})_\infty}{(q^2;q^2)_\infty}\left(\dfrac{1-q^{2(m+1)}}{1+q^{(2r+1)(m+1)}} \right) \right)\\
&=\frac{(-q^{2r+1};q^{2r+1})_\infty}{(q^2;q^2)_\infty} \sum_{m\ge0}   \dfrac{q^{4(m+1)} +q^{(2r+1)(m+1)}}{1+q^{(2r+1)(m+1)}}.
\end{align*}
Adding these three generating functions produces the generating function for $a_3^{(r)}(n)$.
\end{proof}
Next, we derive the generating functions for $b_1^{(r)}(n)$, $b_2^{(r)}(n)$, and $b_3^{(r)}(n)$ for all positive integers $r$. Set $S := \{ n \in \mathbb{N} : n \text{~is even or~} n \equiv 2r+1 \pmod{4r+2}\}$.
\begin{proof}[Proof of Theorem~\ref{p1_thm14}]
Following similarly as in the proof of Theorem \ref{p1_thm11}, and using the fact that $a_\lambda(v) = l_\lambda(v) = 0$ whenever the hook length of $v$ is $1$, we obtain the following generating function:
\begin{align*}
\sum_{n\ge 0} b_1^{(r)}(n)q^n 
&=\sum_{\substack{m\ge0\\ m+1 \in S } }  \left(q^{(m+1)}+q^{2(m+1)}+\cdots\right)\prod_{\substack{ i \in S\\ i\ne m+1 }}
\left(1+q^{i}+q^{2i}+\cdots\right)\\
&=\sum_{\substack{m\ge0\\ m+1 \in S } } \left(\dfrac{q^{m+1}}{1-q^{m+1}}\right)
\prod_{\substack{ i \in S\\ i\ne m+1}}
\left( \dfrac{1}{1-q^{i}}\right)\\
&=\sum_{\substack{m\ge0\\ m+1 \in S } } \left(\dfrac{q^{m+1}}{1-q^{m+1}}\right)
\left(  \dfrac{(-q^{2r+1};q^{2r+1})_\infty}{(q^2;q^2)_\infty}(1-q^{m+1})   \right)\\
&=\frac{(-q^{2r+1};q^{2r+1})_\infty}{(q^2;q^2)_\infty} \sum_{\substack{m\ge0\\ m+1 \in S } } q^{m+1}\\
&= \frac{(-q^{2r+1};q^{2r+1})_\infty}{(q^2;q^2)_\infty}\left(
\frac{q^2 \sum_{k=0}^{2r}q^{2k} +q^{2r+1}}{1-q^{4r+2}}
\right).
\end{align*}
This completes the proof of the theorem.
\end{proof}
\begin{proof}[Proof of Theorem~\ref{p1_thm15}] As in Theorem \ref{p1_thm12}, to count the number of $2$-hooks, we need to consider the following two cases:
     \begin{itemize}
     \item Case 1: partitions include a box such that $a_\lambda(v) = 1$ and $l_\lambda(v) = 0$,
    \item Case 2: partitions include a box such that $a_\lambda(v) = 0$ and $l_\lambda(v) = 1$.
    \end{itemize}
\textbf{Case 1:} Each partition in this case possesses a part of size $m+2$ occurring at least once, where $m+2 \in S$, and contains no part of size $m+1$, for all $m \ge 0$. Thus, in this case, the  generating function is 

\begin{align*}
&\sum_{\substack{m\ge0\\ m+2 \in S}}  \left(q^{(m+2)}+q^{2(m+2)}+\cdots\right)
\prod_{\substack{i\in S\\ i\ne m+1,m+2}}
\left(1+q^{i}+q^{2i}+\cdots\right)\\
&=\sum_{\substack{m\ge0\\ m+2 \in S}} \left(\dfrac{q^{m+2}}{1-q^{m+2}}\right)
\prod_{\substack{i\in S\\ i\ne m+1,m+2}}
\left( \dfrac{1}{1-q^{i}}\right)\\
&=\sum_{\substack{m\ge0\\ m+1,m+2\in S}}
\left(\dfrac{q^{m+2}}{1-q^{m+2}}\right)
\dfrac{(-q^{2r+1};q^{2r+1})_\infty}{(q^2;q^2)_\infty}(1-q^{m+1})(1-q^{m+2})\\
&\qquad+\sum_{\substack{m\ge0\\ m+2\in S\\m+1\notin S}}
\left(\dfrac{q^{m+2}}{1-q^{m+2}}\right)
\dfrac{(-q^{2r+1};q^{2r+1})_\infty}{(q^2;q^2)_\infty}(1-q^{m+2})
\\
&=\frac{(-q^{2r+1};q^{2r+1})_\infty}{(q^2;q^2)_\infty}
\left( \sum_{\substack{m\ge0\\ m+1,m+2\in S}}
q^{m+2}(1-q^{m+1})
+
\sum_{\substack{m\ge0\\ m+2\in S\\ m+1\notin S}}
q^{m+2} \right)\\
&=\frac{(-q^{2r+1};q^{2r+1})_\infty}{(q^2;q^2)_\infty}
\left(
\dfrac{\sum_{k=1}^{2r+1}q^{2k} + q^{2r+1}}{1-q^{4r+2}} - \dfrac{q^{4r+1}+ q^{4r+3}}{1-q^{8r+4}}
\right).
\end{align*}

\textbf{Case 2:} Partitions falling under this case possess a part of size $m+1$ occurring at least twice, where $m+1 \in S$, for all $m \ge 0$. Consequently, the generating function is given by
\begin{align*}
&\sum_{\substack{m\ge0\\ m+1 \in S}} \left(q^{2(m+1)}+q^{3(m+1)}+\cdots\right)
\prod_{\substack{i\in S\\ i\ne m+1}}
\left(1+q^{i}+q^{2i}+\cdots\right)\\
&=\sum_{\substack{m\ge0\\ m+1 \in S}} \left(\dfrac{q^{2(m+1)}}{1-q^{m+1}}\right)
\prod_{\substack{ i\in S\\ i\ne m+1}}
\left( \dfrac{1}{1-q^{i}}\right)\\
&=\sum_{\substack{m\ge0\\ m+1 \in S}} \left(\dfrac{q^{2(m+1)}}{1-q^{m+1}}\right)
\left( \dfrac{(-q^{2r+1};q^{2r+1})_\infty}{(q^2;q^2)_\infty}(1-q^{m+1})  \right)\\
&=\frac{(-q^{2r+1};q^{2r+1})_\infty}{(q^2;q^2)_\infty} \sum_{\substack{m\ge0\\ m+1 \in S}} q^{2(m+1)}\\
&=\frac{(-q^{2r+1};q^{2r+1})_\infty}{(q^2;q^2)_\infty} \left(  \dfrac{q^{4r+2}+ \sum_{k=1}^{2r+1}q^{4k}}{1-q^{8r+4}} \right). 
\end{align*}
We obtain the required generating function for $b_2^{(r)}(n)$ by adding these two generating functions.
\end{proof}
\begin{proof}[Proof of Theorem~\ref{p1_thm16}]
As shown in Fig.~\ref{fig:hook-length-3}, to count the number of hooks of length 3, we need to consider the following three cases.
\begin{itemize}
\item Case 1: partitions include a box with $a_\lambda(v) = 2$ and $l_\lambda(v) = 0$,
\item Case 2: partitions include a box with $a_\lambda(v) = 1$ and $l_\lambda(v) = 1$,
\item Case 3: partitions include a box with $a_\lambda(v) = 0$ and $l_\lambda(v) = 2$.
    \end{itemize}
\textbf{Case 1:} Each partition in this case contains a part of size $m+3$, where $m+3\in S$, and contains no part of size $m+1$ or $m+2$, for all $m\ge 0$. Thus, the generating function corresponding to this case is given by
 \begin{align*}
&\sum_{\substack{m\ge0\\ m+3 \in S}}  \left(q^{(m+3)}+q^{2(m+3)}+\cdots\right)
\prod_{\substack{i\in S\\ i\ne m+1,m+2,m+3}}
\left(1+q^{i}+q^{2i}+\cdots\right)\\
&=\sum_{\substack{m\ge0\\ m+3 \in S}} \left(\dfrac{q^{m+3}}{1-q^{m+3}}\right)
\prod_{\substack{i\in S\\ i\ne m+1,m+2,m+3}}
\left( \dfrac{1}{1-q^{i}}\right)\\
&=  \sum_{\substack{m\ge0\\m+1, m+2,m+3 \in S}} \left(\dfrac{q^{m+3}}{1-q^{m+3}}\right)
\dfrac{(-q^{2r+1};q^{2r+1})_\infty}{(q^2;q^2)_\infty}(1-q^{m+1})(1-q^{m+2})(1-q^{m+3}) \\
&\qquad + \sum_{\substack{m\ge0\\m+2, m+3 \in S\\ m+1 \notin S}} \left(\dfrac{q^{m+3}}{1-q^{m+3}}\right)
\dfrac{(-q^{2r+1};q^{2r+1})_\infty}{(q^2;q^2)_\infty}(1-q^{m+2})(1-q^{m+3}) \\
&\qquad + \sum_{\substack{m\ge0\\m+1, m+3 \in S\\ m+2 \notin S } } \left(\dfrac{q^{m+3}}{1-q^{m+3}}\right)
\dfrac{(-q^{2r+1};q^{2r+1})_\infty}{(q^2;q^2)_\infty}(1-q^{m+1})(1-q^{m+3}) \\
&=\frac{(-q^{2r+1};q^{2r+1})_\infty}{(q^2;q^2)_\infty}
\left( \sum_{\substack{m\ge0\\m+1, m+2,m+3 \in S}} q^{m+3}(1-q^{m+1})(1-q^{m+2}) \right.\\
&\hspace{1.2cm}\left.+ \sum_{\substack{m\ge0\\m+2, m+3 \in S\\ m+1 \notin S}}
q^{m+3}(1-q^{m+2}) + \sum_{\substack{m\ge0\\m+1, m+3 \in S\\ m+2\notin S } }
q^{m+3}(1-q^{m+1}) 
\right)\\
&=\dfrac{(-q^{2r+1};q^{2r+1})_\infty}{(q^2;q^2)_\infty}
\left( 
\frac{q^{2r+1}+ \sum_{k=0}^{2r}q^{2k+4} }{1-q^{4r+2}} - \dfrac{q^{4r+1}+q^{4r+3}+  \sum_{k=0}^{2r}q^{4k+6} }{1-q^{8r+4}} + \dfrac{q^{6r+3}}{1-q^{12r+6}}
\right). 
\end{align*}

\textbf{Case 2:} We consider two subcases depending on whether the $*$ position is occupied. If $*$ position is empty, then the partition contains a part of size $m+2$ at least once and a part of size $m+1$ exactly once, with $m+1, m+2 \in S$ for all $m \ge 0$. Therefore, the generating function in this situation is

\begin{align*}
&\sum_{\substack{m\ge0\\ m+1,m+2 \in S } }  q^{m+1}\left(q^{(m+2)}+q^{2(m+2)}+\cdots\right)
\prod_{\substack{i\in S\\ i\ne m+1,m+2}}
\left(1+q^{i}+q^{2i}+\cdots\right)\\
&=\sum_{\substack{m\ge0\\ m+1,m+2 \in S } } q^{m+1} \left(\dfrac{q^{m+2}}{1-q^{m+2}}\right)
\prod_{\substack{i\in S\\ i\ne m+1,m+2}}
\left( \dfrac{1}{1-q^{i}}\right)\\
&=\sum_{\substack{m\ge0\\ m+2,m+1 \in S } } \left(\dfrac{q^{2m+3}}{1-q^{m+2}}\right)
 \frac{(-q^{2r+1};q^{2r+1})_\infty}{(q^2;q^2)_\infty}(1-q^{m+1})(1-q^{m+2}) \\
&=\frac{(-q^{2r+1};q^{2r+1})_\infty}{(q^2;q^2)_\infty} \sum_{\substack{m\ge0\\ m+1,m+2 \in S } } q^{2m+3}(1-q^{m+1})\\
&=\dfrac{(-q^{2r+1};q^{2r+1})_\infty}{(q^2;q^2)_\infty}\left(  \dfrac{q^{4r+1} +q^{4r+3}}{1-q^{8r+4}} - \dfrac{q^{6r+1}+ q^{6r+4}}{1-q^{12r+6}}\right).
\end{align*}
However, if $*$ position is occupied, then the partition contains at least two parts of size $m+2$ and no parts of size $m+1$, where $m+2\in S$ for all $m \ge 0$. Accordingly, the generating function is

\begin{align*}
&\sum_{\substack{m\ge0\\ m+2 \in S } }  \left(q^{2(m+2)}+q^{3(m+2)}+\cdots\right)
\prod_{\substack{i\in S\\ i\ne m+1,m+2}}
\left(1+q^{i}+q^{2i}+\cdots\right)\\
&=\sum_{\substack{m\ge0\\ m+2 \in S } }  \left(\dfrac{q^{2(m+2)}}{1-q^{m+2}}\right)
\prod_{\substack{i\in S\\ i\ne m+1,m+2}}
\left( \dfrac{1}{1-q^{i}}\right)\\
&=\sum_{\substack{m\ge0\\ m+1,m+2 \in S } } \left(\dfrac{q^{2(m+2)}}{1-q^{m+2}}\right)
 \frac{(-q^{2r+1};q^{2r+1})_\infty}{(q^2;q^2)_\infty}(1-q^{m+1})(1-q^{m+2}) \\  &\qquad +  \sum_{\substack{m\ge0\\ m+2 \in S\\m+1 \notin S } } \left(\dfrac{q^{2(m+2)}}{1-q^{m+2}}\right)
 \dfrac{(-q^{2r+1};q^{2r+1})_\infty}{(q^2;q^2)_\infty}(1-q^{m+2}) \\
&=\frac{(-q^{2r+1};q^{2r+1})_\infty}{(q^2;q^2)_\infty} \left(  \dfrac{q^{4r+2} + \sum_{j= 1}^{2r+1}q^{4j}}{1-q^{8r+4}} - \dfrac{q^{6r+2}+ q^{6r+5}}{1-q^{12r+6}}\right).
\end{align*}

\textbf{Case 3:} In this case, each partition contains a part of size $m+1$ that is repeated at least three times, where $m+1\in S$, for all $m \ge 0$. Thus, the generating function in this case is
\begin{align*}
&\sum_{\substack{m\ge0\\ m+1 \in S } }  \left(q^{3(m+1)}+q^{4(m+1)}+\cdots\right)
\prod_{\substack{i\in S\\ i\ne m+1}}
\left(1+q^{i}+q^{2
i}+\cdots\right)\\
&=\sum_{\substack{m\ge0\\ m+1 \in S } } \left(\dfrac{q^{3(m+1)}}{1-q^{m+1}}\right)
\prod_{\substack{i\in S\\ i\ne m+1}}
\left( \dfrac{1}{1-q^{i}}\right)\\
&=\sum_{\substack{m\ge0\\ m+1 \in S } } \left(\dfrac{q^{3(m+1)}}{1-q^{m+1}}\right)
\left( \frac{(-q^{2r+1};q^{2r+1})_\infty}{(q^2;q^2)_\infty}(1-q^{m+1}) \right)\\
&=\frac{(-q^{2r+1};q^{2r+1})_\infty}{(q^2;q^2)_\infty} \sum_{\substack{m\ge0\\ (m+1) \in S } } q^{3(m+1)}\\
&=\dfrac{(-q^{2r+1};q^{2r+1})_\infty}{(q^2;q^2)_\infty}
\left(
\frac{q^{6r+3}+ \sum_{j =1 }^{2r+1}q^{6j} }{1-q^{12r+6}}
\right).
\end{align*}
By adding all the generating functions derived above, we obtain the requisite expression for the generating function of $b_3^{(r)}(n)$.
\end{proof}
%%%%%%%%%%%%%%%%%%%%%%%%%%%%%%%%%%%%%
\section{Proof of Asymptotic formulas for $a_{\ell}^{(r)}(n)$ and $b_{\ell}^{(r)}(n)$}\label{p1_sec3}
In this section, we prove Theorems~\ref{p1_thm8}, \ref{p1_thm9} and \ref{p1_thm10}. We prove the asymptotic formulas using Wright's circle method.
\subsection{Wright's circle method}\label{p1_subsec3.1}
The circle method is a fundamental technique in the analytic theory of integer partitions. This method originates from the seminal work of Hardy and Ramanujan on $p(n)$ \cite{Hardy_Ramanujan}, in which they established that
\[
p(n) \sim \frac{1}{4n\sqrt{3}}e^{\pi\sqrt{\frac{2n}{3}}}
\]
as $n\to\infty$. Subsequently, Rademacher \cite{Rademacher} refined this approach to derive an exact formula for
$p(n)$. The presence of non-modular terms in the generating
functions associated with $a_\ell^{(r)}(n)$ and  $b_\ell^{(r)}(n)$ for $\ell= 1,2,3$, such as Lambert series and rational functions, precludes the direct application of Rademacher's approach to obtain exact formulas in these cases.
\par
To overcome this difficulty, we employ a modified version of the circle method due to Wright \cite{Wright}. We use the formulation of this method as developed by Ngo and Rhoades \cite{Ngo_Rhoades}, who derived asymptotic formulas for the coefficients $c(n)$ of the products
\[
L(q)\xi(q),
\]
where $L(q)$ and $\xi(q)$ are certain analytic functions of
$q\in\mathbb{C}$ with $|q|<1$ and $q\notin\mathbb{R}_{\le 0}$. Informally, $\xi(q)$ must have a ``main’’ exponential singularity as $q\to1$ and $L(q)$ must have a polynomial behavior as
$q\to 1$.
\par
Throughout the remainder of this article, let $q = e^{-z}$, where $z = x + iy$ and $q \notin \mathbb{R}_{\le 0}$, with $x > 0$ and $|y| < \pi$.

The setup is as follows, since $L(q)$ and $\xi(q)$ are two analytic functions so $L(q)\xi(q)$ is also analytic for
$|q|<1$ and $q\notin\mathbb{R}_{\le 0}$. We are interested in the
asymptotic expansion of
\begin{equation}
c(n)=\frac{1}{2\pi i}
\int_{\mathcal{C}}
\frac{L(q)\xi(q)}{q^{n+1}}dq 
\notag
\end{equation}
  as  $n\to\infty$, where $\mathcal{C}$ denotes a circle centered at the origin with radius less than one, traversed by $q$ in the counterclockwise direction. The above integral arises from the generating functions described in Section~\ref{p1_sec2}.
\par
To state the version of Wright's circle method as presented in~\cite{Ngo_Rhoades}, we use the following notations introduced by Ballantine et al. \cite{Ballantine_2023}. Let $0 < \delta < \frac{\pi}{2}$ and define
\[
D_{\delta}:=\{z\in\mathbb{C}: |\arg(z)|< \frac{\pi}{2} -\delta\}.
\]
Equivalently, if we let $\Delta=\cot(\delta)$, then
\[
D_{\delta}:=\{x+iy\in\mathbb{C}: 0\le |y|<\Delta x\}.
\]
For fixed $c>0$ and integer $n\ge 1$, we let $\mathcal{C}=\mathcal{C}(c,n)$ be the
circle in the complex plane centered at zero with radius $|q|=e^{-x}$, where
$x=\frac{c}{\sqrt{n}}.$ For fixed $\delta$, we let
\[
\mathcal{C}_1:=D_{\delta}\cap\mathcal{C}
\qquad\text{and}\qquad
\mathcal{C}_2:=\mathcal{C}\setminus\mathcal{C}_1.
\]
We call $\mathcal{C}_1$ the \textit{major arc} and $\mathcal{C}_2$ the \textit{minor arc}.
We will interchangeably use Landau's big-$O$ and Vinogradov's $\ll$ notation in this section. Recall that for two complex-valued functions $f(z)$ and $g(z)$, we write $f = O(g)$ or $f \ll g$ in a region if and only if there exists a constant $C > 0$ such that
\[
|f(z)|\le C|g(z)|
\]
for all $z$ in that region.

Let $L(q),\xi(q)$ be analytic functions for $q=e^{-z}$ in the unit disk, and fix
$\delta$ and $c$ as above. Following the framework established by Ngo and Rhoades, we define four hypotheses  concerning the asymptotic properties of these functions. In (H1)–(H4) below, the notation
$\ll_{\delta}$ and $O_{\delta}$ indicate that the implied constants depend on
$\delta$.
\begin{itemize}
    \item[(H1)] For every positive integer $k$, as $z\to 0$ in the cone
    $D_{\delta}$ we have
    \[
    L(q)
    =
    \frac{1}{z^B}
    \left(
        \sum_{s=0}^{k-1}\alpha_s z^s
        + O_{\delta}\left(z^k\right)
    \right),
    \]
    where $B\in\mathbb{R}$ and $\alpha_s\in\mathbb{C}$.

    \item[(H2)] As $z\to 0$ in the cone $D_{\delta}$ we have
    \[
    \xi(q)
    =
    K z^{\beta} e^{\frac{A}{z}}
    \left(
        1+O_{\delta}\left(e^{-\frac{\gamma}{z}}\right)
    \right),
    \]
    where $A:=c^2$, and $K,\beta\ge 0$ and $\gamma>0$.
\item[(H3)] As $z \to 0$ in the region
$\frac{\pi}{2}-\delta \le |\arg(z)| < \frac{\pi}{2},$ we have
\[
|L(q)| \ll_{\delta} |z|^{-C},
\]
where $C>0$.

\item[(H4)] As $z \to 0$ in the region $ \frac{\pi}{2}-\delta \le |\arg(z)| < \frac{\pi}{2},$ we have
\[
|\xi(q)| \ll_{\delta} \xi(|q|)
e^{-\frac{\delta'}{\operatorname{Re}(z)}},
\]
where $\delta'=\delta'(\delta)>0$.
\end{itemize}
Note that, in (H2), Ngo and Rhoades assumed $\gamma>c^2$, but this is not strictly necessary when establishing Proposition \ref{p1_prop1}, although the scenario where $\gamma>c^2$ arises naturally if $\xi$ is modular. In addition, (H1) and (H2) put restrictions on the
asymptotic behavior of $L(q)\xi(q)$ on the major arc, and (H3) and (H4) put restrictions on the minor arc.
Ngo and Rhoades proved the following result from these hypotheses.
\begin{proposition}\label{p1_prop1}\cite[Proposition 1.8]{Ngo_Rhoades} 
   Assume the hypotheses above. Then, as $n\to\infty$, we
have for any $N\in\mathbb{Z}^{+}$ that
\begin{align}
c(n)
= K e^{2\sqrt{An}} n^{\frac{1}{4}(2B-2\beta-3)}
\left(
\sum_{r=0}^{N-1} p_r n^{-\frac{r}{2}}
+O\left(n^{-\frac{N}{2}}\right)
\right),
\notag
\end{align}
where
$p_r:=\sum_{j=0}^{r}\alpha_j c_{j,r-j}$ and
$c_{j,r}
:=
\frac{
\left(-\frac{1}{4\sqrt{A}}\right)^r
\left(\sqrt{A}\right)^{j+\beta-B+\frac{1}{2}}
}{
2\sqrt{\pi}
}
\frac{
\Gamma\left(j+B+\frac{3}{2}+r\right)
}{
r!\Gamma\left(j+B+\frac{3}{2}-r\right)
}.$
 
\end{proposition}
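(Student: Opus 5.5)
The statement is Proposition~1.8 of Ngo and Rhoades, so one could simply cite it. The argument I would give is the standard Wright circle method, with the Bessel function $I_\nu$ carrying the main term.

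\textbf{Setup.} Substitute $q=e^{-z}$, so $dq=-q\,dz$. The circle $\mathcal{C}$ becomes the vertical segment $\operatorname{Re}(z)=x=\sqrt{A/n}$, $|\operatorname{Im}(z)|\le\pi$, and
\[
c(n)=\frac{1}{2\pi i}\int_{x-i\pi}^{x+i\pi}L(e^{-z})\,\xi(e^{-z})\,e^{nz}\,dz .
\]
I split this into the major arc $\mathcal{C}_1$ (where $|y|<\Delta x$) and the minor arc $\mathcal{C}_2$. The choice $x=c/\sqrt n$ with $A=c^2$ balances $A/x$ against $nx$, so that $A/x+nx=2\sqrt{An}$.

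\textbf{Minor arc.} By (H3), $|L|\ll|z|^{-C}\ll x^{-C}$, which is polynomial in $n$. By (H4) and (H2) applied at the real point $x$,
\[
|\xi(q)|\ll \xi(|q|)\,e^{-\delta'/x}\ll x^{\beta}e^{A/x}e^{-\delta'\sqrt n/c}.
\]
Since $|e^{nz}|=e^{nx}$, the whole minor-arc contribution is $\ll n^{O(1)}e^{2\sqrt{An}-\delta'\sqrt n/c}$. This is exponentially smaller than the claimed main term.

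\textbf{Major arc.} Here I insert (H1) with a large $k$ together with (H2):
\[
L\xi=Kz^{\beta-B}e^{A/z}\Big(\sum_{s<k}\alpha_s z^s+O(z^k)\Big)\big(1+O(e^{-\gamma/z})\big).
\]
\begin{enumerate}[(a)]
\item On the cone, $\operatorname{Re}(1/z)=x/|z|^2\ge 1/((1+\Delta^2)x)$. Hence the factor $e^{-\gamma/z}$ is $\ll e^{-\gamma'\sqrt n}$, and that error is exponentially negligible. This step does not need $\gamma>c^2$, which matches the remark after (H4).
\item The $O(z^k)$ term is bounded by
\[
\int_{\mathcal{C}_1}|z|^{k+\beta-B}e^{A\operatorname{Re}(1/z)+nx}\,|dz|\ll e^{2\sqrt{An}}\,n^{-(k+\beta-B+1)/2}.
\]
Taking $k$ large relative to $N$ absorbs this into $O(n^{-N/2})$ after normalisation.
\item For each main integral
\[
I_s=\frac{1}{2\pi i}\int_{\mathcal{C}_1}z^{s+\beta-B}e^{A/z+nz}\,dz,
\]
I complete $\mathcal{C}_1$ to a Hankel-type contour around the negative real axis. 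The added pieces have $\operatorname{Re}(A/z+nz)\le 2\sqrt{An}-\epsilon\sqrt n$, so completing the contour costs only an exponentially small error.
\end{enumerate}

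\textbf{Evaluation and bookkeeping.} On the full contour, the Bessel representation gives
\[
\frac{1}{2\pi i}\int z^{-\nu-1}e^{nz+A/z}\,dz=\Big(\frac{n}{A}\Big)^{\nu/2}I_\nu\big(2\sqrt{An}\big),\qquad \nu=-(s+\beta-B+1).
\]
I then substitute the asymptotic expansion
\[
I_\nu(w)=\frac{e^w}{\sqrt{2\pi w}}\sum_{r}\frac{(-1)^r}{r!\,(2w)^r}\,\frac{\Gamma(\nu+r+\tfrac12)}{\Gamma(\nu-r+\tfrac12)}+\cdots,
\]
with $w=2\sqrt{An}$. Collecting powers of $n$ and grouping the indices $j=s$ and $r$ should reproduce the prefactor $Ke^{2\sqrt{An}}n^{\frac14(2B-2\beta-3)}$ and the coefficients $p_r=\sum_j\alpha_jc_{j,r-j}$.

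\textbf{Main obstacle.} I expect the hardest part to be uniform control of the errors: the $O(z^k)$ remainder and the contour-completion tails must be bounded by $e^{2\sqrt{An}}n^{-N/2}$ times the normalising power. The remaining difficulty is matching the Gamma-ratio constants exactly, which is routine but error-prone.
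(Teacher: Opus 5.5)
Your approach is right: it is the standard Wright circle-method argument (minor arc via (H3)--(H4), major arc via (H1)--(H2), completion to a Hankel contour, Bessel evaluation, then the asymptotic expansion of $I_\nu$). This is the argument Ngo and Rhoades use, and the paper itself gives no proof beyond citing their Proposition~1.8. Your minor-arc estimate, your treatment of the $O(e^{-\gamma/z})$ and $O(z^k)$ errors, and the identity $\frac{1}{2\pi i}\int z^{-\nu-1}e^{nz+A/z}\,dz=(n/A)^{\nu/2}I_\nu(2\sqrt{An})$ are all correct.

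The step that fails as you announce it is the last one, which you only assert. Do the bookkeeping with $\nu_j=B-\beta-j-1$, and use that $\Gamma(\nu+r+\frac12)/\Gamma(\nu-r+\frac12)=\prod_{i=1}^{r}\bigl(\nu^2-(i-\frac12)^2\bigr)$ depends only on $\nu^2$. You then obtain
\[
c_{j,r}=\frac{\left(-\frac{1}{4\sqrt{A}}\right)^{r}\left(\sqrt{A}\right)^{j+\beta-B+\frac12}}{2\sqrt{\pi}}\cdot\frac{\Gamma\left(j+\beta-B+\frac32+r\right)}{r!\,\Gamma\left(j+\beta-B+\frac32-r\right)},
\]
whereas the displayed statement has $\Gamma\left(j+B+\frac32\pm r\right)$. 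The two coincide for $r=0$ but not in general for $r\ge 1$.

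For example, in the paper's own setting $\beta=\frac12$, $B=1$, the $j=0$ integrand is $z^{-1/2}e^{A/z+nz}$. This gives $I_{-1/2}(w)=\sqrt{2/(\pi w)}\cosh w$, which has no $w^{-1}$ correction, so $c_{0,1}=0$. The printed formula instead has Gamma ratio $\Gamma(\frac72)/\Gamma(\frac32)=\frac{15}{4}$, hence $c_{0,1}\neq 0$.

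Similarly, for $p(n)$ take $L(q)=q^{1/24}$ and $\xi(q)=q^{-1/24}P(q)$, so $B=0$, $\beta=\frac12$, $A=\frac{\pi^2}{6}$. Only the corrected constants reproduce the known relative correction $-\bigl(\frac{\sqrt6}{2\pi}+\frac{\pi}{24\sqrt6}\bigr)n^{-1/2}$ coming from Rademacher's formula.

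So your argument proves the proposition with the Gamma arguments corrected, not with the constants as printed, and you should state that rather than claim agreement. The paper only ever uses $p_0=\alpha_0c_{0,0}$, so none of its asymptotic theorems is affected.
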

In the context of Proposition \ref{p1_prop1}, it should be emphasized that the specific numerical values of the constants $\delta, \gamma, C, \delta'$ involved do not play a direct role in the asymptotic expression. As a result, no particular attention is devoted to determining these constants with exactness in the analysis conducted in this section.

\subsection{Modular transformation}\label{p1_subsec3.2}

In this section, we present the calculation of the asymptotic growth of $\frac{(-q^{2r+1};q^{2r+1})_\infty}{(q^2;q^2)_\infty}$ on both the major and minor arcs. As established in Section~\ref{p1_sec2}, all the relevant generating functions are expressed as products involving $\frac{(-q^{2r+1};q^{2r+1})_\infty}{(q^2;q^2)_\infty}$ and additional factors. Since $\frac{(-q^{2r+1};q^{2r+1})_\infty}{(q^2;q^2)_\infty}$ constitutes the factor with exponential singularities in all these generating functions, we define 
$$\xi^{(r)}(q):=\frac{(-q^{2r+1};q^{2r+1})_\infty}{(q^2;q^2)_\infty} = \frac{(q^{4r+2};q^{4r+2})_\infty}{(q^2;q^2)_\infty (q^{2r+1};q^{2r+1})_\infty} = \frac{E^{(r)}(q)}{(q^2;q^2)_\infty},
$$
where $E^{(r)}(q):= \frac{(q^{4r+2};q^{4r+2})_\infty}{(q^{2r+1};q^{2r+1})_\infty}$.
Observe that for $q = e^{2\pi i\tau}$, we have the relation $P(e^{2\pi i\tau}) =\frac{e^{\pi i\tau/12}}{\eta(\tau)},$ where $\eta(\tau)$ denotes the Dedekind $\eta$-function and $P(q):= \sum_{n \ge 0} p(n)q^n = \frac{1}{(q;q)_\infty}$ is the Euler’s partition generating function. Note that $P(q)$ satisfies the following transformation
\begin{align}
P(q)=P(e^{-z})=\sqrt{\frac{z}{2\pi}}e^{\left(\frac{\pi^2}{6z}-\frac{z}{24}\right)}
P\left(e^{-\frac{4\pi^2}{z}}\right),
\label{p1_eqn1}
\end{align}
which is an immediate consequence of the modular transformation law
\begin{align*}
    \eta\left(-\frac{1}{\tau}\right) =\sqrt{\frac{\tau}{i}}\eta(\tau)
\end{align*}
satisfied by the Dedekind $\eta$-function.
Using \eqref{p1_eqn1}, we obtain the following transformation law for $\frac{(q^2;q^2)_\infty}{(q;q)_\infty}$:
\begin{align*}
    \frac{(q^2;q^2)_\infty}{(q;q)_\infty}=\frac{1}{\sqrt{2}}e^{\left(\frac{\pi^2}{12z}+\frac{z}{24}\right)}\frac{P\left(e^{-\frac{4\pi^2}{z}}\right)}{P\left(e^{-\frac{2\pi^2}{z}}\right)}. 
\end{align*}
Using this transformation law for $\frac{(q^2;q^2)_\infty}{(q;q)_\infty}$, Jackson and Otgonbayar~\cite{Jackson} established the following asymptotic results for $\frac{(q^2;q^2)_\infty}{(q;q)_\infty}$ on both major and minor arcs.
\begin{lemma}\label{p1_lem1} 
   \cite[Lemma 3.8]{Jackson} Let $\Delta > 0$ be a constant, and suppose $z=x+iy$ satisfies
$0\le |y|<\Delta x$. Then, as $z\to0$ in this region, we have
$$
\frac{(q^2;q^2)_\infty}{(q;q)_\infty} = \frac{1}{\sqrt{2}} e^ {\left( \frac{\pi^2}{12z} + \frac{z}{24} \right)}\left( 1 + O \left( e^{-\frac{2\pi^2}{z}} \right) \right).
$$ 
\end{lemma}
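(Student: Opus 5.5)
The plan is to start from the transformation law for $\frac{(q^2;q^2)_\infty}{(q;q)_\infty}$ recorded just above the lemma. We have
$$\frac{(q^2;q^2)_\infty}{(q;q)_\infty}=\frac{1}{\sqrt{2}}e^{\frac{\pi^2}{12z}+\frac{z}{24}}\frac{P\left(e^{-\frac{4\pi^2}{z}}\right)}{P\left(e^{-\frac{2\pi^2}{z}}\right)}.$$
This identity follows from applying \eqref{p1_eqn1} twice. Once we take $P(q)=P(e^{-z})$, and once we take $P(q^2)=P(e^{-2z})$, which requires replacing $z$ by $2z$. Dividing the two gives the prefactor $\sqrt{2z/2\pi}/\sqrt{z/2\pi}$ inverted, which is $1/\sqrt2$. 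The exponents combine as
$$\frac{\pi^2}{6z}-\frac{\pi^2}{12z}-\frac{z}{24}+\frac{2z}{24}=\frac{\pi^2}{12z}+\frac{z}{24}.$$
Given this identity, the whole task reduces to showing that the ratio $P(e^{-4\pi^2/z})/P(e^{-2\pi^2/z})$ equals $1+O(e^{-2\pi^2/z})$ uniformly as $z\to0$ in the cone $0\le|y|<\Delta x$.

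Next, write $w=e^{-2\pi^2/z}$. Then $P(e^{-4\pi^2/z})=P(w^2)$ and $P(e^{-2\pi^2/z})=P(w)$, so the ratio is
$$\frac{P(w^2)}{P(w)}=\frac{(w;w)_\infty}{(w^2;w^2)_\infty}=(w;w^2)_\infty=\prod_{k\ge0}(1-w^{2k+1}).$$
Everything now hinges on the size of $|w|$. We have $|w|=e^{-2\pi^2\operatorname{Re}(1/z)}$ and $\operatorname{Re}(1/z)=x/|z|^2$. In the cone, $|z|^2=x^2+y^2<(1+\Delta^2)x^2$, so $\operatorname{Re}(1/z)>\frac{1}{(1+\Delta^2)x}$, which tends to $\infty$. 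Hence $|w|\to0$ uniformly in the cone; in particular $|w|\le 1/2$ once $z$ is small.

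Then $(w;w^2)_\infty$ is a power series in $w$ with constant term $1$ and integer coefficients. It converges absolutely for $|w|<1$, and the tail is $O(|w|)$ uniformly for $|w|\le1/2$. Concretely, $|\log(w;w^2)_\infty|\le\sum_k 2|w|^{2k+1}\le 3|w|$. Therefore $(w;w^2)_\infty=1+O(w)=1+O(e^{-2\pi^2/z})$, where $O(e^{-2\pi^2/z})$ is understood as $O(|e^{-2\pi^2/z}|)$. This is exactly the claimed statement.

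The only delicate point is the uniformity of this estimate as $z\to 0$ inside the cone. That uniformity is supplied by the lower bound on $\operatorname{Re}(1/z)$ derived from $|y|<\Delta x$; the rest is routine.
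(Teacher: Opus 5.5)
Your argument is correct and follows the route the paper relies on: the paper does not reprove this lemma but quotes it from Jackson and Otgonbayar as a consequence of the displayed transformation law for $\frac{(q^2;q^2)_\infty}{(q;q)_\infty}$, and your write-up supplies exactly that derivation. Writing the leftover factor as $(w;w^2)_\infty$ with $w=e^{-2\pi^2/z}$, and using $\operatorname{Re}(1/z)>\frac{1}{(1+\Delta^2)x}$ on the cone, gives the uniform $1+O\bigl(|e^{-2\pi^2/z}|\bigr)$ bound cleanly; this bound is sharp, since $(w;w^2)_\infty=1-w+O(w^3)$.
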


\begin{lemma}
     \cite[Lemma 3.9]{Jackson} \label{p1_lem3} Let $\Delta > 0$ be a positive constant, and suppose $z = x + iy$ satisfies $\Delta x \le |y| < \pi$. Then, as $z \to 0$ in this region, we have
$$
\left|\frac{(q^2;q^2)_\infty}{(q;q)_\infty}\right| \ll e^ {\left( \frac{\pi^2}{12x} \left( \frac{1}{2} + \frac{3}{\pi^2} + \frac{6}{\pi^2 (\Delta^2 + 1)} \right) \right)}.
$$In particular, if $\Delta > 1.45$, then there is a positive constant $\delta'$ such that
$$
\left|\frac{(q^2;q^2)_\infty}{(q;q)_\infty}\right| \ll \left( -e^{-x};e^{-x} \right)_\infty  e^ {\left( -\frac{\delta'}{x} \right)}.
$$
\end{lemma}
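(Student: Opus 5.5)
The plan is to work with $\log$ of the product. The quotient $\frac{(q^2;q^2)_\infty}{(q;q)_\infty}$ equals $(-q;q)_\infty$. Expanding $\log(1+q^n)$ and summing over $n$ gives, for $|q|<1$,
\[
\log\frac{(q^2;q^2)_\infty}{(q;q)_\infty}=\sum_{k\ge1}\frac{(-1)^{k+1}}{k}\,\frac{q^k}{1-q^k},
\]
so that $\log\bigl|\frac{(q^2;q^2)_\infty}{(q;q)_\infty}\bigr|$ is the real part of this series. I would not bound every term by its absolute value: that costs a main term of size $\frac{\pi^2}{6x}$ instead of $\frac{\pi^2}{12x}$. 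Instead I split the series into three pieces:
\begin{itemize}
\item the first term $k=1$;
\item the odd terms $k\ge3$, bounded in absolute value by $\frac{|q|^k}{k(1-|q|^k)}$;
\item the even terms, which enter with a minus sign.
\end{itemize}

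\textbf{The odd terms $k\ge3$.} With $x=\mathrm{Re}(z)$ and $|q|=e^{-x}$, their contribution is at most $\sum_{k\ \mathrm{odd}}\frac{e^{-kx}}{k(1-e^{-kx})}$ minus the $k=1$ piece. Comparing sums with integrals, $\sum_{k\ \mathrm{odd}}\frac{1}{k^2x}=\frac{\pi^2}{8x}$ up to lower order terms. The $\pi^2/(8x)$ leading piece is reduced by the subtracted $k=1$ term $\approx 1/x$, and by the even terms, whose real parts I will show contribute at most about $-\frac{\pi^2}{24x}$ plus a correction of size $\frac{1}{4x}$. The correction comes from the $k=2$ term, where $\mathrm{Re}\,\frac{q^2}{1-q^2}$ need not be close to its value at $|q|$. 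Collecting these, the non-$(k=1)$ part should produce the constant $\frac{\pi^2}{12x}\bigl(\tfrac12+\tfrac{3}{\pi^2}\bigr)=\frac{\pi^2}{24x}+\frac{1}{4x}$.

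\textbf{The $k=1$ term.} This is where the minor arc hypothesis $\Delta x\le|y|$ enters. I would show $\left|\frac{q}{1-q}\right|\le\frac{1+o(1)}{|z|}$ as $z\to0$. Since $|z|^2=x^2+y^2\ge x^2(1+\Delta^2)$, this gives a bound of the form $\frac{1}{x\sqrt{1+\Delta^2}}$. The stated constant $\frac{1}{2x(\Delta^2+1)}$ suggests one should instead use $\mathrm{Re}\,\frac{q}{1-q}\approx\mathrm{Re}\,\frac1z=\frac{x}{|z|^2}\le\frac{1}{x(1+\Delta^2)}$. Keeping the real part rather than the modulus is what gives the sharper $(1+\Delta^2)^{-1}$ dependence. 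I expect this bookkeeping, i.e. getting exactly the advertised constants, to be the main obstacle; I would first try Taylor expanding $\frac{q^k}{1-q^k}=\frac{1}{kz}-\frac12+O(z)$ for small $k$ and handling the tail by absolute values.

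\textbf{The second assertion.} By Lemma~\ref{p1_lem1} applied on the positive real axis, $(-e^{-x};e^{-x})_\infty\asymp e^{\pi^2/(12x)}$. The ratio of the two bounds is therefore
\[
\exp\!\left(\frac{\pi^2}{12x}\Bigl(-\tfrac12+\tfrac{3}{\pi^2}+\tfrac{6}{\pi^2(\Delta^2+1)}\Bigr)\right).
\]
The bracket is negative exactly when $3+\frac{6}{\Delta^2+1}<\frac{\pi^2}{2}$, i.e.\ $\Delta^2>\frac{12}{\pi^2-6}-1\approx2.10$, i.e.\ $\Delta>1.45$. Taking $\delta'$ to be $\frac{\pi^2}{12}$ times the absolute value of the bracket yields the claimed bound with $e^{-\delta'/x}$.
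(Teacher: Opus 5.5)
The paper does not prove this lemma; it quotes it from \cite{Jackson}. Your deduction of the second assertion from the first is correct: Lemma~\ref{p1_lem1} at $z=x$, the threshold $\Delta^2>\frac{12}{\pi^2-6}-1$, and the choice of $\delta'$ all check out. The problem is that the first estimate is never established, and the route you sketch for it does not work.

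Your expansion $\sum_{k}\frac{(-1)^{k+1}}{k}\frac{q^k}{1-q^k}$ alternates in sign. After bounding the odd $k\ge3$ terms absolutely, which costs $\frac{\pi^2}{8x}-\frac1x$, you rely on the even terms contributing at most about $-\frac{\pi^2}{24x}+\frac{1}{4x}$. But the even part is exactly $-\frac12\log P(q^2)$, whose real part is $\frac12\log|(q^2;q^2)_\infty|$, and there is no such saving on the minor arc. For instance, at $y=\pi/2$ we have $q^2=-e^{-2x}$ and
\[
\frac12\log\left|(q^2;q^2)_\infty\right|=\frac12\log\frac{(-e^{-2x};e^{-4x})_\infty}{P(e^{-4x})}=-\frac{\pi^2}{96x}+O\!\left(\log\frac1x\right),
\]
which is larger than $-\frac{\pi^2}{24x}+\frac{1}{4x}$. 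The quantities you list also add up to $\frac{\pi^2}{12x}-\frac{3}{4x}$, not to $\frac{\pi^2}{24x}+\frac{1}{4x}$, so the collecting step is not a computation.

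If you use only the valid trivial bound $\frac12\log|(q^2;q^2)_\infty|\le\frac12\log(-|q|^2;|q|^2)_\infty\sim\frac{\pi^2}{48x}$, your decomposition gives $\log|(-q;q)_\infty|\le\frac1x\bigl(\frac{7\pi^2}{48}-1+\frac{1}{1+\Delta^2}\bigr)+O(\log\frac1x)$. That implies the stated estimate only for $\Delta\gtrsim1.12$, which is enough for the second assertion but not for the first. The factor-$2$ discrepancy you noticed in the $k=1$ term ($\frac{1}{x(1+\Delta^2)}$ versus $\frac{1}{2x(1+\Delta^2)}$) is a symptom of using the wrong expansion.

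The missing idea is to expand with nonnegative coefficients instead. Since $(-q;q)_\infty=1/(q;q^2)_\infty$,
\[
\log(-q;q)_\infty=\sum_{k\ge1}\frac1k\,\frac{q^k}{1-q^{2k}}.
\]
Bounding these terms by absolute values reproduces $\log(-|q|;|q|)_\infty\sim\frac{\pi^2}{12x}$ with no loss, so the only saving needed is in the $k=1$ term. There $\frac{q}{1-q^2}=\frac{1}{2\sinh z}$, and exactly
\[
\mathrm{Re}\,\frac{1}{\sinh z}=\frac{\sinh x\cos y}{\sinh^2x+\sin^2y}.
\]
This is $\le0$ for $\frac{\pi}{2}\le|y|<\pi$, and at most $\frac{\sinh x}{\sinh^2x+\sin^2(\Delta x)}=\frac{1+O(x^2)}{x(1+\Delta^2)}$ for $\Delta x\le|y|\le\frac{\pi}{2}$. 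This covers the whole minor arc uniformly. Your Taylor expansion in $z$ cannot do that, since $|y|$ runs up to $\pi$ and $z$ does not tend to $0$ there.

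For $k\ge2$, use $\bigl|\frac{q^k}{1-q^{2k}}\bigr|\le\frac{1}{2\sinh(kx)}\le\frac{1}{2kx}$. Altogether
\[
\log\left|\frac{(q^2;q^2)_\infty}{(q;q)_\infty}\right|\le\frac{1}{2x}\left(\frac{\pi^2}{6}-1\right)+\frac{1+O(x^2)}{2x(1+\Delta^2)}.
\]
Since $\frac{\pi^2}{12}-\frac12<\frac{\pi^2}{24}+\frac14$ (equivalently $\pi^2<18$), this implies the stated bound. In fact it beats $\frac{\pi^2}{12x}$ for every $\Delta>0$, not just for $\Delta>1.45$.
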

We now prove two lemmas on the asymptotic behavior of $\xi^{(r)}(q)$.
\begin{lemma}\label{p1_lem2}
Let $\Delta > 0$ be a constant, and suppose $z=x+iy$ satisfies
$0\le |y|<\Delta x$. Then, as $z\to0$ in this region, we have
    $$ \xi^{(r)}(q) = \frac{\sqrt{z}}{\sqrt{2\pi}} e^ {\left(\frac{(r+1)\pi^2}{6(2r+1)z}  \right)} \left( 1 + O \left( e^{-\frac{2\pi^2}{(2r+1)z}} \right) \right).$$
\end{lemma}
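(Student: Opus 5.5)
We will factor $\xi^{(r)}(q)$ into pieces whose behaviour near $z=0$ is already known. Write
$$\xi^{(r)}(q)=\frac{E^{(r)}(q)}{(q^2;q^2)_\infty}=\frac{(q^{4r+2};q^{4r+2})_\infty}{(q^{2r+1};q^{2r+1})_\infty}\cdot \frac{1}{(q^2;q^2)_\infty}.$$
The first factor is $\frac{(Q^2;Q^2)_\infty}{(Q;Q)_\infty}$ with $Q=q^{2r+1}=e^{-(2r+1)z}$. If $z$ lies in the cone $0\le|y|<\Delta x$, then so does $(2r+1)z$, and $(2r+1)z\to0$ as $z\to 0$. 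Hence Lemma~\ref{p1_lem1} applies with $z$ replaced by $(2r+1)z$:
$$E^{(r)}(q)=\frac{1}{\sqrt2}\exp\!\Big(\frac{\pi^2}{12(2r+1)z}+\frac{(2r+1)z}{24}\Big)\Big(1+O\big(e^{-\frac{2\pi^2}{(2r+1)z}}\big)\Big).$$

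For the second factor we use \eqref{p1_eqn1} with $z$ replaced by $2z$, since $q^2=e^{-2z}$:
$$\frac{1}{(q^2;q^2)_\infty}=P(e^{-2z})=\sqrt{\frac{z}{\pi}}\,\exp\!\Big(\frac{\pi^2}{12z}-\frac{z}{12}\Big)P\big(e^{-2\pi^2/z}\big).$$
In the cone, $\mathrm{Re}(1/z)\ge x/|z|^2\gg 1/|z|$, so $|e^{-2\pi^2/z}|\to0$. Therefore $P(e^{-2\pi^2/z})=1+O(e^{-2\pi^2/z})$, and this error is absorbed into $O(e^{-2\pi^2/((2r+1)z)})$ because $\mathrm{Re}(1/z)>0$ and $2r+1\ge 1$.

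Multiplying the two expansions:
- The constants give $\frac{1}{\sqrt2}\sqrt{\frac{z}{\pi}}=\frac{\sqrt z}{\sqrt{2\pi}}$.
- The exponents combine as $\frac{\pi^2}{12z}\big(\frac{1}{2r+1}+1\big)=\frac{\pi^2(2r+2)}{12(2r+1)z}=\frac{(r+1)\pi^2}{6(2r+1)z}$, which is the stated main term.

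The leftover factor $\exp\big(\frac{(2r+1)z}{24}-\frac{z}{12}\big)=\exp\big(\frac{(2r-1)z}{24}\big)$ tends to $1$. Its size is $1+O(z)$, which is polynomially small rather than exponentially small.

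This leftover factor is the one real obstacle. The statement as written has only an exponential error, so either:
1. the factor $\exp\big(\frac{(2r-1)z}{24}\big)$ is kept explicitly, analogous to the $e^{z/24}$ in Lemma~\ref{p1_lem1} (the proof can simply record that the displayed identity holds up to this harmless analytic factor); or
2. it is absorbed into the function $L(q)$ in the later applications of Proposition~\ref{p1_prop1}. Since it is analytic at $z=0$ with a Taylor expansion $1+\frac{(2r-1)z}{24}+\cdots$, it satisfies (H1) and (H3) without difficulty, and it does not alter the leading constant $\alpha_0$.

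I would adopt the reading in option 1 and note option 2 for the applications. The remaining step is to check that the error terms combine into the single $O\big(e^{-2\pi^2/((2r+1)z)}\big)$, using $|e^{-a/z}|\le |e^{-b/z}|$ for $a\ge b>0$ in the cone.
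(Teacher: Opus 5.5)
Your proposal follows the paper's proof exactly: you apply Lemma~\ref{p1_lem1} at $(2r+1)z$ for $E^{(r)}(q)$ and the transformation \eqref{p1_eqn1} at $2z$ for $1/(q^2;q^2)_\infty$, then multiply the two expansions. Your flag about the leftover factor is correct and points to a slip in the paper, not in your argument: the paper drops $e^{-z/12}$ in \eqref{p1_eqn3} and drops $e^{(2r+1)z/24}$ from \eqref{p1_eqn2} when it multiplies, so the lemma as printed is off by the factor $e^{(2r-1)z/24}=1+O(z)$. Your option 2, moving $e^{(2r-1)z/24}$ into $L(q)$, is the right repair for applying Proposition~\ref{p1_prop1}, because it leaves $B$ and $\alpha_0$ unchanged and hence leaves the leading terms in Theorems~\ref{p1_thm8}--\ref{p1_thm10} unchanged.
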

\begin{proof}
Using Lemma \ref{p1_lem1}, as $z\to0$ we obtain
\begin{align}
    E^{(r)}(q) = \frac{1}{\sqrt{2}} e^ {\left( \frac{\pi^2}{12(2r+1)z} + \frac{(2r+1)z}{24} \right)} \left( 1 + O \left( e^{-\frac{2\pi^2}{(2r+1)z}} \right) \right).
    \label{p1_eqn2}
\end{align}
Also, from \eqref{p1_eqn1}, we have
\begin{align*}
\notag P(e^{-z})&= \sqrt{\frac{z}{2\pi}}e^{\frac{\pi^2}{6z} -\frac{z}{24}}\sum_{n\ge 0}p(n)e^{-\frac{4\pi^2n}{z}} \\
&= \sqrt{\frac{z}{2\pi}} e^{\frac{\pi^2}{6z}} \left(1 + O(e^{\frac{-4\pi^2}{z}})\right).
\end{align*}
This yields
\begin{align}
P(e^{-2z}) = \frac{1}{(q^2;q^2)_\infty} = \sqrt{\frac{z}{\pi}}  e^{\left( \frac{\pi^2}{12z}\right)} \left(1 + O(e^{\frac{-2\pi^2}{z}})\right).
\label{p1_eqn3}
\end{align}
Therefore, using \eqref{p1_eqn2} and \eqref{p1_eqn3} as $z\to0$ we have 
\begin{align*}
    \xi^{(r)}(q) &= \frac{(q^{4r+2};q^{4r+2})_\infty}{(q^2;q^2)_\infty (q^{2r+1};q^{2r+1})_\infty}\\ &=  \sqrt{\frac{z}{\pi}}  e^{\left( \frac{\pi^2}{12z}\right)} \left(1 + O(e^{\frac{-2\pi^2}{z}})\right)\frac{1}{\sqrt{2}} e^ {\left( \frac{\pi^2}{12(2r+1)z} + \frac{(2r+1)z}{24} \right)} \left( 1 + O \left( e^{-\frac{2\pi^2}{(2r+1)z}} \right) \right)\\
&=\frac{\sqrt{z}}{\sqrt{2\pi}} e^ {\left( \frac{(r+1)\pi^2}{6(2r+1)z}  \right)} \left( 1 + O \left( e^{-\frac{2\pi^2}{(2r+1)z}} \right) \right).
\end{align*}
This completes the proof of the lemma.
\end{proof}

\begin{lemma}\label{p1_lem4}
Let $\Delta > 1.45$ be a positive constant, and suppose $z = x + iy$ satisfies $\Delta x \le |y| < \pi$. Then, as $z \to 0$ in this region, we have
    $$|\xi^{(r)}(q)| = \left|\frac{E^{(r)}(q)}{(q^2;q^2)_\infty}\right| \ll \xi^{(r)}(e^{-x}) e^ {\left( -\frac{\delta'}{(2r+1)x} \right)} = \xi^{(r)}(e^{-x}) e^ {\left( -\frac{\delta''}{x} \right)},
    $$ where $\delta'' = \frac{\delta'}{(2r+1)}$.
\end{lemma}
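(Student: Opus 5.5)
We split $\xi^{(r)}(q)=E^{(r)}(q)\cdot\frac{1}{(q^2;q^2)_\infty}$ and bound each factor on the minor arc region $\Delta x\le |y|<\pi$. The first factor is handled by rescaling Lemma~\ref{p1_lem3}. The second factor needs only a trivial bound. The plan is to get exponential savings from $E^{(r)}$ alone and lose nothing from $1/(q^2;q^2)_\infty$.

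\textbf{Step 1: the factor $E^{(r)}$.} Write $E^{(r)}(q)=\frac{(Q^2;Q^2)_\infty}{(Q;Q)_\infty}$ with $Q=q^{2r+1}=e^{-(2r+1)z}$. Put $z'=(2r+1)z=x'+iy'$, so $x'=(2r+1)x$ and $y'=(2r+1)y$. The cone condition $|y'|\ge\Delta x'$ is preserved, but $|y'|$ may exceed $\pi$. We therefore reduce $y'$ modulo $2\pi$ to $y''\in(-\pi,\pi]$; this does not change $Q$.
\begin{itemize}
\item[(a)] If $|y''|\ge \Delta x'$, Lemma~\ref{p1_lem3} applies directly. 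It gives $|E^{(r)}(q)|\ll (-e^{-x'};e^{-x'})_\infty\, e^{-\delta'/x'}$. Since $(-e^{-x'};e^{-x'})_\infty=\frac{(Q_0^2;Q_0^2)_\infty}{(Q_0;Q_0)_\infty}=E^{(r)}(e^{-x})$ with $Q_0=e^{-x'}$, this is exactly $E^{(r)}(e^{-x})e^{-\delta'/((2r+1)x)}$.
\item[(b)] Otherwise $y''$ lies in the major-arc cone, which happens when $(2r+1)y$ is near a nonzero multiple of $2\pi$, i.e.\ $q$ is near a nontrivial $(2r+1)$-th root of unity. Then $E^{(r)}$ is not small, and Lemma~\ref{p1_lem1} gives $|E^{(r)}(q)|\asymp E^{(r)}(e^{-x})$.
\end{itemize}

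\textbf{Step 2: the factor $1/(q^2;q^2)_\infty$.} In every case we use $\left|\frac{1}{(q^2;q^2)_\infty}\right|\le\frac{1}{(|q|^2;|q|^2)_\infty}$. In case (a) this already finishes the proof with $\delta''=\delta'/(2r+1)$.

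\textbf{Step 3: the main obstacle, case (b).} Here the saving must come from $1/(q^2;q^2)_\infty$ instead. In this case $y$ is bounded away from $0$: we have $|y|\ge \frac{2\pi}{2r+1}-O(x)$. If additionally $2y$ stays away from multiples of $2\pi$, then $q^2$ is on a genuine minor arc for $P$. The standard estimate
\[
\log\left|P(q^2)\right|-\log P(|q|^2)\le -\sum_{n}\frac{|q|^{2n}(1-\cos 2ny)}{n}\le -\frac{c}{x}
\]
then gives a saving $e^{-c/x}$ with $c$ depending on $r$.

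The exceptional point is $y=\pi$ when $2r+1$ divides an even multiple of $\pi$. This cannot occur near $|y|=\pi$ because $(2r+1)\pi$ is an odd multiple of $\pi$. Concretely, $2y\equiv0 \pmod{2\pi}$ forces $y\approx\pm\pi$, but there $(2r+1)y\approx\pm(2r+1)\pi$, which is not a multiple of $2\pi$. So cases (b) and "$q^2$ near $1$" are disjoint, and one of the two factors always supplies an exponential saving.

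Combining the cases and taking the minimum of the savings, which we may shrink to $\delta'/(2r+1)$ or smaller as needed, yields $|\xi^{(r)}(q)|\ll \xi^{(r)}(e^{-x})e^{-\delta''/x}$. I expect the careful bookkeeping in case (b) to be the delicate part. In particular, a uniform lower bound for $\sum_n |q|^{2n}(1-\cos 2ny)/n$ of size $\gg 1/x$ needs to be verified there.
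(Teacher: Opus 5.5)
Your case split catches a real issue that the paper's proof passes over. The paper's argument is exactly your Step 1(a) plus Step 2, applied on the whole arc. It invokes Lemma~\ref{p1_lem3} with $z$ replaced by $(2r+1)z$ and bounds $1/(q^2;q^2)_\infty$ trivially, without noting that $(2r+1)|y|$ may exceed $\pi$. At $q=e^{-x+2\pi i j/(2r+1)}$ with $1\le j\le r$ one has $q^{2r+1}=e^{-(2r+1)x}$, hence $E^{(r)}(q)=E^{(r)}(e^{-x})$ exactly. So the intermediate bound on $E^{(r)}$ asserted there fails, and the saving must come from $1/(q^2;q^2)_\infty$, as you say. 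In case (b) you should not claim $|E^{(r)}(q)|\asymp E^{(r)}(e^{-x})$; that is false once $|y''|$ is comparable to $(2r+1)x$. Only the trivial bound $|(-Q;Q)_\infty|\le(-|Q|;|Q|)_\infty=E^{(r)}(e^{-x})$ is needed.

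However, the estimate you propose for case (b) is false, and it is exactly the bound you flag as needing verification. Since $1-\cos\le 2$,
\[
\sum_{n\ge1}\frac{|q|^{2n}(1-\cos 2ny)}{n}\le 2\sum_{n\ge1}\frac{e^{-2nx}}{n}=-2\log\left(1-e^{-2x}\right)=O\left(\log\frac1x\right),
\]
so this sum can never be $\gg 1/x$. The weights $1/n$ come from the single factor $1-q^2$, and the sum equals $\log\frac{|1-q^2|}{1-|q|^2}$. A saving of size $1/x$ comes from taking the first term from every factor $1-q^{2n}$. Write $\log P(q^2)=\sum_{N\ge1}\sigma_{-1}(N)q^{2N}$ with $\sigma_{-1}(N)=\sum_{d\mid N}d^{-1}\ge 1$. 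Then
\[
\log P(|q|^2)-\log|P(q^2)|\ge\sum_{N\ge1}e^{-2Nx}(1-\cos 2Ny)=\frac{1}{e^{2x}-1}-\operatorname{Re}\frac{q^2}{1-q^2}\ge\frac{1}{2x}-\frac12-\frac{1}{|1-q^2|}.
\]
In case (b), $y$ lies within $\Delta x$ of some $\pm\frac{2\pi j}{2r+1}$ with $1\le j\le r$. Hence $|1-q^2|\ge 2|\sin y|-(1-e^{-2x})\ge 2\sin\frac{\pi}{2r+1}-O(x)$, and so $|1/(q^2;q^2)_\infty|\le e^{-\frac{1}{2x}+O_r(1)}/(e^{-2x};e^{-2x})_\infty$. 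Combined with the trivial bound on $E^{(r)}$, this gives $|\xi^{(r)}(q)|\ll \xi^{(r)}(e^{-x})e^{-1/(2x)}$ in case (b). That is at least the required saving, shrinking $\delta'$ if necessary, which Lemma~\ref{p1_lem3} permits. With this replacement for your displayed estimate, the proof goes through.
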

\begin{proof}
 Using Lemma  \ref{p1_lem3}, as $z\to0$, we find that
$$
|E^{(r)}(q)| \ll E^{(r)} \left( e^{-(2r+1)x} \right)  e^ {\left( -\frac{\delta'}{(2r+1)x} \right)}.
$$
Also, $\left|\dfrac{1}{(q^2;q^2)_\infty}\right|$ $\le$ $\dfrac{1}{(e^{-2x};e^{-2x})_\infty}$.
Therefore, we have
\begin{align*}
|\xi^{(r)}(q)| = \left|\frac{E^{(r)}(q)}{(q^2;q^2)_\infty}\right| \ll \xi^{(r)}(e^{-x}) e^ {\left( -\frac{\delta'}{(2r+1)x} \right)} = \xi^{(r)}(e^{-x}) e^ {\left( -\frac{\delta''}{x} \right)}.
\end{align*}
  This completes the proof. 
\end{proof}
From Lemma~\ref{p1_lem2} and Lemma~\ref{p1_lem4} we can say that $\xi^{(r)}(q)$ satisfies (H2) and (H4) with the constants $\beta = \frac{1}{2}$, $K = \frac{1}{\sqrt{2\pi}}$ and $A = \frac{(r+1)\pi^2}{6(2r+1)}$.

\subsection{ Estimating rational functions and Lambert series}\label{p1_subsec3.3}
To verify (H1) and (H3) in our situation, we must consider the rational functions and Lambert series that appear in the generating functions for $a_\ell^{(r)}(n)$ and $b_\ell^{(r)}(n) $ for $\ell \in \{ 1,2,3\}$ and $r\ge1$. We define the following functions:
\begin{align*}
L_{a,1}^{(r)}(q) &:= \sum_{m\ge0}   \left(\dfrac{q^{2(m+1)}(1+q^{(2r-1)(m+1)})}{1+q^{(2r+1)(m+1)}} \right), \\
L_{a,2}^{(r)}(q) &:= \sum_{m\ge0} 
\left(
\dfrac{q^{2(m+2)}(1-q^{2(m+1)})(1+q^{(2r-1)(m+2)})}{(1+q^{(2r+1)(m+1)})(1+q^{(2r+1)(m+2)})} + \dfrac{q^{2(m+1)}(1+q^{(2r-1)(m+1)})}{1+q^{(2r+1)(m+1)}} 
\right),\\
L_{a,3}^{(r)}(q) & := \sum_{m\ge0} \left(
 \dfrac{q^{2(m+3)}(1-q^{2(m+1)})(1-q^{2(m+2)})(1+q^{(2r-1)(m+3)})}{(1+q^{(2r+1)(m+1)})(1+q^{(2r+1)(m+2)})(1+q^{(2r+1)(m+3)})}  \right.\\
&\hspace{1 cm}\left.+
\dfrac{q^{2(m+2)}(1-q^{2(m+1)})(1+q^{(2r-1)(m+2)})}{(1+q^{(2r+1)(m+1)})(1+q^{(2r+1)(m+2)})}
+ \dfrac{(q^{4(m+1)} +q^{(2r+1)(m+1)})}{(1+q^{(2r+1)(m+1)}) } \right), \\
L_{b,1}^{(r)}(q) &:= \left(
\frac{q^2 \sum_{k=0}^{2r}q^{2k} +q^{2r+1}}{1-q^{4r+2}} \right), \\
L_{b,2}^{(r)}(q) &:=
\left(
\dfrac{\sum_{k=1}^{2r+1}q^{2k} + q^{2r+1}}{1-q^{4r+2}} + \dfrac{q^{4r+2} + \sum_{k=1}^{2r+1}q^{4k}- q^{4r+1}-q^{4r+3}}{1-q^{8r+4}} 
\right), \\
L_{b,3}^{(r)}(q) &:=
\left(
\frac{q^{2r+1} + \sum_{k=0}^{2r}q^{2k+4}}{1 - q^{4r+2}} +  \frac{q^{4r+2} + \sum_{j=1}^{2r+1}q^{4j} - \sum_{k=0}^{2r}q^{4k+6}}{1 - q^{8r+4}}  \right.\\
&\hspace{1.2cm}\left.+  \frac{2q^{6r+3} - (q^{6r+1} + q^{6r+2} + q^{6r+4} + q^{6r+5}) + \sum_{j=1}^{2r+1}q^{6j}}{1 - q^{12r+6}}
\right). 
\end{align*}
\par
To verify Hypotheses (H1) and (H3) for the functions defined above, we require the following aysmptotic expansion of the Euler-Maclaurin sum formula. Recall that the $n^{th}$ \emph{Bernoulli polynomials} $B_n(x)$ are defined as 
\begin{align*}
\sum_{n\ge 0}B_n(x)\frac{t^n}{n!}:= \frac{te^{tx}}{e^t-1}.
\end{align*}
\begin{theorem}\label{p1_thm17}\cite[Theorem 1.2]{Ingham}
Suppose that 
$0 \le \delta < \frac{\pi}{2}$ and let
$$
D_{\delta}:=\{z\in\mathbb{C}: |\arg(z)|< \frac{\pi}{2} -\delta\}.
$$
Let $f:\mathbb{C}\to\mathbb{C}$ be holomorphic in a domain
containing $D_{\delta}$, so that in particular $f$ is
holomorphic at the origin, and assume that $f$ and all of its
derivatives are of sufficient decay. Then, for $a\in\mathbb{R}$
and nonnegative integer $N$,
$$
\sum_{m\ge 0} f\bigl(z(m+a)\bigr)=\frac{1}{z}\int_0^\infty f(x)dx-\sum_{n=0}^{N-1}\frac{B_{n+1}(a)f^{(n)}(0)}{(n+1)!}z^n+O_N\bigl(z^N\bigr),$$
uniformly, as \(z\to 0\) in \(D_{\delta}\). 
\end{theorem}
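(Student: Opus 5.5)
This is the Euler–Maclaurin expansion cited from Ingham, so the plan is the standard argument: a Mellin-type/Bernoulli approach, done by applying the classical Euler–Maclaurin summation formula on a ray and then justifying the error uniformly in the cone.

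First, fix $z\in D_\delta$. Write $z=|z|e^{i\theta}$ with $|\theta|<\frac{\pi}{2}-\delta$, and set $g(t):=f(e^{i\theta}t)$ for real $t\ge 0$. Then $\sum_{m\ge0}f(z(m+a))=\sum_{m\ge0}g(|z|(m+a))$, so the sum is a real-variable sum of a smooth function sampled at step $h=|z|$. Since $f$ is holomorphic on a domain containing $D_\delta$ and decays with all derivatives there, Cauchy's theorem lets me rotate the contour $[0,\infty)$ to the ray $e^{i\theta}[0,\infty)$. This gives $\int_0^\infty g(t)\,dt=e^{i\theta}\int_0^\infty f(x)\,dx$, so $\frac{1}{|z|}\int_0^\infty g=\frac1z\int_0^\infty f$. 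Likewise $g^{(n)}(0)=e^{in\theta}f^{(n)}(0)$, so $|z|^n g^{(n)}(0)=z^nf^{(n)}(0)$. The claimed expansion therefore reduces to the real-variable statement
\[
\sum_{m\ge0}g(h(m+a))=\frac1h\int_0^\infty g(t)\,dt-\sum_{n=0}^{N-1}\frac{B_{n+1}(a)g^{(n)}(0)}{(n+1)!}h^n+O_N(h^N),
\]
with the implied constant uniform in $\theta$.

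Second, I prove this real statement. For $0\le a\le1$, apply the Euler–Maclaurin formula with periodic Bernoulli functions to $G(u):=g(h(u+a))$ summed over $u\in\{0,1,2,\dots\}$. Equivalently, use the identity
\[
\sum_{m\ge0}G(m)=\int_{-a}^\infty G(u)\,du-\sum_{n=0}^{N-1}\frac{B_{n+1}(a)}{(n+1)!}G^{(n)}(-a)+\frac{(-1)^{N+1}}{N!}\int_{-a}^\infty \widetilde B_N(u+a)\,G^{(N)}(u)\,du,
\]
where $\widetilde B_N$ is the periodic Bernoulli function. This identity follows by repeated integration by parts of $\int \widetilde B_n(u+a)G^{(n)}(u)\,du$, using that the boundary terms at $u=-a$ involve $B_n(0)$ shifted appropriately. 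The cleanest route is to rewrite the sum as $\sum_{m\ge0}G(m)=\sum_{k\ge0}F(k+a)$ with $F(t)=g(ht)$ and to integrate by parts starting from $\int_0^\infty$ with the function $\widetilde B_1(t-a)$. Now $G^{(n)}(-a)=h^ng^{(n)}(0)$ and $\int_{-a}^\infty G=\frac1h\int_0^\infty g$. The remainder is bounded by $\|\widetilde B_N\|_\infty\, h^N\int_0^\infty |g^{(N)}(ht)|\,dt=O(h^{N-1})$. To reach $O(h^N)$, I run the expansion to order $N+1$ and absorb the extra term $h^N g^{(N)}(0)$ into the error. For general real $a$, shift by the finitely many terms $f(z(m+a))$ with $m+a<0$ (or add the missing ones) and Taylor-expand them at $0$. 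The Bernoulli polynomials' translation identity $B_{n+1}(a+1)-B_{n+1}(a)=(n+1)a^n$ makes these corrections consistent.

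Third, I need uniformity in $\theta$, and I expect this to be the main obstacle. The remainder involves $\int_0^\infty |f^{(N+1)}(e^{i\theta}t)|\,dt$, and I need this bounded uniformly for $|\theta|\le\frac{\pi}{2}-\delta$. This is exactly what the hypothesis "sufficient decay of $f$ and its derivatives in $D_\delta$" should supply. I would state it explicitly: $f^{(k)}(w)\ll_k (1+|w|)^{-1-\epsilon}$ uniformly in $D_\delta$. It is also worth noting that, since $D_\delta$ is open, one uses a slightly larger closed cone, on which Cauchy estimates give these derivative bounds from bounds on $f$ itself. The same decay justifies the contour rotation, because the arc contribution at radius $R$ is $O(R\cdot R^{-1-\epsilon})\to0$.
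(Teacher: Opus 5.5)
The paper gives no proof of its own here (the result is quoted from Bringmann--Jennings-Shaffer--Mahlburg), and your argument is the standard proof of that result and is correct: Euler--Maclaurin summation along the ray through $z$ with shifted periodic Bernoulli functions, one extra order to upgrade the remainder from $O(|z|^{N-1})$ to $O(|z|^{N})$, uniformity in $\arg z$ from the decay hypothesis, and the translation identity $B_{n+1}(a+1)-B_{n+1}(a)=(n+1)a^{n}$ for general real $a$. Two slips need fixing: $\int_0^\infty g(t)\,dt=e^{-i\theta}\int_0^\infty f(x)\,dx$ (not $e^{i\theta}$, which contradicts your very next equality), and the kernel in your displayed identity must be $\widetilde B_N(u)$, i.e.\ $\widetilde B_N(t-a)$ as in your ``cleanest route'' remark, not $\widetilde B_N(u+a)$, whose jumps sit at $t\in\mathbb{Z}$ and would produce $\sum_m F(m)$ instead of $\sum_m F(m+a)$; with the correct kernel, the boundary values $\widetilde B_{n}(-a)=(-1)^{n}B_{n}(a)$ for $0<a<1$ are what yield the coefficients $B_{n+1}(a)$.
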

We now decompose the relevant series in a manner that will assist in verifying the requisite Hypotheses (H1) to (H4).

\begin{align*}
&\sum_{m\ge0} 
\left(
\dfrac{q^{2(m+2)}(1-q^{2(m+1)})(1+q^{(2r-1)(m+2)})}{(1+q^{(2r+1)(m+1)})(1+q^{(2r+1)(m+2)})} \right) \\
&=
\sum_{m \ge 0}\dfrac{q^2}{1-q^{2r+1}}\left(\dfrac{-q^{2r-1} + (q^{2r-1}+1)q^{2m+2} - q^{4m+4}}{1+q^{(2r+1)(m+1)}}\right.\\
&\hspace{1.2cm}\left.+\dfrac{q^{2r-1} - (q^{2r+1}+q^{2r-1})q^{2m+2} + q^{2r+1}q^{4m+4}}{1+q^{(2r+1)(m+2)}}\right)\\
& =   R_{a,1}(q) \mathcal{L}_{a,1}^{(1)}(q)+ R_{a,2}(q)\mathcal{L}_{a,2}^{(1)}(q) + R_{a,3}(q) \mathcal{L}_{a,1}^{(2)}(q) + R_{a,4}(q) \mathcal{L}_{a,2}^{(2)}(q) + R_{a,5}(q).
\end{align*}
Similarly, we write
\begin{align*}
&\sum_{m \ge 0}\dfrac{q^{2(m+3)}\left(1-q^{2(m+1)}\right)\left(1-q^{2(m+2)}\right)\left(1+q^{(2r-1)(m+3)}\right)}
{\left(1+q^{(2r+1)(m+1)}\right)\left(1+q^{(2r+1)(m+2)}\right)\left(1+q^{(2r+1)(m+3)}\right)}\\
&=\sum_{m \ge 0}\dfrac{1}{1-q^{2r+1}}\left(
\dfrac{-q^{4r+2}+\left(q^{4}+q^{4r+2}+q^{4r+4}\right)q^{2m+2}
-\left(q^{4}+q^{6}+q^{4r+4}\right)q^{4m+4}+q^{6m+12}}
{\left(1-q^{4r+2}\right)\left(1+q^{(2r+1)(m+1)}\right)}\right.\\
&\hspace{.2cm}\left.+\dfrac{q^{4r+2}-\left(q^{2r+5}+q^{4r+2}+q^{4r+4}\right)q^{2m+2}
+\left(q^{2r+5}+q^{2r+7}+q^{4r+4}\right)q^{4m+4}-q^{2r+6m+13}}
{\left(1-q^{2r+1}\right)\left(1+q^{(2r+1)(m+2)}\right)}\right.\\
&\hspace{.2cm}\left.+\dfrac{-q^{6r+3}+\left(q^{6r+3}+q^{6r+5}+q^{6r+7}\right)q^{2m+2}
-\left(q^{6r+5}+q^{6r+7}+q^{6r+9}\right)q^{4m+4}+q^{6r+6m+15}}
{\left(1-q^{4r+2}\right)\left(1+q^{(2r+1)(m+3)}\right)}\right)
\\
&=R_{a,6}(q)\mathcal{L}_{a,1}^{(1)}(q)
+R_{a,7}(q)\mathcal{L}_{a,2}^{(1)}(q)
+R_{a,8}(q)\mathcal{L}_{a,3}^{(1)}(q)+R_{a,9}(q)\mathcal{L}_{a,1}^{(2)}(q)
+R_{a,10}(q)\mathcal{L}_{a,2}^{(2)}(q)\\
&\hspace{.2cm}+R_{a,11}(q)\mathcal{L}_{a,3}^{(2)}(q)+R_{a,12}(q)\mathcal{L}_{a,1}^{(3)}(q)
+R_{a,13}(q)\mathcal{L}_{a,2}^{(3)}(q)
+R_{a,14}(q)\mathcal{L}_{a,3}^{(3)}(q)
+R_{a,15}(q),
\end{align*}
\text{where}
\begin{align*}
\mathcal{L}_{a,1}^{(i)}(q) &:= \sum_{m\ge0}\dfrac{q^{2(m+i)}}{1+q^{(2r+1)(m+i)}},
& \mathcal{L}_{a,2}^{(i)}(q) &:= \sum_{m\ge0}\dfrac{q^{4(m+i)}}{1+q^{(2r+1)(m+i)}}, \\
\mathcal{L}_{a,3}^{(i)}(q) &:= \sum_{m\ge0}\dfrac{q^{6(m+i)}}{1+q^{(2r+1)(m+i)}},
& R_{a,1}(q) &:= \dfrac{q^{2r+1}+q^{2}}{1-q^{2r+1}}, \\
R_{a,2}(q) &:= \dfrac{-q^{2}}{1-q^{2r+1}},
& R_{a,3}(q) &:= \dfrac{-\left(q^{2r+1}+q^{2r-1}\right)}{1-q^{2r+1}}, \\
R_{a,4}(q) &:= \dfrac{q^{2r-1}}{1-q^{2r+1}},
& R_{a,5}(q) &:= \dfrac{q^{2(2r+1)}}{1-q^{2(2r+1)}}, \\
R_{a,6}(q) &:= \dfrac{q^{4}+q^{4r+2}+q^{4r+4}}{\left(1-q^{2r+1}\right)\left(1-q^{4r+2}\right)},
& R_{a,7}(q) &:= \dfrac{-\left(q^{4}+q^{6}+q^{4r+4}\right)}{\left(1-q^{2r+1}\right)\left(1-q^{4r+2}\right)}, \\
R_{a,8}(q) &:= \dfrac{q^{6}}{\left(1-q^{2r+1}\right)\left(1-q^{4r+2}\right)},
& R_{a,9}(q) &:= -\dfrac{q^{2r+3}+q^{4r}+q^{4r+2}}{\left(1-q^{2r+1}\right)^{2}}, \\
R_{a,10}(q) &:= \dfrac{q^{2r+1}+q^{2r+3}+q^{4r}}{\left(1-q^{2r+1}\right)^{2}},
& R_{a,11}(q) &:= -\dfrac{q^{2r+1}}{\left(1-q^{2r+1}\right)^{2}}, \\
R_{a,12}(q) &:= \dfrac{q^{6r+3}+q^{6r-1}+q^{6r+1}}{\left(1-q^{2r+1}\right)\left(1-q^{4r+2}\right)},
& R_{a,13}(q) &:= -\dfrac{q^{6r-1}+q^{6r+1}+q^{6r-3}}{\left(1-q^{2r+1}\right)\left(1-q^{4r+2}\right)}, \\
R_{a,14}(q) &:= \dfrac{q^{6r-3}}{\left(1-q^{2r+1}\right)\left(1-q^{4r+2}\right)},
& R_{a,15}(q) &:= \dfrac{q^{6r+3}\left(1-q^{6r+3}\right)}{\left(1-q^{4r+2}\right)\left(1-q^{8r+4}\right)}.
\end{align*}
We begin by estimating the Lambert series that arises in our generating function, by applying Theorem~\ref{p1_thm17} and obtain the following results. Note that $D_0^{(r)}, D_1^{(r)}, D_2^{(r)},$ and $D_3^{(r)}$ are as defined in Section~\ref{Subsection-Asymp}.
\begin{lemma}\label{p1_lem5}
     Let $\Delta > 0$ be a constant and suppose that $z=x+iy$ satisfies
$0\le |y|<\Delta x$. Then, as $z\to0$ in this region, we have
    \begin{align*}
L_{a,1}^{(r)}(q) = \dfrac{1}{z}\left( \dfrac{D_0^{(r)}}{2r+1} - \dfrac{1}{2}z + \dfrac{1}{12}z^{2} + O(z^{4}) \right).
\end{align*}
\end{lemma}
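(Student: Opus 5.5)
We apply the Euler--Maclaurin asymptotic expansion of Theorem~\ref{p1_thm17} directly to $L_{a,1}^{(r)}(q)$ with $q=e^{-z}$. Set $c:=2r+1$. Since $q^{2(m+1)}\cdot q^{(2r-1)(m+1)}=q^{c(m+1)}$, we can write
\[
L_{a,1}^{(r)}(e^{-z})=\sum_{m\ge 0} f\bigl(z(m+1)\bigr),\qquad f(t):=\frac{e^{-2t}+e^{-ct}}{1+e^{-ct}}.
\]
First we check the hypotheses of Theorem~\ref{p1_thm17}.

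\emph{Hypotheses.} The only singularities of $f$ are the zeros of $1+e^{-ct}$, namely $t=i\pi(2k+1)/c$ with $k\in\mathbb{Z}$. These lie on the imaginary axis and are bounded away from $0$. Hence $f$ is holomorphic on a domain containing $D_\delta$ together with a neighbourhood of the origin. In $D_\delta$ we have $\operatorname{Re}(t)\ge \sin(\delta)|t|$, so $f$ and all its derivatives decay exponentially as $|t|\to\infty$. Therefore Theorem~\ref{p1_thm17} applies with $a=1$ and, say, $N=4$:
\[
\sum_{m\ge0}f(z(m+1))=\frac1z\int_0^\infty f(t)\,dt-\sum_{n=0}^{3}\frac{B_{n+1}(1)f^{(n)}(0)}{(n+1)!}z^n+O(z^4).
\]

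\emph{Boundary terms.} We use $B_1(1)=\tfrac12$, $B_2(1)=\tfrac16$, $B_3(1)=0$ and $B_4(1)=-\tfrac1{30}$. A short computation gives $f(0)=1$. For the derivative, write $f=N/D$. Then $N(0)=D(0)=2$, $N'(0)=-2-c$ and $D'(0)=-c$, so
\[
f'(0)=\frac{2(-2-c)+2c}{4}=-1.
\]
Hence the $n=0$ term is $-\tfrac12$ and the $n=1$ term is $+\tfrac1{12}z$. The $z^2$ term vanishes because $B_3(1)=0$. The $z^3$ term becomes $O(z^4)$ after multiplying through by $z$ inside the bracket, so it is absorbed into the error. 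This produces exactly the terms $-\tfrac12 z+\tfrac1{12}z^2+O(z^4)$ in the bracket.

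\emph{Main term.} The main step is evaluating $\int_0^\infty f(t)\,dt$ in closed form; I expect this to be the only real obstacle. Split the integral into two pieces. For the second piece,
\[
\int_0^\infty \frac{e^{-ct}}{1+e^{-ct}}\,dt=\frac{\ln 2}{c}.
\]
For the first piece, substitute $u=ct$ and expand $1/(1+e^{-u})$ as an alternating geometric series. Using the classical formula
\[
\int_0^\infty\frac{e^{-su}}{1+e^{-u}}\,du=\tfrac12\Bigl(\psi\bigl(\tfrac{s+1}{2}\bigr)-\psi\bigl(\tfrac{s}{2}\bigr)\Bigr)
\]
with $s=2/c$, we get
\[
\int_0^\infty f(t)\,dt=\frac1c\Bigl(\ln2+\tfrac12\bigl(\psi(\tfrac12+\tfrac1c)-\psi(\tfrac1c)\bigr)\Bigr).
\]
Finally, apply Legendre's duplication formula $\psi(2x)=\tfrac12\psi(x)+\tfrac12\psi(x+\tfrac12)+\ln 2$ at $x=1/c$. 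This shows that the bracket equals $\psi(2/c)-\psi(1/c)=D_0^{(r)}$. Therefore the main term is $\frac{1}{z}\cdot\frac{D_0^{(r)}}{2r+1}$, and combining it with the boundary terms gives the expansion stated in Lemma~\ref{p1_lem5}.
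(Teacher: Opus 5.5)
Your proposal is correct and follows the route the paper intends: you apply the Euler--Maclaurin expansion of Theorem~\ref{p1_thm17} with $a=1$ to $f(t)=\frac{e^{-2t}+e^{-(2r+1)t}}{1+e^{-(2r+1)t}}$, and your ingredients all check out, namely $f(0)=1$, $f'(0)=-1$, $B_3(1)=0$, and $\int_0^\infty f(t)\,dt=D_0^{(r)}/(2r+1)$ via the alternating-series digamma formula and the duplication identity \eqref{p1_eqn5}. The paper states the lemma only as a consequence of Theorem~\ref{p1_thm17} without writing out these computations, so your argument supplies exactly the omitted verification.
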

\begin{lemma}\label{p1_lem6}
  Let $\Delta > 0$ be a constant and suppose that $z=x+iy$ satisfies
$0\le |y|<\Delta x$. Then, as $z\to0$ in this region, we have
   
\begin{align*}
\mathcal{L}_{a,1}^{(1)}(q) &= \dfrac{1}{z}\left(\dfrac{D_1^{(r)}}{2(2r+1)}-\dfrac{z}{4}-\dfrac{2r-3}{48}z^{2}+O(z^{3})\right), \\
\mathcal{L}_{a,1}^{(2)}(q) &= \dfrac{1}{z}\left(\dfrac{D_1^{(r)}}{2(2r+1)}-\dfrac{3z}{4}-\dfrac{13(2r-3)}{48}z^{2} +O(z^{3})\right), \\
\mathcal{L}_{a,1}^{(3)}(q) &= \dfrac{1}{z}\left(\dfrac{D_1^{(r)}}{2(2r+1)}-\dfrac{5z}{4}-\dfrac{37(2r-3)}{48}z^{2}+O(z^{3})\right).
\end{align*}
\end{lemma}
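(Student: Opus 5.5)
The plan is to get each expansion directly from the Euler--Maclaurin asymptotic, Theorem~\ref{p1_thm17}. With $q=e^{-z}$ and $b:=2r+1$, put
\[
f(t):=\frac{e^{-2t}}{1+e^{-bt}},
\]
so that $\mathcal{L}_{a,1}^{(i)}(q)=\sum_{m\ge0} f\bigl(z(m+i)\bigr)$ for $i=1,2,3$. We therefore apply Theorem~\ref{p1_thm17} with $a=i$ and $N=2$. This gives
\[
\mathcal{L}_{a,1}^{(i)}(q)=\frac{1}{z}\int_0^\infty f(t)\,dt-B_1(i)f(0)-\frac{B_2(i)f'(0)}{2}z+O(z^2),
\]
and multiplying out by $z$ yields exactly the claimed shape $\frac1z(\cdots+O(z^3))$.

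\textbf{Hypotheses of Theorem~\ref{p1_thm17}.} The poles of $f$ lie where $e^{-bt}=-1$, that is, at $t=i\pi(2k+1)/b$. These are on the imaginary axis and bounded away from $0$. Hence $f$ is holomorphic on a neighbourhood of any closed cone $\{|\arg t|\le \frac{\pi}{2}-\delta\}$ together with a disc around the origin. Moreover, $f$ and all its derivatives decay like $e^{-2\,\mathrm{Re}(t)}$ in that cone. The region $0\le|y|<\Delta x$ is the cone $D_\delta$ with $\delta=\operatorname{arccot}\Delta>0$, so the theorem applies uniformly there.

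\textbf{The main term.} This is the only step that is not mechanical. Substitute $u=e^{-bt}$, so that $e^{-2t}=u^{2/b}$ and $dt=-\frac{du}{bu}$. Then
\[
\int_0^\infty f(t)\,dt=\frac1b\int_0^1\frac{u^{s-1}}{1+u}\,du,\qquad s=\frac{2}{b}.
\]
Next, use the classical identity
\[
\int_0^1\frac{u^{s-1}}{1+u}\,du=\frac12\Bigl(\psi\bigl(\tfrac{s+1}{2}\bigr)-\psi\bigl(\tfrac{s}{2}\bigr)\Bigr).
\]
It follows from expanding $1/(1+u)$ as an alternating geometric series, integrating term by term to get $\sum_{k}(-1)^k/(s+k)$, and pairing consecutive terms against the series $\psi(w)=-\gamma+\sum_{k\ge0}\bigl(\frac1{k+1}-\frac1{k+w}\bigr)$. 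With $s=2/(2r+1)$ we have $\frac{s+1}{2}=\frac12+\frac1{2r+1}$ and $\frac s2=\frac1{2r+1}$. The main term is therefore $D_1^{(r)}/(2(2r+1))$, the same for every $i$, as stated.

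\textbf{The lower-order terms.} Near $t=0$ we have $e^{-2t}=1-2t+O(t^2)$ and $\frac{1}{1+e^{-bt}}=\frac12+\frac{bt}{4}+O(t^3)$. Hence
\[
f(0)=\frac12,\qquad f'(0)=\frac{b-4}{4}=\frac{2r-3}{4}.
\]
With $B_1(a)=a-\frac12$ and $B_2(a)=a^2-a+\frac16$ we get:
\begin{itemize}
\item $i=1$: $B_1=\frac12$ and $B_2=\frac16$, giving $-\frac14$ and $-\frac{2r-3}{48}z$;
\item $i=2$: $B_1=\frac32$ and $B_2=\frac{13}{6}$, giving $-\frac34$ and $-\frac{13(2r-3)}{48}z$;
\item $i=3$: $B_1=\frac52$ and $B_2=\frac{37}{6}$, giving $-\frac54$ and $-\frac{37(2r-3)}{48}z$.
\end{itemize}
These match the three stated expansions.

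The main obstacle is the evaluation of the integral in digamma form. The rest is a routine Taylor expansion plus checking holomorphy and decay in the cone.
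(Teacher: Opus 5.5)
Your proposal is correct and follows the paper's intended route: the paper only cites Theorem~\ref{p1_thm17}, applied to $f(t)=e^{-2t}/(1+e^{-(2r+1)t})$ with shift $a=i$, and leaves the computations implicit. Your evaluation of $\int_0^\infty f$ as $D_1^{(r)}/(2(2r+1))$ (consistent with \eqref{p1_eqn6}) checks out, as do $f(0)=\tfrac12$, $f'(0)=\tfrac{2r-3}{4}$, the Bernoulli values $B_1(i)$ and $B_2(i)$, and the holomorphy and decay check in the cone.
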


\begin{lemma}\label{p1_lem7}
  Let $\Delta > 0$ be a constant and suppose that $z=x+iy$ satisfies
$0\le |y|<\Delta x$. Then, as $z\to0$ in this region, we have

\begin{align*}
\mathcal{L}_{a,2}^{(1)}(q) &= \dfrac{1}{z}\left(\dfrac{D_2^{(r)}}{2(2r+1)}-\dfrac{z}{4}-\dfrac{2r-7}{48}z^{2}+O(z^{3})\right), \\
\mathcal{L}_{a,2}^{(2)}(q) &= \dfrac{1}{z}\left(\dfrac{D_2^{(r)}}{2(2r+1)}-\dfrac{3z}{4}-\dfrac{13(2r-7)}{48}z^{2} +O(z^{3})\right), \\
\mathcal{L}_{a,2}^{(3)}(q) &= \dfrac{1}{z}\left(\dfrac{D_2^{(r)}}{2(2r+1)}-\dfrac{5z}{4}-\dfrac{37(2r-7)}{48}z^{2} +O(z^{3})\right).
\end{align*}
\end{lemma}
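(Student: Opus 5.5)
The statement is Lemma~\ref{p1_lem7}: asymptotic expansions of $\mathcal{L}_{a,2}^{(i)}(q)$ for $i=1,2,3$. I will prove it exactly as Lemmas~\ref{p1_lem5} and~\ref{p1_lem6} are proved, by applying Theorem~\ref{p1_thm17}.

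\textbf{Setup.} With $q=e^{-z}$, write
\[
\mathcal{L}_{a,2}^{(i)}(q)=\sum_{m\ge0}f\bigl(z(m+i)\bigr),\qquad f(w):=\frac{e^{-4w}}{1+e^{-(2r+1)w}}.
\]
This $f$ is holomorphic near the cone $D_\delta$, since its poles lie on $\operatorname{Re}w=0$ away from the origin. It also decays exponentially with all derivatives in $D_\delta$. So Theorem~\ref{p1_thm17} applies with $a=i$, and gives
\[
\mathcal{L}_{a,2}^{(i)}(q)=\frac{1}{z}\int_0^\infty f(x)\,dx-B_1(i)f(0)-\frac{B_2(i)}{2}f'(0)z+O(z^2).
\]
Multiplying through by $z$ puts this in the stated form $\frac1z(\cdots)$.

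\textbf{Main term.} This is the integral, and it is the only real step. Substitute $u=e^{-2(2r+1)x}$, so that $e^{-4x}=u^{2/(2r+1)}$ and $dx=-\frac{du}{2(2r+1)u}$. Expanding $\frac{1}{1+e^{-(2r+1)x}}=\sum_k(-1)^ke^{-(2r+1)kx}$ and integrating termwise gives
\[
\int_0^\infty f=\sum_{k\ge0}\frac{(-1)^k}{4+(2r+1)k}.
\]
Pair the even and odd terms ($k=2j$ and $k=2j+1$) and use the standard identity
\[
\psi(b)-\psi(a)=\sum_{j\ge0}\left(\frac{1}{j+a}-\frac{1}{j+b}\right).
\]
With $a=\frac{2}{2r+1}$ and $b=\frac12+\frac{2}{2r+1}$, this yields
\[
\int_0^\infty f=\frac{1}{2(2r+1)}\left(\psi\Bigl(\tfrac12+\tfrac{2}{2r+1}\Bigr)-\psi\Bigl(\tfrac{2}{2r+1}\Bigr)\right)=\frac{D_2^{(r)}}{2(2r+1)}.
\]
The only care needed is justifying the termwise integration of the alternating series; an Abel-summation argument or dominated convergence on partial sums suffices.

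\textbf{Lower-order terms.}
\begin{itemize}
\item We have $f(0)=\frac12$ and $B_1(i)=i-\frac12$. Hence $B_1(i)f(0)=\frac14,\frac34,\frac54$ for $i=1,2,3$, matching the $-\frac z4,-\frac{3z}4,-\frac{5z}4$ terms.
\item Differentiating, $f'(0)=\frac{-4\cdot2+(2r+1)}{4}=\frac{2r-7}{4}$.
\item With $B_2(x)=x^2-x+\frac16$, we get $B_2(1)=\frac16$, $B_2(2)=\frac{13}6$, $B_2(3)=\frac{37}6$. So $\frac{B_2(i)}{2}f'(0)=\frac{(2r-7)}{48},\frac{13(2r-7)}{48},\frac{37(2r-7)}{48}$, matching the quadratic coefficients after multiplying by $z$.
\end{itemize}
The remaining error is $O(z^3)$ inside the parentheses. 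It holds uniformly as $z\to0$ in $0\le|y|<\Delta x$, which is the region $D_\delta$ with $\Delta=\cot\delta$.
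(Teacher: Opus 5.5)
Your proposal is correct and takes the same route as the paper, which obtains these expansions by applying the Euler--Maclaurin formula (Theorem~\ref{p1_thm17}) to $f(w)=e^{-4w}/(1+e^{-(2r+1)w})$ with shift $a=i$, without writing out the details. Your computations supply those details and all check out: the integral equals $D_2^{(r)}/(2(2r+1))$ via the paired alternating series and the digamma difference, and $f(0)=\tfrac12$, $f'(0)=\tfrac{2r-7}{4}$, $B_1(i)=i-\tfrac12$, $B_2(i)=\tfrac16,\tfrac{13}{6},\tfrac{37}{6}$.
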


\begin{lemma}\label{p1_lem8}
   Let $\Delta > 0$ be a constant and suppose that $z=x+iy$ satisfies
$0\le |y|<\Delta x$. Then, as $z\to0$ in this region, we have
\begin{align*}
\mathcal{L}_{a,3}^{(1)}(q) &= \dfrac{1}{z}\left(\dfrac{D_3^{(r)}}{2(2r+1)}-\dfrac{z}{4}-\dfrac{2r-11}{48}z^{2}+O(z^{3})\right), \\
\mathcal{L}_{a,3}^{(2)}(q) &= \dfrac{1}{z}\left(\dfrac{D_3^{(r)}}{2(2r+1)}-\dfrac{3z}{4}-\dfrac{13(2r-11)}{48}z^{2}+O(z^{3})\right), \\
\mathcal{L}_{a,3}^{(3)}(q) &= \dfrac{1}{z}\left(\dfrac{D_3^{(r)}}{2(2r+1)}-\dfrac{5z}{4}-\dfrac{37(2r-11)}{48}z^{2} +O(z^{3})\right).
\end{align*}
   \end{lemma}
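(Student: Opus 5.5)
The plan is to treat each $\mathcal{L}_{a,3}^{(i)}(q)$, $i\in\{1,2,3\}$, as an Euler--Maclaurin sum and apply Theorem~\ref{p1_thm17} with shift $a=i$ and truncation $N=2$, exactly as for Lemmas~\ref{p1_lem6} and \ref{p1_lem7}. With $q=e^{-z}$ we have
\[
\mathcal{L}_{a,3}^{(i)}(q)=\sum_{m\ge 0} f\bigl(z(m+i)\bigr),\qquad f(t):=\frac{e^{-6t}}{1+e^{-(2r+1)t}} .
\]

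First I check the hypotheses on $f$. The function $f$ is meromorphic, and its only poles are at $t=\frac{(2k+1)\pi i}{2r+1}$, $k\in\mathbb{Z}$, all on the imaginary axis. So $f$ is holomorphic on a domain containing $\overline{D_\delta}$ and the origin, away from these poles. In $D_\delta$ we have $|e^{-(2r+1)t}|=e^{-(2r+1)\operatorname{Re}t}$, so the denominator is bounded away from $0$ for $|t|$ large. Hence $f$ and all its derivatives decay like $e^{-6\operatorname{Re} t}$ in the cone, which is the required decay. Theorem~\ref{p1_thm17} then gives
\[
\mathcal{L}_{a,3}^{(i)}(q)=\frac1z\int_0^\infty f(t)\,dt-B_1(i)f(0)-\frac{B_2(i)f'(0)}{2}z+O(z^2),
\]
and multiplying through by $z$ yields the stated form with error $O(z^3)$ inside the bracket.

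Next come the local terms. We have $f(0)=\tfrac12$ and $B_1(i)=i-\tfrac12$, so the constant term is $-\frac{2i-1}{4}$, which gives $-\tfrac14,-\tfrac34,-\tfrac54$ for $i=1,2,3$. Differentiating, $f'(0)=\frac{-6\cdot 2+(2r+1)}{4}=\frac{2r-11}{4}$. Since $B_2(i)=i^2-i+\tfrac16$ equals $\tfrac16,\tfrac{13}{6},\tfrac{37}{6}$ for $i=1,2,3$, the linear coefficients are $-\frac{2r-11}{48}$, $-\frac{13(2r-11)}{48}$ and $-\frac{37(2r-11)}{48}$, as claimed.

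The main step is the integral. I substitute $u=(2r+1)t$ and set $s=\frac{6}{2r+1}$, which reduces it to $\frac{1}{2r+1}\int_0^\infty \frac{e^{-su}}{1+e^{-u}}\,du$. I then use the classical identity
\[
\int_0^\infty\frac{e^{-su}}{1+e^{-u}}\,du=\sum_{k\ge0}\frac{(-1)^k}{s+k}=\frac12\Bigl(\psi\bigl(\tfrac{s+1}{2}\bigr)-\psi\bigl(\tfrac{s}{2}\bigr)\Bigr).
\]
This follows by expanding geometrically, integrating termwise, splitting into even and odd $k$, and using $\psi(x)=-\gamma+\sum_{n\ge0}\bigl(\frac1{n+1}-\frac1{n+x}\bigr)$. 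With $\frac s2=\frac{3}{2r+1}$ the integral equals $\frac{D_3^{(r)}}{2(2r+1)}$.

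The only delicate point is this digamma evaluation together with justifying the termwise integration, which is legitimate by dominated convergence since $s>0$. Everything else is routine bookkeeping.
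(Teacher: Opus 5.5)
Your proposal is correct and follows the route the paper indicates: the paper gives no details beyond applying Theorem~\ref{p1_thm17}, and you apply it with shift $a=i$ and $N=2$, evaluating the integral through $\sum_{k\ge0}\frac{(-1)^k}{s+k}$, which is the identity \eqref{p1_eqn6}. Your constants all check out: $f(0)=\tfrac12$, $f'(0)=\frac{2r-11}{4}$, $B_2(i)=\tfrac16,\tfrac{13}{6},\tfrac{37}{6}$, and $\int_0^\infty f=\frac{D_3^{(r)}}{2(2r+1)}$, and the dominated-convergence justification of the termwise integration is sound.
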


   \begin{lemma}\label{p1_lem9}  Let $\Delta > 0$ be a constant and suppose that $z=x+iy$ satisfies
$0\le |y|<\Delta x$. Then, as $z\to0$ in this region, we have
       \begin{align*} \sum_{m \ge 0}\dfrac{q^{4(m+1)}+q^{(2r+1)(m+1)})}{1+q^{(2r+1)(m+1)}} = \dfrac{1}{z}\left(\dfrac{2\ln 2+D_2^{(r)}}{2(2r+1)}
-\dfrac{z}{2}+\dfrac{z^{2}}{6}+O(z^{4})\right).
\end{align*}
   \end{lemma}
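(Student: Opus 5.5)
We need the expansion of $\sum_{m\ge0}\frac{q^{4(m+1)}+q^{(2r+1)(m+1)}}{1+q^{(2r+1)(m+1)}}$ in the cone. The first step is an algebraic split. Writing $\frac{q^{(2r+1)k}}{1+q^{(2r+1)k}} = 1-\frac{1}{1+q^{(2r+1)k}}$ would produce divergent pieces, so instead we separate the summand as
\[
\frac{q^{4k}}{1+q^{(2r+1)k}}+\frac{q^{(2r+1)k}}{1+q^{(2r+1)k}},\qquad k=m+1.
\]
The first piece is exactly $\mathcal{L}_{a,2}^{(1)}(q)$. The second is the Lambert-type series $\sum_{k\ge1}\frac{e^{-(2r+1)kz}}{1+e^{-(2r+1)kz}}$.

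For the second piece we apply Theorem~\ref{p1_thm17} with $f(w)=\frac{e^{-w}}{1+e^{-w}}=\frac{1}{e^{w}+1}$, step $(2r+1)z$ and $a=1$. This $f$ is holomorphic near $D_\delta$ (its poles lie at $w=i\pi(2j+1)$), and it decays exponentially, so the theorem applies. We have $\int_0^\infty \frac{dw}{e^w+1}=\ln 2$. Hence the leading term is $\frac{\ln 2}{(2r+1)z}$, and the constant and higher terms are $-\sum_n \frac{B_{n+1}(1)f^{(n)}(0)}{(n+1)!}((2r+1)z)^n$. Since $f(w)=\frac12-\frac12\tanh(w/2)$, its Taylor coefficients are explicit: $f(0)=\frac12$, $f'(0)=-\frac14$, $f''(0)=0$, and $f'''(0)=\frac18$. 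With $B_1(1)=\frac12$, $B_2(1)=\frac16$ and $B_3(1)=0$, this gives
\[
\frac{\ln 2}{(2r+1)z}-\frac14+\frac{(2r+1)z}{48}+O(z^3).
\]

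For $\mathcal{L}_{a,2}^{(1)}$ we use Lemma~\ref{p1_lem7}. Its leading term $\frac{D_2^{(r)}}{2(2r+1)z}$ comes from $\int_0^\infty \frac{e^{-2w}}{1+e^{-(2r+1)w/2}}\,dw$-type integrals evaluated through $\psi$. That is consistent with the target, since
\[
\frac{\ln 2}{2r+1}+\frac{D_2^{(r)}}{2(2r+1)}=\frac{2\ln 2+D_2^{(r)}}{2(2r+1)}.
\]
The constant terms also combine correctly: $-\frac14-\frac14=-\frac12$. So the plan is to add the two expansions and match coefficients.

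The main obstacle is the $z^2$ coefficient (the $z$ term inside the bracket) together with the claimed error $O(z^4)$. Adding $\frac{2r+1}{48}$ to $-\frac{2r-7}{48}$ gives $\frac{8}{48}=\frac16$, which matches the statement. For the error, the $z^3$ term inside the bracket must vanish. The second piece contributes nothing at that order because $B_4(1)f'''(0)$ enters only at order $z^3$ overall while $f^{(4)}(0)=0$, by the oddness of $\tanh$. For the first piece, I would recompute $\mathcal{L}_{a,2}^{(1)}$ directly via Theorem~\ref{p1_thm17} with $g(w)=\frac{e^{-4w}}{1+e^{-(2r+1)w}}$ and check that the combined Taylor data make the next coefficient cancel. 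If that cancellation fails, the statement would only hold with $O(z^3)$, which suffices for all later applications of Proposition~\ref{p1_prop1}, since only the leading term is used.
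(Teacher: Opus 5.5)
Your route is the paper's own: Euler--Maclaurin via Theorem~\ref{p1_thm17} at shift $a=1$. Splitting off $\mathcal{L}_{a,2}^{(1)}(q)$ from $\sum_{k\ge1}(e^{(2r+1)kz}+1)^{-1}$ is harmless. Your leading term, the constant $-\frac12$, and the coefficient $\frac16$ of $z^2$ inside the bracket are all correct.

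The gap is the remainder $O(z^4)$, which you leave open. The check you propose for it is also aimed at the wrong quantity. Lemma~\ref{p1_lem7} is only accurate to $O(z^3)$ inside the bracket, so for the first piece you need the $n=2$ term of Theorem~\ref{p1_thm17}, namely $-\frac{B_3(1)}{3!}\,g''(0)\,z^2$ with $g(w)=\frac{e^{-4w}}{1+e^{-(2r+1)w}}$.

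There is no cancellation among Taylor coefficients to be found. One has $g''(0)=6-4r\neq 0$ for every integer $r\ge1$, and $\frac{1}{e^{w}+1}$ has vanishing second derivative at $0$. So the full summand $F(w)=g(w)+\frac{1}{e^{(2r+1)w}+1}$ also has $F''(0)=6-4r\neq0$. Searching for a cancellation there would lead you to conclude, wrongly, that only $O(z^3)$ holds.

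What kills the term is $B_3(1)=0$, a value you had already recorded. It makes the $z^2$ term of Euler--Maclaurin vanish for every summand at shift $a=1$, so each piece separately has no $z^3$ term inside its bracket. Going one step further, with $F'''(0)=24r-20$ and $B_4(1)=-\frac1{30}$, gives
\[
\frac{1}{z}\left(\frac{2\ln 2+D_2^{(r)}}{2(2r+1)}-\frac{z}{2}+\frac{z^{2}}{6}+\frac{6r-5}{180}z^{4}+O(z^{5})\right).
\]
So the stated $O(z^4)$ is correct and sharp. It is true that later only the leading coefficient is used, but the lemma asserts $O(z^4)$, and this observation is what your argument needs to deliver it.

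There are two smaller slips. First, your justification for the second piece cites $B_4(1)f'''(0)$ and $f^{(4)}(0)=0$. Those govern the $z^4$ and $z^5$ coefficients inside the bracket; the relevant term is again $B_3(1)f''(0)/3!$, which is zero. Second, the integral you cite for the leading term of $\mathcal{L}_{a,2}^{(1)}$ is off by a factor of $2$. The right one is
\[
\int_0^\infty\frac{e^{-4w}}{1+e^{-(2r+1)w}}\,dw=\sum_{j\ge0}\frac{(-1)^j}{4+(2r+1)j}=\frac{D_2^{(r)}}{2(2r+1)},
\]
by \eqref{p1_eqn6}. Since you take this coefficient from Lemma~\ref{p1_lem7}, it does not affect the argument.
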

 Next, we examine the Laurent series expansions of $R_{a,i}(q)$ for all $i \in \{1,2\ldots,15\}$ and using Theorem~\ref{p1_thm17} and obtain the following result.
\begin{lemma}\label{p1_lem10}
 As $z\to 0$, we have 
    \begin{align*}
        R_{a,1}(q) = \dfrac{1}{z} \left( \dfrac{2}{2r+1} - \dfrac{2}{2r+1} z + \left( \dfrac{4r^2 - 8r + 7}{6(2r+1)} \right) z^2  + O(z^3) \right).
    \end{align*}
\end{lemma}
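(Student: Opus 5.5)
The plan is a direct Laurent expansion at $z=0$, with no Euler--Maclaurin argument needed. Since $R_{a,1}(q)=\dfrac{q^{2r+1}+q^{2}}{1-q^{2r+1}}$ is a quotient of finitely many exponentials in $z$ (with $q=e^{-z}$), it is meromorphic near $z=0$. Its only singularity there is a simple pole coming from the denominator. Multiplying the Taylor expansions of numerator and reciprocal denominator therefore gives the claim, and the error term is a genuine $O(z^3)$ valid in any punctured neighbourhood of $0$, in particular in the cone.

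Write $a:=2r+1$ for brevity.

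\textbf{Step 1: the denominator.} I will use the Bernoulli generating function $\dfrac{t}{e^t-1}=\sum_{n\ge0}B_n\dfrac{t^n}{n!}$ with $t=-az$. This gives
\begin{align*}
\frac{1}{1-e^{-az}}=\frac{1}{az}\left(1+\frac{az}{2}+\frac{a^2z^2}{12}+O(z^4)\right).
\end{align*}
The cubic term vanishes because $B_3=0$.

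\textbf{Step 2: the numerator.} Expanding each exponential gives
\begin{align*}
e^{-az}+e^{-2z}=2-(a+2)z+\frac{a^2+4}{2}z^2+O(z^3).
\end{align*}

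\textbf{Step 3: multiply and collect coefficients.} The product is $\dfrac{1}{az}$ times a power series whose first three coefficients are as follows.
\begin{itemize}
\item Constant term: $2$.
\item Linear term: $2\cdot\frac{a}{2}-(a+2)=-2$.
\item Quadratic term: $\frac{a^2}{6}-\frac{a(a+2)}{2}+\frac{a^2+4}{2}=\frac{a^2}{6}-a+2$.
\end{itemize}
Substituting $a=2r+1$ in the quadratic coefficient gives
\begin{align*}
\frac{4r^2+4r+1-12r-6+12}{6}=\frac{4r^2-8r+7}{6}.
\end{align*}
Absorbing the prefactor $\frac{1}{a}=\frac{1}{2r+1}$ yields exactly
\begin{align*}
R_{a,1}(q)=\frac{1}{z}\left(\frac{2}{2r+1}-\frac{2}{2r+1}z+\frac{4r^2-8r+7}{6(2r+1)}z^2+O(z^3)\right).
\end{align*}

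There is no real obstacle. The only point needing care is the bookkeeping of the $z^2$ coefficient, where three cross terms combine. I would double-check it by specialising to $r=1$: then $R_{a,1}=\dfrac{2q^{2}}{1-q^{3}}$ should give quadratic coefficient $\tfrac{1}{6}$, and indeed $\tfrac{a^2}{6}-a+2=\tfrac{9}{6}-1=\tfrac12$ at $a=3$, which after dividing by $a=3$ gives $\tfrac16$.
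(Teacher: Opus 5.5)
Your proposal is correct and takes essentially the paper's approach: the paper gives no detailed argument and simply obtains this as the Laurent expansion of the explicit quotient $R_{a,1}(q)$ at $z=0$. Your bookkeeping checks out, including the $z^2$ coefficient $\frac{a^2}{6}-a+2$, and you are right that no Euler--Maclaurin input is needed for a rational function of $q$.

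The only blemish is your sanity check. For $r=1$ one has $R_{a,1}(q)=\frac{q^{3}+q^{2}}{1-q^{3}}$, not $\frac{2q^{2}}{1-q^{3}}$; the latter would actually give $\frac{1}{3z}\bigl(2-z-\tfrac12 z^2+O(z^3)\bigr)$. Moreover, substituting $a=3$ into your own general formula is not an independent check in any case.
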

\begin{lemma}\label{p1_lem11}
 As $z \to 0 , $ we have  
    \begin{align*}
         R_{a,2}(q) = \dfrac{1}{z} \left( -\dfrac{1}{2r+1} + \dfrac{3-2r}{2(2r+1)} z - \dfrac{4r^2 - 20r + 13}{12(2r+1)} z^2  + O(z^3) \right).
    \end{align*}
\end{lemma}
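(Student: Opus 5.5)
We prove the statement by a direct Laurent expansion of the closed form $R_{a,2}(q)=-\dfrac{q^{2}}{1-q^{2r+1}}$ with $q=e^{-z}$. No Lambert series is involved, so Theorem~\ref{p1_thm17} is not needed. Set $a:=2r+1$ and write
\[
R_{a,2}(e^{-z})=-\,e^{-2z}\cdot\frac{1}{1-e^{-az}}.
\]
Both factors are meromorphic near $z=0$. The second has a simple pole there, so the product has the form $\frac{1}{z}\bigl(c_0+c_1z+c_2z^2+O(z^3)\bigr)$, valid as $z\to 0$ in any manner.

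First, the Bernoulli generating function $\dfrac{t}{e^{t}-1}=\sum_{n\ge0}B_n\dfrac{t^n}{n!}$ with $t=-az$ gives
\[
\frac{1}{1-e^{-az}}=\frac{1}{az}+\frac12+\frac{az}{12}+O(z^{3}),
\]
since $B_2=\frac16$ and $B_3=0$. Second, $e^{-2z}=1-2z+2z^{2}+O(z^{3})$. Multiplying these and keeping terms through order $z$ (that is, through $z^2$ after factoring out $\frac1z$) gives
\[
e^{-2z}\cdot\frac{1}{1-e^{-az}}=\frac{1}{az}+\Bigl(\frac12-\frac{2}{a}\Bigr)+\Bigl(\frac{2}{a}-1+\frac{a}{12}\Bigr)z+O(z^{2}).
\]

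Finally, negate and substitute $a=2r+1$. The leading coefficient is $-\frac{1}{2r+1}$. The constant term is
\[
\frac{2}{a}-\frac12=\frac{4-a}{2a}=\frac{3-2r}{2(2r+1)}.
\]
For the linear term we use $a^{2}=4r^{2}+4r+1$:
\[
-\frac{2}{a}+1-\frac{a}{12}=\frac{-24+12a-a^{2}}{12a}=-\frac{4r^{2}-20r+13}{12(2r+1)}.
\]
After factoring out $\frac1z$, these three coefficients are exactly the ones in the statement.

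No step is a genuine obstacle. The only point needing care is bookkeeping: the $\frac{1}{az}$ pole multiplies the quadratic term $2z^2$ of $e^{-2z}$, so that term must be kept. Tracking it correctly is what produces the $-\frac{4r^{2}-20r+13}{12(2r+1)}$ coefficient.
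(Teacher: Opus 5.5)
Your proof is correct and takes essentially the paper's approach. The paper gives no details beyond asserting the Laurent expansion of the rational function $R_{a,2}(q)=-q^{2}/(1-q^{2r+1})$, nominally via Theorem~\ref{p1_thm17}; applied to $\sum_{m\ge0}e^{-(2r+1)z(m+\frac{2}{2r+1})}$, that theorem reproduces the same Bernoulli generating-function computation you carried out, and your three coefficients, including the $z^{2}$ term coming from $2z^{2}$ in $e^{-2z}$, check out.
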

\begin{lemma}\label{p1_lem12}
 As $z \to 0$, we have 
    \begin{align*}
         R_{a,3}(q) = \dfrac{1}{z} \left( -\dfrac{2}{2r+1} + \dfrac{2r-1}{2r+1} z - \dfrac{4r^2 - 8r + 7}{6(2r+1)} z^2  + O(z^3) \right).
    \end{align*}
\end{lemma}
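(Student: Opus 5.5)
The plan is a direct Laurent expansion: substitute $q=e^{-z}$ into
$$R_{a,3}(q)=-\frac{q^{2r+1}+q^{2r-1}}{1-q^{2r+1}}$$
and multiply the Taylor series of the numerator by the Laurent series of the reciprocal of the denominator. Both are entire or meromorphic in $z$ near $0$ with only a simple pole coming from the denominator, so no Euler--Maclaurin argument (Theorem~\ref{p1_thm17}) is needed. The error term $O(z^3)$ inside the bracket is then automatic and uniform in any neighbourhood of $0$, in particular in the cone $0\le |y|<\Delta x$.

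To keep the bookkeeping light, I set $a:=2r+1$, so the numerator is $-(e^{-az}+e^{-(a-2)z})$.

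\textbf{Denominator.} Since $1-e^{-az}=az\bigl(1-\tfrac{az}{2}+\tfrac{a^2z^2}{6}+O(z^3)\bigr)$, the standard Bernoulli expansion gives
$$\frac{1}{1-e^{-az}}=\frac{1}{az}\Bigl(1+\frac{az}{2}+\frac{a^2z^2}{12}+O(z^4)\Bigr).$$

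\textbf{Numerator.} Expanding to second order,
$$-(e^{-az}+e^{-(a-2)z})=-2+(2a-2)z-\frac{a^2+(a-2)^2}{2}z^2+O(z^3).$$

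\textbf{Collecting coefficients.} Multiplying and factoring out $\frac1z$:
\begin{itemize}
\item The constant term is $-\frac{2}{a}=-\frac{2}{2r+1}$.
\item The coefficient of $z$ is $\frac1a\bigl((2a-2)-a\bigr)=\frac{a-2}{a}=\frac{2r-1}{2r+1}$.
\item The coefficient of $z^2$ is
$$\frac1a\Bigl(-\frac{a^2+(a-2)^2}{2}+\frac{(2a-2)a}{2}-\frac{2a^2}{12}\Bigr)=\frac1a\Bigl(a-2-\frac{a^2}{6}\Bigr).$$
\end{itemize}
Substituting $a=2r+1$ gives $a-2-\frac{a^2}{6}=\frac{-4r^2+8r-7}{6}$, so the $z^2$ coefficient is $-\frac{4r^2-8r+7}{6(2r+1)}$, which is the claimed value.

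The only step needing care is this $z^2$ coefficient, which combines three cross terms. As a consistency check I would compare with Lemma~\ref{p1_lem10}: $R_{a,1}$ has the same denominator and a closely related numerator, and its $z^2$ coefficient is exactly the negative of the one obtained here. The remaining steps are routine.
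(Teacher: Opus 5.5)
Your proposal is correct and follows essentially the paper's approach: the paper writes out no computation for this lemma, only saying that the $R_{a,i}$ are handled by Laurent expansion in $z$, and your direct expansion of $-(e^{-az}+e^{-(a-2)z})/(1-e^{-az})$ with $a=2r+1$ reproduces all three coefficients, including $a-2-a^2/6=(-4r^2+8r-7)/6$ for the $z^2$ term. You are also right that Theorem~\ref{p1_thm17} is not needed, since $R_{a,3}$ is a rational function of $q$ with only a simple pole at $q=1$; and the sign match with Lemma~\ref{p1_lem10} is structural, because $R_{a,1}+R_{a,3}=(q^2-q^{2r-1})/(1-q^{2r+1})$ is even in $z$, which forces the $z^{0}$ and $z^{2}$ bracket coefficients of the two lemmas to cancel.
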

\begin{lemma}\label{p1_lem13}
 As $z \to 0$, we have 
    \begin{align*}
        R_{a,4}(q) = \dfrac{1}{z} \left( \dfrac{1}{2r+1} + \dfrac{3-2r}{2(2r+1)} z + \dfrac{4r^2 - 20r + 13}{12(2r+1)} z^2  + O(z^3) \right).
    \end{align*}
\end{lemma}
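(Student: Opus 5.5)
The expansion in Lemma~\ref{p1_lem13} should follow from a direct Laurent expansion at $z=0$, with the generating function of the Bernoulli polynomials doing all the work. No use of Theorem~\ref{p1_thm17} is needed, since $R_{a,4}$ is a single rational function of $q$ and not a series in $m$.

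\textbf{Step 1: rewrite $R_{a,4}$.} With $q=e^{-z}$, multiply numerator and denominator by $e^{(2r+1)z}$:
\[
R_{a,4}(q)=\frac{e^{-(2r-1)z}}{1-e^{-(2r+1)z}}=\frac{e^{2z}}{e^{(2r+1)z}-1}.
\]
Now set $t=(2r+1)z$ and $x_0=\frac{2}{2r+1}$, so that $e^{2z}=e^{x_0 t}$. The defining identity $\sum_{n\ge0}B_n(x)\frac{t^n}{n!}=\frac{te^{tx}}{e^t-1}$ then gives
\[
R_{a,4}(q)=\frac{1}{t}\sum_{n\ge0}B_n(x_0)\frac{t^n}{n!}
=\frac{1}{z}\left(\frac{1}{2r+1}+B_1(x_0)\,z+\frac{(2r+1)B_2(x_0)}{2}\,z^2+O(z^3)\right).
\]

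\textbf{Step 2: the error term.} The series $\frac{te^{tx_0}}{e^t-1}$ is holomorphic for $|t|<2\pi$. Its Taylor remainder after the $t^2$ term is therefore $O(|t|^3)=O(|z|^3)$ uniformly as $z\to0$, in any direction. This justifies the unqualified phrase ``as $z\to0$'' in the statement.

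\textbf{Step 3: identify the coefficients.} Using $B_1(x)=x-\tfrac12$, the linear coefficient is
\[
B_1(x_0)=\frac{2}{2r+1}-\frac12=\frac{3-2r}{2(2r+1)}.
\]
Using $B_2(x)=x^2-x+\tfrac16$, the quadratic coefficient is
\[
\frac{(2r+1)B_2(x_0)}{2}=\frac{2}{2r+1}-1+\frac{2r+1}{12}=\frac{24-12(2r+1)+(2r+1)^2}{12(2r+1)}=\frac{4r^2-20r+13}{12(2r+1)}.
\]
Both agree with the claimed expansion, and the constant term $\frac{1}{2r+1}$ comes from $B_0=1$.

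\textbf{Main obstacle.} The only point needing care is the algebra in Step 3. This is routine once the rewriting in Step 1 turns the problem into a Bernoulli-polynomial expansion at the single point $x_0=2/(2r+1)$.
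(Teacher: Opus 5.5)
Your proposal is correct and is essentially the paper's argument. The paper gives no separate proof and simply asserts this as a Laurent expansion, nominally via Theorem~\ref{p1_thm17}. Writing $R_{a,4}(q)=\sum_{m\ge0}e^{-(2r+1)z(m+a)}$ with $a=\frac{2r-1}{2r+1}=1-x_0$ and taking $f(x)=e^{-x}$, that Euler--Maclaurin formula is exactly your Bernoulli generating-function identity, using $B_n(1-x)=(-1)^nB_n(x)$. Your coefficient algebra checks out; for example, $r=1$ gives $\frac{1}{z}\bigl(\frac13+\frac16 z-\frac1{12}z^2+\cdots\bigr)$, which matches a direct expansion of $e^{-z}/(1-e^{-3z})$. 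Your holomorphy argument also gives the error bound uniformly in all directions, not just in a cone.
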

\begin{lemma}\label{p1_lem14}
 As $z \to 0$, we have 
    \begin{align*}
       R_{a,5}(q) = \dfrac{1}{z} \left( \dfrac{1}{2(2r+1)} - \dfrac{z}{2} + \dfrac{(2r+1)z^2}{6}  + O(z^3) \right).
    \end{align*}
\end{lemma}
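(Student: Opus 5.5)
The lemma is an elementary Laurent expansion; unlike the Lambert series treated in Lemmas~\ref{p1_lem5}--\ref{p1_lem9}, it does not need Theorem~\ref{p1_thm17}. Put $q=e^{-z}$ and set $w:=2(2r+1)z$. Then
\[
R_{a,5}(q)=\frac{q^{2(2r+1)}}{1-q^{2(2r+1)}}=\frac{e^{-w}}{1-e^{-w}}=\frac{1}{e^{w}-1}.
\]
So the whole task is to expand $1/(e^w-1)$ about $w=0$ and then substitute back for $w$.

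\textbf{Step 1 (Bernoulli expansion).} By the definition of the Bernoulli polynomials recalled before Theorem~\ref{p1_thm17}, taking $x=0$ gives $\frac{w}{e^w-1}=\sum_{n\ge0}B_n(0)\frac{w^n}{n!}$. Hence
\[
\frac{1}{e^{w}-1}=\frac1w-\frac12+\frac{w}{12}+O(w^{3}).
\]
The $w^2$ term vanishes because $B_3(0)=0$. This series converges for $0<|w|<2\pi$, so the expansion holds as $w\to0$ from any direction, and in particular as $z\to0$.

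\textbf{Step 2 (substitute back).} With $w=2(2r+1)z$, the three terms become
\[
\frac{1}{2(2r+1)z},\qquad -\frac12,\qquad \frac{2(2r+1)z}{12}=\frac{(2r+1)z}{6}.
\]
The error term $O(w^3)$ is $O(z^3)$, with an implied constant depending only on $r$.

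\textbf{Step 3 (factor).} Pulling out $1/z$ gives
\[
R_{a,5}(q)=\frac1z\left(\frac{1}{2(2r+1)}-\frac z2+\frac{(2r+1)z^2}{6}+O(z^{4})\right),
\]
which is even slightly stronger than the claimed $O(z^3)$ inside the parentheses.

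There is no real obstacle here. The only points to check are that the $w^2$ coefficient is zero and that the scaling factor $2(2r+1)$ is carried consistently through each coefficient.
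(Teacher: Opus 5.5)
Your proof is correct. Writing $R_{a,5}(q)=1/(e^{w}-1)$ with $w=2(2r+1)z$ and using the Bernoulli-number expansion gives exactly the stated coefficients, with an error of $O(z^4)$ inside the parentheses. This is the same direct Laurent expansion the paper has in mind; the paper states the lemma without a written proof, and its passing reference to Theorem~\ref{p1_thm17} is unnecessary here, although applying it with $f(x)=e^{-x}$ would reproduce the same coefficients.
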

\begin{corollary}\label{p1_cor1}
    Let $\Delta > 0$ be a constant and suppose that $z=x+iy$ satisfies
$0\le |y|<\Delta x$. Then, as $z\to0$ in this region, we have
\begin{align*}
L_{a,2}^{(r)}(q)= &\dfrac{1}{z}\left(\dfrac{(2r-3)\left(D_1^{(r)}-D_2^{(r)}\right)+2(2r+1)\left(D_0^{(r)}+1\right)}{2(2r+1)^{2}}
+ O(z) \right).
\end{align*} 
\end{corollary}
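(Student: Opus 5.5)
The plan is to split $L_{a,2}^{(r)}(q)$ into its two defining sums. The second sum is exactly $L_{a,1}^{(r)}(q)$, and Lemma~\ref{p1_lem5} gives its expansion
\[
L_{a,1}^{(r)}(q)=\frac{1}{z}\left(\frac{D_0^{(r)}}{2r+1}+O(z)\right).
\]
For the first sum I will use the partial fraction decomposition displayed just before the definitions of $\mathcal{L}_{a,j}^{(i)}$ and $R_{a,i}$:
\[
R_{a,1}\mathcal{L}_{a,1}^{(1)}+R_{a,2}\mathcal{L}_{a,2}^{(1)}+R_{a,3}\mathcal{L}_{a,1}^{(2)}+R_{a,4}\mathcal{L}_{a,2}^{(2)}+R_{a,5}.
\]
I first check this identity by expanding the bracket and comparing it with the summand. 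The key observation is that $\mathcal{L}_{a,j}^{(2)}$ is $\mathcal{L}_{a,j}^{(1)}$ with its first term removed. The $q^{2r-1}$ constant terms then telescope against each other, and this produces $R_{a,5}$.

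Next I insert the expansions term by term. Each $R_{a,i}$ is of size $1/z$ by Lemmas~\ref{p1_lem10}--\ref{p1_lem14}, and each $\mathcal{L}_{a,j}^{(i)}$ is of size $1/z$ by Lemmas~\ref{p1_lem6}--\ref{p1_lem7}. So every product is a priori of order $z^{-2}$, and the real work is showing that the $z^{-2}$ terms cancel.

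\textbf{Order $z^{-2}$.} The leading coefficients of $R_{a,1},R_{a,3}$ are $\pm\frac{2}{2r+1}$, and $\mathcal{L}_{a,1}^{(1)},\mathcal{L}_{a,1}^{(2)}$ share the leading term $\frac{D_1^{(r)}}{2(2r+1)}$, so those two products cancel. In the same way, the leading coefficients $\mp\frac{1}{2r+1}$ of $R_{a,2},R_{a,4}$ cancel the common $D_2^{(r)}$ leading terms of $\mathcal{L}_{a,2}^{(1)},\mathcal{L}_{a,2}^{(2)}$.

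\textbf{Order $z^{-1}$.} Here there are two kinds of contribution:
\begin{itemize}
\item subleading coefficient of $R$ times leading coefficient of $\mathcal{L}$, which gives
\[
\frac{D_1^{(r)}}{2(2r+1)}\cdot\frac{-2+(2r-1)}{2r+1}=\frac{(2r-3)D_1^{(r)}}{2(2r+1)^2},
\]
and similarly $-\frac{(2r-3)D_2^{(r)}}{2(2r+1)^2}$ from the $R_{a,2},R_{a,4}$ pair;
\item leading coefficient of $R$ times subleading coefficient of $\mathcal{L}$, which gives
\[
\frac{2}{2r+1}\Bigl(-\tfrac14\Bigr)-\frac{2}{2r+1}\Bigl(-\tfrac34\Bigr)-\frac{1}{2r+1}\Bigl(-\tfrac14\Bigr)+\frac{1}{2r+1}\Bigl(-\tfrac34\Bigr)=\frac{1}{2(2r+1)}.
\]
\end{itemize}
Adding $R_{a,5}=\frac{1}{z}\bigl(\frac{1}{2(2r+1)}+O(z)\bigr)$ brings the constant part to $\frac{1}{2r+1}$. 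Combining with $L_{a,1}^{(r)}$, the total $z^{-1}$ coefficient is
\[
\frac{(2r-3)(D_1^{(r)}-D_2^{(r)})}{2(2r+1)^2}+\frac{D_0^{(r)}+1}{2r+1},
\]
which is the claimed leading coefficient.

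\textbf{Error term.} All remaining products are $O(1)$. Every expansion is uniform in the cone $0\le|y|<\Delta x$, since the Lemmas are stated there and the Laurent expansions of the $R_{a,i}$ hold near $z=0$. So the error is $\frac1z O(z)$, as required.

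The main obstacle is the bookkeeping at orders $z^{-2}$ and $z^{-1}$. The exact second-order coefficients of the $R_{a,i}$ and $\mathcal{L}$'s only feed into $O(1)$, so they need not be tracked. The one check that matters is verifying the partial fraction identity itself, in particular the prefactor $q^2/(1-q^{2r+1})$ and the telescoping that yields $R_{a,5}$.
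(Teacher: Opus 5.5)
Your proposal is correct and follows the paper's own argument. The paper also splits off $L_{a,1}^{(r)}(q)$ and rewrites the first sum through the displayed partial-fraction identity as $R_{a,1}\mathcal{L}_{a,1}^{(1)}+R_{a,2}\mathcal{L}_{a,2}^{(1)}+R_{a,3}\mathcal{L}_{a,1}^{(2)}+R_{a,4}\mathcal{L}_{a,2}^{(2)}+R_{a,5}$; it then reads off the result from Lemmas~\ref{p1_lem5}--\ref{p1_lem7} and \ref{p1_lem10}--\ref{p1_lem14}. Your verification of the identity (including the telescoping that produces $R_{a,5}$), the cancellation at order $z^{-2}$, and the order-$z^{-1}$ bookkeeping all check out, and they supply the details the paper leaves implicit.
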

\begin{proof} 
The proof readily follows from Lemmas~\ref{p1_lem5} -- \ref{p1_lem7} and  Lemmas~\ref{p1_lem10} -- \ref{p1_lem14}.
\end{proof} 
\begin{lemma}\label{p1_lem15}
 As $z \to 0$, we have 
    \begin{align*}
R_{a,6}(q)
&= \dfrac{1}{z^{2}}\left(\dfrac{3}{2(2r+1)^{2}}+\dfrac{2r-11}{4(2r+1)^{2}}z+\dfrac{12r^{2}-28r+23}{8(2r+1)^{2}}z^{2}  +O(z^{3})\right).
\end{align*} 
\end{lemma}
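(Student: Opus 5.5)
The plan is to prove the expansion of $R_{a,6}(q)$ by direct Taylor expansion in $z$, with $q=e^{-z}$. This is an elementary local computation at $z=0$ and needs no cone restriction. Set $a:=2r+1$, so that $4r+2=2a$, and write
\[
R_{a,6}(q)=\frac{N(z)}{(1-e^{-az})(1-e^{-2az})},\qquad N(z):=e^{-4z}+e^{-(4r+2)z}+e^{-(4r+4)z}.
\]
Each denominator factor has a simple zero at $z=0$. Using $1-e^{-cz}=cz\left(1-\tfrac{c}{2}z+\tfrac{c^2}{6}z^2-\tfrac{c^3}{24}z^3+O(z^4)\right)$, the denominator equals
\[
2a^2z^2\left(1-\tfrac{a}{2}z+\tfrac{a^2}{6}z^2+O(z^3)\right)\left(1-az+\tfrac{2a^2}{3}z^2+O(z^3)\right).
\]
This already produces the factor $z^{-2}$ in the statement. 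It remains to expand $N(z)$ divided by the two bracketed series to order $z^2$.

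First I would invert the product of the two brackets. Their product is $1-\tfrac{3a}{2}z+\left(\tfrac{a^2}{6}+\tfrac{2a^2}{3}+\tfrac{a^2}{2}\right)z^2+O(z^3)=1-\tfrac{3a}{2}z+\tfrac{4a^2}{3}z^2+O(z^3)$. Its reciprocal is
\[
1+\tfrac{3a}{2}z+\left(\tfrac{9a^2}{4}-\tfrac{4a^2}{3}\right)z^2+O(z^3)=1+\tfrac{3a}{2}z+\tfrac{11a^2}{12}z^2+O(z^3).
\]
Next I would expand the numerator as
\[
N(z)=3-(8r+10)z+\tfrac12\left(16+(4r+2)^2+(4r+4)^2\right)z^2+O(z^3).
\]
Multiplying the two series and the prefactor $\tfrac{1}{2a^2}$ gives the three coefficients.

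As a check, the constant term is $\tfrac{3}{2a^2}$. The linear term is $\tfrac{1}{2a^2}\left(\tfrac{9a}{2}-(8r+10)\right)=\tfrac{2r-11}{4a^2}$. Both agree with the claimed $\tfrac{3}{2(2r+1)^2}$ and $\tfrac{2r-11}{4(2r+1)^2}$.

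The $z^2$ coefficient is
\[
\frac{1}{2a^2}\left(\tfrac{11a^2}{4}-\tfrac{3a}{2}(8r+10)+\tfrac12\left(16+(4r+2)^2+(4r+4)^2\right)\right),
\]
which I would then rewrite as a polynomial in $r$ over $8(2r+1)^2$. This is the only step with real bookkeeping risk, so I would carry it out carefully, cross-check it numerically at a small value such as $r=1$ (for example against a truncated series in $z$), and then compare with the stated $\tfrac{12r^2-28r+23}{8(2r+1)^2}$.

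Finally, all the series involved are convergent power series in a neighbourhood of $0$, since the only singularity of $R_{a,6}$ at $z=0$ is the double pole coming from the denominator. The error term is therefore genuinely $O(z^3)$ inside the bracket as $z\to0$, uniformly in any region, and in particular in the cone $0\le|y|<\Delta x$.
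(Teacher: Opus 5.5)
Your proposal is correct and follows essentially the paper's route: the paper states this lemma without a written proof, and the intended derivation is the same direct Laurent expansion of the rational function in $q=e^{-z}$ about its double pole at $z=0$. The step you left unfinished does close: with $a=2r+1$, your bracket simplifies to $\frac{3a^2}{4}-5a+10=3r^2-7r+\frac{23}{4}$, and dividing by $2a^2$ gives exactly $\frac{12r^2-28r+23}{8(2r+1)^2}$.
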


\begin{lemma}\label{p1_lem16}
 As $z \to 0$, we have 
   \begin{align*}
R_{a,7}(q)
&= \dfrac{1}{z^{2}}\left(-\dfrac{3}{2(2r+1)^{2}}-\dfrac{10r-19}{4(2r+1)^{2}}z-\dfrac{7(2r-3)^{2}}{8(2r+1)^{2}}z^{2} +O(z^{3})\right).
\end{align*} 
\end{lemma}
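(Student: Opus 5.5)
The plan is to expand numerator and denominator of $R_{a,7}(q)$ separately as power series in $z$, with $q=e^{-z}$, and multiply. No Lambert-series input such as Theorem~\ref{p1_thm17} is needed, since $R_{a,7}$ is a rational function of $q$ with a double pole at $q=1$, coming from one simple zero in each of the factors $1-q^{2r+1}$ and $1-q^{4r+2}$. Write $a:=2r+1$ throughout, so that
$$R_{a,7}(q)=-\frac{e^{-4z}+e^{-6z}+e^{-(4r+4)z}}{(1-e^{-az})(1-e^{-2az})}.$$

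For the denominator I will use the Bernoulli expansion
$$\frac{1}{1-e^{-w}}=\frac{1}{w}\Bigl(1+\frac{w}{2}+\frac{w^{2}}{12}+O(w^{4})\Bigr),$$
which is the generating function for $B_n(1)$. Taking $w=az$ and $w=2az$ and multiplying gives
$$\frac{1}{(1-q^{2r+1})(1-q^{4r+2})}=\frac{1}{2a^{2}z^{2}}\Bigl(1+\frac{3a}{2}z+\frac{11a^{2}}{12}z^{2}+O(z^{3})\Bigr).$$
The $z^{2}$ coefficient arises as $\tfrac1{12}+\tfrac13+\tfrac12=\tfrac{11}{12}$.

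For the numerator I will expand each exponential to second order:
$$-\bigl(e^{-4z}+e^{-6z}+e^{-(4r+4)z}\bigr)=-3+(4r+14)z-\Bigl(26+8(r+1)^{2}\Bigr)z^{2}+O(z^{3}).$$

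Multiplying the two expansions produces $\frac{1}{z^{2}}(c_0+c_1z+c_2z^2+O(z^3))$.
\begin{itemize}
\item The constant term is $c_0=-3/(2a^2)=-\tfrac{3}{2(2r+1)^2}$.
\item The linear term is $c_1=\frac{1}{2a^2}\bigl(-\tfrac{9a}{2}+4r+14\bigr)=\frac{1}{2a^2}\cdot\frac{-10r+19}{2}=-\frac{10r-19}{4(2r+1)^2}$, as claimed.
\item The quadratic term is $c_2=\frac{1}{2a^2}\bigl(-\tfrac{11a^2}{4}+\tfrac{3a}{2}(4r+14)-26-8(r+1)^2\bigr)$.
\end{itemize}

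The only real obstacle is the bookkeeping in $c_2$. One has to expand this expression as a polynomial in $r$ (with $a=2r+1$) and verify that it simplifies to $-\tfrac{7(2r-3)^2}{4}$, so that dividing by $2a^2$ gives $-\tfrac{7(2r-3)^2}{8(2r+1)^2}$. I would double-check this identity by specializing $r=1$ and $r=2$, and by comparing with the analogous computation for $R_{a,6}$ in Lemma~\ref{p1_lem15}, which has the same denominator. Since the functions involved are meromorphic at $z=0$, the $O(z^3)$ remainder is uniform as $z\to0$ in any region, and in particular in the cone $0\le|y|<\Delta x$.
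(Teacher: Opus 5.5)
Your proposal is correct and follows essentially the paper's route: a direct Laurent expansion of the rational function $R_{a,7}(e^{-z})$, where the paper's nominal appeal to Theorem~\ref{p1_thm17} amounts to the same Bernoulli expansion of $1/(1-e^{-w})$ that you use. The bookkeeping you left unexecuted also checks out: with $a=2r+1$, the bracket in $c_2$ equals $-7r^2+21r-\tfrac{63}{4}=-\tfrac{7}{4}(2r-3)^2$, which gives the stated $z^2$ coefficient.
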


\begin{lemma}\label{p1_lem17}
 As $z \to 0$, we have 
    \begin{align*}
R_{a,8}(q)
&= \dfrac{1}{z^{2}}\left(\dfrac{1}{2(2r+1)^{2}}+\dfrac{3(2r-3)}{4(2r+1)^{2}}z+\dfrac{44r^{2}-172r+119}{24(2r+1)^{2}}z^{2} +O(z^{3})\right).
\end{align*} 
\end{lemma}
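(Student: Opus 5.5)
The plan is to compute the Laurent expansion directly. Put $k:=2r+1$, so that
$$R_{a,8}(q)=\frac{q^{6}}{(1-q^{k})(1-q^{2k})},\qquad q=e^{-z}.$$
Each factor is an elementary function of $z$ with a simple zero at $z=0$, so only finitely many Taylor coefficients are needed and no Euler--Maclaurin argument (Theorem~\ref{p1_thm17}) is required.

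First, for any constant $a>0$, the generating function of the Bernoulli numbers gives
$$\frac{1}{1-e^{-az}}=\frac{1}{az}\left(1+\frac{az}{2}+\frac{a^{2}z^{2}}{12}+O(z^{4})\right).$$
Applying this with $a=k$ and $a=2k$ yields
$$\frac{1}{(1-q^{k})(1-q^{2k})}=\frac{1}{2k^{2}z^{2}}\left(1+\frac{kz}{2}+\frac{k^{2}z^{2}}{12}\right)\left(1+kz+\frac{k^{2}z^{2}}{3}\right)+O(z)
=\frac{1}{2k^{2}z^{2}}\left(1+\frac{3k}{2}z+\frac{11k^{2}}{12}z^{2}+O(z^{3})\right).$$
Second, I multiply by $q^{6}=e^{-6z}=1-6z+18z^{2}+O(z^{3})$ and collect terms through $z^{2}$:
$$R_{a,8}(q)=\frac{1}{z^{2}}\left(\frac{1}{2k^{2}}+\frac{3k-12}{4k^{2}}z+\frac{11k^{2}-108k+216}{24k^{2}}z^{2}+O(z^{3})\right).$$

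Third, I substitute $k=2r+1$ back in. The linear coefficient becomes $\frac{3(2r-3)}{4(2r+1)^{2}}$. In the quadratic coefficient, using $k^{2}=4r^{2}+4r+1$, the numerator is $44r^{2}+44r+11-216r-108+216=44r^{2}-172r+119$. This matches the statement.

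No step is a real obstacle; the only care needed is the bookkeeping of the $z^{2}$ coefficient in the triple product. I would double-check that coefficient by expanding in the other order: first form $(1+\tfrac{k}{2}z+\tfrac{k^{2}}{12}z^{2})(1-6z+18z^{2})$, then multiply by the second factor. The $O(z^{3})$ error is uniform as $z\to0$, because every factor is holomorphic in a punctured neighbourhood of $0$ with at most a pole of order one. In particular, no restriction to the cone $D_\delta$ is needed.
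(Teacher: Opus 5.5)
Your proposal is correct and takes essentially the paper's route. The paper gives no separate argument for this lemma: it reads the coefficients off the Laurent expansion of the rational function in $q=e^{-z}$, nominally citing Theorem~\ref{p1_thm17}, which for $1/(1-q^{a})=\sum_{m\ge0}e^{-amz}$ reduces to the same Bernoulli-number expansion $\frac{1}{az}\bigl(1+\frac{az}{2}+\frac{a^{2}z^{2}}{12}+O(z^{4})\bigr)$ that you use. Your computation of all three coefficients checks out, including the numerator $44r^{2}-172r+119$ and the uniformity of the $O(z^{3})$ error near $z=0$.
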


\begin{lemma}\label{p1_lem18}
 As $z \to 0$, we have 
    \begin{align*}
R_{a,9}(q)
&= \dfrac{1}{z^{2}}\left(-\dfrac{3}{(2r+1)^{2}}+\dfrac{2}{2r+1}z-\dfrac{12r^{2}-4r+11}{4(2r+1)^{2}}z^{2}+O(z^{3})\right).
\end{align*} 
\end{lemma}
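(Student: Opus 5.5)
The plan is a direct Taylor expansion with $q=e^{-z}$; no Euler--Maclaurin or Lambert-series input is needed, since $R_{a,9}(q)$ is an explicit rational function of $q$. Write
\[
R_{a,9}(q)=-\frac{N(z)}{(2r+1)^2z^2}\cdot\left(\frac{w}{1-e^{-w}}\right)^{2},
\]
where
\[
N(z):=e^{-(2r+3)z}+e^{-4rz}+e^{-(4r+2)z},\qquad w:=(2r+1)z.
\]
Both factors are holomorphic near $z=0$: $N$ is entire, and $w/(1-e^{-w})$ is holomorphic for $|w|<2\pi$. Hence the product has a convergent Taylor expansion at $0$. Its truncation after the $z^2$ term has error $O(z^3)$ as $z\to0$, with no restriction on $\arg z$.

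First I expand the numerator to second order. The exponents sum to $(2r+3)+4r+(4r+2)=10r+5$. Their squares sum to $(2r+3)^2+16r^2+(4r+2)^2=36r^2+28r+13$. Hence
\[
N(z)=3-(10r+5)z+\frac{36r^2+28r+13}{2}z^2+O(z^3).
\]

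Next I use the Bernoulli expansion $w/(1-e^{-w})=1+\tfrac{w}{2}+\tfrac{w^2}{12}+O(w^3)$. Squaring gives $1+w+\tfrac{5}{12}w^2+O(w^3)$, that is,
\[
\left(\frac{w}{1-e^{-w}}\right)^{2}=1+(2r+1)z+\frac{5(2r+1)^2}{12}z^2+O(z^3).
\]

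Multiplying the two expansions:
\begin{itemize}
\item The constant term is $3$.
\item The $z$-coefficient is $-(10r+5)+3(2r+1)=-2(2r+1)$.
\item The $z^2$-coefficient is
\[
\frac{36r^2+28r+13}{2}-(10r+5)(2r+1)+\frac{5(2r+1)^2}{4}=\frac{12r^2-4r+11}{4}.
\]
\end{itemize}
Multiplying through by $-1/((2r+1)^2z^2)$ then gives
\[
R_{a,9}(q)=\frac{1}{z^{2}}\left(-\frac{3}{(2r+1)^{2}}+\frac{2}{2r+1}z-\frac{12r^{2}-4r+11}{4(2r+1)^{2}}z^{2}+O(z^{3})\right),
\]
which is the claimed expansion.

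There is no conceptual obstacle. The only step needing care is the $z^2$ coefficient, where three contributions must be combined without arithmetic slips. I would double-check it by specializing to $r=1$ and comparing with a direct series expansion of $-(q^5+q^4+q^6)/(1-q^3)^2$.
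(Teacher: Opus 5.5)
Your proposal is correct and takes essentially the same route as the paper. The paper simply asserts this lemma as the Laurent expansion of the explicit rational function $R_{a,9}(e^{-z})$ (nominally citing Theorem~\ref{p1_thm17}, which is not needed here), and your factorization $-\frac{N(z)}{(2r+1)^2z^2}\bigl(\frac{w}{1-e^{-w}}\bigr)^2$ supplies the computation it omits; your coefficients check out, including the $z^2$ term $3r^2-r+\frac{11}{4}=\frac{12r^2-4r+11}{4}$ and the $r=1$ value $-\frac{1}{3}+\frac{2}{3}z-\frac{19}{36}z^2$.
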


\begin{lemma}\label{p1_lem19}
 As $z \to 0$, we have 
    \begin{align*}
R_{a,10}(q)
&= \dfrac{1}{z^{2}}\left(\dfrac{3}{(2r+1)^{2}}-\dfrac{1}{2r+1}z+\dfrac{(2r-3)^{2}}{4(2r+1)^{2}}z^{2} +O(z^{3})\right).
\end{align*} 
\end{lemma}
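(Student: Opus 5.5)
The plan is a direct Laurent expansion in $z$ with $q=e^{-z}$. No Lambert series or Euler--Maclaurin argument is needed here: $R_{a,10}(q)$ is a rational function of $q$ with a double pole at $q=1$, i.e.\ at $z=0$. Recall
$$R_{a,10}(q)=\frac{q^{2r+1}+q^{2r+3}+q^{4r}}{(1-q^{2r+1})^2}.$$
I will treat the denominator and the numerator separately and then multiply the two expansions, keeping terms up to order $z^0$.

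\emph{Denominator.} Put $w=(2r+1)z$. I will use the classical Bernoulli expansion
$$\frac{1}{1-e^{-w}}=\frac1w+\frac12+\frac{w}{12}+O(w^3).$$
Squaring it gives
$$\frac{1}{(1-e^{-w})^2}=\frac{1}{w^2}+\frac{1}{w}+\frac{5}{12}+O(w).$$
In terms of $z$ this reads
$$\frac{1}{(2r+1)^2z^2}+\frac{1}{(2r+1)z}+\frac{5}{12}+O(z).$$

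\emph{Numerator.} Set $a=2r+1$, $b=2r+3$, $c=4r$. Then
$$e^{-az}+e^{-bz}+e^{-cz}=3-(a+b+c)z+\frac{a^2+b^2+c^2}{2}z^2+O(z^3),$$
where $a+b+c=8r+4=4(2r+1)$.

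\emph{Collecting coefficients.} Multiplying the two expansions, the $z^{-2}$ coefficient is $\frac{3}{(2r+1)^2}$. The $z^{-1}$ coefficient is
$$\frac{3}{2r+1}-\frac{4(2r+1)}{(2r+1)^2}=-\frac{1}{2r+1},$$
which matches the statement. The $z^0$ coefficient is
$$\frac{5\cdot 3}{12}-\frac{4(2r+1)}{2r+1}+\frac{a^2+b^2+c^2}{2(2r+1)^2}.$$
I will reduce this over the common denominator $4(2r+1)^2$, using $a^2+b^2+c^2=24r^2+16r+10$. It should come out to $\frac{(2r-3)^2}{4(2r+1)^2}$. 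Factoring out $z^{-2}$ then gives the displayed form, with remainder $O(z^3)$ inside the bracket. The remainder is uniform as $z\to0$ because $R_{a,10}(e^{-z})-\frac{3}{(2r+1)^2z^2}+\frac{1}{(2r+1)z}$ is meromorphic near $0$ with only a simple pole.

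The only real risk is arithmetic in the $z^0$ coefficient. As a safeguard, I would cross-check it against the companion Lemma~\ref{p1_lem18} for $R_{a,9}$. That function has the same denominator and a closely related numerator, so the leading coefficients should cancel or combine consistently.
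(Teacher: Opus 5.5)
Your direct Laurent expansion is correct and is essentially what the paper does: it states the expansion of $R_{a,10}$ as a routine Laurent computation without details, and the $z^0$ coefficient you set up does reduce to $-\frac{11}{4}+\frac{12r^2+8r+5}{(2r+1)^2}=\frac{(2r-3)^2}{4(2r+1)^2}$. One small slip: after subtracting the two polar terms, the function $R_{a,10}(e^{-z})-\frac{3}{(2r+1)^2z^2}+\frac{1}{(2r+1)z}$ has a removable singularity at $0$ (it is holomorphic there), not a simple pole, and that holomorphy is what makes the $O(z^3)$ remainder uniform.
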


\begin{lemma}\label{p1_lem20}
 As $z \to 0$, we have 
    \begin{align*}
R_{a,11}(q)
&= \dfrac{1}{z^{2}}\left(-\dfrac{1}{(2r+1)^{2}}+\dfrac{1}{12}z^{2}+O(z^{3})\right).
\end{align*} 
\end{lemma}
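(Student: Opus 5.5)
The plan is to reduce $R_{a,11}(q)$ to a hyperbolic function of one variable and use its known Laurent expansion, without invoking Theorem~\ref{p1_thm17}. With $q=e^{-z}$, set $w:=(2r+1)z$. Then
\begin{align*}
R_{a,11}(q) = -\frac{e^{-w}}{\left(1-e^{-w}\right)^{2}}.
\end{align*}
Multiplying the numerator and denominator by $e^{w}$ gives
\begin{align*}
\frac{e^{-w}}{\left(1-e^{-w}\right)^{2}} = \frac{1}{\left(e^{w/2}-e^{-w/2}\right)^{2}} = \frac{1}{4\sinh^{2}(w/2)}.
\end{align*}
This function is even in $w$ and meromorphic near $0$, with a double pole at $w=0$.

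The next step is the classical expansion
\begin{align*}
\frac{1}{\sinh^{2}u}=\frac{1}{u^{2}}-\frac{1}{3}+\frac{u^{2}}{15}+O(u^{4}).
\end{align*}
It follows from $\sinh^{2}u=u^{2}\left(1+\frac{u^{2}}{3}+\frac{2u^{4}}{45}+O(u^{6})\right)$ by inverting the bracket as a geometric series. Only the first two terms are needed. Putting $u=w/2$ gives
\begin{align*}
\frac{1}{4\sinh^{2}(w/2)}=\frac{1}{w^{2}}-\frac{1}{12}+O(w^{2}).
\end{align*}

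Substituting $w=(2r+1)z$ and restoring the minus sign, we get
\begin{align*}
R_{a,11}(q) = -\frac{1}{(2r+1)^{2}z^{2}}+\frac{1}{12}+O(z^{2}) = \frac{1}{z^{2}}\left(-\frac{1}{(2r+1)^{2}}+\frac{1}{12}z^{2}+O(z^{4})\right).
\end{align*}
This is stronger than the claimed $O(z^{3})$, because evenness in $z$ kills every odd-order term. The implied constant depends only on $r$. The expansion is uniform for complex $z\to 0$, since the function is meromorphic with its only pole near $0$ at $z=0$; in particular it holds throughout the cone $D_\delta$, which is where it is used.

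There is no real obstacle here; the only care needed is the constant term $-\tfrac13\cdot\tfrac14=-\tfrac1{12}$, which becomes $+\tfrac1{12}$ after the overall sign.
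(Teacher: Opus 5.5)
Your proof is correct. With $w=(2r+1)z$, the identity $e^{-w}/(1-e^{-w})^{2}=1/(4\sinh^{2}(w/2))$ together with $1/\sinh^{2}u=u^{-2}-\tfrac13+O(u^{2})$ gives exactly the stated bracket, and in fact with the sharper error $O(z^{4})$.

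The paper writes out no argument for this lemma. It only records the Laurent expansions of all the $R_{a,i}(q)$ and says they are obtained using Theorem~\ref{p1_thm17}. For $R_{a,11}$, that route would go as follows:
\begin{itemize}
\item write $R_{a,11}(q)=-\sum_{m\ge1}m\,q^{(2r+1)m}=-\frac{1}{w}\sum_{m\ge0}f\bigl(w(m+1)\bigr)$ with $f(x)=xe^{-x}$;
\item apply Euler--Maclaurin with $a=1$;
\item use $f(0)=0$, $f'(0)=1$, $B_{2}(1)=\tfrac16$ and $B_{3}(1)=0$ to get $\sum_{m\ge1}m\,q^{(2r+1)m}=\frac{1}{w^{2}}-\frac{1}{12}+O(w^{2})$, the same expansion.
\end{itemize}

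Your hyperbolic rewriting has two advantages for this lemma. First, it avoids the decay hypotheses and the cone restriction of Theorem~\ref{p1_thm17}: it shows directly that $z^{2}R_{a,11}(e^{-z})$ is holomorphic on the disc $|z|<2\pi/(2r+1)$, so the expansion is uniform there. Second, evenness in $z$ explains at once why no $z^{-1}$ or $z$ term appears.

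What the Euler--Maclaurin phrasing buys the paper is a single method for everything. The same theorem also handles the Lambert series $\mathcal{L}_{a,j}^{(i)}(q)$, which have no closed form. For one rational function of $q^{2r+1}$ like $R_{a,11}$, though, your direct computation is the cleaner route.
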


\begin{lemma}\label{p1_lem21}
 As $z \to 0$, we have 
    \begin{align*}
R_{a,12}(q)
&= \dfrac{1}{z^{2}}\left(\dfrac{3}{2(2r+1)^{2}}+\dfrac{3(1-6r)}{4(2r+1)^{2}}z+\dfrac{44r^{2}-28r+15}{8(2r+1)^{2}}z^{2} +O(z^{3})\right).
\end{align*} 
\end{lemma}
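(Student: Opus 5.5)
The plan is a direct Laurent expansion of $R_{a,12}(q)$ in $z$ with $q=e^{-z}$; no circle-method input is needed, since $R_{a,12}$ is a rational function of $q$ with a double pole at $q=1$. Put $a:=2r+1$, so that
\[
R_{a,12}(q)=\frac{N(z)}{(1-e^{-az})(1-e^{-2az})},\qquad N(z):=e^{-(6r+3)z}+e^{-(6r-1)z}+e^{-(6r+1)z}.
\]
I treat the numerator and the denominator separately and then multiply the two truncated series.

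\emph{Numerator.} Expand each exponential to second order:
\[
N(z)=3-(18r+3)z+\frac{(6r+3)^2+(6r-1)^2+(6r+1)^2}{2}\,z^2+O(z^3).
\]
The quadratic coefficient simplifies to $(108r^2+24r+11)/2$.

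\emph{Denominator.} Use the Bernoulli expansion
\[
\frac{1}{1-e^{-w}}=\frac1w+\frac12+\frac{w}{12}+O(w^3),
\]
taking $w=az$ and $w=2az$. Multiplying the two expansions gives
\[
\frac{1}{(1-e^{-az})(1-e^{-2az})}=\frac{1}{2a^2z^2}\left(1+\frac{3a}{2}z+\frac{2a^2}{3}z^2+O(z^3)\right).
\]
The $z^2$ coefficient comes from $\frac{1}{12}+\frac{1}{6}\cdot 2+\frac14\cdot 2\cdot\ldots$; I will recompute it carefully, since it is the only slightly delicate bookkeeping step.

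\emph{Product.} Multiply out and collect powers of $z$:
\begin{itemize}
\item The constant term is $3/(2a^2)$, which matches the lemma.
\item The linear term is $\bigl(-(18r+3)+\tfrac{9a}{2}\bigr)/(2a^2)=3(1-6r)/(4a^2)$, which also matches.
\item The quadratic term is
\[
\frac{1}{2a^2}\left(\frac{108r^2+24r+11}{2}-\frac{3a(18r+3)}{2}+3\cdot\frac{2a^2}{3}\right).
\]
This should reduce to $(44r^2-28r+15)/(8a^2)$.
\end{itemize}

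The main obstacle is only the arithmetic of the $z^2$ coefficient, in particular getting the correct second-order coefficient of the denominator's reciprocal. To avoid slips, I would instead expand $(1-e^{-az})(1-e^{-2az})=2a^2z^2\bigl(1-\tfrac{3a}{2}z+\tfrac{7a^2}{12}z^2+\cdots\bigr)$ and invert that series, then cross-check against the stated target. The error term $O(z^3)$ inside the bracket holds because both factors are analytic near $z=0$ once the $z^{-2}$ factor is removed.
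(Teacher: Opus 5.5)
Your method, a direct Laurent expansion of the rational function $R_{a,12}(e^{-z})$ around its double pole, is the same as the paper's. The paper just records the resulting expansion without proof. Your constant and linear coefficients are correct. The gap is in the $z^2$ coefficient, which is the only nontrivial content of the lemma. Two of your inputs there are wrong, and with the numbers you wrote the quadratic bracket evaluates to $\frac{8r^2-16r+3}{2a^2}$, not $\frac{44r^2-28r+15}{8a^2}$. So your assertion that it ``should reduce to'' the target is false as written.

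The two errors are as follows.
\begin{enumerate}
\item The quadratic coefficient of $N(z)$ is $\frac{(6r+3)^2+(6r-1)^2+(6r+1)^2}{2}=\frac{108r^2+36r+11}{2}$, not $\frac{108r^2+24r+11}{2}$. The linear terms inside the squares contribute $36r-12r+12r=36r$.
\item Multiplying $\bigl(\frac{1}{az}+\frac12+\frac{az}{12}\bigr)\bigl(\frac{1}{2az}+\frac12+\frac{az}{6}\bigr)$ gives constant term $\frac16+\frac14+\frac1{24}=\frac{11}{24}$. Hence
\[
\frac{1}{(1-e^{-az})(1-e^{-2az})}=\frac{1}{2a^2z^2}\left(1+\frac{3a}{2}z+\frac{11a^2}{12}z^2+O(z^3)\right),
\]
not $\frac{2a^2}{3}z^2$ inside the bracket.
\end{enumerate}
Your proposed cross-check also uses a wrong coefficient. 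In fact $(1-e^{-az})(1-e^{-2az})=2a^2z^2\bigl(1-\frac{3a}{2}z+\frac{4a^2}{3}z^2+O(z^3)\bigr)$. Inverting this gives $\frac{9a^2}{4}-\frac{4a^2}{3}=\frac{11a^2}{12}$. Your value $\frac{7a^2}{12}$ would invert to $\frac{5a^2}{3}$, which agrees with neither of your other numbers.

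With the corrected inputs, and using $3a(18r+3)=108r^2+72r+9$, the quadratic term is
\[
\frac{1}{2a^2}\left(\frac{11a^2}{4}-\frac{3a(18r+3)}{2}+\frac{108r^2+36r+11}{2}\right)=\frac{1}{2a^2}\left(11r^2+11r+\frac{11}{4}-18r+1\right)=\frac{44r^2-28r+15}{8a^2},
\]
which is exactly the lemma. Both corrections are needed: fixing only one of them still gives a wrong coefficient. Your justification of the $O(z^3)$ term is fine. Once the $z^{-2}$ factor is removed, the denominator tends to $2a^2\neq0$, so $z^2R_{a,12}(e^{-z})$ is analytic at $0$. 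This even gives the bound in a full neighbourhood of $0$, not just in the cone.
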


\begin{lemma}\label{p1_lem22}
 As $z \to 0$, we have 
    \begin{align*}
R_{a,13}(q)
&= \dfrac{1}{z^{2}}\left(-\dfrac{3}{2(2r+1)^{2}}-\dfrac{3(5-6r)}{4(2r+1)^{2}}z-\dfrac{44r^{2}-100r+51}{8(2r+1)^{2}}z^{2}+O(z^{3})\right).
\end{align*} 
\end{lemma}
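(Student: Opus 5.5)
The expansion follows by writing $R_{a,13}(q)$ explicitly in terms of $z$ and multiplying out elementary power series. No Lambert-series analysis or Theorem~\ref{p1_thm17} is needed, since $R_{a,13}$ is a rational function of $q$ with a double pole at $q=1$. Put $a:=2r+1$ and $q=e^{-z}$, so that
\[
R_{a,13}(q)=-\frac{N(z)}{(1-e^{-az})(1-e^{-2az})},\qquad N(z):=e^{-(6r-1)z}+e^{-(6r+1)z}+e^{-(6r-3)z}.
\]

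\textbf{Step 1: the denominator.} I will use the Bernoulli expansion $\frac{w}{1-e^{-w}}=1+\frac{w}{2}+\frac{w^{2}}{12}+O(w^{4})$, which is convergent for $|w|<2\pi$, with $w=az$ and $w=2az$. This gives
\[
\frac{1}{(1-e^{-az})(1-e^{-2az})}=\frac{1}{2a^{2}z^{2}}\Bigl(1+\tfrac{az}{2}+\tfrac{a^{2}z^{2}}{12}+\cdots\Bigr)\Bigl(1+az+\tfrac{a^{2}z^{2}}{3}+\cdots\Bigr).
\]
Multiplying out,
\[
\frac{1}{(1-e^{-az})(1-e^{-2az})}=\frac{1}{2a^{2}z^{2}}\Bigl(1+\tfrac{3a}{2}z+\tfrac{11a^{2}}{12}z^{2}+O(z^{3})\Bigr).
\]

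\textbf{Step 2: the numerator.} Expanding each exponential,
\[
N(z)=3-(18r-3)z+\tfrac12\bigl[(6r-1)^{2}+(6r+1)^{2}+(6r-3)^{2}\bigr]z^{2}+O(z^{3}).
\]
The bracket equals $108r^{2}-36r+11$.

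\textbf{Step 3: multiply and read off coefficients.}
\begin{itemize}
\item Constant term: $-3/(2a^{2})$, which is the stated leading coefficient.
\item Linear term: $-\frac{1}{2a^{2}}\bigl(\frac{9a}{2}-(18r-3)\bigr)=-\frac{1}{2a^2}\cdot\frac{15-18r}{2}=-\frac{3(5-6r)}{4(2r+1)^{2}}$, as claimed.
\item Quadratic term: $-\frac{1}{2a^{2}}\bigl(\frac{11a^{2}}{4}-\frac{3a}{2}(18r-3)+\frac{108r^{2}-36r+11}{2}\bigr)$. After substituting $a=2r+1$, I expect this to simplify to $-\frac{44r^{2}-100r+51}{8(2r+1)^{2}}$.
\end{itemize}

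\textbf{Error term.} All series involved converge near $z=0$, so the remainder is an analytic $O(z^{3})$ uniformly as $z\to0$, and in particular in any cone $D_\delta$.

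The only real obstacle is the bookkeeping in the quadratic coefficient. To guard against slips, I would check the final expression numerically, for instance at $r=1$ with a small real $z$.
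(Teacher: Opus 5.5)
Your proposal is correct and follows the paper's route: the paper states this lemma as a direct Laurent expansion of the rational function $R_{a,13}(e^{-z})$ without writing out details (its mention of Theorem~\ref{p1_thm17} is not needed for a rational function), and your Bernoulli-series computation supplies exactly that expansion. The step you left as an expectation does check out: with $a=2r+1$ one gets $\frac{11a^{2}}{4}-\frac{3a(18r-3)}{2}+\frac{108r^{2}-36r+11}{2}=\frac{44r^{2}-100r+51}{4}$, which gives the stated $z^{2}$ coefficient $-\frac{44r^{2}-100r+51}{8(2r+1)^{2}}$.
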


\begin{lemma}\label{p1_lem23}
 As $z \to 0$, we have 
    \begin{align*}
R_{a,14}(q)
&= \dfrac{1}{z^{2}}\left(\dfrac{1}{2(2r+1)^{2}}-\dfrac{3(2r-3)}{4(2r+1)^{2}}z+\dfrac{44r^{2}-172r+119}{24(2r+1)^{2}}z^{2} +O(z^{3})\right).
\end{align*} 
\end{lemma}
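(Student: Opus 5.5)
The plan is to expand $R_{a,14}(q)$ directly as a product of elementary power series in $z$, with no Euler--Maclaurin needed. Since $R_{a,14}(q)$ is a rational function of $q=e^{-z}$ whose only singularity near $z=0$ is a double pole, the expansion holds as $z\to 0$ in any manner.

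Set $a:=2r+1$ and $c:=6r-3$. Then
\[
R_{a,14}(q)=\frac{e^{-cz}}{(1-e^{-az})(1-e^{-2az})}.
\]

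First, use the standard expansion
\[
\frac{1}{1-e^{-bz}}=\frac{1}{bz}\left(1+\frac{bz}{2}+\frac{b^2z^2}{12}+O(z^3)\right),
\]
which follows from the Bernoulli generating function $\frac{t}{e^t-1}$. Applying it with $b=a$ and $b=2a$ and multiplying the two results gives
\[
\frac{1}{(1-e^{-az})(1-e^{-2az})}=\frac{1}{2a^2z^2}\left(1+\frac{3a}{2}z+\frac{11a^2}{12}z^2+O(z^3)\right).
\]
Here the $z^2$ coefficient comes from $\frac{a^2}{12}+\frac{a^2}{3}+\frac{a}{2}\cdot a$.

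Next, multiply by
\[
e^{-cz}=1-cz+\frac{c^2}{2}z^2+O(z^3)
\]
and collect terms up to $z^2$:
\begin{itemize}
\item The constant term gives the leading coefficient $\frac{1}{2(2r+1)^2}$.
\item The $z$ coefficient is $\frac{1}{2a^2}\left(\frac{3a}{2}-c\right)$. Since $\frac{3a}{2}-c=\frac{9-6r}{2}$, this equals $-\frac{3(2r-3)}{4(2r+1)^2}$.
\item The $z^2$ coefficient is $\frac{1}{2a^2}\left(\frac{11a^2}{12}-\frac{3ac}{2}+\frac{c^2}{2}\right)$. Substituting $a^2=4r^2+4r+1$, $ac=12r^2-3$ and $c^2=36r^2-36r+9$ gives $\frac{44r^2-172r+119}{24(2r+1)^2}$.
\end{itemize}

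The only real obstacle is this bookkeeping of the quadratic coefficient. I would double-check it against the analogous computation for $R_{a,8}$ in Lemma~\ref{p1_lem17}, which yields the same quadratic coefficient with $c=6$ in place of $6r-3$. The $O(z^3)$ remainder is automatic, since every factor is a convergent power series (times $z^{-1}$) in a neighbourhood of $z=0$.
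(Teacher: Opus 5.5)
Your proof is correct and takes essentially the paper's approach. The paper states this lemma without a written proof, only as the Laurent expansion of the rational function $R_{a,14}(e^{-z})$ at $z=0$, and your expansion via $\frac{bz}{1-e^{-bz}}=1+\frac{bz}{2}+\frac{b^2z^2}{12}+O(z^3)$ supplies that computation, with all three coefficients checking out. Your cross-check against Lemma~\ref{p1_lem17} is also sound: $6+(6r-3)=3(2r+1)$, and $q\mapsto q^{-1}$ (that is, $z\mapsto -z$) sends $q^{c}/((1-q^{a})(1-q^{2a}))$ to $q^{3a-c}/((1-q^{a})(1-q^{2a}))$, so the $z^2$ coefficients agree and the $z$ coefficients differ by sign.
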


\begin{lemma}\label{p1_lem24}
 As $z \to 0$, we have 
    \begin{align*}
R_{a,15}(q)
&= \dfrac{1}{z}\left(\dfrac{3}{8(2r+1)}-\dfrac{9}{16}z+\dfrac{2r+1}{4}z^{2}+O(z^{3})\right).
\end{align*} 

\end{lemma}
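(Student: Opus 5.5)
The plan is a direct Laurent expansion at $z=0$. Since $q=e^{-z}$, the function $R_{a,15}(q)$ is a ratio of exponential polynomials in $z$. The numerator vanishes to first order at $z=0$ and the denominator to second order, so $R_{a,15}(e^{-z})$ is meromorphic near $z=0$ with at most a simple pole. Consequently the expansion holds as $z\to 0$ in any manner, and no cone condition or Euler--Maclaurin input (Theorem~\ref{p1_thm17}) is needed.

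Write $a:=2r+1$ and introduce the entire function
$$g(w):=\frac{1-e^{-w}}{w}=1-\frac{w}{2}+\frac{w^2}{6}-\frac{w^3}{24}+\cdots,\qquad g(0)=1.$$
Then
$$R_{a,15}(e^{-z})=\frac{e^{-3az}\,(3az)\,g(3az)}{(2az)\,g(2az)\,(4az)\,g(4az)}=\frac{3}{8a\,z}\cdot e^{-3az}\,\frac{g(3az)}{g(2az)\,g(4az)}.$$
This immediately gives the leading coefficient $\frac{3}{8(2r+1)}$. It then remains to expand
$$H(z):=e^{-3az}\,\frac{g(3az)}{g(2az)\,g(4az)}$$
to order $z^2$, with an $O(z^3)$ remainder coming from analyticity of $H$ at $0$.

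For the first-order term I would work with $\log H$. The logarithms of the factors contribute linear terms $-3az$, $-\tfrac{3az}{2}$, $+az$ and $+2az$, which sum to $-\tfrac{3a}{2}z$. Multiplying by $\frac{3}{8a}$ gives $-\tfrac{9}{16}$, matching the statement and notably independent of $r$.

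For the second-order term I would use $\log g(w)=-\tfrac{w}{2}+\tfrac{w^2}{24}+O(w^4)$. Then
$$\log H=-\tfrac{3a}{2}z+\tfrac{a^2}{24}\,(9-4-16)\,z^2+O(z^3).$$
Exponentiating, the $z^2$ coefficient of $H$ is $\tfrac{9a^2}{8}-\tfrac{11a^2}{24}=\tfrac{2a^2}{3}$. Multiplying by $\frac{3}{8a}$ then gives the $z^2$ coefficient inside the bracket as $\tfrac{a}{4}=\tfrac{2r+1}{4}$.

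The only real obstacle is bookkeeping in this second-order coefficient. Using the logarithmic form rather than multiplying out truncated power series keeps the computation short and makes sign errors easy to check.
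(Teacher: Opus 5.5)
Your proposal is correct and is essentially the paper's approach: a direct Laurent expansion of $R_{a,15}(e^{-z})$ at $z=0$. Your factorization through $(1-e^{-w})/w$ with logarithmic bookkeeping correctly yields the coefficients $\frac{3}{8(2r+1)}$, $-\frac{9}{16}$ and $\frac{2r+1}{4}$; and although the paper nominally cites Theorem~\ref{p1_thm17} here, you are right that $R_{a,15}$ is a finite rational expression in $q$, so no Euler--Maclaurin input is actually needed.
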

\begin{corollary}\label{p1_cor2}
   Let $\Delta > 0$ be a constant and suppose that $z=x+iy$ satisfies
$0\le |y|<\Delta x$. Then, as $z\to0$ in this region, we have
\begin{align*}
L_{a,3}^{(r)}(q) = \frac{1}{z}\Bigg(&\frac{(3+2\ln 2)(2r+1)^{2}+(8r^{2}-10r-1)D_1^{(r)}+(-8r^{2}+28r-8)D_2^{(r)}}{2(2r+1)^{3}} \\
&+\frac{(4r^{2}-14r+10)D_3^{(r)}}{2(2r+1)^{3}} + O(z)\Bigg).
\end{align*}
\end{corollary}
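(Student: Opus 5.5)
The corollary will follow by splitting $L_{a,3}^{(r)}(q)$ into its three summands and expanding each to order $z^{0}$ inside the bracket, that is, keeping terms through $O(1)$ overall. The third summand, $\sum_{m\ge0}\frac{q^{4(m+1)}+q^{(2r+1)(m+1)}}{1+q^{(2r+1)(m+1)}}$, is handled directly by Lemma~\ref{p1_lem9}. It contributes $\frac{1}{z}\bigl(\frac{2\ln 2+D_2^{(r)}}{2(2r+1)}+O(z)\bigr)$.

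The second summand is exactly the first series in $L_{a,2}^{(r)}(q)$. By the decomposition preceding Lemma~\ref{p1_lem5}, it equals $R_{a,1}\mathcal{L}_{a,1}^{(1)}+R_{a,2}\mathcal{L}_{a,2}^{(1)}+R_{a,3}\mathcal{L}_{a,1}^{(2)}+R_{a,4}\mathcal{L}_{a,2}^{(2)}+R_{a,5}$. Its expansion can be read off from the computation behind Corollary~\ref{p1_cor1} by subtracting the expansion of $L_{a,1}^{(r)}$ from Lemma~\ref{p1_lem5}. This gives leading coefficient
\[
\frac{(2r-3)(D_1^{(r)}-D_2^{(r)})+2(2r+1)}{2(2r+1)^2}.
\]
As a check, the $\frac{1}{z^2}$ terms cancel here because the leading coefficients of $R_{a,1}+R_{a,3}$ and of $R_{a,2}+R_{a,4}$ vanish and the $\mathcal{L}_{a,i}^{(j)}$ share leading terms.

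The first summand, the triple-product series, is the real work. I will use the partial-fraction decomposition
\[
\sum_{i=1}^{3}\sum_{k=1}^{3}R_{a,3(i-1)+k+5}\,\mathcal{L}_{a,k}^{(i)}+R_{a,15},
\]
with the $\mathcal{L}$'s taken from Lemmas~\ref{p1_lem6}--\ref{p1_lem8} and the $R$'s from Lemmas~\ref{p1_lem15}--\ref{p1_lem23} and Lemma~\ref{p1_lem24}. Each $R_{a,j}$ with $6\le j\le 14$ is of size $z^{-2}$ and each $\mathcal{L}$ is of size $z^{-1}$, so the individual products are $O(z^{-3})$. The expansions must therefore be multiplied through three orders, which is why the lemmas record coefficients up to $z^2$.

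I will then verify the cancellations. The $z^{-3}$ coefficient is
\[
\frac{1}{2(2r+1)}\sum_{k} D_k^{(r)}\Bigl(\sum_i [R_{a,\cdot}]_{-2}\Bigr).
\]
The inner sums vanish for each $k$: for instance, for $k=1$ they give $\frac{3}{2}-3+\frac{3}{2}=0$ in units of $(2r+1)^{-2}$. The $z^{-2}$ coefficient collects terms of two kinds. The first kind is the subleading $R$ coefficients times $D_k^{(r)}/(2(2r+1))$; the second is the leading $R$ coefficients times the constants $-\frac14,-\frac34,-\frac54$. Both must cancel. The surviving $z^{-1}$ coefficient comes from three sources: second-order $R$ coefficients times the $D_k$ terms, first-order times first-order, and leading-order $R$ times the $z^{2}$ terms in the $\mathcal{L}$'s. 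To these I add $\frac{3}{8(2r+1)}$ from $R_{a,15}$.

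Finally, I will sum the three contributions and put everything over the common denominator $2(2r+1)^3$. The coefficients of $D_1^{(r)},D_2^{(r)},D_3^{(r)}$ should come out as $8r^2-10r-1$, $-8r^2+28r-8$ and $4r^2-14r+10$, and the constant should be $(3+2\ln 2)(2r+1)^2$.

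I expect the main obstacle to be the bookkeeping in this first summand. Two things have to be checked there: the exact cancellation of the $z^{-3}$ and $z^{-2}$ orders, and the collection of the rational constants, where the $(2r-3),(2r-7),(2r-11)$ terms from the $\mathcal{L}$-expansions must combine into a pure multiple of $(2r+1)^2$. I would organize this as a $3\times 3$ table indexed by $(i,k)$, summing along $i$ first, since for fixed $k$ the $D_k$ and $z^2$-coefficient structure factor as $\{1,13,37\}$ and $\{-\frac14,-\frac34,-\frac54\}$.
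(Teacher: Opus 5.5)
Your proposal is correct and follows the paper's own route: the paper also splits $L_{a,3}^{(r)}(q)$ into its three summands and combines the partial-fraction decompositions (via $R_{a,1},\dots,R_{a,5}$ and $R_{a,6},\dots,R_{a,15}$) with Lemmas~\ref{p1_lem6}--\ref{p1_lem14} and \ref{p1_lem15}--\ref{p1_lem24}, using Lemma~\ref{p1_lem9} for the third summand. Carrying out your bookkeeping with the paper's coefficients confirms that the $z^{-3}$ and $z^{-2}$ orders cancel, and gives the triple-product series the leading coefficient $\frac{(2r+1)^2+2(2r-1)(r-1)D_1^{(r)}-4(2r-3)(r-1)D_2^{(r)}+2(2r-5)(r-1)D_3^{(r)}}{2(2r+1)^3}$; adding your other two contributions then yields exactly the stated formula.
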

\begin{proof}
The proof readily follows from Lemmas~\ref{p1_lem6}--\ref{p1_lem14} and Lemmas~\ref{p1_lem15}--\ref{p1_lem24}.
\end{proof}
Finally, using Theorem~\ref{p1_thm17} we obtain the following estimates of $L_{b,i}^{(r)}(q)$ and $L_{a,i}^{(r)}(q)$ for $i=1,2,3$. 
\begin{lemma}\label{p1_lem25}
 As $z \to 0$, we have 
    \begin{align*}
        L_{b,1}^{(r)}(q) = \frac{1}{z} \left( \frac{r+1}{2r+1} + O(z) \right).
    \end{align*}
\end{lemma}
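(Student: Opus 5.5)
Since $L_{b,1}^{(r)}(q)$ is a single rational function of $q=e^{-z}$, I will expand numerator and denominator directly in powers of $z$ rather than use Theorem~\ref{p1_thm17}. Write
\[
L_{b,1}^{(r)}(q)=\frac{N(q)}{1-q^{4r+2}},\qquad N(q):=\sum_{k=0}^{2r}q^{2k+2}+q^{2r+1}.
\]
Both the numerator and the denominator are analytic at $z=0$. The denominator has a simple zero there and the numerator does not vanish, so $L_{b,1}^{(r)}$ has a simple pole at $z=0$. It is therefore meromorphic with Laurent expansion $z^{-1}(c_0+c_1z+\cdots)$ near $0$. This holds uniformly for $z\to 0$ in any direction, in particular in the cone of the statement.

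The leading coefficient comes from the first-order terms. We have $1-q^{4r+2}=1-e^{-(4r+2)z}=(4r+2)z\bigl(1+O(z)\bigr)$. The numerator is $N(q)=N(1)+O(z)$, where $N(1)$ counts the monomials: $2r+1$ terms in the sum plus one more, so $N(1)=2r+2$. Hence
\[
L_{b,1}^{(r)}(q)=\frac{2r+2+O(z)}{(4r+2)z\,(1+O(z))}=\frac{1}{z}\left(\frac{r+1}{2r+1}+O(z)\right).
\]

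As a cross-check, I will use the combinatorial form $L_{b,1}^{(r)}(q)=\sum_{s\in S}q^s$. The set $S$ has density $\tfrac12+\tfrac{1}{4r+2}=\tfrac{r+1}{2r+1}$, which agrees with the residue just computed. There is no real obstacle here. The only point to watch is the count $N(1)=2r+2$ and the fact that the $O(z)$ error terms are uniform; the latter is automatic because the function is meromorphic with a simple pole at the origin.
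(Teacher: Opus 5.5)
Your argument is correct, but it takes a different route from the paper. The paper gives no separate proof of this lemma; it only says the estimate follows from the Euler--Maclaurin formula, Theorem~\ref{p1_thm17}. Implicitly, one writes $L_{b,1}^{(r)}(q)=\sum_{s\in S}q^{s}$ and splits $S$ into its $2r+2$ residue classes $a$ modulo $4r+2$. One then applies the theorem with $f(x)=e^{-x}$ to each series $\sum_{m\ge0}e^{-(4r+2)z(m+a/(4r+2))}$, and each contributes $\frac{1}{(4r+2)z}+O(1)$.

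You instead Taylor-expand the closed rational form directly: the numerator tends to $2r+2$ and the denominator is $(4r+2)z(1+O(z))$. Your density cross-check is exactly the leading term of the Euler--Maclaurin route.

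Your approach buys three things:
\begin{enumerate}
\item It is more elementary.
\item It needs no check of the decay hypotheses in Theorem~\ref{p1_thm17}.
\item Since $zL_{b,1}^{(r)}(e^{-z})$ is holomorphic at $z=0$, the expansion holds uniformly in a full punctured disc, not only in a cone $D_\delta$. This matters because the paper also uses this lemma to verify the minor-arc hypothesis (H3).
\end{enumerate}

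The Euler--Maclaurin route treats $L_{b,i}^{(r)}$ the same way as the genuinely non-rational Lambert series $L_{a,i}^{(r)}$. It also produces lower-order coefficients through the values $B_{n+1}(a)$. For a rational function of $q$, though, those coefficients are just as easy to get by further Taylor expansion.
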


\begin{lemma}\label{p1_lem26}
 As $z \to 0$, we have 
    \begin{align*}
        L_{b,2}^{(r)}(q) = \frac{1}{z} \left( \frac{3r + 2}{2(2r + 1)} + O(z)  \right). 
    \end{align*}
\end{lemma}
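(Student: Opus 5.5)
The plan is a direct Laurent expansion at $z=0$. Each of the two summands of $L_{b,2}^{(r)}(q)$ is a polynomial in $q$ divided by $1-q^{N}$ for a fixed $N$. With $q=e^{-z}$, each summand is therefore meromorphic in a neighbourhood of $z=0$ with at most a simple pole there. So the expansion holds as $z\to0$ in any manner, not only in the cone, and neither Theorem~\ref{p1_thm17} nor any Lambert series estimate is needed. This is the same mechanism that gives Lemma~\ref{p1_lem25}.

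The basic expansion I will use is, for a fixed positive integer $N$,
\begin{align*}
\frac{1}{1-e^{-Nz}}=\frac{1}{Nz}+\frac{1}{2}+O(z).
\end{align*}
For a polynomial $P$, the Taylor expansion $P(e^{-z})=P(1)+O(z)$ then gives
\begin{align*}
\frac{P(e^{-z})}{1-e^{-Nz}}=\frac{1}{z}\left(\frac{P(1)}{N}+O(z)\right).
\end{align*}
The whole proof therefore reduces to evaluating each numerator at $q=1$.

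\textbf{First summand.} Here $N=4r+2$ and the numerator is $\sum_{k=1}^{2r+1}q^{2k}+q^{2r+1}$. At $q=1$ this equals $(2r+1)+1=2r+2$. The leading coefficient is
\begin{align*}
\frac{2r+2}{4r+2}=\frac{r+1}{2r+1}.
\end{align*}

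\textbf{Second summand.} Here $N=8r+4$ and the numerator is $q^{4r+2}+\sum_{k=1}^{2r+1}q^{4k}-q^{4r+1}-q^{4r+3}$. At $q=1$ this equals $1+(2r+1)-2=2r$. The leading coefficient is
\begin{align*}
\frac{2r}{8r+4}=\frac{r}{2(2r+1)}.
\end{align*}

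\textbf{Combining.} Adding the two contributions gives
\begin{align*}
\frac{1}{z}\left(\frac{2(r+1)+r}{2(2r+1)}+O(z)\right)=\frac{1}{z}\left(\frac{3r+2}{2(2r+1)}+O(z)\right),
\end{align*}
which is the claim.

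There is no real obstacle. The only step needing care is the bookkeeping of the numerator values at $q=1$, in particular the cancellation of the two negative terms $-q^{4r+1}-q^{4r+3}$ in the second summand. Higher-order coefficients would come from the $\tfrac12$ term in the expansion above and from $P'(1)$, but they only enter the $O(z)$ error and need not be computed. The same computation also gives the bound $|L_{b,2}^{(r)}(q)|\ll|z|^{-1}$ near $z=0$ required in (H3).
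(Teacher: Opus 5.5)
Your argument is correct: the numerator values $2r+2$ and $2r$ at $q=1$ are right, and $\frac{r+1}{2r+1}+\frac{r}{2(2r+1)}=\frac{3r+2}{2(2r+1)}$.

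It differs mildly from the paper's route. The paper gives no separate proof of Lemmas~\ref{p1_lem25}--\ref{p1_lem27}; it only says they follow from the Euler--Maclaurin expansion of Theorem~\ref{p1_thm17}. The natural reading is that each piece $q^{a}/(1-q^{N})$ is written as $\sum_{m\ge0}e^{-Nz(m+a/N)}$, and the leading term $\frac{1}{Nz}\int_0^\infty e^{-w}\,dw=\frac{1}{Nz}$ is read off, exactly as for the Lambert series $L_{a,i}^{(r)}$. You instead use that $L_{b,2}^{(r)}$ is a rational function of $q$ with a simple pole at $q=1$, and Laurent-expand directly.

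The two computations agree term by term, since Euler--Maclaurin applied to a geometric series reproduces exactly the Bernoulli-polynomial expansion of $e^{-az}/(1-e^{-Nz})$. Your version is more elementary, and it is valid in a full punctured neighbourhood of $z=0$ rather than only in the cone $D_\delta$. The paper's phrasing buys only uniformity with the treatment of $L_{a,i}^{(r)}$, where no closed form is available.

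One small caution about your closing remark on (H3). It is right near $z=0$, but the minor arc extends to $|y|<\pi$. There, other roots of unity (for instance $q=-1$) are poles of $L_{b,2}^{(r)}$ that the contour approaches as $x\to0$. What holds there, and what suffices given the exponential saving in (H4), is the uniform bound $|L_{b,2}^{(r)}(q)|\ll 1/x$, which comes from $|1-q^{N}|\ge 1-e^{-Nx}$.
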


\begin{lemma}\label{p1_lem27}
 As $z \to 0$, we have 
    \begin{align*}
        L_{b,3}^{(r)}(q) = \frac{1}{z} \left( \frac{16r + 13}{12(2r+1)} + O(z) \right). 
    \end{align*}
\end{lemma}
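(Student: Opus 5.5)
The plan is a direct local expansion at $q=1$, with no Euler--Maclaurin summation needed. The function $L_{b,3}^{(r)}(q)$ is a finite sum of three rational functions of the form $P_j(q)/(1-q^{M_j})$, where each $P_j$ is a polynomial and $M_j\in\{4r+2,\,8r+4,\,12r+6\}$. Since $q=e^{-z}$, each term is meromorphic in $z$ near $0$ with at most a simple pole at $z=0$. It therefore suffices to compute each residue. The $O(z)$ remainder then holds uniformly as $z\to 0$ in any manner, in particular in the cone $D_\delta$.

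For each term I will write $1-q^{M}=1-e^{-Mz}=Mz\,(1+O(z))$, so that $(1-q^M)^{-1}=\frac{1}{Mz}(1+O(z))$. I will also use $P(e^{-z})=P(1)+O(z)$, since $P$ is a polynomial. Hence
\[
\frac{P(q)}{1-q^M}=\frac{1}{z}\left(\frac{P(1)}{M}+O(z)\right).
\]
The whole computation reduces to evaluating the three numerators at $q=1$.

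\begin{itemize}
\item \textbf{First term.} The numerator $q^{2r+1}+\sum_{k=0}^{2r}q^{2k+4}$ gives $1+(2r+1)=2r+2$ at $q=1$. With $M=4r+2$ this contributes $\frac{r+1}{2r+1}$.
\item \textbf{Second term.} The numerator $q^{4r+2}+\sum_{j=1}^{2r+1}q^{4j}-\sum_{k=0}^{2r}q^{4k+6}$ gives $1+(2r+1)-(2r+1)=1$. With $M=8r+4$ this contributes $\frac{1}{4(2r+1)}$.
\item \textbf{Third term.} The numerator $2q^{6r+3}-(q^{6r+1}+q^{6r+2}+q^{6r+4}+q^{6r+5})+\sum_{j=1}^{2r+1}q^{6j}$ gives $2-4+(2r+1)=2r-1$. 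With $M=12r+6$ this contributes $\frac{2r-1}{6(2r+1)}$.
\end{itemize}

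Adding over the common denominator $12(2r+1)$ gives
\[
12(r+1)+3+2(2r-1)=16r+13,
\]
which is the claimed leading coefficient $\frac{16r+13}{12(2r+1)}$. The remaining contributions are of the form $\frac{1}{z}\cdot O(z)$, which yields the stated error term.

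The only real obstacle is bookkeeping: correctly counting the terms in each finite sum at $q=1$. This includes the cancellation in the second numerator and the signs in the third. As a sanity check, I will confirm that the same method reproduces Lemmas~\ref{p1_lem25} and \ref{p1_lem26}. Each partial coefficient also matches a natural density: for example, $\frac{r+1}{2r+1}$ is the density of $S$, and the third term arises from multiples $3s$ with $s\in S$ minus the corrections coming from Case~2. This agreement provides an independent check on the arithmetic.
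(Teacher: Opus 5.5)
Your proposal is correct. Each of the three pieces of $L_{b,3}^{(r)}(q)$ has the form $P_j(q)/(1-q^{M_j})$ with $P_j$ a polynomial, so it has a simple pole at $z=0$ with residue $P_j(1)/M_j$. Your evaluations $P_1(1)=2r+2$, $P_2(1)=1$, $P_3(1)=2r-1$, with $M_j=4r+2,\,8r+4,\,12r+6$, give $12(r+1)+3+2(2r-1)=16r+13$ over $12(2r+1)$, as claimed.

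The paper gives no computation here. It simply attributes this estimate, together with Lemmas~\ref{p1_lem25} and \ref{p1_lem26}, to the Euler--Maclaurin formula, Theorem~\ref{p1_thm17}. Read that way, one writes $q^{a}/(1-q^{M})=\sum_{m\ge0}e^{-Mz(m+a/M)}$, and the leading term $\frac1z\int_0^\infty e^{-Mt}\,dt=\frac{1}{Mz}$ reproduces exactly your $P_j(1)/M_j$. So the two routes agree term by term.

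What each buys: the paper's Euler--Maclaurin framing keeps the method uniform with the $L_{a,\ell}^{(r)}$, which are genuinely infinite Lambert-type series where Theorem~\ref{p1_thm17} is really needed. Your direct Laurent expansion exploits the fact that $L_{b,3}^{(r)}$ is a finite combination of rational functions in $q$. It is simpler and slightly stronger: the expansion holds in a full punctured disc around $z=0$, not only in the cone $D_\delta$, and all higher-order terms come for free.

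The same observation also gives a minor-arc bound. Using $|1-q^M|\ge 1-e^{-Mx}$, you get $|L_{b,3}^{(r)}(q)|\ll x^{-1}$ on the whole circle $|q|=e^{-x}$. This is the input the paper implicitly relies on when it cites this lemma for (H3), without stating it separately.
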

\begin{lemma}\label{p1_lem28} Suppose $z=x+iy$. For $x>0$ and $0\le|y|<\pi$ we have
$$ \left|\sum_{m\ge0}
\frac{q^{m+1}}{1+q^{(2r+1)(m+1)}} \right|<
\dfrac{1}{x^2}.$$ 
\end{lemma}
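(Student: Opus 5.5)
We bound the series termwise by passing to absolute values. Write $q=e^{-z}$ with $z=x+iy$, so $|q|=e^{-x}<1$. For each $m\ge0$ the numerator satisfies $|q^{m+1}|=e^{-x(m+1)}$. For the denominator, the reverse triangle inequality gives
$$\bigl|1+q^{(2r+1)(m+1)}\bigr|\ \ge\ 1-e^{-(2r+1)(m+1)x}.$$
This is positive for every $m\ge0$ because $x>0$, so no term of the series is singular, including when $y$ is near $\pm\pi$. Hence
$$\left|\sum_{m\ge0}\frac{q^{m+1}}{1+q^{(2r+1)(m+1)}}\right|\le \sum_{k\ge1}\frac{e^{-kx}}{1-e^{-(2r+1)kx}}.$$
The bound no longer depends on $y$, which is the main point of the argument. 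It then remains to show the right-hand side is less than $1/x^2$.

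For the denominator, use $1-e^{-t}\ge t/(1+t)$ for $t>0$ with $t=(2r+1)kx$. This gives
$$\frac{1}{1-e^{-(2r+1)kx}}\le 1+\frac{1}{(2r+1)kx}\le 1+\frac{1}{kx}.$$
Therefore the sum is at most
$$\sum_{k\ge1}e^{-kx}+\frac1x\sum_{k\ge1}\frac{e^{-kx}}{k}=\frac{1}{e^x-1}-\frac{\log(1-e^{-x})}{x}.$$
We use two elementary bounds, $\frac{1}{e^x-1}<\frac1x$ and $-\log(1-e^{-x})\le \log\!\bigl(1+\tfrac1x\bigr)<\tfrac1x$. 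The second holds because $1-e^{-x}\ge x/(1+x)$. Combining them, the total is strictly less than $\frac1x+\frac1{x^2}$.

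This last bound is the main obstacle, since the statement asks for $1/x^2$ and not $1/x+1/x^2$. The fix is to bound each term by a single expression from the start. For $t>0$ we have $1-e^{-t}\ge te^{-t}$, because $e^t-1\ge t$. With $t=(2r+1)kx$ this gives
$$\frac{e^{-kx}}{1-e^{-(2r+1)kx}}\le\frac{e^{-kx}e^{(2r+1)kx}}{(2r+1)kx},$$
but this bound blows up, so it is the wrong direction. Instead we keep the factor $e^{-kx}$ and use $1-e^{-(2r+1)kx}\ge 1-e^{-kx}$. This reduces the sum to
$$\sum_{k\ge1}\frac{e^{-kx}}{1-e^{-kx}}=\sum_{k\ge1}\frac{1}{e^{kx}-1}.$$
Now apply $e^{kx}-1\ge kx+\tfrac{(kx)^2}{2}>\tfrac{k^2x^2}{2}$. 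This gives a bound of $\frac{2}{x^2}\sum_k k^{-2}=\frac{\pi^2}{3x^2}$, which has too large a constant.

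Sharper termwise bounds are needed, for example $\frac{1}{e^{kx}-1}\le\frac{1}{kx}e^{-kx/2}$, which holds since $\sinh(u)\ge u$. Summing gives the bound
$$\frac{1}{x}\bigl(-\log(1-e^{-x/2})\bigr)\le\frac1x\log\!\Bigl(1+\frac{2}{x}\Bigr).$$
For $x\le 1$ this is $<1/x^2$. The inequality $\log(1+2/x)<1/x$ must be checked for small $x$; a quick check suggests it holds for $x\le 0.5$ or so. For the remaining range of $x$, we would compare the sum directly with the crude bound $\frac{e^{-x}}{(1-e^{-x})^2}$. The final check is $\frac{e^{-x}}{(1-e^{-x})^2}=\frac{1}{4\sinh^2(x/2)}<\frac1{x^2}$, which holds since $\sinh(u)>u$. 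This is in fact the cleanest route, because $\sum_k\frac{1}{e^{kx}-1}\le\sum_k\frac{e^{-kx}}{1-e^{-x}}$. So the final plan is: reduce to $\sum_k\frac{e^{-kx}}{1-e^{-x}}=\frac{e^{-x}}{(1-e^{-x})^2}$ and apply $\sinh(u)>u$.
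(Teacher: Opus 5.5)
Your final route is correct and is essentially the paper's argument: both pass to absolute values and land on the same bound $\sum_{n\ge1}ne^{-nx}=\frac{e^{-x}}{(1-e^{-x})^{2}}=\frac{1}{4\sinh^{2}(x/2)}<\frac{1}{x^{2}}$. The paper gets there by expanding into the divisor sum $\sum_{n\ge1}\sigma_{1,2r+1}(n)e^{-nx}$ and using $\sigma_{1,2r+1}(n)\le n$, while you use the more direct termwise estimate $1-e^{-(2r+1)kx}\ge 1-e^{-x}$; your $\sinh(u)>u$ step also makes explicit the last inequality the paper leaves unjustified. When writing it up, delete the abandoned detours, one of which contains a false claim: $\frac{1}{x}\log\bigl(1+\frac{2}{x}\bigr)<\frac{1}{x^{2}}$ fails at $x=1$, since $\log 3>1$.
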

\begin{proof}
We have 
\begin{align*}
\left|\sum_{m\ge0}
\frac{q^{m+1}}{1+q^{(2r+1)(m+1)}}\right| 
&\le \sum_{m\ge1}
\left|\frac{q^m}{1+q^{(2r+1)m}}\right| \\
&\le \sum_{m\ge1}
\frac{e^{-mx}}{1-e^{-(2r+1)mx}} \\
&= \sum_{m\ge1} e^{-mx}
\sum_{k\ge0}e^{-(2r+1)mkx} \\
&= \sum_{m\ge1}\sum_{k\ge0}
e^{-(1+(2r+1)k)mx} \\
&= \sum_{n\ge1}
\sigma_{1,2r+1}(n)e^{-nx} \\
&\le \sum_{n\ge1}ne^{-nx}
=\frac{e^x}{(e^x-1)^2}
<\frac{1}{x^2}.
\end{align*}
In the above, $\sigma_{1,2r+1}(n) :=|\{d\in\mathbb{N}:d\mid n,\,
d\equiv1\pmod{2r+1}\}|$.
\end{proof}

\begin{corollary}\label{p1_cor3} Suppose $z=x+iy$ satisfies $x>0, 0\le|y|<\pi$. Then, we have
$$|L_{a,1}^{(r)}(q)|<\dfrac{2}{x^2}.$$     
\end{corollary}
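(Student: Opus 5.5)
The plan is to decompose $L_{a,1}^{(r)}(q)$ into two Lambert series of the type controlled by Lemma~\ref{p1_lem28}, and bound each by $1/x^2$. By definition,
\begin{align*}
L_{a,1}^{(r)}(q)=\sum_{m\ge0}\frac{q^{2(m+1)}}{1+q^{(2r+1)(m+1)}}+\sum_{m\ge0}\frac{q^{(2r+1)(m+1)}}{1+q^{(2r+1)(m+1)}},
\end{align*}
and the triangle inequality reduces the claim to showing that each of these two sums has absolute value less than $1/x^2$.

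For each sum, I will follow the proof of Lemma~\ref{p1_lem28} term by term rather than invoke its statement literally, since the numerators are $q^{2(m+1)}$ and $q^{(2r+1)(m+1)}$ instead of $q^{m+1}$. The key ingredient is the lower bound $|1+q^{(2r+1)m}|\ge 1-e^{-(2r+1)mx}$, which gives
\begin{align*}
\left|\frac{q^{jm}}{1+q^{(2r+1)m}}\right|\le \frac{e^{-jmx}}{1-e^{-(2r+1)mx}}\le \frac{e^{-mx}}{1-e^{-(2r+1)mx}}
\end{align*}
for $j\in\{2,2r+1\}$. The second inequality uses only $j\ge1$ and $x>0$. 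Summing over $m\ge1$, each series is dominated by exactly the majorant already estimated in Lemma~\ref{p1_lem28}. As shown there, that majorant equals $\sum_{n\ge1}\sigma_{1,2r+1}(n)e^{-nx}\le \sum_{n\ge1}ne^{-nx}=e^x/(e^x-1)^2<1/x^2$.

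Adding the two bounds yields $|L_{a,1}^{(r)}(q)|<2/x^2$ for all $x>0$ and $|y|<\pi$. There is no real obstacle here. The one point to state explicitly is that the majorant chain in Lemma~\ref{p1_lem28} depends only on the modulus of the numerator, so replacing $e^{-mx}$ by the smaller $e^{-jmx}$ keeps every inequality valid.
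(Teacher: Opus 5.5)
Your proof is correct and is essentially the paper's argument. The paper bounds $|1+q^{(2r-1)(m+1)}|\le 2$ instead of splitting $L_{a,1}^{(r)}(q)$ into two series, then uses $|q^{2(m+1)}|\le|q^{m+1}|$ and the sum-of-absolute-values majorant $\sum_{n\ge1}\sigma_{1,2r+1}(n)e^{-nx}<1/x^2$ from the proof of Lemma~\ref{p1_lem28}, exactly as you do.
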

\begin{proof}
We have
   \begin{align*}
|L_{a,1}^{(r)}(q)|
&\le \sum_{m\ge0}
\left| \frac{q^{2(m+1)}\left(1+q^{(2r-1)(m+1)}\right)}
     {1+q^{(2r+1)(m+1)}} \right|.
\end{align*}
Since $|1+ q^{(2r-1)(m+1)}| \le 1+|q|^{(2r-1)(m+1)} \le 2$, we obtain
\begin{align*}
|L_{a,1}^{(r)}(q)|
     &\le 2\sum_{m\ge0}
\left|\frac{q^{2(m+1)}}{1+q^{(2r+1)(m+1)}} \right|\\
&\le 2\sum_{m\ge0}|q^{m+1}|\left|
\frac{q^{m+1}}{1+q^{(2r+1)(m+1)}} \right|\\
&<\dfrac{2}{x^2}. 
\end{align*}
The last inequality follows from Lemma~\ref{p1_lem28}.
\end{proof}
Furthermore, on the minor arcs, Corollary~\ref{p1_cor3} shows that the generating function for $a_1^{(r)}(n)$ satisfies Hypothesis (H3).
\begin{corollary}\label{p1_cor4}
    Suppose $z=x+iy$ satisfies $x>0, 0\le|y|<\pi$. As $z\to 0$,  we have
$$|L_{a,2}^{(r)}(q)|<\dfrac{6}{x^4}.$$ 
\end{corollary}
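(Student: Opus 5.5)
We bound $L_{a,2}^{(r)}(q)$ termwise, splitting it into its two defining sums:
\[
L_{a,2}^{(r)}(q)=\Sigma_1(q)+L_{a,1}^{(r)}(q),\qquad
\Sigma_1(q):=\sum_{m\ge0}\dfrac{q^{2(m+2)}(1-q^{2(m+1)})(1+q^{(2r-1)(m+2)})}{(1+q^{(2r+1)(m+1)})(1+q^{(2r+1)(m+2)})}.
\]
The second sum is already controlled by Corollary~\ref{p1_cor3}, which gives $|L_{a,1}^{(r)}(q)|<2/x^2$. It therefore remains to show $|\Sigma_1(q)|\le 4/x^4$, or indeed any bound of the shape $c/x^4$ with room to spare. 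The target $6/x^4$ is generous, since $2/x^2\le 2/x^4$ as soon as $x\le 1$. This is the regime $z\to0$ in the statement, and one could also carry a mild crude constant.

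For $\Sigma_1$ we use the elementary estimates $|1-q^{2(m+1)}|\le 2$ and $|1+q^{(2r-1)(m+2)}|\le 2$. For the denominators we use $|1+q^{k}|\ge 1-|q|^{k}=1-e^{-kx}$.

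Following the pattern of Lemma~\ref{p1_lem28}, the key step is to expand one denominator factor as a geometric series in absolute values and to bound the other reciprocal simply by $1/(1-e^{-(2r+1)x})$. Since $1-e^{-t}\ge t/(1+t)$, this reciprocal is $\le 1+1/((2r+1)x)\le 2/x$ for $x\le1$. The remaining sum is handled exactly as in Lemma~\ref{p1_lem28}:
\[
\sum_{m\ge0}\frac{e^{-2(m+2)x}}{1-e^{-(2r+1)(m+2)x}}\le\sum_{n\ge1}n e^{-nx}<\frac1{x^2}.
\]
Together these give $|\Sigma_1(q)|\le 4\cdot\frac{2}{x}\cdot\frac{1}{x^2}=\frac{8}{x^3}$. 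Since $x\le1$, this is at most $8/x^4$, which is slightly too large for the constant $4$.

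So the only real obstacle is bookkeeping of constants to land under $6/x^4$. I would first reduce the $2\cdot 2$ numerator loss, using that $e^{-2(m+2)x}\le e^{-x}e^{-(m+1)x}\cdot e^{-(m+2)x}$, to keep an extra factor of $|q|^{m+1}$ as in Corollary~\ref{p1_cor3}. Alternatively, I would accept an $x^{-3}$ bound times a small $x$-factor: as $x\to0$, $8/x^3=(8x)/x^4<4/x^4$ once $x<1/2$. This is legitimate since the statement is asymptotic in $z\to0$. Adding the two pieces then gives $|L_{a,2}^{(r)}(q)|<4/x^4+2/x^4=6/x^4$ for all sufficiently small $x$, uniformly in $|y|<\pi$. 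This verifies (H3) for $a_2^{(r)}$ with $C=4$.
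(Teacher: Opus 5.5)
Your proof is correct, but it bounds $\Sigma_1$ differently from the paper.

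\textbf{The paper's route.} It also bounds the two numerator factors by $2$ each. It then writes $q^{2(m+2)}=q^{m+2}\cdot q^{m+2}$ and bounds the resulting sum of products of nonnegative terms by the product of the two sums $\sum_m |q^{m+2}/(1+q^{(2r+1)(m+1)})|$ and $\sum_m |q^{m+2}/(1+q^{(2r+1)(m+2)})|$. Each of these is dominated by the sum in Lemma~\ref{p1_lem28}, which gives exactly $4/x^4$. The paper then uses $4/x^4+2/x^2\le 6/x^4$ for $x\le 1$.

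\textbf{Your route.} You bound the $(m+1)$-denominator uniformly by $1/(1-e^{-(2r+1)x})\le 2/x$. You apply a Lemma~\ref{p1_lem28}-type divisor count only once, to $\sum_{j\ge2}\sum_{k\ge0}e^{-(2+(2r+1)k)jx}\le\sum_n n e^{-nx}<1/x^2$. This is valid because each $n$ has at most $n$ representations $n=(2+(2r+1)k)j$. You obtain the sharper estimate $|\Sigma_1|\le 8/x^3$. The price is that you must restrict to $x<1/2$ to absorb it into $4/x^4$. That is harmless, since the statement is only claimed as $z\to0$ and any polynomial bound suffices for (H3). Your route gives a better exponent; the paper's gets the stated constant without the extra restriction.

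\textbf{On your first suggested fix.} Splitting $e^{-2(m+2)x}=e^{-x}e^{-(m+1)x}e^{-(m+2)x}$ does not by itself lower the numerator constant $4$. Combined with the product-of-sums inequality, it is essentially the paper's route, which reaches $4/x^4$ without any reduction of that constant. In your write-up, the $x<1/2$ absorption is the step that actually closes the argument.
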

\begin{proof}
We have 
    \begin{align*}
        |L_{a,2}^{(r)}(q)| &\le \sum_{m\ge0} 
\left|
\frac{q^{2(m+2)} \left(1-q^{2(m+1)}\right) \left(1+q^{(2r-1)(m+2)}\right)}{\left(1+q^{(2r+1)(m+1)}\right) \left(1+q^{(2r+1)(m+2)}\right)}\right|
+  \sum_{m\ge0} \left|
\frac{q^{2(m+1)} \left(1+q^{(2r-1)(m+1)}\right)}{1+q^{(2r+1)(m+1)}}
\right|. \\
&<4 \sum_{m\ge0} 
\left|
\frac{q^{2(m+2)}}{\left(1+q^{(2r+1)(m+1)}\right) \left(1+q^{(2r+1)(m+2)}\right)}\right| + \dfrac{2}{x^2}\qquad \text{ (by Corollary~\ref{p1_cor3})} \\
&\le  4 \left (\sum_{m\ge0} 
\left|
\frac{q^{m+2}}{\left(1+q^{(2r+1)(m+1)}\right)}\right|  \sum_{m\ge0} 
\left|
\frac{q^{m+2}}{\left(1+q^{(2r+1)(m+2)}\right)}\right| \right) + \dfrac{2}{x^2} \\
&\le  4 \left (\sum_{m\ge0} 
\left|
\frac{q^{m+1}}{\left(1+q^{(2r+1)(m+1)}\right)}\right| \right)^2 + \dfrac{2}{x^2} \\
&\le \dfrac{4}{x^4} + \dfrac{2}{x^2}  \le \dfrac{6}{x^4}.\qquad \qquad \text{ (by Lemma~\ref{p1_lem28} and the fact that $z\to 0$)}
\end{align*}
\end{proof}
From Corollary~\ref{p1_cor4}, it follows that, on the minor arcs, the generating function for $a_2^{(r)}(n)$ satisfies Hypothesis (H3).
\begin{lemma}\label{p1_lem29}  Suppose $z=x+iy$. For $x>0$ and $0\le|y|<\pi$, as $z\to 0$, we have
$$\left|\sum_{m\ge0}
\frac{
q^{m+3}
}{
\left(1+q^{(2r+1)(m+1)}\right)
\left(1+q^{(2r+1)(m+2)}\right)
}\right|= O\left(\dfrac{1}{x^3}\right).$$  
\end{lemma}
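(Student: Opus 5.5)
We will bound the series exactly as in Lemma~\ref{p1_lem28} and Corollary~\ref{p1_cor4}: pass to absolute values, bound each denominator below using $|q|=e^{-x}$, and then compare with a divisor-type sum whose generating function we can evaluate explicitly. For every $k\ge1$ we have $|1+q^{(2r+1)k}|\ge 1-e^{-(2r+1)kx}$, and $|q^{m+3}|=e^{-(m+3)x}$. This gives
\[
\left|\sum_{m\ge0}\frac{q^{m+3}}{(1+q^{(2r+1)(m+1)})(1+q^{(2r+1)(m+2)})}\right|\le \sum_{m\ge0}\frac{e^{-(m+3)x}}{(1-e^{-(2r+1)(m+1)x})(1-e^{-(2r+1)(m+2)x})}.
\]

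The first route is to factor $e^{-(m+3)x}=e^{-x}\,e^{-(m+2)x}$, bound $e^{-x}\le 1$, and use Cauchy--Schwarz or a plain product bound, as in Corollary~\ref{p1_cor4}. That only gives $O(x^{-4})$, because each factor behaves like $x^{-2}$. To reach $x^{-3}$ we need to be more economical.

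The better route uses the monotonicity $1-e^{-(2r+1)(m+2)x}\ge 1-e^{-(2r+1)(m+1)x}$. It bounds the summand by
\[
\frac{e^{-(m+1)x}}{(1-e^{-(2r+1)(m+1)x})^2}.
\]
Writing $j=m+1$ and expanding $(1-t)^{-2}=\sum_{k\ge0}(k+1)t^k$ with $t=e^{-(2r+1)jx}$, we obtain
\[
\sum_{j\ge1}\sum_{k\ge0}(k+1)e^{-(1+(2r+1)k)jx}=\sum_{n\ge1}c(n)e^{-nx},
\qquad c(n)=\sum_{\substack{d\mid n\\ d\equiv1 \ (\mathrm{mod}\ 2r+1)}}\Bigl(\tfrac{d-1}{2r+1}+1\Bigr).
\]
Each term in $c(n)$ is at most $d\le n$, and there are at most $n$ divisors, so $c(n)\le n^2$. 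The sum is therefore bounded by $\sum_{n\ge1}n^2e^{-nx}=\frac{e^{x}(e^{x}+1)}{(e^{x}-1)^3}$. Since $e^x-1\ge x$ and the numerator is bounded as $x\to0$ (say $x\le1$), this is $O(x^{-3})$, uniformly in $y$ with $|y|<\pi$.

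The only delicate point is the divisor-function bound. We need the crude estimate $c(n)\le n^2$, not the sharper $c(n)\ll n$, because summing $n^2e^{-nx}$ is what produces exactly the $x^{-3}$ order claimed. Any bound $c(n)\ll n^2$ suffices, so this step is safe. We should also state that the implied constant is absolute, since $c(n)\le n^2$ holds for every $r$.

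Alternatively, one can avoid divisor sums. Use $1-e^{-u}\ge u/(1+u)$ to get $(1-e^{-(2r+1)jx})^{-2}\le (1+\frac{1}{(2r+1)jx})^2$, and then compare $\sum_j e^{-jx}(1+\frac{1}{jx})^2$ with the integral. That route produces a $\log(1/x)/x^2$-type term, which is also $O(x^{-3})$. We will take the divisor argument as the primary route because it parallels Lemma~\ref{p1_lem28}.
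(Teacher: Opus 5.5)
Your proof is correct and follows essentially the paper's route: you bound both denominators by $1-e^{-(2r+1)(m+1)x}$, expand $(1-t)^{-2}=\sum_{k\ge0}(k+1)t^k$, regroup into a divisor-type sum bounded by a quadratic in $n$ (the paper uses $\sigma_1(n)\le n(n+1)/2$ where you use $c(n)\le n^2$), and sum $\sum_{n\ge1}n^2e^{-nx}=O(x^{-3})$. One remark in your commentary is wrong, though the proof never uses it: $c(n)\ll n$ is false (take $n$ a product of many primes $\equiv1\pmod{2r+1}$, so that $c(n)/n\ge\frac{1}{2r+1}\prod(1+1/p)\to\infty$), and a sharper bound would only strengthen the conclusion, so nothing actually ``needs'' the crude estimate.
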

\begin{proof}
We have
    \begin{align*}
      &\left|\sum_{m\ge0}
\frac{
q^{m+3}
}{
\left(1+q^{(2r+1)(m+1)}\right)
\left(1+q^{(2r+1)(m+2)}\right)
}\right|  \\
&\le \sum_{m\ge 1 }
\frac{
e^{-(m+2)x}
}{
\left(1-e^{-(2r+1)mx}\right)
\left(1-e^{-(2r+1)(m+1)x}\right)
} \\
&\le \sum_{m \ge 1 }e ^{-(m+2)x}\left( \sum_{k \ge 0 }e ^{-(2r+1)mkx} \right)\times\left( \sum_{l \ge 0 }e ^{-(2r+1)(m+1)lx} \right) \\
&\le \sum_{m \ge 1 }e ^{-(m+2)x} \sum_{s \ge 0 }(s+1)e ^{-(2r+1)msx} \\
&\le \sum_{m \ge 1 }\sum_{s \ge 0 }(s+1) e^{-((2r+1)s+1)mx}\qquad (\text{ as } e^{-x} <1)\\
    &\le \sum_{m \ge 1 }\sum_{s \ge 0 }((2r+1)s+1) e^{-((2r+1)s+1)mx} \\
&\le \sum_{n \ge 1} \sigma_1(n)e^{-nx}\qquad (\text{ where, }  \sigma_1(n) := \sum_{d|n, d\ge1}d )\\
&\leq \dfrac{1}{2} \left(  \sum_{n \ge 1}n^2e^{-nx} + \sum_{n \ge 1}ne^{-nx} \right) \\
&\le   \dfrac{e^{-x}(1+ e^{-x})}{(1-e^{-x})^3} + \dfrac{1}{2x^2}\\
&=  O\left(\dfrac{1}{x^3}\right) + \dfrac{1}{2x^2}  =  O\left(\dfrac{1}{x^3}\right).
    \end{align*}
This completes the proof.
\end{proof}

\begin{corollary}\label{p1_cor5}
 Suppose $z=x+iy$. For $x>0$ and $0\le|y|<\pi$, as $z\to0$, we have
$$|L_{a,3}^{(r)}(q)|= O\left(\dfrac{1}{x^5}\right).$$     
\end{corollary}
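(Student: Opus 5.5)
We bound each of the three sums making up $L_{a,3}^{(r)}(q)$ separately by the triangle inequality, in the same spirit as Corollaries~\ref{p1_cor3} and \ref{p1_cor4}, and then add the bounds. Throughout, $|q|=e^{-x}<1$, so all numerator factors are bounded: $|1-q^{2(m+j)}|\le 2$ and $|1+q^{(2r-1)(m+j)}|\le 2$. Each denominator factor satisfies $|1+q^{(2r+1)k}|\ge 1-e^{-(2r+1)kx}$.

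\textbf{The second and third sums.} The second sum in $L_{a,3}^{(r)}(q)$ is exactly the first sum appearing in $L_{a,2}^{(r)}(q)$. The proof of Corollary~\ref{p1_cor4} already bounds its absolute value by $4/x^4$, via the Cauchy-product estimate and Lemma~\ref{p1_lem28}; alternatively, Lemma~\ref{p1_lem29} gives $O(x^{-3})$ after pulling out one factor $|q|\le 1$.

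For the third sum, we split
\[
\frac{q^{4(m+1)}+q^{(2r+1)(m+1)}}{1+q^{(2r+1)(m+1)}}.
\]
The piece $q^{(2r+1)(m+1)}/(1+q^{(2r+1)(m+1)})$ is bounded by $\sum_{n\ge1} e^{-(2r+1)nx}/(1-e^{-(2r+1)nx})\le\sum_{n\ge1} d(n)e^{-nx}\le \sum_{n\ge1} ne^{-nx}<1/x^2$. The piece $q^{4(m+1)}/(1+q^{(2r+1)(m+1)})$ is bounded by Lemma~\ref{p1_lem28}, since $|q^{3(m+1)}|\le1$. Hence the third sum is $O(x^{-2})$.

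\textbf{The first sum (main step).} This is the triple-denominator term. We bound the numerator by $8|q|^{2(m+3)}\le 8|q|^{m+3}$ and expand each of the three reciprocals $1/(1-e^{-(2r+1)(m+j)x})$ as a geometric series. Using $(m+j)\ge m$ for the exponents, the product of the three series is dominated by
\[
\sum_{s\ge0}\binom{s+2}{2}e^{-(2r+1)msx}.
\]
Exactly as in Lemma~\ref{p1_lem29}, this gives
\[
\sum_{m\ge1}\sum_{s\ge0}(s+1)^2 e^{-((2r+1)s+1)mx}\le\sum_{m\ge1}\sum_{d\ge1} d^2 e^{-dmx}=\sum_{n\ge1}\sigma_2(n)e^{-nx}.
\]
With $\sigma_2(n)\le n^3$ (or $\sigma_2(n)\ll n^2$), this is $\ll\sum n^3e^{-nx}\ll x^{-4}$.

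This step is the one requiring care: the $m=0$ terms involve $1-e^{-(2r+1)x}$, which is fine since we shift the index to $m\ge1$ as in Lemma~\ref{p1_lem29}. We must also ensure the Cauchy-product coefficients grow at most polynomially in $s$ so that the divisor-sum bound applies. The crude bound $\sigma_2(n)\le n^3$ suffices, and even a cruder bound of $O(x^{-5})$ is all that is claimed.

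\textbf{Conclusion.} Adding the three bounds gives $|L_{a,3}^{(r)}(q)|\ll x^{-4}+x^{-4}+x^{-2}=O(x^{-5})$ as $z\to0$ (since $x\to0$). This verifies (H3) for the generating function of $a_3^{(r)}(n)$ on the minor arcs.
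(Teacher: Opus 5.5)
Your proof is correct; it shares the paper's skeleton (triangle inequality on the three sums, numerator factors bounded by $2$, geometric-series/divisor-sum estimates as in Lemmas~\ref{p1_lem28}--\ref{p1_lem29}) but handles the dominant triple-denominator sum differently. The paper never expands that sum. It uses $\sum_m a_mb_m\le(\sum_m a_m)(\sum_m b_m)$ for nonnegative terms to split it into the double-denominator sum of Lemma~\ref{p1_lem29} times a single-denominator sum of Lemma~\ref{p1_lem28}, and this is exactly where $x^{-5}=x^{-3}\cdot x^{-2}$ comes from. It splits the second sum the same way into two Lemma~\ref{p1_lem28} sums, giving $x^{-4}$. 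It handles the third sum in one step via $|q^{4(m+1)}+q^{(2r+1)(m+1)}|\le 2|q|^{m+1}$ and Lemma~\ref{p1_lem28}, so your split of that numerator is harmless but unnecessary. You instead rerun the proof of Lemma~\ref{p1_lem29} with three geometric series, after the shift to $m\ge1$, and arrive at $\sum_n\sigma_2(n)e^{-nx}$. With $\sigma_2(n)\le\zeta(2)n^2$ this is $O(x^{-3})$. Combined with your Lemma~\ref{p1_lem29} bound for the second sum, you in fact get $|L_{a,3}^{(r)}(q)|=O(x^{-3})$. The paper's route needs nothing beyond lemmas already proved but loses two powers of $x$; yours costs one short extra computation and gives the sharper exponent. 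Your route also extends directly to sums with more denominator factors, such as longer hooks. The improvement is not needed for Theorem~\ref{p1_thm10}, though: on the minor arc any polynomial bound in $1/x$ is absorbed by the decay $e^{-\delta''/x}$ from Lemma~\ref{p1_lem4}.
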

\begin{proof}
We have 
\begin{align}
|L_{a,3}^{(r)}(q)| &\le \sum_{m\ge0}
\left|
\frac{
q^{2(m+3)}
\left(1-q^{2(m+1)}\right)
\left(1-q^{2(m+2)}\right)
\left(1+q^{(2r-1)(m+3)}\right)
}{
\left(1+q^{(2r+1)(m+1)}\right)
\left(1+q^{(2r+1)(m+2)}\right)
\left(1+q^{(2r+1)(m+3)}\right)
}\right| \notag \\ 
&\quad+
    \sum_{m\ge0}
\left|
\frac{
q^{2(m+2)}
\left(1-q^{2(m+1)}\right)
\left(1+q^{(2r-1)(m+2)}\right)
}{
\left(1+q^{(2r+1)(m+1)}\right)
\left(1+q^{(2r+1)(m+2)}\right)
}\right|
+
\sum_{m\ge0}
\left|
\frac{
\left(q^{4(m+1)}
+q^{(2r+1)(m+1)}\right)
}{
1+q^{(2r+1)(m+1)}
}\right| \notag \\
&\le 8 \sum_{m\ge0}
\left|
\frac{
q^{2(m+3)}}{
\left(1+q^{(2r+1)(m+1)}\right)
\left(1+q^{(2r+1)(m+2)}\right)
\left(1+q^{(2r+1)(m+3)}\right)
}\right| \notag \\ 
&\quad+
    4 \sum_{m\ge0}
\left|
\frac{
q^{2(m+2)}
}{
\left(1+q^{(2r+1)(m+1)}\right)
\left(1+q^{(2r+1)(m+2)}\right)
}\right|
+
2\sum_{m\ge0}
\left|
\frac{
q^{m+1}
}{
1+q^{(2r+1)(m+1)}
}\right| \notag \\
&\le 8\left(\sum_{m\ge0}
\left|
\frac{
q^{m+3}
}{
\left(1+q^{(2r+1)(m+1)}\right)
\left(1+q^{(2r+1)(m+2)}\right)
}\right|\right)\left(\sum_{m\ge0}\left|\frac{q^{m+3}}{1+q^{(2r+1)(m+3)}}\right|\right) \notag \\ 
&\quad+4 \sum_{m\ge0}
\left|
\frac{
q^{m+2}
}{
\left(1+q^{(2r+1)(m+1)}\right)
}\right|\sum_{m\ge0}\left|\dfrac{q^{m+2}}{\left(1+q^{(2r+1)(m+2)}\right)}\right|+O\left(\dfrac{1}{x^2}\right) \label{eqn-new-1}\\ 
&=  O\left(\dfrac{1}{x^3}\right) O\left(\dfrac{1}{x^2}\right) + O\left(\dfrac{1}{x^4}\right)+O\left(\dfrac{1}{x^2}\right) =  O\left(\dfrac{1}{x^5}\right). \notag 
\end{align}
We obtain the inequality \eqref{eqn-new-1} from Lemma~\ref{p1_lem28}. The final inequality is obtained using Lemma~\ref{p1_lem28} and Lemma~\ref{p1_lem29}. This completes the proof.
\end{proof}
From Corollary~\ref{p1_cor5}, it follows that, on the minor arcs, the generating function for $a_3^{(r)}(n)$ satisfies Hypothesis (H3).

\subsection{Proof of Theorems~\ref{p1_thm8}, \ref{p1_thm9} and \ref{p1_thm10}}
In this section, we prove Theorems~\ref{p1_thm8}, \ref{p1_thm9}, and \ref{p1_thm10} by systematically verifying that each of the hypotheses required for Proposition~\ref{p1_prop1} is fulfilled by the generating functions of $a_\ell^{(r)}(n)$ and $b_\ell^{(r)}(n)$ for $\ell=1,2,3$.
\begin{proof}[Proof of Theorem~\ref{p1_thm8}]
The function $\xi^{(r)}(q)$ associated with $a_{\ell}^{(r)}(n)$ and  $b_{\ell}^{(r)}(n)$ for $\ell=1,2,3$, satisfies Hypotheses (H2) and (H4), as established in Lemmas~\ref{p1_lem2} and \ref{p1_lem4}, with constants $(\beta, K, A) = \left(\frac{1}{2}, \frac{1}{\sqrt{2\pi}}, \frac{(r+1)\pi^2}{6(2r+1)}\right)$. Additionally, as a result of Lemma~\ref{p1_lem5} and Corollary \ref{p1_cor3} (resp. Lemma~\ref{p1_lem25}), Hypotheses (H1) and (H3) are satisfied by $L_{a,1}^{(r)}(q)$ (resp. $L_{b,1}^{(r)}(q)$), with constants $(B,\alpha_0) = \left(1, \frac{D_0^{(r)}}{2r+1}\right)$ (resp. $(B,\alpha_0) = \left(1, \frac{r+1}{2r+1}\right))$. Now, Theorem~\ref{p1_thm8} readily follows from Proposition~\ref{p1_prop1}.
\end{proof}
\begin{proof}[Proof of Theorem~\ref{p1_thm9}] As a consequence of Corollaries~\ref{p1_cor1} and  \ref{p1_cor4} (resp. Lemma~\ref{p1_lem26}), Hypotheses (H1) and (H3) are  satisfied by $L_{a,2}^{(r)}(q)$ (resp.  $L_{b,2}^{(r)}(q)$) with constants $$(B,\alpha_0) = \left(1,\frac{(2r-3)\left(D_1^{(r)}-D_2^{(r)}\right)+2(2r+1)\left(D_0^{(r)}+1\right)}{2(2r+1)^{2}} \right)$$ (resp. $(B,\alpha_0) =\left(1, \frac{3r+2}{2(2r+1)}\right) $). Hence, Theorem \ref{p1_thm9} readily follows from Proposition~\ref{p1_prop1}.
\end{proof}
\begin{proof}[Proof of Theorem~\ref{p1_thm10}] Similarly, we verify that Hypotheses (H1) and (H3) are satisfied for $L_{a,3}^{(r)}(q)$ (respectively, $L_{b,3}^{(r)}(q)$), with constants $(B,\alpha_0) = \left(1, \frac{\mathcal{D}(r)}{2(2r+1)^{3}}\right)$, where 

\begin{align}\label{new-eqn-2}
     \mathcal{D}(r) &:= (3+2\ln 2)(2r+1)^2+(8r^2-10r-1)D_1^{(r)} \\
&\hspace{1 cm}+  (-8r^2+28r-8)D_2^{(r)}+(4r^2-14r+10)D_3^{(r)}.\notag 
 \end{align}
  $\left(resp. (B,\alpha_0) = \left(1, \frac{16r+13}{12(2r+1)} \right) \right)$, by using Corollaries~\ref{p1_cor2} and \ref{p1_cor5} (resp. Lemma~\ref{p1_lem27}). Hence, Theorem~\ref{p1_thm10} readily follows from Proposition~\ref{p1_prop1}.
\end{proof}
%%%%%%%%%%%%%%%%%%%%%%%%%%%%%%%%%%%%%%%%%%%%%%%%%%%%%%%%%%%%%%
\section{Proof of the main results}\label{p1_sec4}
In this section, we prove our main results Theorems~\ref{p1_thm1.3}--\ref{p1_newthm}.
We first recall the \emph{digamma} function, defined as the logarithmic derivative of $\Gamma$, $$\psi(z) := \frac{\Gamma'(z)}{\Gamma(z)},$$ where $\Gamma$ is defined for $\text{Re}(z)>0$ by
$$\Gamma(z):=\int_{0}^{\infty}e^{-u}u^{z-1}du.$$ 
For $z\in\mathbb{C}$ with $Re(z) > 0$, the \emph{digamma} function has the integral representation:
\begin{align*}
\psi(z)=\int_0^\infty\left(\frac{e^{-u}}{u}-\frac{e^{-zu}}{1-e^{-u}}\right)du.
\end{align*}
The \emph{digamma} function and its derivative have the series representation for
$z\neq-1,-2,\dots$:
\begin{align*}
\psi(z)=-\gamma+\sum_{n\geq 0}
\left(\frac{1}{n+1}-\frac{1}{n+z}\right),
\qquad
\psi'(z)=\sum_{n\geq 0}\frac{1}{(n+z)^2},
\end{align*}
where $\gamma$ is the Euler-Mascheroni constant. From the expression for $\psi'$ it is clear that $\psi$ is strictly increasing on the positive real line.
We now recall the following identities satisfied by the \emph{digamma} function.
\begin{align}
    \psi(z+1)&=\psi(z)+\frac{1}{z},\label{p1_eqn4}\\
    \psi(2z)&=\frac{1}{2}\psi(z)+\frac{1}{2}\psi\left(z+\frac{1}{2}\right)+\ln 2, \label{p1_eqn5}\\
    \psi\left(z+\frac{1}{2}\right)-\psi(z)&=2\sum_{m\ge0}\frac{(-1)^m}{(2z+m)}.\label{p1_eqn6}
\end{align}
Before proving our main results, we define a sequence of ratios to facilitate the subsequent analysis. We denote these ratios by $M_j(r)$, where $j \in \{1, 2, \ldots, 9\}$.
\begin{align*}
    M_1(r) &:= \lim_{n\to\infty}\frac{b_1^{(r)}(n)}{a_1^{(r)}(n)} = \frac{1}{M_1^*(r)}, 
    & M_2(r) &:= \lim_{n\to\infty}\frac{b_2^{(r)}(n)}{a_2^{(r)}(n)}; \\
    M_3(r) &:= \lim_{n\to\infty}\frac{b_3^{(r)}(n)}{a_3^{(r)}(n)}, 
    & M_4(r) &:= \lim_{n\to\infty}\frac{a_2^{(r)}(n)}{a_1^{(r)}(n)}; \\
    M_5(r) &:= \lim_{n\to\infty}\frac{a_2^{(r)}(n)}{a_3^{(r)}(n)}, 
    & M_6(r) &:= \lim_{n\to\infty}\frac{a_3^{(r)}(n)}{a_1^{(r)}(n)}; \\
     M_7(r) &:= \lim_{n\to\infty}\frac{b_2^{(r)}(n)}{b_1^{(r)}(n)}, 
    & M_8(r) &:= \lim_{n\to\infty}\frac{b_2^{(r)}(n)}{b_3^{(r)}(n)}; \\
     M_9(r) &:= \lim_{n\to\infty}\frac{b_3^{(r)}(n)}{b_1^{(r)}(n)}.
    \end{align*}
Our aim is to establish that, as $n \to \infty$, each ratio $M_j(r)$ is greater than one for $j \in \{1, 2, \ldots, 9\}$, except for $M_2(1)$ and $M_3(1)$, which are both less than one. For the remainder of this section, we define $t_k(r):= \frac{k}{2r+1}$.
%%%%%%%%%%%%%%%%%
\begin{proof}[Proof of Theorem~\ref{p1_thm1.3}] From Theorem~\ref{p1_thm8}, it follows that $M_1^*(r) := \frac{D_0^{(r)}}{r+1}$. Applying the Maclaurin series expansion of $\psi(1+x)$ around $x=0$, we obtain $\psi(x+1) = -\gamma + O(x)$ as $x \to 0^+$. Therefore, from \eqref{p1_eqn4}, we have $\psi(x) = -\gamma - \frac{1}{x} + O(x)$ as $x \to 0^+$, which in turn implies that as $r \to \infty$, 
\begin{align*}
 D_0^{(r)} &= \frac{2r+1}{2} + O\left(\frac{1}{r}\right),\\
  M_1^*(r) &=\frac{\frac{2r+1}{2}+O\left(\frac{1}{r}\right)}{r+1}\to 1.  
\end{align*}
Hence, we have $$\displaystyle \lim_{r\to \infty}\left(\lim_{n\to \infty}\frac{b_1^{(r)}(n)}{a_1^{(r)}(n)}\right)=\lim_{r\to \infty}\frac{1}{M_1^{*}(r)}=1.$$
Again using the fact that $\psi(y)-\psi(x)=\sum_{n\geq 0}\left(\frac{1}{n+x}-\frac{1}{n+y}\right)$ we obtain the following
\begin{align*}
M_1^*(r) = \dfrac{2r+1}{r+1} \sum_{n \geq 0} \dfrac{1}{(n(2r+1)+1)(n(2r+1)+2)}.
\end{align*}
We verify that $M_1^*(r+1) - M_1^*(r) > 0$ for all $r \geq 1$, which shows that $M_1^*(r)$ is a strictly increasing sequence for $r\ge1$. In addition, $M_1^*(1) = \frac{\pi}{2\sqrt{3}} < 1$ and $M_1^*(r) \to 1$ as $r\to \infty$.
Therefore, $M_1^*(r) < 1$ for all $r \geq 1$, which implies $M_1(r) > 1$ for all $r \geq 1$. This gives the required bias between $a_{1}^{(r)}(n)$ and $b_{1}^{(r)}(n)$.
\par Next, $M_1(r) = \displaystyle  \lim_{n\to\infty}\frac{b_1^{(r)}(n)}{a_1^{(r)}(n)}>1$ implies that there exists a positive integer $n_0$ such that
$$\left|\frac{b_1^{(r)}(n)}{a_1^{(r)}(n)}-M_1(r)\right|<\frac{M_1(r)-1}{2}$$
for all $n\geq n_0$. Hence,
$$b_{1}^{(r)}(n)-a_{1}^{(r)}(n)=\left(\frac{b_{1}^{(r)}(n)}{a_{1}^{(r)}(n)}-1\right)a_{1}^{(r)}(n)>\left(\frac{M_1(r)-1}{2}\right)a_{1}^{(r)}(n)$$
for all $n\geq n_0$.
From Theorem~\ref{p1_thm8} we have $a_{1}^{(r)}(n) \to \infty$ as $n \to \infty$ for all $r\geq 1$. Hence $b_{1}^{(r)}(n)-a_{1}^{(r)}(n) \to \infty$ as $n \to \infty$. This completes the proof of the theorem.
\end{proof}
%%%%%%%%%%%%%%%%%%%%%%%%%%
\begin{proof}[Proof of Theorem~\ref{p1_thm1.4}] We first define four functions $G_1$, $H_1$, $g_1$, and $h_n$ for $u > 0$ as follows:
\begin{align*}
    &G_1(u):=\psi\left(\dfrac{1}{2}+u\right)-\psi(u)-\psi\left(\dfrac{1}{2}+2u\right)+\psi(2u),\\
    &H_1(u):= \psi(2u)-\psi(u)+1, \\
    &g_1(u) :=\dfrac{t_1(r)}{(u+t_1(r))(u+2t_1(r))},\\
    &h_{n}(u)=\dfrac{u}{(n+u)(n+2u)},
\end{align*}
where $t_1(r)=\frac{1}{2r+1}$. By Theorem~\ref{p1_thm9}, we have
\begin{align*}
    M_2(r) &=\dfrac{(3r+2)(2r+1)}{ 2(2r+1)(D_0^{(r)}+1) + (2r-3)(D_1^{(r)}-D_2^{(r)})}\\
           &= \dfrac{3+t_1(r)}{2t_1(r)((1-4t_1(r))G_1(t_1(r))+2H_1(t_1(r)))}.
\end{align*}
Using the fact that $\displaystyle\psi(y)-\psi(x)=\sum_{n\geq 0}\left(\frac{1}{n+x}-\frac{1}{n+y}\right)$, we obtain
\begin{align*}
    G_1(t_1(r)) &=\sum_{n\ge0}t_1(r)\left(\dfrac{1}{(n+t_1(r))(n+2t_1(r))}-\dfrac{1}{(n+\frac{1}{2}+t_1(r))(n+\frac{1}{2}+2t_1(r))}\right) \\
    &= \sum_{n\ge0}\left(g_1(n)-g_1\left(n+\frac{1}{2}\right)\right) \\
    &= \sum_{ n \ge 0 } \int_{n}^{n+\frac{1}{2}}(-g_1'(u))du <\sum_{n\ge0}\int_{n}^{n+1}(-g_1'(u))du   \\
    &=\int_{0}^{\infty}(-g_1'(u))du=g_1(0)-\lim_{u\to\infty}g_1(u)= g_1(0)=\dfrac{1}{2t_1(r)}.
    \end{align*}  
In the preceding steps, we have used the facts that $g_1'(u)<0$ and $\lim_{u \to \infty}g_1(u)=0$.
\par Similarly, we have
\begin{align*}
    H_1(t_1(r)) &=1+\dfrac{1}{2t_1(r)}+\sum_{n\ge1}\dfrac{t_1(r)}{(n+2t_1(r))(n+t_1(r))} \\
    &=1+\dfrac{1}{2t_1(r)}+\sum_{n\ge1}h_{n}(t_1(r)).
   \end{align*}  
For $0<x\leq\frac{1}{5}$ and $n\ge 1$, it is easy to see that $h_n'(x)=\dfrac{n^2-2x^2}{(n+x)^2(n+2x)^2}>0$. Thus, $h_n(x)$ is an increasing function with respect to $x$ for each $n$, and hence $$\sum_{n \geq 1} h_n(x)\le\sum_{n \geq 1} h_n\left(\frac{1}{5}\right).$$
Now, observe that the function $\frac{5}{(5x+1)(5x+2)}$ is decreasing for $x>0$, and therefore $$\int_{n-1}^n\frac{5}{(5x+1)(5x+2)}dx\ge \frac{5}{(5n+1)(5n+2)} ~\text{for}~ n\ge2.$$ Thus, we have
\begin{align*}
    \sum_{n\ge1}h_n(t_1(r))\le\sum_{n\ge1}h_n\left(\frac{1}{5}\right)&\le \sum_{n=1}^3h_n\left(\frac{1}{5}\right)+\sum_{n\ge4}\int_{n-1}^n\frac{5}{(5x+1)(5x+2)}dx\\
    &\le \frac{5}{42}+\frac{5}{132}+\frac{5}{272}+\int_{3}^\infty\frac{5}{(5x+1)(5x+2)}dx\\
    &\le \frac{5}{42}+\frac{5}{132}+\frac{5}{272}+\ln\left(\frac{17}{16}\right)<\frac{1}{4}.
\end{align*}

This further implies that, for $0<t_1(r)\le\frac{1}{5}$, we have 
\begin{align*}
H_1(t_1(r)) &< \frac{1}{2t_1(r)} + \frac{5}{4},\\
    (1-4t_1(r))G_1(t_1(r))+2H_1(t_1(r)) &<(1-4t_1(r))\dfrac{1}{2t_1(r)}+2\left(\dfrac{1}{2t_1(r)}+\dfrac{5}{4}\right)=\dfrac{3+t_1(r)}{2t_1(r)},
\end{align*}
which yield 
\begin{align*}
    2t_1(r)\left((1-4t_1(r))G_1(t_1(r))+2H_1(t_1(r))\right)<(3+t_1(r)).
\end{align*}
We observe that the denominator of $M_2(r)$ is positive. Therefore, $M_2(r) > 1$ for all  $r \geq 2$. 
\par For the case $r = 1$, we have
\begin{align*}
    M_2(1) = \dfrac{15}{6H_1\left(\frac{1}{3}\right)-G_1\left(\frac{1}{3}\right)}=\dfrac{15}{12+\frac{2\pi}{\sqrt{3}}} <1.
\end{align*} 
This completes the proof of part (i) and part (ii) of the theorem. Now, as shown in the proof of Theorem~\ref{p1_thm1.3}, part (iii) of the theorem readily follows using the fact that $a_{2}^{(r)}(n), b_{2}^{(r)}(n) \to \infty$ as $n \to \infty$ for all $r\geq 1$.
\par 
Finally, we prove part (iv). Using \eqref{p1_eqn5} we obtain
$$D_1^{(r)}-D_2^{(r)}=3\psi\left(\frac{2}{2r+1}\right)-2\psi\left(\frac{4}{2r+1}\right)-\psi\left(\frac{1}{2r+1}\right)+D_0^{(r)}.$$
Also, we know that $\psi(x) = -\gamma - \frac{1}{x} + O(x)$ as $x \to 0^+$. Thus, for sufficiently large $r$, we have
\begin{align*}
    M_2(r)=\dfrac{(3r+2)(2r+1)}{(4r+2)\left(\frac{2r+1}{2}+O\left(\frac{1}{r}\right)\right)+(2r-3)\left(\frac{2r+1}{2}+O\left(\frac{1}{r}\right)\right)}.
 \end{align*}
 Therefore, $M_2(r)\to1$ as $r\to\infty$. This completes the proof.
\end{proof}
%%%%%%%%%%%%%%%%%%%%%%%%
\begin{proof}[Proof of Theorem \ref{p1_thm1.5}]
By Theorem~\ref{p1_thm10}, we have
\begin{align}\label{eqn-new-11}
M_3(r) =\frac{(16r+13)(2r+1)^2}{6 \mathcal{D}(r)},
\end{align}
 where $\mathcal{D}(r)$ is as given in \eqref{new-eqn-2}.
 \par 
To proceed with the proof, we next define the functions $G_2(u)$ and $H_2(r)$ for $u > 0$ and $r \in \mathbb{N}$ as follows.
\begin{align*}
    &G_2(u):=\psi\left(\frac12+u\right)-\psi(1+u)+2\ln 2, \\
    &H_2(r) := \sum_{k=1}^{3} C_k(r) G_2\left(t_k(r)\right),\end{align*}
where $C_1(r) := 8r^2 - 10r - 1$, $C_2(r) := -8r^2 + 28r - 8$, $C_3(r) := 4r^2 - 14r + 10$, and $t_k(r)=\frac{k}{2r+1}$. Now, we find that 
\begin{align}\label{new_eqn1}
\sum_{k=1}^3 C_k(r) &= (2r+1)^2,\\
\label{new_eqn2}
\sum_{k=1}^3 \frac{C_k(r)}{k} &= \frac{4(2r+1)^2 - 9(2r+1)}{3},\\
\label{new_eqn3}
\sum_{k=1}^3 kC_k(r) &= (2r+1)^2 + 12.
\end{align}
Using \eqref{p1_eqn4}, we obtain
\begin{align*}
D_k^{(r)} = \psi\left(\frac{1}{2} + t_k(r)\right) - \psi(t_k(r)) = \frac{(2r+1)}{k} - 2\ln 2 + G_2(t_k(r)),
\end{align*}
for $k \in \{1, 2, 3\}$.
Replacing these expression for $D_k^{(r)}$ in the expression of $\mathcal{D}(r)$, and then using \eqref{new_eqn1} and \eqref{new_eqn2} we obtain 
\begin{align*}
    \mathcal{D}(r) = \dfrac{4(2r+1)^3}{3}+H_2(r).
\end{align*}
Hence, \eqref{eqn-new-11} yields
\begin{align*}
    M_3(r)=\dfrac{8(2r+1)^3+5(2r+1)^2}{8(2r+1)^3+6H_2(r)}.
\end{align*}
For $r=1$, direct computation shows that  $M_3(1) = \frac{87}{2(99-10\pi \sqrt{3})}<1$. This completes the proof of part (i).
\par To prove part (ii), we show that $M_3(r)>1$ for $r\geq 2$. We numerically verify that $M_3(2)>1$. To establish $M_3(r)>1$ for $r\geq 3$, it suffices to show that $0<6H_2(r)<5(2r+1)^2$. Observe that $G_2(0)=0$ and $G_2'$ is decreasing for $u> 0$. Thus, by the Mean Value Theorem, we obtain
\begin{align}
G_2(u) = G_2'(\chi_1) u \leq G_2'(0)u = \frac{\pi^2}{3}u,
\label{p1_eqn7}
\end{align}
for some $\chi_1 \in (0, u)$. Now, combining \eqref{p1_eqn7} and \eqref{new_eqn3}, we obtain
\begin{align*}
H_2(r) \leq \frac{\pi^2}{3} \frac{1}{(2r+1)} \sum_{k=1}^3 kC_k(r) = \frac{\pi^2((2r+1)^2 + 12)}{3(2r+1)}.
\end{align*}
This yields
\begin{align}
6H_2(r)\leq \frac{2\pi^2((2r+1)^2 + 12)}{(2r+1)}.
\label{p1_new-added_eqn1}
\end{align}
It is easy to see that $2\pi^2((2r+1)^2 + 12) < 5(2r+1)^3$ for all $r \geq 3$.
Therefore, from \eqref{p1_new-added_eqn1}, we have $6H_2(r) < 5(2r+1)^2$ for $r \geq 3$. Next, it remains to prove that $H_2(r) > 0$ for all $r \geq 3$ (which ensures the positivity of the denominator of $M_3(r)$). When $r = 3$, we have $C_k(r) > 0$ for $k = 1, 2, 3$, and hence $H_2(3) > 0$ as $G_2(u)>0$ for $u>0$. When $r\ge 4$, we have $C_1(r)>0$, $C_3(r)>0$, but $C_2(r)<0$. Since $G_2''$ is an increasing function, so $G_2''(u)\ge G_2''(0)$ for $u>0$ and integrating this inequality twice from $0$ to $u$, we obtain
\begin{align}\label{new_eqn4}
    G_2'(0)u+\frac{G_2''(0)}{2}u^2\le G_2(u).
\end{align}
Thus, from \eqref{p1_eqn7} and \eqref{new_eqn4}, we obtain
\begin{align*}
    C_2(r)G_2(t_2(r))&\ge C_2(r)G_2'(0)t_2(r),\qquad  \left(\text{ as }C_2(r)<0\right)\\
    C_k(r)G_2(t_k(r))&\ge C_k(r)\left( G_2'(0)t_k(r)+\frac{G_2''(0)}{2}t_k(r)^2\right), \quad \text{for~} k \in \{1,3\}.
\end{align*}
Therefore, from the definition of $H_2(r)$ we find that 
\begin{align*}
    H_2(r)&\ge G_2'(0)\sum_{k=1}^{3}C_k(r)t_k(r)+\frac{G_2''(0)}{2}\left(C_1(r)(t_1(r))^2+C_3(r)(t_3(r))^2\right)\\
    &=\frac{G_2'(0)}{(2r+1)}\sum_{k=1}^3kC_k(r)+\frac{G_2''(0)}{2}\left(C_1(r)(t_1(r))^2+C_3(r)(t_3(r))^2\right).
\end{align*}
Now, using \eqref{new_eqn3} we obtain
\begin{align}
    H_2(r)\ge\frac{\mathcal{S}(r)}{(2r+1)^2},
    \label{p1_new-added_eqn2}
\end{align}
where 
\begin{align*}
    \mathcal{S}(r)&:=G_2'(0)(2r+1)^3+11\frac{G_2''(0)}{2}(2r+1)^2+(12G_2'(0)-90\frac{G_2''(0)}{2})(2r+1)+168\frac{G_2''(0)}{2}\\
    &=\frac{\pi^2}{3}(2r+1)^3-66\zeta(3)(2r+1)^2+(4\pi^2+540\zeta(3))(2r+1)-1008\zeta(3).
\end{align*}
Here, $\zeta(s)$ is the Riemann zeta function defined as $\zeta(s)=\sum_{n=1}^\infty\frac{1}{n^s}$, for $s\in \mathbb{C}$ with $Re(s)>1$.
\par
 For $r\ge4$, $\mathcal{S}'(r)$ has negative discriminant, so $\mathcal{S}'(r)>0$ for $r\ge4$ as $\mathcal{S}'(4)>0$. Therefore, $\mathcal{S}(r)$ is increasing for $r\ge4$. Hence, $\mathcal{S}(r)>0$ for all $r\ge 4$ as $\mathcal{S}(4)>0$. Consequently, from \eqref{p1_new-added_eqn2}, we obtain $H_2(r)>0$ for $r\ge 4$. This completes the proof of part (ii).
 \par Now, as shown in the proof of Theorem~\ref{p1_thm1.3}, part (iii) of the theorem readily follows using the fact that $a_{3}^{(r)}(n), b_{3}^{(r)}(n) \to \infty$ as $n \to \infty$ for all $r\geq 1$.
\par Finally, we prove part (iv). Using \eqref{p1_eqn5}, we obtain
\begin{align*}
    D_1^{(r)}&=2\psi\left(\frac{2}{2r+1}\right)-2\psi\left(\frac{1}{2r+1}\right)-2\ln2,\\
    D_2^{(r)}&=2\psi\left(\frac{4}{2r+1}\right)-2\psi\left(\frac{2}{2r+1}\right)-2\ln2,\\
    D_3^{(r)}&=2\psi\left(\frac{6}{2r+1}\right)-2\psi\left(\frac{3}{2r+1}\right)-2\ln2.
\end{align*}
Also, we know that $\psi(x) = -\gamma - \frac{1}{x} + O(x)$ as $x \to 0^+$. Thus, from \eqref{new-eqn-2} for sufficiently large $r$, we have
\begin{align*}
   \mathcal{D}(r)=&(3+2\ln 2)(2r+1)^2+(8r^2-10r-1)\left(2r+1+O\left(\frac{1}{r}\right)-2\ln2\right)\\
&+(-8r^2+28r-8)\left(\frac{2r+1}{2}+O\left(\frac{1}{r}\right)-2\ln2\right)\\
&+(4r^2-14r+10)\left(\frac{2r+1}{3}+O\left(\frac{1}{r}\right)-2\ln2\right).
 \end{align*}
 Therefore, $M_3(r)=\frac{(16r+13)(2r+1)^2}{6\mathcal{D}(r)}\to1$ as $r\to\infty$. This completes the proof of the Theorem.
\end{proof}
\begin{proof}[Proof of Theorem~\ref{p1_thm1.6}] We first prove part (i). By Theorems~\ref{p1_thm8} and\ref{p1_thm9}, we obtain
\begin{align*}
    M_4(r)&=\dfrac{(4r+2)(D_0^{(r)}+1) + (2r-3)(D_1^{(r)}-D_2^{(r)})}{(4r+2)D_0^{(r)}}.
\end{align*}
We aim to show that $M_4(r)>1$ for all $r \geq 1$. In the case $r=1$, direct calculation shows that  $M_4(1)=\frac{2\pi+2\sqrt{3}}{3\pi}>1$. Therefore, it is enough to consider $r \geq 2$. By using the fact that $\psi$ is strictly increasing on the positive real axis, the denominator of $M_4(r)$ is strictly positive. Thus, the problem reduces to establishing the following equivalent condition: \begin{align*}
    M_4(r)>1 &\iff (4r+2)+(2r-3)(D_1^{(r)}-D_2^{(r)})>0 \\ &\iff 2+(1-4t_1(r))(D_1^{(r)}-D_2^{(r)})>0.
\end{align*}
Again by Mean Value Theorem we have
\begin{align*}
    D_0^{(r)}-(D_1^{(r)}-D_2^{(r)})= \psi\left(\frac{1}{2}+2t_1(r)\right)-\psi\left(\frac{1}{2}+t_1(r)\right)=t_1(r)\psi'(\chi_2),
\end{align*}
for some $\chi_2\in \left(\frac{1}{2} + t_1(r),\frac{1}{2}+2t_1(r)\right)$, where the second equality is by using Mean Value Theorem. As $\psi'$ is decreasing function on positive real line and $\psi'(\frac{1}{2})=\frac{\pi^2}{2}$ and noting $t_1(r)\leq\frac{1}{5}$ for $r\geq2$, it follows that $t_1(r) \psi'(\chi_2)\le\frac{\psi'(1/2)}{5}=\frac{\pi^2}{10}<1$. Now,
\begin{align*}
    D_0^{(r)}>0 &\implies (D_1^{(r)}-D_2^{(r)})+t_1(r)\psi'(\chi_2)>0 \\
    &\implies (D_1^{(r)}-D_2^{(r)})>-1 \\
    &\implies (1-4t_1(r))(D_1^{(r)} - D_2^{(r)})>-1 \\
    &\implies 2+(1-4t_1(r))(D_1^{(r)}-D_2^{(r)})>0. 
\end{align*}
In the second-to-last step, we used $0<1-4t_1(r)<1$ for $r \geq 2$.
Therefore, $M_4(r)>1$. Finally, $a_1^{(r)}(n)\to \infty$ as $n\to \infty$ implies that $a_2^{(r)}(n)- a_1^{(r)}(n)\to \infty$ as $n\to \infty$.  
This completes the proof of part (i).
\par
Next, we prove part (ii) of the theorem. From Theorems~\ref{p1_thm9} and \ref{p1_thm10}, we obtain the following expression:
\begin{align*}
    M_5(r)=\dfrac{(2r+1)\left(2(2r+1)(D_0^{(r)}+1) + (2r-3)(D_1^{(r)}-D_2^{(r)})\right)}{\mathcal{D}(r)},
\end{align*}
where $\mathcal{D}(r)$ is as given in \eqref{new-eqn-2}. Using \eqref{p1_eqn6}, we have
\begin{align}
D_k^{(r)}=2\sum_{m\ge 0}\frac{(-1)^m}{kt_2(r)+m},\label{p1_eqn8}
\end{align}
where $t_2(r)=\frac{2}{2r+1} \text{ for } r\geq 1$. 
We divide the numerator and denominator of $M_5(r)$ by $(2r+1)^2$ and then using \eqref{p1_eqn5}, we obtain $M_5(r) = \frac{N(r)}{Q(r)}$, with $N(r)$ and $Q(r)$ as defined below.
\begin{align*}
N(r)&:=(2-2t_2(r))D_1^{(r)}-(1-2t_2(r))D_2^{(r)}+2\ln 2+2,\\
Q(r)&:=3+2\ln 2+\left(2-\frac{9t_2(r)}{2}+\frac{3(t_2(r))^2}{2}\right)D_1^{(r)}+(-2+9t_2(r)-6(t_2(r))^2)D_2^{(r)} \\ &+\left(1-\frac{9t_2(r)}{2}+\frac{9(t_2(r))^2}{2}\right)D_3^{(r)}.
\end{align*}
Hence, to prove that $M_5(r)>1$, we need to show that $N(r)>Q(r)$ with $Q(r) > 0$. Set $G_3(r):= N(r)-Q(r)$. Then, we have
\begin{align*}
G_3(r)=&\frac{t_2(r)(5-3t_2(r))}{2}D_1^{(r)}+(1-t_2(r))(1-6t_2(r))D_2^{(r)} \\ &-(1-3t_2(r))\left(1-\frac{3t_2(r)}{2}\right)D_3^{(r)}-1.
\end{align*}
Applying \eqref{p1_eqn8}, we obtain
\begin{align*}
G_3(r)=\frac{1}{3t_2(r)}+2\sum_{m\ge1}(-1)^mc_r(m), 
\end{align*}
 where  \begin{align*}
     c_r(m)=\frac{t_2(r)\left((1-2t_2(r)+3(t_2(r))^2)m+t_2(r)(1+3t_2(r))\right)}{(m+t_2(r))(m+2t_2(r))(m+3t_2(r))}.
 \end{align*}
  Since $c_r(m)$ is a positive and decreasing sequence of $m$ for each fixed $t_2(r)$, and $c_r(m)\to 0$ as $ m\to\infty$, therefore, the alternating series satisfies $\sum_{m\ge 1}(-1)^mc_r(m)\ge-c_r(1)$.
Hence,
\begin{align*}
G_3(r)\ge\frac{1}{3t_2(r)}-\frac{2t_2(r)(1-t_2(r)+6(t_2(r))^2)}{(1+t_2(r))(1+2t_2(r))(1+3t_2(r))}.
\end{align*}
Showing $G_3(r)>0$ is enough to showing that $6(t_2(r))^2(1-t_2(r)+6(t_2(r))^2)<1+6t_2(r)+11(t_2(r))^2+6(t_2(r))^3$. Equivalently, we need to show that $1+6t_2(r)+5(t_2(r))^2+12(t_2(r))^3-36(t_2(r))^4>0$, which is true for $0<t_2(r)\leq \frac{2}{3}$.
Again, using \eqref{p1_eqn8}, we find that
\begin{align*}
Q(r)=2\ln2+\frac{8}{3t_2(r)}+2\sum_{m\ge1}(-1)^md_r(m),
\end{align*} 
 where 
 \begin{align*}d_r(m)=\frac{m^2+(5t_2(r)-3(t_2(r))^3)m+(8(t_2(r))^2-9(t_2(r))^3)}{(m+t_2(r))(m+2t_2(r))(m+3t_2(r))}.
 \end{align*}
It is easy to verify that $d_r(m)$ forms a positive and decreasing sequence of $m$ for each fixed $t_2(r)$ and $d_r(m)\to 0$ as $m\to \infty$. Therefore, $\sum_{m\ge 1}(-1)^m d_r(m)\ge-d_r(1)$, and $d_r(1)<1$ for $0<t_2(r)\leq \frac{2}{3}$. It follows that $Q(r) > 2\ln 2 + \frac{8}{3t_2(r)}-2\ge2\ln2+4-2>0$. This completes the proof of $M_5(r)>1$ for all $r\geq 1$. Finally, $a_3^{(r)}(n)\to \infty$ as $n\to \infty$ implies that $a_2^{(r)}(n)- a_3^{(r)}(n)\to \infty$ as $n\to \infty$.  
This completes the proof of part (ii).
\par Finally, we prove part (iii) of the theorem. Employing Theorems~\ref{p1_thm8} and \ref{p1_thm10}, we have 
\begin{align}\label{eqn-new-12}
M_6(r)
= \frac{\mathcal{D}(r)}{2(2r+1)^2 D_0^{(r)}}
\end{align}
where $\mathcal{D}(r)$ is as given in \eqref{new-eqn-2}. We replace $\mathcal{D}(r)$ using \eqref{new-eqn-2} and $D_0^{(r)}$ using \eqref{p1_eqn5} in \eqref{eqn-new-12}, and then multiply both the numerator and the denominator by $(t_1(r))^2$ to deduce the following expression
\begin{align*}
    M_6(r)&= \frac{(3+2\ln 2)
+ (2-9t_1(r)+6(t_1(r))^2)D_1^{(r)}
+ (-2+18t_1(r)-24(t_1(r))^2)D_2^{(r)}}
{2\ln 2+D_1^{(r)}} \\
&\qquad + \frac{(1-9t_1(r)+18(t_1(r))^2)D_3^{(r)}}
{2\ln 2+D_1^{(r)}}.
\end{align*}
Note that the denominator of $M_6(r)$ is always positive, as $\psi$ is strictly increasing on the positive real line. Now, we have $M_6(r)>1$ if and only if 
\begin{align*}
    & 3+(1-9t_1(r)+6(t_1(r))^2)D_1^{(r)}+(-2+18t_1(r)-24(t_1(r))^2)D_2^{(r)}\\
    &+(1-9t_1(r)+18(t_1(r))^2)D_3^{(r)}>0.
\end{align*}
Let
\begin{align*}
    G_4(r)&:=3+(1-9t_1(r)+6(t_1(r))^2)D_1^{(r)}+(-2+18t_1(r)-24(t_1(r))^2)D_2^{(r)} \\ &+(1-9t_1(r)+18(t_1(r))^2)D_3^{(r)}.
\end{align*}
Now, we prove that $G_4(r)>0$. Using \eqref{p1_eqn6}, we obtain
\begin{align}\notag
    G_4(r)&=3+2\sum_{k\ge0}(-1)^k\left(\frac{1-9t_1(r)+6(t_1(r))^2}{2t_1(r)+k}+\frac{-2+18t_1(r)-24(t_1(r))^2}{4t_1(r)+k}\right.\\
&\hspace{1 cm}\left.+  \frac{1-9t_1(r)+18(t_1(r))^2}{6t_1(r)+k}\right)
\notag\\
    &=\frac{1}{3t_1(r)}+\left(16(t_1(r))^2(1-9t_1(r))Z_A-48(t_1(r))^3Z_B\right),\label{p1_new-added_eqn3}
\end{align}
where
\begin{align*}
&Z_A:=\sum_{k\ge1}\frac{(-1)^k}{\rho_r(k)},\quad Z_B:=\sum_{k\ge1}\frac{(-1)^k\,k}{\rho_r(k)};\\
&\rho_r(k):=\left(2t_1(r)+k)(4t_1(r)+k)(6t_1(r)+k\right).
\end{align*}
It is easy to verify that both series $Z_A \text{ and } Z_B$ are absolutely convergent, since $\frac{1}{\rho_r(k)}>0$ and $\frac{k}{\rho_r(k)}>0$ for all $k\geq 1$, for each fixed $r$ and these are decreasing sequences of $k$ and the terms are of $O(k^{-3})$ and $O(k^{-2})$, respectively.
Observe that
\begin{align*}
    Z_A&=-\frac{1}{\rho_r(1)}+\sum_{k\ge2}\left(\frac{1}{\rho_r(k)}-\frac{1}{\rho_r(k+1)}\right)\ge -\frac{1}{\rho_r(1)},\\
    Z_A&=\sum_{k\ge1}\left(-\frac{1}{\rho_r(k)}+\frac{1}{\rho_r(k+1)}\right)\le 0.
\end{align*}
Hence,
\begin{align*}
    \frac{-1}{(2t_1(r)+1)(4t_1(r)+1)(6t_1(r)+1)}\leq Z_A\leq 0.
\end{align*}
Similarly, we have $Z_B\leq 0$, which implies that $-48(t_1(r))^3 Z_B\geq 0$. Since the denominator of $M_6(r)$ is strictly positive, it remains to prove that $G_4(r)>0$. We consider two cases according to the value of $t_1(r)$.\\
\textbf{Case 1:} Suppose that $\frac{1}{9}<t_1(r)\leq\frac{1}{3}$ (equivalently, $r\leq3$). Then $1-9t_1(r) < 0$. Since $Z_A \leq 0$, we obtain  
$8(t_1(r))^2(1-9t_1(r))Z_A \geq 0$. Consequently, we have $G_4 \geq \frac{1}{3t_1(r)} > 0$.\\
\textbf{Case 2:} Suppose that $0 < t_1(r) \leq \frac{1}{9}$ (equivalently, $r \geq 4$). In this case, $1-9t_1(r) \in [0, 1)$ and $(2t_1(r)+1)(4t_1(r)+1)(6t_1(r)+1) > 1$. Consequently, we have 
\begin{align*}
16(t_1(r))^2(1-9t_1(r))Z_A\geq\frac{-16(t_1(r))^2}{(2t_1(r)+1)(4t_1(r)+1)(6t_1(r)+1)}>-16(t_1(r))^2.
\end{align*}
Thus, by \eqref{p1_new-added_eqn3}, we obtain
\begin{align*}
G_4(r)>\frac{1}{3t_1(r)}-16(t_1(r))^2-48(t_1(r))^3 Z_B\geq\frac{1}{3t_1(r)}-16(t_1(r))^2\geq 3-\frac{16}{81}>0
\end{align*}
Therefore, $G_4(r)>0$ in both cases, which implies that $M_6(r)>1$. Hence, $M_6(r)>1$ holds for every $r\ge1$. Finally, as shown in the proof of Theorem~\ref{p1_thm1.3}, $a_1^{(r)}(n)\to \infty$ as $n\to \infty$ implies that $a_3^{(r)}(n)- a_1^{(r)}(n)\to \infty$ as $n\to \infty$.  
This completes the proof of part (iii).
\end{proof}
\begin{remark} In part (i) of Theorem~\ref{p1_thm1.6}, we established that $a_2^{(r)}(n) > a_1^{(r)}(n)$ for all sufficiently large $n$, and that $a_2^{(r)}(n) - a_1^{(r)}(n) \to \infty$ as $n \to \infty$. We now also provide a simple combinatorial justification for it. For any partition $\lambda$ of $n$, the number of $1$-hooks is equal to the number of distinct parts of $\lambda$. In an $A_r$-type partition, each part must occur at least twice, which guarantees that for each distinct part, there exists at least one $2$-hook. Therefore, for every $A_r$-type partition of $n$, the number of $2$-hooks is at least the number of $1$-hooks, so $a_2^{(r)}(n) \geq a_1^{(r)}(n)$ for all $n \geq 1$.\par To show that the inequality is strict for sufficiently large $n$, let $n \geq 2r + 7$ and define $\mathcal{d}:= n - (2r+7)$. By the division algorithm, let $\mathcal{d} = 6m + s$ for integers $m \geq 0$ and $0 \leq s < 6$. For every integer $m_0$ with $0 \leq m_0 \leq m$, consider the partition $\lambda = (3^{2(m_0+1)}, 1^{2r + s + 6(m - m_0) + 1})$. This is clearly an $A_r$-type partition of $n$. In each such partition, the number of $2$-hooks exceeds the number of $1$-hooks by exactly one. As $m_0$ ranges from $0$ to $m$, there are $m+1$ such partitions, each contributing one additional $2$-hook beyond the number of $1$-hooks. Thus, $a_2^{(r)}(n) > a_1^{(r)}(n)$ for all $n \geq 2r + 7$. Moreover, as $n \to \infty$ implies  $m \to \infty$, thus the difference $a_2^{(r)}(n) - a_1^{(r)}(n) \to \infty $, as required.
\end{remark}

\begin{proof}[Proof of Theorem~\ref{p1_thm1.7}]
Following similarly, we show that $M_7(r), M_8(r), M_9(r) >1$ to establish the biases. From Theorems~\ref{p1_thm8}, \ref{p1_thm9}, and \ref{p1_thm10}, we obtain
\begin{align*}
   M_7(r) &= \frac{3r+2}{2(r+1)} = 1+ \frac{r}{2r+2}>1,\\
   M_8(r) &= \frac{18r+12}{16r+13} = 1+ \frac{2r-1}{16r+13}>1, \\
    M_9(r) &= \frac{16r+13}{12(r+1)} = 1+ \frac{4r+1}{12(r+1)}>1.
\end{align*}
Furthermore, $b_{\ell}^{(r)}(n)\to \infty$ as $n \to \infty$ for $\ell=1,3$. Hence, as shown in the proof of Theorem~\ref{p1_thm1.3}, we have $b_2^{(r)}(n)-b_{\ell}^{(r)}(n) \to\infty$ as $n\to\infty$ for $\ell=1,3$.
Similarly, $b_3^{(r)}(n)-b_1^{(r)}(n)\to\infty$ as $n\to\infty$. This completes the proof of the theorem. 
\end{proof}
\begin{proof}[Proof of Theorem~\ref{p1_newthm}] From Theorems~\ref{p1_thm8}, \ref{p1_thm9}, and \ref{p1_thm10} we have 
\begin{align*}
    \lim_{n\to\infty}\frac{b_1^{(r)}(n)}{b_1^{(r+1)}(n)} &=   \lim_{n \to \infty }e^{2\pi\sqrt{n}\left(\sqrt{\frac{r+1}{6(2r+1)}}-\sqrt{\frac{r+2}{6(2r+3)}}\right)}\frac{(r+1)(2r+3)}{(2r+1)(r+2)}  ,\\
    \lim_{n\to\infty}\frac{b_2^{(r)}(n)}{b_2^{(r+1)}(n)} &=   \lim_{n \to \infty }e^{2\pi\sqrt{n}\left(\sqrt{\frac{r+1}{6(2r+1)}}-\sqrt{\frac{r+2}{6(2r+3)}}\right)}\frac{(3r+2)(4r+6)}{(3r+5)(4r+2)}  ,\\
    \lim_{n\to\infty}\frac{b_3^{(r)}(n)}{b_3^{(r+1)}(n)} &=  \lim_{n \to \infty }e^{2\pi\sqrt{n}\left(\sqrt{\frac{r+1}{6(2r+1)}}-\sqrt{\frac{r+2}{6(2r+3)}}\right)}\frac{(16r+13)(24r+36)}{(16r+29)(24r+12)}.
\end{align*}
 Observe that $\sqrt{\frac{r+1}{6(2r+1)}} - \sqrt{\frac{r+2}{6(2r+3)}} > 0$ for every $r\geq 1$. Consequently, the exponential factor tends to infinity as $n\to\infty$. Moreover, each of the rational factors $\frac{(r+1)(2r+3)}{(2r+1)(r+2)}$, $\frac{(3r+2)(4r+6)}{(3r+5)(4r+2)}$, and $\frac{(16r+13)(24r+36)}{(16r+29)(24r+12)}$ is positive.  Hence, for every $r\geq 1$,
$$
\lim_{n\to\infty}\frac{b_1^{(r)}(n)}{b_1^{(r+1)}(n)}=\lim_{n\to\infty}\frac{b_2^{(r)}(n)}{b_2^{(r+1)}(n)}=\lim_{n\to\infty}\frac{b_3^{(r)}(n)}{b_3^{(r+1)}(n)}=\infty.
$$
Thus, all three limits establish the required biases. Furthermore, $b_{\ell}^{(r)}(n)\to \infty$ as $n \to \infty$ for $\ell=1, 2, 3$ and every $r\geq 1$. Hence, as shown in the proof of Theorem~\ref{p1_thm1.3}, $b_{\ell}^{(r)}(n)-b_{\ell}^{(r+1)}(n) \to\infty$ as $n\to\infty$ for $\ell=1,2, 3$. 
 This completes the proof of the theorem. 
 \end{proof}
 
\section*{Acknowledgement}
We thank Ajit Singh and Gurinder Singh for their careful reading of the manuscript and their valuable suggestions.

\bibliographystyle{amsplain}
		\bibliography{ref_biblio.bib}

@article {MacMahon_Generalization_by_Andrews,
    AUTHOR = {Andrews, George E.},
     TITLE = {A generalization of a partition theorem of {M}ac{M}ahon},
   JOURNAL = {J. Combinatorial Theory},
  FJOURNAL = {Journal of Combinatorial Theory},
    VOLUME = {3},
      YEAR = {1967},
     PAGES = {100--101},
  MRNUMBER = {211891},
}

@article {Ono_Singh,
    AUTHOR = {Amdeberhan, Tewodros and Andrews, George E. and Ono, Ken and Singh, Ajit},
     TITLE = {Hook lengths in self-conjugate partitions},
   JOURNAL = {Proc. Amer. Math. Soc. Ser. B},
  FJOURNAL = {Proceedings of the American Mathematical Society. Series B},
    VOLUME = {11},
      YEAR = {2024},
     PAGES = {345--357},
  MRNUMBER = {4767988},
MRREVIEWER = {Olivier\ Mallet},
}

@article {Ballantine_2023,
    AUTHOR = {Ballantine, Cristina and Burson, Hannah E. and Craig, William
              and Folsom, Amanda and Wen, Boya},
     TITLE = {Hook length biases and general linear partition inequalities},
   JOURNAL = {Res. Math. Sci.},
  FJOURNAL = {Research in the Mathematical Sciences},
    VOLUME = {10},
      YEAR = {2023},
    NUMBER = {4},
     PAGES = {Paper No. 41, 36},
  MRNUMBER = {4648745},
}

@article {BMS,
    AUTHOR = {Barman, Rupam and Mahanta, Pankaj Jyoti and Singh, Gurinder},
     TITLE = {Hook length inequalities for {$t$}-regular partitions in the {$t$}-aspects: {P}art {II}},
   JOURNAL = {Res. Number Theory},
  FJOURNAL = {Research in Number Theory},
    VOLUME = {11},
      YEAR = {2025},
    NUMBER = {4},
     PAGES = {Paper No. 84, 15},
  MRNUMBER = {4956722},
MRREVIEWER = {Huan\ Xiong},
}

@article {Ingham,
    AUTHOR = {Bringmann, Kathrin and Jennings-Shaffer, Chris and Mahlburg, Karl},
     TITLE = {On a {T}auberian theorem of {I}ngham and {E}uler-{M}aclaurin summation},
   JOURNAL = {Ramanujan J.},
  FJOURNAL = {Ramanujan Journal. An International Journal Devoted to the
              Areas of Mathematics Influenced by Ramanujan},
    VOLUME = {61},
      YEAR = {2023},
    NUMBER = {1},
     PAGES = {55--86},
  MRNUMBER = {4584516},
MRREVIEWER = {Driss\ Essouabri},
}

@article {Cossaboom,
    AUTHOR = {Cossaboom, Catherine H.},
     TITLE = {Hook length biases for self-conjugate partitions and
              partitions with distinct odd parts},
   JOURNAL = {J. Number Theory},
  FJOURNAL = {Journal of Number Theory},
    VOLUME = {277},
      YEAR = {2025},
     PAGES = {290--324},
  MRNUMBER = {4908737},
MRREVIEWER = {Huan\ Xiong},
}

@article {Craig,
    AUTHOR = {Craig, William and Locus Dawsey, Madeline and Han, Guo-Niu},
     TITLE = {Inequalities and asymptotics for hook numbers in restricted
              partitions},
   JOURNAL = {J. Combin. Theory Ser. A},
  FJOURNAL = {Journal of Combinatorial Theory. Series A},
    VOLUME = {221},
      YEAR = {2026},
     PAGES = {Paper No. 106176, 35},
  MRNUMBER = {5036914},
}

@article {Craig2,
    AUTHOR = {Craig, William and Ono, Ken and Singh, Ajit},
     TITLE = {Distribution of hooks in self-conjugate partitions},
   JOURNAL = {Discrete Math.},
  FJOURNAL = {Discrete Mathematics},
    VOLUME = {348},
      YEAR = {2025},
    NUMBER = {10},
     PAGES = {Paper No. 114563, 13},
  MRNUMBER = {4904414},
}

@article {Craig3,
    AUTHOR = {Craig, William},
     TITLE = {On the number of parts in congruence classes for partitions
              into distinct parts},
   JOURNAL = {Res. Number Theory},
  FJOURNAL = {Research in Number Theory},
    VOLUME = {8},
      YEAR = {2022},
    NUMBER = {3},
     PAGES = {Paper No. 52, 24},
  MRNUMBER = {4456287},
}

@article {Sellers,
    AUTHOR = {Fu, Shishuo and Sellers, James A.},
     TITLE = {Bijective proofs of partition identities of {M}ac{M}ahon,
              {A}ndrews, and {S}ubbarao},
   JOURNAL = {Electron. J. Combin.},
  FJOURNAL = {Electronic Journal of Combinatorics},
    VOLUME = {21},
      YEAR = {2014},
    NUMBER = {2},
     PAGES = {Paper 2.41, 9},
  MRNUMBER = {3244807},
}

@article {Griffin,
    AUTHOR = {Griffin, Michael and Ono, Ken and Tsai, Wei-Lun},
     TITLE = {Distributions of hook lengths in integer partitions},
   JOURNAL = {Proc. Amer. Math. Soc. Ser. B},
  FJOURNAL = {Proceedings of the American Mathematical Society. Series B},
    VOLUME = {11},
      YEAR = {2024},
     PAGES = {422--435},
  MRNUMBER = {4797102},
}

@article {Hardy_Ramanujan,
    AUTHOR = {Hardy, G. H. and Ramanujan, S.},
     TITLE = {Asymptotic {F}ormulae in {C}ombinatory {A}nalysis},
   JOURNAL = {Proc. London Math. Soc. (2)},
  FJOURNAL = {Proceedings of the London Mathematical Society. Second Series},
    VOLUME = {17},
      YEAR = {1918},
     PAGES = {75--115},
  MRNUMBER = {1575586},
}

@article {Kim1,
    AUTHOR = {Kim, Eunmi},
     TITLE = {Inequalities and asymptotics for hook lengths in
              {$\ell$}-regular partitions and {$\ell$}-distinct partitions},
   JOURNAL = {J. Math. Anal. Appl.},
  FJOURNAL = {Journal of Mathematical Analysis and Applications},
    VOLUME = {550},
      YEAR = {2025},
    NUMBER = {2},
     PAGES = {Paper No. 129599, 19},
  MRNUMBER = {4897658},
}

@article {Rademacher,
    AUTHOR = {Rademacher, Hans},
     TITLE = {On the {P}artition {F}unction p(n)},
   JOURNAL = {Proc. London Math. Soc. (2)},
  FJOURNAL = {Proceedings of the London Mathematical Society. Second Series},
    VOLUME = {43},
      YEAR = {1937},
    NUMBER = {4},
     PAGES = {241--254},
  MRNUMBER = {1575213},
}

@article {Ngo_Rhoades,
    AUTHOR = {Ngo, Hieu T. and Rhoades, Robert C.},
     TITLE = {Integer partitions, probabilities and quantum modular forms},
   JOURNAL = {Res. Math. Sci.},
  FJOURNAL = {Research in the Mathematical Sciences},
    VOLUME = {4},
      YEAR = {2017},
     PAGES = {Paper No. 17, 36},
  MRNUMBER = {3721075},
}

@article {Jackson,
    AUTHOR = {Jackson, Faye and Otgonbayar, Misheel},
     TITLE = {Unexpected biases between congruence classes for parts in
              {$k$}-indivisible partitions},
   JOURNAL = {J. Number Theory},
  FJOURNAL = {Journal of Number Theory},
    VOLUME = {248},
      YEAR = {2023},
     PAGES = {310--342},
  MRNUMBER = {4562070},
}

@book {MacMahon,
    AUTHOR = {MacMahon, Percy A.},
     TITLE = {Combinatory analysis},
      NOTE = {Two volumes (bound as one)},
 PUBLISHER = {Chelsea Publishing Co., New York},
      YEAR = {1960},
     PAGES = {xix+302+xix+340},
  MRNUMBER = {141605},
}

@incollection {Nekrasov_Okounkov,
    AUTHOR = {Nekrasov, Nikita A. and Okounkov, Andrei},
     TITLE = {Seiberg-{W}itten theory and random partitions},
 BOOKTITLE = {The unity of mathematics},
    SERIES = {Progr. Math.},
    VOLUME = {244},
     PAGES = {525--596},
 PUBLISHER = {Birkh\"auser Boston, Boston, MA},
      YEAR = {2006},
  MRNUMBER = {2181816},
}

@article {Han_2010,
	AUTHOR = {Han, Guo-Niu},
	TITLE = {The {N}ekrasov-{O}kounkov hook length formula: refinement, elementary proof, extension and applications},
	JOURNAL = {Ann. Inst. Fourier},
	VOLUME = {60},
	YEAR = {2010},
	NUMBER = {1},
	PAGES = {1--29},
}

@incollection {Han_2016,
    AUTHOR = {Han, Guo-Niu and Xiong, Huan},
     TITLE = {New hook-content formulas for strict partitions (extended
              abstract)},
 BOOKTITLE = {28th {I}nternational {C}onference on {F}ormal {P}ower {S}eries
              and {A}lgebraic {C}ombinatorics ({FPSAC} 2016)},
    SERIES = {Discrete Math. Theor. Comput. Sci. Proc.},
    VOLUME = {BC},
     PAGES = {635--645},
 PUBLISHER = {Assoc. Discrete Math. Theor. Comput. Sci., Nancy},
      YEAR = {[2016] \copyright 2016},
  MRNUMBER = {4111324},
}

@article {Han_2017,
    AUTHOR = {Han, Guo-Niu and Xiong, Huan},
     TITLE = {New hook-content formulas for strict partitions},
   JOURNAL = {J. Algebraic Combin.},
  FJOURNAL = {Journal of Algebraic Combinatorics. An International Journal},
    VOLUME = {45},
      YEAR = {2017},
    NUMBER = {4},
     PAGES = {1001--1019},
  MRNUMBER = {3641975},
}

@article {Petreolle,
    AUTHOR = {P\'etr\'eolle, Mathias},
     TITLE = {Characterization of cyclically fully commutative elements in
              finite and affine {C}oxeter groups},
   JOURNAL = {European J. Combin.},
  FJOURNAL = {European Journal of Combinatorics},
    VOLUME = {61},
      YEAR = {2017},
     PAGES = {106--132},
  MRNUMBER = {3588712},
}

@article {Singh1Barman,
    AUTHOR = {Singh, Gurinder and Barman, Rupam},
     TITLE = {Hook length biases in ordinary and {$t$}-regular partitions},
   JOURNAL = {J. Number Theory},
  FJOURNAL = {Journal of Number Theory},
    VOLUME = {264},
      YEAR = {2024},
     PAGES = {41--58},
  MRNUMBER = {4767463},
}

@article {Singh2Barman,
    AUTHOR = {Singh, Gurinder and Barman, Rupam},
     TITLE = {Hook length inequalities for {$t$}-regular partitions in the {$t$}-aspect},
   JOURNAL = {Canad. Math. Bull.},
  FJOURNAL = {Canadian Mathematical Bulletin. Bulletin Canadien de
              Math\'ematiques},
    VOLUME = {69},
      YEAR = {2026},
    NUMBER = {1},
     PAGES = {77--88},
  MRNUMBER = {5021592},
}

@article {Bing-He,
	AUTHOR = {He, Bing and Liu, Shuming},
	TITLE = {A proof of a conjecture of {S}ingh and {B}arman on hook
	length},
	JOURNAL = {J. Number Theory},
	FJOURNAL = {Journal of Number Theory},
	VOLUME = {278},
	YEAR = {2026},
	PAGES = {353--379},
     MRNUMBER = {4919847},
}

@article {Zang,
	AUTHOR = {Qu, Wenxia and Zang, Wenston J. T.},
	TITLE = {On the hook length biases of the 2- and 3-regular partitions},
	JOURNAL = {J. Number Theory},
	FJOURNAL = {Journal of Number Theory},
	VOLUME = {280},
	YEAR = {2026},
	PAGES = {455--480},
    MRNUMBER = {4966416},
}

@article {Yee,
	AUTHOR = {Cho, Hyunsoo and Kim, Byungchan and Kim, Eunmi and Yee, Ae Ja},
	TITLE = {Inequalities on the total number of {$t$}-hooks of partitions},
	JOURNAL = {European J. Combin.},
	FJOURNAL = {European Journal of Combinatorics},
	VOLUME = {136},
	YEAR = {2026},
	PAGES = {Paper No. 104390, 29},
     MRNUMBER = {5062381},
}

@article {Wright,
    AUTHOR = {Wright, E. M.},
     TITLE = {Stacks. {II}},
   JOURNAL = {Quart. J. Math. Oxford Ser. (2)},
  FJOURNAL = {The Quarterly Journal of Mathematics. Oxford. Second Series},
    VOLUME = {22},
      YEAR = {1971},
     PAGES = {107--116},
  MRNUMBER = {282940},
}

\end{document}